\newtheorem{thm}{Theorem}[section]
\newtheorem{cor}[thm]{Corollary}
\newtheorem{lem}[thm]{Lemma}
\newtheorem{prop}[thm]{Proposition}
\theoremstyle{definition}
\newtheorem{defin}[thm]{Definition}
\newtheorem{rem}[thm]{Remark}
\newtheorem{exa}[thm]{Example}
\numberwithin{equation}{section}
\begin{document}

\baselineskip=17pt



\title[Minimal universal metric spaces]{Minimal universal metric spaces}

\author[V. Bilet]{V. Bilet}
\address{Division of Applied Problems in Contemporary Analysis,
Institute of Mathematics of NASU,
Tereshenkivska str.~3,
Kyiv 01601,
Ukraine}
\email{biletvictoriya@mail.ru}

\author[O. Dovgoshey]{O. Dovgoshey}
\address{Division of Applied Problems in Contemporary Analysis\\
Institute of Mathematics of NASU\\
Tereshenkivska str.~3\\
Kyiv 01601\\
Ukraine}
\email{aleksdov@mail.ru}
\thanks{The research of the second author was supported by a grant received from TUBITAK within 2221-Fellowship Programme for Visiting Scientists and Scientists on Sabbatical Leave and also as a part of EUMLS project with grant agreement PIRSES --- GA --- 2011 --- 295164.}

\author[M. K\"{u}\c{c}\"{u}kaslan]{M. K\"{u}\c{c}\"{u}kaslan}
\address{Mersin University, Faculty of Art and Sciences\\
Department of Mathematics\\
Mersin 33342,\\
Turkey}
\email{mkucukaslan@mersin.edu.tr}

\author[E. Petrov]{E. Petrov}
\address{Division of Applied Problems in Contemporary Analysis\\
Institute of Mathematics of NASU\\
Tereshenkivska str.~3\\
Kyiv 01601\\
Ukraine}
\email{eugeniy.petrov@gmail.com}

\subjclass[2010]{54E35, 30L05, 54E40, 51F99}

\keywords{Metric space, isometric embedding, universal metric space, betweenness relation in metric spaces.}

\date{}

\begin{abstract}
Let $\mathfrak{M}$ be a class of metric spaces. A metric space $Y$ is minimal $\mathfrak{M}$-universal if every $X\in\mathfrak{M}$ can be isometrically embedded in $Y$ but there are no proper subsets of $Y$ satisfying this property. We find conditions under which, for given metric space $X$, there is a class $\mathfrak{M}$ of metric spaces such that $X$ is minimal $\mathfrak{M}$-universal. We generalize the notion of minimal $\mathfrak{M}$-universal metric space to notion of minimal $\mathfrak{M}$-universal class of metric spaces and prove the uniqueness, up to an isomorphism, for these classes. The necessary and sufficient conditions under which the disjoint union of the metric spaces belonging to a class $\mathfrak{M}$ is minimal $\mathfrak{M}$-universal are found. Examples of minimal universal metric spaces  are constructed for the classes of the three--point metric spaces and $n$-dimensional normed spaces. Moreover minimal universal metric spaces are found for some subclasses of the class of metric spaces $X$ which possesses the following property. Among every three distinct points of $X$ there is one point lying between the other two points.
\end{abstract}

\maketitle

\tableofcontents

\section{Introduction}

Let $X$ and $Y$ be metric spaces. Recall that a function $f: X\to Y$ is an \emph{isometric embedding} if the equality $d_{X}(x,y)=d_{Y}(f(x), f(y))$ holds for all $x,y\in X.$ In what follows the notation $f: X\hookrightarrow Y$ means that $f$ is an isometric embedding of $X$ in $Y.$ We say that $X$ is isometrically embedded in $Y$ and write $X\hookrightarrow Y$ if there exists $f: X\hookrightarrow Y.$ The situation when $X\hookrightarrow Y$ does not hold will be denoted as $X\not\hookrightarrow Y.$

We find it useful to set that the empty metric space is isometrically embedded in every metric space $Y,$ $\varnothing\hookrightarrow Y.$ Moreover, $Y\hookrightarrow\varnothing$ holds if and only if $Y=\varnothing.$

\begin{defin}\label{univ}
Let $\mathfrak{M}$ be a class of metric spaces. A metric space $Y$ is said to be universal for $\mathfrak{M}$ or $\mathfrak{M}$-universal if $X\hookrightarrow Y$ holds for every $X\in\mathfrak{M}.$
\end{defin}

In what follows the expression $\mathfrak{M}\hookrightarrow Y$ signifies that $Y$ is a $\mathfrak{M}$-universal metric space.

Recall that a class $\mathfrak{A}$ is a set if and only if there is a class $\mathfrak{B}$ such that $\mathfrak{A}\in\mathfrak{B}.$ We shall use the capital Gothic letters $\mathfrak{A}, \mathfrak{B}, ...$ to denote classes of metric spaces, the capital Roman letters $A, B, ...$ to denote metric spaces and the small letters $a, b, ...$ for points of these spaces.

It is relevant to remark that throughout this paper we shall make no distinction in notation between a set $X$ and a metric space with a support $X.$ For example  $X\subseteq Y$ means that $X$ is a subset of a set $Y$ or that $X$ is a subspace of a metric space $Y$ with the metric induced from $Y.$

\begin{defin}\label{muniv}
Let $\mathfrak{M}$ be a class of metric spaces and let $Y$ be a $\mathfrak{M}$-universal metric space. The space $Y$ is minimal $\mathfrak{M}$-universal if the implication
\begin{equation}\label{muneqv}
(\mathfrak{M}\hookrightarrow Y_{0})\Rightarrow (Y_{0}=Y)
\end{equation}
holds for every subspace $Y_{0}$ of $Y.$
\end{defin}

This basic for us definition was previously used by W.~Holstynski in \cite{Ho2} and \cite{Ho3}. It should be noted here that some other ''natural'' definitions of minimal universal metric spaces can be introduced. For example instead of \eqref{muneqv} we can use the implication
\begin{equation}\label{0.1*}
(\mathfrak{M}\hookrightarrow Y_{0})\Rightarrow (Y\hookrightarrow Y_{0})
\end{equation}
or
\begin{equation}\label{0.1**}
(\mathfrak{M}\hookrightarrow Y_{0})\Rightarrow (Y_{0}\simeq Y)
\end{equation}
where $Y_{0}\simeq Y$ means that $Y$ and $Y_0$ are isometric. It is easy to show that \eqref{muneqv}, \eqref{0.1*} and \eqref{0.1**} lead to the three different concepts of minimal universal metric spaces. See, in particular, Proposition \ref{pr2.17} for an example of a family $\mathfrak{M}$ which have unique, up to isometry, minimal $\mathfrak{M}$--universal metric space if we use \eqref{0.1*} or \eqref{0.1**}, and which do not admit any minimal $\mathfrak{M}$--universal metric space in the sense of~\eqref{muneqv}. In the present paper we mainly consider the minimal universal metric spaces in the sense of Definition \ref{muniv}.

The structure of the paper can be described as following.

\smallskip

\noindent$\bullet$ The second section is a short survey of some results related to universal metric spaces.

\smallskip

\noindent$\bullet$ Sufficient conditions under which a metric space is minimal universal for a class of metric spaces are obtained in Section~\ref{sect3}. Moreover, there we find some conditions of non existence of minimal $\mathfrak{M}$-universal metric spaces for given $\mathfrak{M}$.

\smallskip

\noindent$\bullet$ The subspaces of minimal universal metric spaces are discussed briefly in Section~\ref{sect4}.

\smallskip

\noindent$\bullet$ In the fifth section we generalize the notion of minimal $\mathfrak{M}$-universal metric space to the notion of minimal $\mathfrak{M}$-universal class of metric spaces. It is proved that a minimal $\mathfrak{M}$-universal class, if it exists, is unique up to an isomorphism.

\smallskip

\noindent$\bullet$ In the sixth section we discuss when one can construct a minimal $\mathfrak{M}$-universal metric space using disjoint union of metric spaces belonging to $\mathfrak{M}$.

\smallskip

\noindent$\bullet$ Section~\ref{sect7} deals with metric spaces which are minimal universal for some subclasses of the class of metric spaces $X$ that possesses the following property. Among every three distinct points of $X$ there is one point lying between the other two points.

\smallskip

\noindent$\bullet$ Two simple examples of universal metric spaces which are minimal for the class of three--point metric spaces are given in Section~\ref{sect8}.

\section{A short survey of universal metric spaces}
\label{sect2}

Let us denote by $\mathfrak{S}$ the class of separable metric spaces. In 1910 M.~Fre\-chet \cite{Fr} proved that the space $l^{\infty}$ of bounded sequences of real numbers with the sup-norm is $\mathfrak{S}$-universal. This result admits a direct generalization to the class $\mathfrak{S}_{\tau}$ of metric spaces of weight at most $\tau$ with an arbitrary cardinal number $\tau.$ Indeed, in 1935, K. Kuratowski \cite{Kur} proved that every metric space $X$ is isometrically embedded in the space $L^{\infty}(X)$ of bounded real-valued functions on $X$ with sup-norm. The Kuratowski embedding $$X\ni x\mapsto f_{x}\in L^{\infty}(X)$$ was defined as $f_{x}(y)=d_{X}(x, y)-d_{X}(x_0, y),$ where $x_0$ is a marked point in $X.$ It is evident that for every dense subset $X_0$ of $X$ the equality
$$
\sup_{y\in X}|f_{x_1}(y)-f_{x_2}(y)| = \sup_{y\in X_{0}}|f_{x_1}(y)-f_{x_2}(y)|
$$
holds for all $x_1, x_2 \in X.$ Hence if $A$ is a set with $|A|=\tau,$ then $L^{\infty}(A)$ is $\mathfrak{S}_{\tau}$-universal.

In 1924, P. S. Urysohn \cite{Ur}, \cite{Ur1} was the first who gave an example of separable $\mathfrak{S}$-universal metric space (note that $l^{\infty}$ is not se\-parable). In 1986 M.~Katetov \cite{Ka} proposed an extension of Urysohn's construction to a $\mathfrak{S}_\tau$--universal metric space of the weight $\tau=\tau^{<\tau}>\omega$. Recently V. Uspenskiy~\cite{Usp1990}, \cite{Usp2002}, \cite{Usp2004}, followed by A.~Vershik \cite{Versh1998}, \cite{Versh02FMT}, \cite{Versh02DR} and later M.~Gromov, positioned the Urysohn space with the correspondence to several mathematical disciplines: Functional Analysis, Probability Theory, Dynamics, Combinatorics, Model Theory and General Topology. In 2009 D. Le\v{s}nik gives a ''constructive model'' of Urysohn space~\cite{Le}. A computable version of this space was given by Hiroyasu Kamo in 2005~\cite{HK}.

The graphic metric space of the Rado graph \cite{Rado} (the vertex-set consists of all prime numbers $p\equiv 1 (\mathrm{mod}\ 4)$ with $pq$ being an edge if $p$ is a quadratic residue modulo $q$) is a universal metric space for the class of at most countable metric spaces with distances $0$, $1$ and $2$ only.

The Banach-Mazur theorem \cite{Ba} asserts that the space $C[0,1]$ of continuous functions $f:[0,1]\to\mathbb R$ with the sup-norm is $\mathfrak{S}$-universal. This famous theorem has numerous interesting modifications. As an example we only mention that every separable Banach space is isometrically embedded in the subspace of $C[0,1]$ consisting of nowhere differentiable functions that was proved by L. Rodriguez-Piazza \cite{R-P} in 1995.

The following unexpected result was obtained by W. Holsztynski in 1978 \cite{Ho}. There exists a compatible with the usual topology metric on $\mathbb R$ such that $\mathbb R$ with this metric is universal for the class $\mathfrak{F}$ of finite metric spaces.
Some interesting examples of  minimal universal metric spaces for the classes $\mathfrak{F}_k$ of metric spaces $X$ with $|X|\le k,$ $k=2,3,4,$ can be found in \cite{Ho2}, \cite{Ho3}.

The class $\mathfrak{SU}$ of separable ultrametric spaces is another example of an important class of metric spaces for which the universal spaces are studied in some details. It was proved by A. F. Timan and I. A. Vestfrid in 1983 \cite{TV} that the space $l_2$ of real sequences $(x_n)_{n\in\mathbb N}$ with the norm $\Bigl(\sum_{n\in\mathbb N}x_{n}^{2}\Bigr)^{1/2}$ is $\mathfrak{SU}$-universal. The first example of ultrametric space which is universal for $\mathfrak{SU}$ was obtained by I. A. Vestfrid in 1994 \cite{Ve}. As was proved by Lemins in \cite{LL}, if an ultrametric space $Y$ is $\mathfrak{F}_2$-universal, then the weight of $Y$ is not less than the continuum $\mathfrak{c}.$
Consequently any $\mathfrak{SU}$-universal ultrametric space cannot be separable. In this connection it should be pointed out that, under some set-theoretic assumptions, for every cardinal $\tau> \mathfrak{c}$ there is an ultrametric space $LW_{\tau}\in\mathfrak{S}_{\tau}$ such that every ultrametric space from $\mathfrak{S}_{\tau}$ can be isometrically embedded into $LW_{\tau}$. The last statement was proved by J.~Vaughan in 1999~\cite{Va}.

There exist also results about spaces which are universal for some classes of compact metric spaces and separable Banach spaces (see, e.g., \cite{DL} and \cite{GK}). In particular, the necessary and sufficient conditions under which for given family of compact metric spaces there exists a compact universal metric space were found by S.~D.~Iliadis in 1995 \cite{Il1}. The same author considered also the existence of universal spaces in various subclasses of $\mathfrak{S}$~\cite{Il2}.

\section{Minimal universal metric spaces. Existence, nonexistence and uniqueness}
\label{sect3}

Let $X$ and $Y$ be metric spaces. Recall that an isometric embedding $f: X\hookrightarrow Y$ is an isometry if $f$ is a surjection. The spaces $X$ and $Y$ are isometric if there is an isometry $f: X\hookrightarrow Y.$  We shall write $X\simeq Y$ if $X$ and $Y$ are isometric. Otherwise, we use the notation $X\not\simeq Y.$

\begin{defin}\label{shemb}
A metric space $Y$ is shifted if there is $X\subseteq Y$ such that $X\simeq Y$ and $Y\setminus X\ne\varnothing.$ Otherwise, the space $Y$ is said to be not shifted.
\end{defin}
It follows directly from the definition, that a non--empty metric space $Y$ is not shifted if and only if every self embedding $f: Y\hookrightarrow Y$ is an isometry.

\begin{exa}\label{ex1*}
All finite metric spaces are not shifted. In particular, the empty metric space is not shifted.
\end{exa}

\begin{prop}\label{vspompr}
Let $Y$ be a metric space. The following conditions are equivalent.
\begin{enumerate}
\item[\rm(i)]\textit{Y is not shifted.}

\item[\rm(ii)]\textit{Y is minimal universal for the class consisting of the unique metric space $Y.$}

\item[\rm(iii)]\textit{There exists a class $\mathfrak{M}$ of metric spaces such that $Y$ is minimal $\mathfrak{M}$-universal.}
\end{enumerate}
\end{prop}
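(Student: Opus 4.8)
The plan is to prove the cyclic chain of implications $(i)\Rightarrow(ii)\Rightarrow(iii)\Rightarrow(i)$. The implication $(ii)\Rightarrow(iii)$ requires no work: taking $\mathfrak{M}=\{Y\}$ witnesses condition (iii) directly.

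For $(i)\Rightarrow(ii)$, I would set $\mathfrak{M}=\{Y\}$ and first note that $Y$ is $\mathfrak{M}$-universal, since the identity map is an isometric embedding $Y\hookrightarrow Y$. To verify minimality, I would take an arbitrary subspace $Y_{0}\subseteq Y$ with $\mathfrak{M}\hookrightarrow Y_{0}$, i.e.\ with an isometric embedding $g\colon Y\hookrightarrow Y_{0}$. Composing $g$ with the inclusion $Y_{0}\hookrightarrow Y$ yields a self-embedding $f\colon Y\hookrightarrow Y$ whose image lies inside $Y_{0}$. If $Y=\varnothing$ then $Y_{0}=\varnothing=Y$ trivially; otherwise, since $Y$ is not shifted, the remark following Definition~\ref{shemb} guarantees that $f$ is surjective, so $Y=f(Y)\subseteq Y_{0}\subseteq Y$ and hence $Y_{0}=Y$. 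This is exactly the implication~\eqref{muneqv} defining minimality.

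For $(iii)\Rightarrow(i)$, I would argue by contraposition: assuming $Y$ is shifted, I would produce, for \emph{any} candidate class $\mathfrak{M}$, a proper $\mathfrak{M}$-universal subspace of $Y$, thereby contradicting minimality. Concretely, if $Y$ is shifted there exists $X\subseteq Y$ with $X\simeq Y$ and $Y\setminus X\neq\varnothing$, so $X$ is a proper subspace. Suppose $Y$ were minimal $\mathfrak{M}$-universal for some $\mathfrak{M}$. Since $Y$ is $\mathfrak{M}$-universal and $X\simeq Y$, every $Z\in\mathfrak{M}$ satisfies $Z\hookrightarrow Y\simeq X$, and composing the embedding $Z\hookrightarrow Y$ with an isometry $Y\to X$ gives $Z\hookrightarrow X$; thus $\mathfrak{M}\hookrightarrow X$. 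Minimality would then force $X=Y$, contradicting $Y\setminus X\neq\varnothing$.

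The arguments are short, and the only bookkeeping subtleties are the empty-space case and keeping track of the direction of the embeddings. The genuine content is carried entirely by the characterization of not-shifted spaces via self-embeddings being isometries, so I expect no real obstacle beyond invoking that remark correctly.
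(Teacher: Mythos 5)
Your proof is correct and follows essentially the same route as the paper: both rest on transferring $\mathfrak{M}$-universality to a subspace by composing embeddings with an isometry/inclusion, and on the characterization of not-shifted spaces via surjectivity of self-embeddings. The only cosmetic differences are that you spell out the $(i)\Rightarrow(ii)$ details the paper dismisses as immediate from the definitions, and you prove $(iii)\Rightarrow(i)$ by contraposition using the shifted witness $X\simeq Y$, whereas the paper argues directly that $f(Y)=Y$ for every self-embedding $f$ by applying minimality to $Y_0=f(Y)$.
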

\begin{proof}
It follows directly from that definitions, that $\textrm{(i)}\Leftrightarrow\textrm{(ii)}$ holds. The implication $\textrm{(ii)}\Rightarrow \textrm{(iii)}$ is evident. Let us prove $\textrm{(iii)}\Rightarrow\textrm{(i)}.$ Suppose there is a class $\mathfrak{M}$ of metric spaces such that $Y$ is minimal $\mathfrak{M}$-universal. We must show that
\begin{equation}\label{eq*}
f(Y)=Y
\end{equation}
holds for every $f: Y\hookrightarrow Y.$ Let us consider an arbitrary $f: Y\hookrightarrow Y.$ Since $Y$ is $\mathfrak{M}$-universal, the space $f(Y)$ is also $\mathfrak{M}$-universal. Now using \eqref{muneqv} with $Y_{0}=f(Y),$ we obtain \eqref{eq*}.
\end{proof}

Proposition~\ref{vspompr} implies that every minimal universal metric space is not shifted. Simple examples show that there is a family $\mathfrak{M}$ of metric spaces and a not shifted metric space $Y$ such that $\mathfrak{M}\hookrightarrow Y$ and $Y$ is not minimal $\mathfrak{M}$-universal. The situation is quite different if, for not shifted $Y$,  we presuppose $\mathfrak{M}\hookrightarrow Y$ and $Y\in\mathfrak{M}$.

\begin{thm}\label{mainth1}
Let $\mathfrak{Y}$ be a class of metric spaces, $Y\in\mathfrak{Y}$ and let $\mathfrak{Y}\hookrightarrow Y.$ Then the following conditions are equivalent.
\begin{enumerate}
\item[\rm(i)]\textit{Y is not shifted.}

\item[\rm(ii)]\textit{Y is minimal $\mathfrak{Y}$-universal.}

\item[\rm(iii)]\textit{There exists a minimal $\mathfrak{Y}$-universal $X\in\mathfrak{Y}.$}

\item[\rm(iv)]\textit{There exists a minimal $\mathfrak{Y}$-universal metric space.}
\end{enumerate}
\end{thm}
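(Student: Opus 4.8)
The plan is to prove the cyclic chain of implications $\mathrm{(i)}\Rightarrow\mathrm{(ii)}\Rightarrow\mathrm{(iii)}\Rightarrow\mathrm{(iv)}\Rightarrow\mathrm{(i)}$, exploiting the standing hypotheses $Y\in\mathfrak{Y}$ and $\mathfrak{Y}\hookrightarrow Y$ throughout. Three of these implications are essentially formal; the content lives in $\mathrm{(iv)}\Rightarrow\mathrm{(i)}$, so I expect that to be the main obstacle.

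For $\mathrm{(i)}\Rightarrow\mathrm{(ii)}$, I would argue that the minimality of $Y$ as a $\mathfrak{Y}$-universal space follows from the fact that $Y$ itself belongs to $\mathfrak{Y}$. Concretely, let $Y_0\subseteq Y$ satisfy $\mathfrak{Y}\hookrightarrow Y_0$. Since $Y\in\mathfrak{Y}$, there is an embedding $g\colon Y\hookrightarrow Y_0$; composing with the inclusion $Y_0\subseteq Y$ turns $g$ into a self-embedding of $Y$. Because $Y$ is not shifted, $g$ must be surjective, so $Y=g(Y)\subseteq Y_0\subseteq Y$, whence $Y_0=Y$. This is exactly the minimality condition \eqref{muneqv}, and $\mathfrak{Y}$-universality of $Y$ is given, so $Y$ is minimal $\mathfrak{Y}$-universal.

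The implications $\mathrm{(ii)}\Rightarrow\mathrm{(iii)}$ and $\mathrm{(iii)}\Rightarrow\mathrm{(iv)}$ are immediate: for the first one takes $X=Y$, using $Y\in\mathfrak{Y}$; the second only forgets the membership $X\in\mathfrak{Y}$.

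For $\mathrm{(iv)}\Rightarrow\mathrm{(i)}$, suppose some metric space $Z$ is minimal $\mathfrak{Y}$-universal. The idea is to show first that $Z\simeq Y$ and then invoke Proposition~\ref{vspompr}. Since $Z$ is $\mathfrak{Y}$-universal and $Y\in\mathfrak{Y}$, there is an embedding $\varphi\colon Y\hookrightarrow Z$; its image $\varphi(Y)$ is isometric to $Y$ and therefore, being an isometric copy of the $\mathfrak{Y}$-universal space $Y$, is itself $\mathfrak{Y}$-universal. As $\varphi(Y)\subseteq Z$ and $Z$ is minimal $\mathfrak{Y}$-universal, minimality forces $\varphi(Y)=Z$, so $\varphi$ is an isometry and $Y\simeq Z$. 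Now Proposition~\ref{vspompr} (in the form $\mathrm{(iii)}\Rightarrow\mathrm{(i)}$, applied with the class $\mathfrak{Y}$ and the minimal $\mathfrak{Y}$-universal space $Z$) shows that $Z$ is not shifted; since the property of being not shifted is manifestly preserved under isometries, $Y$ is not shifted as well. The only delicate point is checking that ``not shifted'' is an isometry invariant, which is routine: an isometry $\psi\colon Y\to Z$ conjugates self-embeddings of $Y$ to self-embeddings of $Z$, so surjectivity of all the latter yields surjectivity of all the former.
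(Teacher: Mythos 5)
Your proof is correct and follows essentially the same route as the paper's: the implication $\mathrm{(i)}\Rightarrow\mathrm{(ii)}$ uses the identical self-embedding argument (the paper just phrases it as a contradiction), and your $\mathrm{(iv)}\Rightarrow\mathrm{(i)}$ combines the paper's $\mathrm{(iv)}\Rightarrow\mathrm{(ii)}$ step (embed $Y$ into the minimal universal space, note the image is $\mathfrak{Y}$-universal, and invoke minimality to get $\varphi(Y)=Z$, hence $Y\simeq Z$) with its appeal to Proposition~\ref{vspompr}, merely transferring ``not shifted'' rather than ``minimal universal'' along the isometry.
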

\begin{proof}
Suppose that, on the contrary, $\textrm{(i)}$ holds but $Y$ is not minimal $\mathfrak{Y}$-universal. Then, by Definition~\ref{muniv}, there is $y_0\in Y$ such that $\mathfrak{Y}\hookrightarrow Y\setminus\{y_0\}$. Since $Y\in\mathfrak{Y},$ there exists $f: Y\hookrightarrow Y\setminus\{y_0\}.$ Let $in: Y\setminus\{y_0\}\to Y$ be the standard injection, $in(y)=y$ for every $y\in Y\setminus\{y_0\}.$ Then the isometric embedding is
$$
\begin{array}{cccc}
Y & \xrightarrow{\ \ \mbox{\emph{f}}\ \ } &
Y\setminus\{y_0\}\xrightarrow {\ \ \mbox{\emph{in}}\ \ } & Y
\end{array}
$$
not an isometry, contrary to $\textrm{(i)}.$ The implication $\textrm{(i)}\Rightarrow \textrm{(ii)}$ follows. The implications $\textrm{(ii)}\Rightarrow \textrm{(iii)}$ and $\textrm{(iii)}\Rightarrow \textrm{(iv)}$ are trivial. Furthermore, Proposition~\ref{vspompr} implies $\textrm{(ii)}\Rightarrow \textrm{(i)}$. To complete the proof, it suffices to show that $\textrm{(iv)}\Rightarrow \textrm{(ii)}$ holds. Let $X$ be a minimal $\mathfrak{Y}$-universal metric space. We claim that $Y$ is also minimal $\mathfrak{Y}$-universal. Indeed, since $Y\in\mathfrak{Y}$ and $\mathfrak{Y}\hookrightarrow X,$ there is $X_{0}\subseteq X$ such that $X_{0}\simeq Y.$ Now from $\mathfrak{Y}\hookrightarrow Y$ and $X_{0}\simeq Y$ it follows that $\mathfrak{Y}\hookrightarrow X_{0}.$ Since $X$ is minimal $\mathfrak{Y}$-universal, the statements $\mathfrak{Y}\hookrightarrow X_{0}$ and $X_{0}\subseteq X$ imply $X_{0}=X.$ Hence, $X_{0}$ is minimal $\mathfrak{Y}$-universal. Since $X_{0}\simeq Y$, the metric space $Y$ is also minimal $\mathfrak{Y}$-universal.
\end{proof}

Analyzing the proof of Theorem~\ref{mainth1}, we obtain the following.

\begin{prop} \label{prisom}
Let $\mathfrak{M}$ be a class of metric spaces and let $X$ and $Y$ be metric spaces such that $\mathfrak{M}\hookrightarrow X$ and $\mathfrak{M}\hookrightarrow Y.$ If $Y$ is minimal $\mathfrak{M}$-universal and $X\in\mathfrak{M},$ then $X\simeq Y$.
\end{prop}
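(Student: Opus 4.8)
The plan is to locate an isometric copy of $X$ inside $Y$, show that this copy is itself $\mathfrak{M}$-universal, and then invoke the minimality of $Y$ to force the copy to exhaust all of $Y$.

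First I would use the hypotheses $X\in\mathfrak{M}$ and $\mathfrak{M}\hookrightarrow Y$. Since $Y$ is $\mathfrak{M}$-universal, every member of $\mathfrak{M}$ embeds isometrically in $Y$, and in particular $X\hookrightarrow Y$. Fixing such an embedding $f: X\hookrightarrow Y$ and writing $Y_{0}=f(X)$, I obtain a subspace $Y_{0}\subseteq Y$ with $Y_{0}\simeq X$.

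Next I would transfer the universality of $X$ to $Y_{0}$. The key observation is that $\mathfrak{M}$-universality is preserved under isometry: given any $A\in\mathfrak{M}$, the assumption $\mathfrak{M}\hookrightarrow X$ yields $A\hookrightarrow X$, and composing with the isometry $X\simeq Y_{0}$ gives $A\hookrightarrow Y_{0}$. Hence $\mathfrak{M}\hookrightarrow Y_{0}$.

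Finally I would apply Definition~\ref{muniv}. Since $Y$ is minimal $\mathfrak{M}$-universal and $Y_{0}$ is a subspace of $Y$ satisfying $\mathfrak{M}\hookrightarrow Y_{0}$, the implication~\eqref{muneqv} forces $Y_{0}=Y$. Combining this with $Y_{0}\simeq X$ yields $X\simeq Y$, as desired. The argument is short, and the only genuine step is the transfer of universality across the isometry $X\simeq Y_{0}$; I expect no real obstacle beyond keeping the direction of the embeddings straight, exactly as in the $\textrm{(iv)}\Rightarrow\textrm{(ii)}$ portion of the proof of Theorem~\ref{mainth1}.
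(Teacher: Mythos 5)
Your proof is correct and follows exactly the argument the paper intends: the paper gives no separate proof but states that Proposition~\ref{prisom} is obtained by ``analyzing the proof of Theorem~\ref{mainth1},'' and your three steps (embed $X$ into $Y$ as $Y_0$, transfer $\mathfrak{M}$-universality across the isometry $X\simeq Y_0$, then invoke minimality of $Y$ via~\eqref{muneqv} to get $Y_0=Y$) are precisely the $\textrm{(iv)}\Rightarrow\textrm{(ii)}$ argument with the roles of the spaces renamed. Nothing is missing.
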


\begin{cor}\label{cor2}
\textbf{\emph{(}Isometry of minimal universal metric spaces\emph{)}.}
Let  $\mathfrak{Y}$ be a class of metric spaces. If there exists a $\mathfrak{Y}$-universal space $Y\in\mathfrak{Y},$ then every two minimal $\mathfrak{Y}$-universal metric spaces, if they exist, are isometric.
\end{cor}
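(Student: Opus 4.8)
The plan is to reduce the whole statement to Proposition~\ref{prisom}, which already encodes the rigidity we need. Write $Y$ for the given $\mathfrak{Y}$-universal space with $Y\in\mathfrak{Y}$, and suppose $Y_{1}$ and $Y_{2}$ are two minimal $\mathfrak{Y}$-universal metric spaces. Since a minimal $\mathfrak{Y}$-universal space is in particular $\mathfrak{Y}$-universal, we have $\mathfrak{Y}\hookrightarrow Y_{1}$ and $\mathfrak{Y}\hookrightarrow Y_{2}$; and by hypothesis $\mathfrak{Y}\hookrightarrow Y$ with $Y\in\mathfrak{Y}$. These are precisely the ingredients Proposition~\ref{prisom} consumes.

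First I would apply Proposition~\ref{prisom} with the class $\mathfrak{M}=\mathfrak{Y}$, taking the distinguished member of the class to be $Y$ and the minimal universal space to be $Y_{1}$. The four hypotheses $\mathfrak{Y}\hookrightarrow Y$, $\mathfrak{Y}\hookrightarrow Y_{1}$, $Y\in\mathfrak{Y}$, and the minimality of $Y_{1}$ are all in place, so the proposition delivers $Y\simeq Y_{1}$. Repeating the identical argument with $Y_{2}$ in the role of $Y_{1}$ yields $Y\simeq Y_{2}$.

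Finally, since isometry of metric spaces is an equivalence relation, chaining $Y_{1}\simeq Y$ with $Y\simeq Y_{2}$ gives $Y_{1}\simeq Y_{2}$, which is the desired conclusion. (One could instead first use Theorem~\ref{mainth1} to upgrade the mere existence of a minimal universal space to the statement that $Y$ itself is minimal universal, but that detour is unnecessary: $Y$ is needed here only as a comparison object, not as a minimal one.)

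There is no genuine obstacle in this corollary; it is essentially a bookkeeping consequence of Proposition~\ref{prisom}. The only point requiring care is to match the roles correctly: the single $\mathfrak{Y}$-universal member $Y\in\mathfrak{Y}$ supplies the ``space lying in the class'' that Proposition~\ref{prisom} compares against, while each candidate minimal universal space serves as the rigid target. The hypothesis that some $\mathfrak{Y}$-universal $Y$ actually belongs to $\mathfrak{Y}$ is exactly what makes $Y$ available as that comparison space in both applications, so the existence assumption is used twice, once for each $Y_{i}$.
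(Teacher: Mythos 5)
Your proof is correct and is exactly the argument the paper intends: Corollary~\ref{cor2} is stated as an immediate consequence of Proposition~\ref{prisom}, applied with the given $\mathfrak{Y}$-universal $Y\in\mathfrak{Y}$ as the comparison space against each minimal $\mathfrak{Y}$-universal space in turn, followed by transitivity of $\simeq$. The role-matching you spell out (the member of the class as $X$, each minimal space as the rigid target) is precisely how the hypotheses of Proposition~\ref{prisom} are meant to be consumed, so there is nothing to add.
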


If a class $\mathfrak{Y}$ of metric spaces does not contain any $\mathfrak{Y}$-universal metric space, then we may generally have two minimal $\mathfrak{Y}$-universal metric spaces $W$ and $Z$ such that $W\not\simeq Z$.

\begin{exa}\label{ex2}
Let $\mathfrak{Y}=\{X, Y\}$ with
$$
X=\{x_1, x_2\},\, d_{X}(x_1, x_2)=1\,\, \mbox{and} \,\,Y=\{y_1, y_2\},\, d_{Y}(y_1, y_2)=2
$$
and let $1<a<b<2$. The metric spaces
$$
Z=\{z_1, z_2, z_3\}, \quad W=\{w_1, w_2, w_3\}
$$
with
$$
d_{Z}(z_1, z_2)=d_{W}(w_1, w_2)=1, \, d_{Z}(z_2, z_3)=d_{W}(w_2, w_3)=2,
$$
$$
d_{Z}(z_1, z_3)=a, \, d_{W}(w_1, w_3)=b
$$
are minimal $\mathfrak{Y}$-universal. It is clear that $Z\not\simeq W.$
\end{exa}

In the fifth section of the paper we shall show that the nonuniqueness of minimal $\mathfrak{M}$-universal metric spaces can be overcome by an extending of the concept of a minimal $\mathfrak{M}$-universal metric space to the concept of minimal $\mathfrak{M}$-universal class of metric spaces (see Definition~\ref{guniv*} and Theorem~\ref{L3.main}).

\begin{prop} \emph{\textbf{(}}\textbf{Nonexistence of minimal universal spaces}\emph{\textbf{)}}.\label{cor1}
Let  $\mathfrak{M}$ be a class of non--empty metric spaces. The class $\mathfrak{M}$ admits no minimal universal metric spaces if at least one from the following conditions holds.
\begin{enumerate}
\item[\rm(i)] There are $X, Y\in\mathfrak{M}$ such that
$$
\mathfrak{M}\hookrightarrow X, \, \mathfrak{M}\hookrightarrow Y \, \mbox{and}\,\, X\not\simeq Y.
$$

\item[\rm(ii)] For every $X\in\mathfrak{M}$ there exist metric spaces $Y, Y_{1}$ and $Y_{2}$ which satisfy the conditions
\begin{equation}\label{equat1}
Y\in\mathfrak{M}, \quad Y_{1}\subseteq Y,\quad Y_{2}\subseteq Y, \quad Y_{1}\cap Y_{2}=\varnothing
\end{equation}
and
\begin{equation}\label{equat2}
Y_{1}\simeq X\simeq Y_{2}.
\end{equation}
\end{enumerate}
\end{prop}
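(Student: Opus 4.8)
The plan is to assume, for contradiction in each case, that a minimal $\mathfrak{M}$-universal metric space $W$ exists, and to contradict Definition~\ref{muniv}. Condition (i) is handled immediately by the rigidity supplied in Proposition~\ref{prisom}: since $X\in\mathfrak{M}$, $\mathfrak{M}\hookrightarrow X$ and $\mathfrak{M}\hookrightarrow W$, that proposition yields $X\simeq W$, and applying it again with $Y$ in place of $X$ yields $Y\simeq W$. Hence $X\simeq Y$, contradicting the hypothesis $X\not\simeq Y$. Note that the weaker facts $X\hookrightarrow Y$ and $Y\hookrightarrow X$ (which do follow from universality) are \emph{not} enough on their own, so invoking the existence of the minimal space $W$ together with Proposition~\ref{prisom} is exactly what is needed.

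For condition (ii) the key observation is that, in combination with $\mathfrak{M}\hookrightarrow W$, it forces two disjoint isometric copies of every member of $\mathfrak{M}$ inside $W$. Concretely, I would fix an arbitrary $X\in\mathfrak{M}$, take $Y,Y_{1},Y_{2}$ as in \eqref{equat1}--\eqref{equat2}, and choose an embedding $h\colon Y\hookrightarrow W$ (possible since $Y\in\mathfrak{M}$). Then $h(Y_{1})$ and $h(Y_{2})$ are disjoint subspaces of $W$, each isometric to $X$ by \eqref{equat2}. Now pick any point $w_{0}\in W$. Because $h(Y_{1})\cap h(Y_{2})=\varnothing$, the point $w_{0}$ lies in at most one of these two copies, so the other is a copy of $X$ contained in $W\setminus\{w_{0}\}$; thus $X\hookrightarrow W\setminus\{w_{0}\}$. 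Since $X$ was arbitrary, $W\setminus\{w_{0}\}$ is $\mathfrak{M}$-universal while being a proper subspace of $W$, contradicting minimality.

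I expect the main obstacle to be purely conceptual rather than computational: recognizing the ``delete one point'' mechanism, namely that universality of $W$ survives the removal of a single point \emph{precisely because} every space embeds into $W$ as two disjoint copies, so one point-deletion can destroy at most one of them. The only auxiliary point requiring care is that the subspace $W\setminus\{w_{0}\}$ be proper, i.e.\ that $W\neq\varnothing$; this is automatic, since members of $\mathfrak{M}$ are non-empty and embed isometrically into $W$, so $W$ contains at least one point $w_{0}$ to remove.
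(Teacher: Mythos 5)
Your proposal is correct and follows essentially the same route as the paper: part (i) is exactly the double application of Proposition~\ref{prisom}, and part (ii) uses the identical mechanism of embedding $Y$ into the putative minimal space $W$ to obtain two disjoint copies of $X$, of which a single deleted point can meet at most one. The only difference is bookkeeping — the paper fixes $w_{0}$ first and lets minimality produce an $X$ with $X\not\hookrightarrow W\setminus\{w_{0}\}$ before deriving the contradiction, whereas you show directly that $W\setminus\{w_{0}\}$ stays $\mathfrak{M}$-universal — and since all quantifiers involved are universal, these are trivially equivalent.
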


\begin{proof}
It is an immediate consequence of Proposition~\ref{prisom} that (i) implies the nonexistence of minimal $\mathfrak{M}$-universal metric spaces.  Let us consider (ii).

Suppose that $W$ is a minimal $\mathfrak{M}$-universal metric space and $\textrm{(ii)}$ holds. Let $w_{0}\in W$ and $$W_{0}=W\setminus\{w_{0}\}.$$
Since $W$ is minimal $\mathfrak{M}$-universal, there is $X\in\mathfrak{M}$ such that $X\not\hookrightarrow W_{0}$. Using $\textrm{(ii)}$ we can find metric spaces $Y, Y_{1}$ and $Y_{2}$ which satisfy \eqref{equat1} and \eqref{equat2}. From $Y\hookrightarrow W$ and \eqref{equat1} it follows that there exist $W_{1}\subseteq W$ and $W_{2}\subseteq W$ such that
\begin{equation}\label{equat3}
W_{1}\simeq Y_{1}, \quad W_{2}\simeq Y_{2} \quad\mbox{and}\quad W_{1}\cap W_{2}=\varnothing.
\end{equation}
The last equality implies $W_{1}\cap\{w_{0}\}=\varnothing$ or $W_{2}\cap\{w_{0}\}=\varnothing.$ We may assume, without loss of generality, that $W_{1}\cap\{w_{0}\}=\varnothing.$ Hence, $W_{1}\subseteq W_{0}$ holds. It follows from \eqref{equat2} and \eqref{equat3} that $X\simeq W_{1}.$ Consequently, we have $X\hookrightarrow W_{0},$ which is a contradiction.
\end{proof}

\begin{rem}
Proposition~\ref{cor1} remains valid if we suppose that there is non--empty $Y\in\mathfrak{M}$ instead of $X\ne\varnothing$ for all $X\in\mathfrak{M}.$
If $\mathfrak{M}=\{\varnothing\},$ then $\mathfrak{M}$ satisfies condition $\textrm{(ii)}$ of Proposition~\ref{cor1} and $X = \varnothing$ is minimal $\mathfrak{M}$-universal.
\end{rem}

\begin{exa}\label{exampl}
Let $\mathfrak{F}$ be the class of finite non--empty metric spaces. Condition (ii) of Proposition~\ref{cor1} is valid for $\mathfrak{M}=\mathfrak{F}.$ Consequently, by Proposition~\ref{cor1}, the class $\mathfrak{F}$ does not admit any minimal $\mathfrak{F}$-universal metric spaces. In particular, the Holsztynski metric space (see \cite{Ho}) is $\mathfrak{F}$-universal, but not minimal $\mathfrak{F}$-universal.
\end{exa}

In Example~\ref{exampl} instead of $\mathfrak{F}$ we can take the class of finite metric subspaces of the usual real line $\mathbb R.$ Some other examples of families $\mathfrak{M}$, which do not admit minimal $\mathfrak{M}$-universal metric spaces will be given after the corresponding lemmas.

Recall that a metric space $X$ is ultrametric if the strong triangle ine\-quality $$d_{X}(x, y)\le\max\{d_{X}(x, z), d_{X}(z, y)\}$$ holds for all $x, y, z\in X$.

\begin{lem}\label{Ultra}
Let $X$ and $Y$ be disjoint ultrametric spaces, let $x_{0}\in X$ and $y_{0}\in Y,$ and let $r_0$ be a positive real number. Then there is an ultrametric $d_{Z}$ on $Z=X \cup Y$ such that
\begin{equation}\label{equat4}
d_{Z}(x, y)=\begin{cases}
         d_{X}(x, y) & \mbox{if} \,$ $ x, y\in X\\
         d_{Y}(x, y) & \mbox{if} \,$ $ x, y\in Y\\
         r_{0} & \mbox{if} \,$ $ x=x_0 \,\, \mbox{and}\,\, y=y_0.\\
\end{cases}
\end{equation}
\end{lem}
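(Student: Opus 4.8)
The goal is to define a single number $r_0$ as the distance between the two marked points $x_0, y_0$ and to extend this to all cross-pairs $(x,y)$ with $x \in X$, $y \in Y$ in a way that the strong triangle inequality holds on all of $Z$. The natural guess, inspired by the ultrametric structure, is to set
$$
d_Z(x,y) = \max\{d_X(x,x_0),\, r_0,\, d_Y(y_0,y)\}
$$
for $x \in X$ and $y \in Y$ (symmetrically for $y \in Y$, $x \in X$). The plan is to verify that this formula, together with the given intrinsic distances on $X$ and $Y$, yields an ultrametric. One should first check it restricts correctly: when $x = x_0$ and $y = y_0$ the two inner terms $d_X(x_0,x_0)$ and $d_Y(y_0,y_0)$ vanish, leaving $r_0$, as required by \eqref{equat4}. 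Positivity and symmetry of $d_Z$ on cross-pairs are immediate from the formula since $r_0 > 0$.

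The substance is checking the strong triangle inequality $d_Z(a,c) \le \max\{d_Z(a,b),\, d_Z(b,c)\}$ for all triples $a,b,c \in Z$. The plan is to split into cases according to how many of the three points lie in $X$ versus $Y$. When all three lie in $X$ (or all in $Y$) the inequality holds by the ultrametric property of $X$ (resp.\ $Y$). The remaining cases are the mixed ones: two points in one space and one in the other. Here the idea is to feed the cross-distances through the defining $\max$ and repeatedly use the ultrametric inequality inside $X$ or inside $Y$ together with the elementary fact that $\max$ is monotone and associative. For instance, if $a,b \in X$ and $c \in Y$, one must bound $d_Z(a,c) = \max\{d_X(a,x_0), r_0, d_Y(y_0,c)\}$ by the maximum of $d_X(a,b)$ and $\max\{d_X(b,x_0), r_0, d_Y(y_0,c)\}$; the only nontrivial term is $d_X(a,x_0)$, which is controlled by $\max\{d_X(a,b), d_X(b,x_0)\}$ via the ultrametric inequality in $X$.

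The main obstacle I expect is organizing the mixed cases cleanly so that the routing of each term through the correct space is transparent, rather than grinding through every subcase by brute force. The key observation that tames this is that $d_X(x,x_0) = d_Z(x,x_0)$ and $d_Y(y_0,y) = d_Z(y_0,y)$, so the cross-distance formula is literally $d_Z(x,y) = \max\{d_Z(x,x_0),\, d_Z(x_0,y_0),\, d_Z(y_0,y)\}$, i.e.\ the distance along the ``path'' $x \to x_0 \to y_0 \to y$. Phrased this way, the strong triangle inequality for mixed triples reduces to the monotonicity of $\max$ applied along such paths, so one need only verify that inserting an intermediate point never decreases the maximal-link value, which follows from the ultrametric inequalities inside $X$ and inside $Y$. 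This reformulation is what makes the verification routine once set up, and I would present the cross-distance formula in the path form to minimize casework.
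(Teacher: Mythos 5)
Your construction is correct, and it takes a genuinely different route from the paper. The paper does not give a construction at all: it disposes of the lemma in one line by citing Theorem~2 of the reference \cite{DovP} on subdominant pseudo-ultrametrics of weighted graphs, of which the lemma is a special case. You instead build the extension explicitly, setting
$$
d_{Z}(x,y)=\max\{d_{X}(x,x_{0}),\, r_{0},\, d_{Y}(y_{0},y)\}
$$
for $x\in X$, $y\in Y$, and checking the strong triangle inequality by splitting triples according to how they distribute over $X$ and $Y$. This verification does go through: in the case $a,b\in X$, $c\in Y$ the only term not common to both sides is $d_{X}(a,x_{0})$, which is absorbed by $\max\{d_{X}(a,b),d_{X}(b,x_{0})\}$ via the ultrametric inequality in $X$; the case $a,c\in X$, $b\in Y$ reduces to $d_{X}(a,c)\le\max\{d_{X}(a,x_{0}),d_{X}(x_{0},c)\}$; the remaining mixed cases are symmetric with the roles of $X$ and $Y$ exchanged. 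Your ``path form'' observation is exactly the right way to see why this works, and it is no accident: your formula is precisely the subdominant (max-link shortest-path) ultrametric of the weighted graph whose edges are all pairs inside $X$, all pairs inside $Y$, and the single bridge $\{x_{0},y_{0}\}$ of weight $r_{0}$ --- i.e.\ you have reconstructed, in this special case, the very object that the cited general theorem produces. What your approach buys is self-containment and an explicit formula for all cross-distances (the paper's statement only pins down $d_{Z}(x_{0},y_{0})$ and leaves the rest to the cited machinery); what the paper's approach buys is generality, since the graph-metrization theorem handles gluings along arbitrary edge sets, not just a single bridge between two marked points.
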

\begin{proof}
It is a particular case of Theorem~2 from \cite{DovP}.
\end{proof}

\begin{lem}
Let $r_0\in (0, \infty)$ and let $X_1$ and $X_2$ be disjoint non--empty metric spaces with
$$
r_0 \ge\max\{\emph{diam} X_{1}, \emph{diam} X_{2}\}>0,
$$
where
$$
\emph{diam} X_{i}=\sup\{d_{X_i}(x, y): x, y\in X_{i}\},\,i=1, 2.
$$
Then the function
\begin{equation}\label{equat5}
d_{Z}(x, y)=\begin{cases}
         d_{X_1}(x, y) & \mbox{if} \,$ $ x, y\in X_1\\
         d_{X_2}(x, y) & \mbox{if} \,$ $ x, y\in X_2\\
         r_{0} & \mbox{if} \,$ $ x\in X_1, \,\, y\in X_2 \text{ or }x\in X_2, \  y\in X_1,\\
\end{cases}
\end{equation}
is a metric on $Z= X_1\cup X_2.$
\end{lem}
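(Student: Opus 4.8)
The plan is to verify the three metric axioms for $d_Z$, of which only the triangle inequality requires real work. Nonnegativity, the identity of indiscernibles, and symmetry are immediate from the definition together with the corresponding properties of $d_{X_1},d_{X_2}$: if $x,y$ lie in the same factor $X_i$, then $d_Z(x,y)=d_{X_i}(x,y)$ inherits these properties from the metric $d_{X_i}$; if $x$ and $y$ lie in different factors, then $d_Z(x,y)=r_0>0$ while $x\ne y$ because $X_1$ and $X_2$ are disjoint, so $d_Z(x,y)=0$ forces $x=y$. Symmetry is built into the definition, since the cross value $r_0$ does not depend on the order of the arguments. Hence it remains to prove $d_Z(x,z)\le d_Z(x,y)+d_Z(y,z)$ for all $x,y,z\in Z$.

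For the triangle inequality I would argue by a short case analysis on which factor each of the three points belongs to. Since $d_Z$ is symmetric in the two factors $X_1,X_2$ and the desired inequality is symmetric under the interchange $x\leftrightarrow z$, it suffices to treat the cases up to this symmetry. If $x,y,z$ all lie in the same $X_i$, the inequality is exactly the triangle inequality of $d_{X_i}$. If $x$ and $z$ lie in different factors, then $d_Z(x,z)=r_0$; moreover $y$ lies in the same factor as exactly one of $x,z$, so the edge joining $y$ to the point in the other factor is a cross edge equal to $r_0$, while the remaining edge is nonnegative. Thus the right-hand side is at least $r_0=d_Z(x,z)$, and the inequality holds.

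The only case that genuinely uses the hypothesis $r_0\ge\max\{\operatorname{diam}X_1,\operatorname{diam}X_2\}$ is when $x$ and $z$ lie in the same factor, say $X_1$, but $y$ lies in the other factor $X_2$. Then $d_Z(x,y)=d_Z(y,z)=r_0$, so the right-hand side equals $2r_0$, whereas the left-hand side is $d_Z(x,z)=d_{X_1}(x,z)\le\operatorname{diam}X_1\le r_0\le 2r_0$. This is the crux: the cross distance must dominate each diameter precisely so that a path forced through the other factor is never shorter than a direct in-factor distance. I expect no serious obstacle beyond keeping the bookkeeping of cases complete; the entire content of the lemma is concentrated in this bound, together with the strict positivity $r_0>0$ and disjointness, which are what secure the identity of indiscernibles.
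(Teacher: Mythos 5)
Your proof is correct, and it is exactly the argument the paper has in mind: the authors omit the proof of this lemma entirely, remarking only that it is simple, and your case analysis (triangle inequality split according to how $x,y,z$ distribute over $X_1$ and $X_2$, with the hypothesis $r_0\ge\max\{\operatorname{diam}X_1,\operatorname{diam}X_2\}$ used only when $x,z$ lie in one factor and $y$ in the other) is the standard verification that was intended. Your chain $d_{X_1}(x,z)\le\operatorname{diam}X_1\le r_0\le 2r_0$ correctly settles that crucial case, so there is nothing to add.
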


A proof is simple, so that we omit it here.

\begin{exa}\label{ex2.13}
Let $\tau$ be an infinite cardinal number and $\mathfrak{S}_{\tau}\mathfrak{U}$ be the class of ultrametric spaces of weight at most $\tau.$ If
$$
X, Y\in \mathfrak{S}_{\tau}\mathfrak{U},\ X\cap Y = \varnothing, \ x_0\in X,\ y_0\in Y,\  r_0 \in (0,\infty), \ Z=X\cup Y
$$
and $d_{Z}$ satisfies \eqref{equat4}, then the weight of $Z$ is at most $\tau.$ Hence, condition $\textrm{(ii)}$ of Proposition~\ref{cor1} holds with $\mathfrak{M}=\mathfrak{S}_{\tau}\mathfrak{U}$. Consequently, $\mathfrak{S}_{\tau}\mathfrak{U}$ admits no minimal $\mathfrak{S}_{\tau}\mathfrak{U}$-universal metric spaces. In particular, the ultrametric space $LW_{\tau}$, which was considered by A.~Lemin and V.~Lemin in \cite{LL}, is $\mathfrak{S}_{\tau}\mathfrak{U}$-universal, but not minimal $\mathfrak{S}_{\tau}\mathfrak{U}$-universal.
\end{exa}

The next example shows that the class $\mathfrak{S}$ of separable metric spaces does not admit any minimal universal spaces.

\begin{exa}\label{ex1}
Let $\overline{C}[0, 1]$ be the metric subspace of $C[0, 1]$ consisting of nowhere differentiable functions.
Since $C[0, 1]$ is complete and $\overline{C}[0, 1]$ is incomplete, we have $C[0, 1]\not\simeq\overline{C}[0, 1].$ Using the Banach-Mazur theorem, the theorem of Rodriegues-Piazza and Proposition~\ref{cor1} with $\mathfrak{M}=\mathfrak{S}$, we see that there are no minimal $\mathfrak{S}$-universal metric spaces.
\end{exa}

\begin{exa}
Let $\mathfrak{D}_3$ be the class of all at most countable metric spaces with the distance sets in $\{0, 1, 2\}$. Using condition $\textrm{(ii)}$ of Propositon~\ref{cor1} and defining $d_{Z}$ by \eqref{equat5} with $r_{0}=2$ for disjoint $X, Y\in \mathfrak{D}_3,$ we see that $\mathfrak{D}_3$ admits no minimal universal metric spaces. Consequently, the graphic metric space of the Rado graph \cite{Rado} is $\mathfrak{D}_3$-universal, but not minimal $\mathfrak{D}_3$-universal.
\end{exa}

Recall that a metric $d_X$ on $X$ is discrete if the equality $d_X(x,y)=1$ holds for all distinct $x, y \in X$.

\begin{exa}\label{exnew}
Let $\tau$ be an infinite cardinal number. Write $\mathfrak{M}\mathfrak{D}^{\tau}$ for the class of metric spaces $X$ with $|X|\le\tau$ and discrete $d_{X}$. It is clear that:
\smallskip

$\bullet$ $(X\simeq Y)\Leftrightarrow (|X|=|Y|)$ holds for all $X, Y\in\mathfrak{M}\mathfrak{D}^{\tau}$;
\smallskip

$\bullet$ A metric space $X\in\mathfrak{M}\mathfrak{D}^{\tau}$ is $\mathfrak{M}\mathfrak{D}^{\tau}$-universal if and only if $|X| = \tau$;
\smallskip

$\bullet$ There are no minimal $\mathfrak{M}\mathfrak{D}^{\tau}$-universal metric spaces.
\end{exa}


\begin{prop}\label{pr2.17}
If $Y\in\mathfrak{M}\mathfrak{D}^{\tau},$ then the implication
\begin{equation}\label{equat6}
(\mathfrak{M}\mathfrak{D}^{\tau}\hookrightarrow Y_0)\Rightarrow (Y_0\simeq Y)
\end{equation}
holds for every subspace $Y_0$ of $Y$. Moreover if $Y$ is an arbitrary metric space and \eqref{equat6} holds for every $Y_0\subseteq Y$, then $Y \in \mathfrak{M}\mathfrak{D}^{\tau}$ and $|Y|=\tau$.
\end{prop}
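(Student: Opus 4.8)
The plan is to prove the two implications separately, leaning throughout on the three facts recorded in Example~\ref{exnew}. For the first statement I would start from $Y\in\mathfrak{M}\mathfrak{D}^{\tau}$ and an arbitrary subspace $Y_0\subseteq Y$ satisfying $\mathfrak{M}\mathfrak{D}^{\tau}\hookrightarrow Y_0$. The point is that a subspace of a discrete space is again discrete, so $Y_0\in\mathfrak{M}\mathfrak{D}^{\tau}$, and the universality criterion (second bullet of Example~\ref{exnew}) forces $|Y_0|=\tau$. Since $Y_0\subseteq Y$ and $|Y|\le\tau$, this gives $\tau=|Y_0|\le|Y|\le\tau$, hence $|Y|=|Y_0|=\tau$; the first bullet of Example~\ref{exnew} then yields $Y_0\simeq Y$, which is exactly \eqref{equat6}.

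For the ``Moreover'' part I would instead manufacture a universal subspace of $Y$ and substitute it into \eqref{equat6}. Let $D_\tau$ denote the discrete metric space of cardinality $\tau$; it lies in $\mathfrak{M}\mathfrak{D}^{\tau}$, so whenever $Y$ is $\mathfrak{M}\mathfrak{D}^{\tau}$-universal there is an isometric copy $S\subseteq Y$ of $D_\tau$, that is, a set $S$ of $\tau$ points at pairwise distance $1$. Being discrete of cardinality $\tau$, the subspace $S$ is itself $\mathfrak{M}\mathfrak{D}^{\tau}$-universal, so $\mathfrak{M}\mathfrak{D}^{\tau}\hookrightarrow S$. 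Applying \eqref{equat6} with $Y_0=S$ gives $S\simeq Y$, and since $S$ is discrete with $|S|=\tau$, the isometric space $Y$ is discrete with $|Y|=\tau$; that is, $Y\in\mathfrak{M}\mathfrak{D}^{\tau}$ and $|Y|=\tau$, as required.

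The step I expect to be the main obstacle is precisely the production of the subspace $S$, since it is here that one must know $Y$ contains $\tau$ mutually unit-distant points --- equivalently, that $Y$ is $\mathfrak{M}\mathfrak{D}^{\tau}$-universal. Without this, the hypothesis can be satisfied vacuously: if $Y$ carries no such subset then no $Y_0\subseteq Y$ is universal, \eqref{equat6} holds trivially, and the conclusion $|Y|=\tau$ fails (a one-point space already shows this). Accordingly I would carry out the ``Moreover'' under the universality of $Y$ that is built into the minimality notion modelled on \eqref{0.1**}; the remainder is routine bookkeeping with the two bullet facts of Example~\ref{exnew}.
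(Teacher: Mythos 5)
The paper does not actually contain a proof of this proposition --- the line following the statement reads ``A simple proof is missed here'' --- so there is no printed argument to measure yours against; the only question is whether your proof is sound, and it is. The first half is exactly the routine bookkeeping one expects: a subspace $Y_0$ of a discrete space of cardinality at most $\tau$ again lies in $\mathfrak{M}\mathfrak{D}^{\tau}$, universality of $Y_0$ forces $|Y_0|=\tau$ by the second bullet of Example~\ref{exnew}, and then $\tau=|Y_0|\le |Y|\le\tau$ together with the first bullet gives $Y_0\simeq Y$.

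Your treatment of the ``Moreover'' part is where the real content lies, and your diagnosis is correct: as literally stated, that claim is false. If $Y$ is a one-point space, then no subspace $Y_0\subseteq Y$ admits an isometric copy of the two-point discrete space, so the antecedent of \eqref{equat6} is never satisfied, \eqref{equat6} holds vacuously for every $Y_0\subseteq Y$, and yet $|Y|=1\neq\tau$. The missing hypothesis is that $Y$ be $\mathfrak{M}\mathfrak{D}^{\tau}$-universal, which is precisely what the authors must have intended: the introduction cites this proposition as exhibiting a class with a unique (up to isometry) minimal universal space in the sense of \eqref{0.1*} or \eqref{0.1**}, and both of those notions are, by the framing of Definition~\ref{muniv}, refinements of ``$Y$ is $\mathfrak{M}$-universal''. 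Under that hypothesis your argument closes correctly: $Y$ contains an isometric copy $S$ of the discrete space of cardinality $\tau$, such an $S$ is itself $\mathfrak{M}\mathfrak{D}^{\tau}$-universal, so \eqref{equat6} applied to $Y_0=S$ yields $Y\simeq S$, hence $Y$ is discrete of cardinality exactly $\tau$. So your proposal is both a correct proof of the (repaired) statement and, in effect, an erratum for the statement as printed.
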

A simple proof is missed here.




\begin{rem}\label{r2.20}
As in Example \ref{ex2.13} we can show that the class $\mathfrak{S}_\tau$ with $\tau>\omega$ does not admit any minimal universal metric spaces. Thus the Katetov space is not minimal $\mathfrak{S}_\tau$--universal.
\end{rem}

We now turn to some conditions under which the metric spaces are not shifted. The following lemma seems to be known up to the terminology.

\begin{lem}\label{compsh}
Every compact metric space is not shifted.
\end{lem}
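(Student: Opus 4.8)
The plan is to use the characterization recorded immediately after Definition~\ref{shemb}: a non-empty metric space $Y$ is not shifted precisely when every self-embedding $f:Y\hookrightarrow Y$ is surjective (equivalently, an isometry). The empty space is not shifted by Example~\ref{ex1*}, so I may assume $Y\ne\varnothing$. Fixing a compact $Y$ and an arbitrary $f:Y\hookrightarrow Y$, the whole task then reduces to proving $f(Y)=Y$.

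First I would record two easy preliminary facts. Since $f$ preserves distances it is $1$-Lipschitz, hence continuous; therefore $f(Y)$ is a continuous image of a compact space, so it is itself compact and in particular closed in $Y$. Consequently it suffices to show that every $a\in Y$ lies in the closure of $f(Y)$, i.e. that for each $\varepsilon>0$ there is a point of $f(Y)$ within distance $\varepsilon$ of $a$.

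The heart of the argument is a recurrence property of the forward orbit. Fix $a\in Y$ and consider the iterates $a_n=f^n(a)$, $n\ge 0$. By compactness the sequence $(a_n)$ has a convergent, hence Cauchy, subsequence, so for every $\varepsilon>0$ one can choose indices $n<m$ with $d_Y(a_n,a_m)<\varepsilon$. Because $f$ is distance-preserving, $d_Y(a_n,a_m)=d_Y(a_0,a_{m-n})=d_Y(a,f^{m-n}(a))$, and $k:=m-n\ge 1$, so the point $f^{k}(a)=f\bigl(f^{k-1}(a)\bigr)\in f(Y)$ lies within $\varepsilon$ of $a$. As $\varepsilon>0$ was arbitrary, $a\in\overline{f(Y)}=f(Y)$; since $a$ was arbitrary, $f$ is onto, and hence $Y$ is not shifted.

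The only genuine obstacle is the middle step: extracting from compactness a near-return $d_Y(a,f^{k}(a))<\varepsilon$ of the orbit, and then exploiting the isometry to transfer this near-return from the pair $(a_n,a_m)$ back to the pair $(a,a_{m-n})$. Everything else is routine. One should check that the argument applies uniformly to every $a\in Y$, which it does, since the Cauchy subsequence is extracted separately for each fixed $a$; and it is worth emphasizing that no hypothesis beyond compactness is used.
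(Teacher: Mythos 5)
Your proof is correct and is essentially the paper's argument arranged contrapositively: both consider the forward orbit of a point, use the isometry identity $d_Y(f^n(a),f^m(a))=d_Y(a,f^{m-n}(a))$ to shift indices back to the base point, and invoke compactness via convergent subsequences together with compactness (hence closedness) of the image. The paper supposes a point $p\notin f(X)$ and shows its orbit is $\varepsilon_0$-separated, contradicting compactness, whereas you run the same computation directly to produce near-returns $d_Y(a,f^k(a))<\varepsilon$ and conclude $a\in\overline{f(Y)}=f(Y)$; the mathematical content is identical.
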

\begin{proof}
Let $X$ be a compact metric space and let $f: X\hookrightarrow X$ be an isometric self embedding. Suppose that a point $p$ belongs to $X\setminus
f(X).$ Define a sequence $(p_i)_{i \in\mathbb N}$ in $X$ as
\begin{equation}\label{point}
p_1 = p, \, p_2=f(p_1), \, p_{3}=f(p_2), ...
\end{equation}
and so on.
Since $f$ is continuous, $f(X)$ is a compact subset of $X$. Hence the number
$$
\varepsilon_{0}:=\inf_{y\in f(X)}d_{X}(p_1, y)
$$
is strictly positive, $\varepsilon_{0}>0.$ It follows from \eqref{point}, that $p_{i}\in f(X)$ for every $i\ge 2.$ Consequently, the inequality $d_{X}(p_1, p_i)\ge\varepsilon_{0}$ holds for every $i\ge 2$. The function $f$ preserves the distances. Hence, for $i> j,$ we have
$$
d_{X}(p_j, p_i)=d_{X}(f(p_{j-1}), f(p_{i-1}))$$ $$=d_{X}(p_{j-1}, p_{i-1})=... =d_{X}(p_1,
p_{i-(j-1)})\ge\varepsilon_{0}.
$$
Thus, $d_{X}(p_{i}, p_{j})\ge\varepsilon_{0}>0$ if $i\ne j.$ In particular, the sequence $(p_i)_{i\in\mathbb N}$ has not any convergent subsequence, contrary to the compactness of $X$. This contradiction implies $f(X)=X.$ By Definition~\ref{shemb}, $X$ is not shifted.
\end{proof}

Lemma~\ref{compsh} and Theorem~\ref{mainth1} give us the following result (cf. Proposition \ref{pr2.17}).
\begin{thm}\label{mainth2}
Let $\mathfrak{Y}$ be a class of compact metric spaces. Then the
following conditions are equivalent.
\begin{enumerate}
\item[\rm(i)]\textit{There is a minimal $\mathfrak{Y}$-universal metric space belonging to $\mathfrak{Y}$.}

\item[\rm(ii)]\textit{There is a $\mathfrak{Y}$-universal metric space belonging to $\mathfrak{Y}$.}
\end{enumerate}
\end{thm}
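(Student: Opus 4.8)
The plan is to deduce the equivalence directly from Lemma~\ref{compsh} and Theorem~\ref{mainth1}, since almost all of the substance is already contained in those two results. The statement is really a corollary of them, and the only genuine content of the proof is verifying that their hypotheses are met.

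The implication $\textrm{(i)}\Rightarrow\textrm{(ii)}$ requires no work: by Definition~\ref{muniv} a minimal $\mathfrak{Y}$-universal space is in particular $\mathfrak{Y}$-universal, so a minimal $\mathfrak{Y}$-universal space belonging to $\mathfrak{Y}$ is at once a $\mathfrak{Y}$-universal space belonging to $\mathfrak{Y}$.

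For $\textrm{(ii)}\Rightarrow\textrm{(i)}$ I would start from a $\mathfrak{Y}$-universal metric space $Y$ with $Y\in\mathfrak{Y}$. Since every member of $\mathfrak{Y}$ is compact, $Y$ is a compact metric space, and Lemma~\ref{compsh} shows that $Y$ is not shifted. Now the hypotheses of Theorem~\ref{mainth1} are satisfied for this class $\mathfrak{Y}$ and this space $Y$: indeed $Y\in\mathfrak{Y}$, and $\mathfrak{Y}\hookrightarrow Y$ because $Y$ is $\mathfrak{Y}$-universal. Applying the implication $\textrm{(i)}\Rightarrow\textrm{(ii)}$ of Theorem~\ref{mainth1}, with ``$Y$ is not shifted'' as the premise, we conclude that $Y$ is minimal $\mathfrak{Y}$-universal. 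As $Y\in\mathfrak{Y}$, this exhibits a minimal $\mathfrak{Y}$-universal metric space belonging to $\mathfrak{Y}$, which is precisely condition $\textrm{(i)}$.

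There is no real obstacle here; the one point that must not be overlooked is that the compactness assumption is imposed on the whole class $\mathfrak{Y}$, and it is exactly this that guarantees the particular universal space $Y$ to be compact, hence not shifted, so that Theorem~\ref{mainth1} may be invoked. If one preferred to route the argument through condition $\textrm{(iv)}$ of Theorem~\ref{mainth1} instead, an alternative phrasing of $\textrm{(ii)}\Rightarrow\textrm{(i)}$ is available, but the passage through ``not shifted'' is the most economical.
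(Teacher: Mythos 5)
Your proposal is correct and follows exactly the paper's route: the paper derives Theorem~\ref{mainth2} precisely by combining Lemma~\ref{compsh} (compact spaces are not shifted) with Theorem~\ref{mainth1}, which is what you do, merely spelling out the details the paper leaves implicit. Nothing is missing; the verification that $Y\in\mathfrak{Y}$ and $\mathfrak{Y}\hookrightarrow Y$ hold, and that compactness of $Y$ supplies the ``not shifted'' premise, is the whole content of the argument.
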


In Proposition \ref{Prop6.16} of the paper we construct a family $\mathfrak{M}$ of metric spaces such that:

\smallskip

\noindent $\bullet$ All $X\in\mathfrak{M}$ are compact;

\smallskip

\noindent $\bullet$ There is a compact  $\mathfrak{M}$-universal metric space;

\smallskip

\noindent $\bullet$ There is at least one minimal $\mathfrak{M}$-universal metric spaces;

\smallskip

\noindent $\bullet$ If $X$ and $Y$ are minimal $\mathfrak{M}$-universal, then $X$ and $Y$ are not compact and $X\simeq Y$.

\smallskip

Recall that a metric space $X$ is boundedly compact if all closed bounded subspaces of $X$ are compact. We can prove an analogue of Lemma~\ref{compsh} for some boundedly compact metric spaces.

\begin{defin}\label{d2.20}
A metric space $X$ is homogeneous if, for every pair $x, y\in X,$ there is $f: X\hookrightarrow X$ such that $f(x)=y.$
\end{defin}

\begin{rem}\label{r2.20*}
Definition~\ref{d2.20} is slightly non--standard. Usually we say that a metric space $X$ is homogenous if its isometry group $\operatorname{Iso}(X)$ acts transitively on points. The following lemma shows, in particular, that the standard definition and Definition~\ref{d2.20} are equivalent for boundedly compact $X$.
\end{rem}

\begin{lem}\label{homsh}
All boundedly compact homogeneous metric spaces are not shifted.
\end{lem}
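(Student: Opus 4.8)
The plan is to mimic the proof of Lemma~\ref{compsh}, using homogeneity to overcome its one genuine limitation. Since the empty space is not shifted (Example~\ref{ex1*}), I may assume $X\neq\varnothing$, and then by the characterization of non--shiftedness stated right after Definition~\ref{shemb} it suffices to prove that every isometric self--embedding $f\colon X\hookrightarrow X$ is surjective. Arguing by contradiction, I would fix a point $p\in X\setminus f(X)$. \emph{The main obstacle} is that the orbit $p, f(p), f^{2}(p),\dots$ used in Lemma~\ref{compsh} may run off to infinity, so the compactness packing argument no longer applies verbatim: bounded compactness only controls bounded regions. Homogeneity is precisely the tool that relocates the ``missing point'' into a single compact ball.

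Concretely, I would put $q:=f(p)$ and $R:=d_{X}(p,q)$. Because $q=f(p)\in f(X)$ while $p\notin f(X)$, we have $q\neq p$, so $R>0$. Invoking Definition~\ref{d2.20} for the pair $(q,p)$, I choose an embedding $s\colon X\hookrightarrow X$ with $s(q)=p$ and set $\phi:=s\circ f$, which is again an isometric self--embedding and satisfies $\phi(p)=s(f(p))=s(q)=p$. Thus $\phi$ fixes $p$, and since it preserves distances it maps the closed ball $\overline{B}(p,R)=\{x\in X:d_{X}(x,p)\le R\}$ into itself. As $X$ is boundedly compact this ball is compact, so Lemma~\ref{compsh} applies to the restriction $\phi|_{\overline{B}(p,R)}$ and forces it to be onto; hence $\overline{B}(p,R)\subseteq\phi(X)$.

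To finish I would exhibit a point of $\overline{B}(p,R)$ that $\phi$ misses, the natural candidate being $s(p)$. On one hand $d_{X}(s(p),p)=d_{X}(s(p),s(q))=d_{X}(p,q)=R$, so $s(p)\in\overline{B}(p,R)$; on the other hand $s(p)\in\phi(X)=s(f(X))$ would give, by injectivity of $s$, that $p\in f(X)$, which is false. Combining this with $\overline{B}(p,R)\subseteq\phi(X)$ yields the contradiction, so $f$ must be surjective and $X$ is not shifted. I expect the only point needing care to be the verification that $\phi$ preserves the compact ball, which is exactly where the fixed point $\phi(p)=p$ produced by homogeneity is used; the remaining steps are routine manipulations with isometric embeddings. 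Note that, unlike in Lemma~\ref{compsh}, neither completeness of $X$ nor closedness of $f(X)$ is needed in this argument.
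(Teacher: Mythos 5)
Your proof is correct and follows essentially the same route as the paper: both compose $f$ with a homogeneity embedding (your $s$, the paper's $g$) sending $f(p)$ back to $p$, so that the composite $s\circ f$ fixes $p$, and then invoke Lemma~\ref{compsh} on closed balls centered at $p$, which are compact by bounded compactness. The only difference is in the finish: the paper argues directly, showing $g(f(B_{r}(p_0)))=B_{r}(p_0)$ for every radius $r$ and then unwinding the injectivity of $g$ to get $f(X)=X$, whereas you argue by contradiction using the single ball of radius $R=d_{X}(p,f(p))$ together with the explicit witness point $s(p)$.
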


\begin{proof}
Let $X$ be a boundedly compact homogeneous metric space. We must show that $f(X)=X$ for every $f: X\hookrightarrow X.$ Let $p_{0}\in X$. Write $p_{1}=f(p_0).$ Since $X$ is homogeneous there is  $g: X\hookrightarrow X$ such that $g(p_1)=p_0.$ Since $g$ is an injective function, the equality $f(X)=X$ holds if
\begin{equation*}\label{eqvv}
g(f(X))=X.
\end{equation*}
It is easy to prove that the last equality holds if
\begin{equation}\label{eqbal}
g(f(B_{r}(p_0)))=B_{r}(p_0)
\end{equation}
for every ball $B_{r}(p_0)=\{x: d_{X}(p_0, x)\le r\}.$ The function $g\circ f$ is an isometric self embedding of $X.$ Since $p_0$ is a fixed point of $g\circ f,$ the restriction $g\circ f |_{B_{r}(p_0)}$ is an isometric self embedding of the compact metric space $B_{r}(p_0).$ Now \eqref{eqbal} follows from Lemma~\ref{compsh}.
\end{proof}

Just as in Lemma \ref{homsh} we can prove the following proposition.

\begin{prop}\label{2.24*}
Let $X$ be a boundedly compact metric space. If for every $f: X\hookrightarrow X$ there is a fixed point $x_0 \in X$, $f(x_0)=x_0$, then $X$ is not shifted.
\end{prop}

\begin{rem}\label{rem2.24}
If $X$ is a finite-dimensional normed linear space over the field $\mathbb R$ or the field $\mathbb C$, then $X$ is not shifted because such spaces are homogeneous and boundedly compact. This fact is well known. In particular in the high school geometry, the isometries of the plane $\mathbb R^2$ usually defined as mappings $f\colon \mathbb R^2 \to \mathbb R^2$ which preserve distances. As was noted by Serge Lang in~\cite[p. 31]{SL}: "It is not immediately clear that an isometry has an inverse". Next in the book~\cite{SL} it is indicated that the existence of the inverses follows from the representation of any isometry by a composition of reflections through some straight lines in $\mathbb R^2$. Thus Lemma \ref{homsh} can be considered as a generalization of this elementary fact.
\end{rem}



The hyperbolic plane $\mathbb H$ gives an important example of boundedly compact homogenous metric space. The key point here that in the upper half-plane model $\mathbb H$, the isometries of $\mathbb H$  can be identified with elements of the subgroup  $\operatorname{M\ddot{o}b}(\mathbb H)$ of the general M\"{o}bius group and $\operatorname{M\ddot{o}b}(\mathbb H)$ acts transitively on $\mathbb H$ (see~\cite{JWA} for details).

Using Lemma~\ref{homsh}, Theorem~\ref{mainth1}, Corollary~\ref{cor2} and Proposition~\ref{prisom} we obtain the following.
\begin{thm}\label{mainth3*}
Let $\mathfrak{M}$ be a class of metric spaces. If $X\in\mathfrak{M}$ is boundedly compact, homogeneous and $\mathfrak{M}$-universal, then $X$ is minimal $\mathfrak{M}$-universal and $Y\simeq X$ holds for all minimal $\mathfrak{M}$-universal $Y$ and all $\mathfrak{M}$-universal $Y\in\mathfrak{M}.$
\end{thm}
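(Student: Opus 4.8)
The plan is to assemble the conclusion from the four cited results, which dovetail exactly. First I would observe that since $X$ is boundedly compact and homogeneous, Lemma~\ref{homsh} immediately gives that $X$ is not shifted. This is the only place where the geometric hypotheses on $X$ enter; everything that follows is purely formal.

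Next I would apply Theorem~\ref{mainth1} with $\mathfrak{Y}=\mathfrak{M}$. The hypotheses of that theorem are satisfied: we have $X\in\mathfrak{M}$, and $\mathfrak{M}\hookrightarrow X$ because $X$ is $\mathfrak{M}$-universal. Since $X$ is not shifted, condition (i) of Theorem~\ref{mainth1} holds, so by the equivalence (i)$\Leftrightarrow$(ii) the space $X$ is minimal $\mathfrak{M}$-universal. This settles the first assertion of the theorem.

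For the uniqueness part I would split according to the type of the space $Y$. If $Y$ is minimal $\mathfrak{M}$-universal, then, since $X\in\mathfrak{M}$ is itself $\mathfrak{M}$-universal, Corollary~\ref{cor2} applies and shows that any two minimal $\mathfrak{M}$-universal spaces are isometric; in particular $Y\simeq X$. If instead $Y\in\mathfrak{M}$ is $\mathfrak{M}$-universal, I would invoke Proposition~\ref{prisom}, taking its minimal $\mathfrak{M}$-universal space to be our $X$ (already established above) and its arbitrary member of the class to be $Y$. Indeed both $\mathfrak{M}\hookrightarrow X$ and $\mathfrak{M}\hookrightarrow Y$ hold, $X$ is minimal $\mathfrak{M}$-universal, and $Y\in\mathfrak{M}$, so Proposition~\ref{prisom} yields $Y\simeq X$.

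There is essentially no obstacle beyond bookkeeping, since all the substance was prepared in the cited statements. The only point demanding care is matching the symbols correctly when applying Proposition~\ref{prisom}: its ``$X$'' is an arbitrary member of the class and its ``$Y$'' is the minimal universal space, which is the reverse of the labelling used in the present theorem.
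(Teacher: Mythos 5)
Your proof is correct and takes exactly the route the paper intends: the paper derives this theorem by combining Lemma~\ref{homsh}, Theorem~\ref{mainth1}, Corollary~\ref{cor2} and Proposition~\ref{prisom}, which is precisely your assembly, and your application of each (including the role-swap of $X$ and $Y$ when invoking Proposition~\ref{prisom}) is accurate.
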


\begin{prop}\label{pr3}
Let $\mathfrak{M}$ be a class of metric spaces and let $X$ be a homogeneous metric space. If $X$ is minimal $\mathfrak{M}$-universal, then there is $Y\in\mathfrak{M}$ such that $X\simeq Y$.
\end{prop}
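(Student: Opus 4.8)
The plan is to produce a single space $Y\in\mathfrak{M}$ together with an isometric embedding $g\colon Y\hookrightarrow X$ whose image is all of $X$; since such a $g$ is then a surjective isometric embedding, i.e.\ an isometry, this gives $X\simeq Y$ as required. We may assume $X\neq\varnothing$ (the case $X=\varnothing$ forces $\mathfrak{M}\subseteq\{\varnothing\}$ and is immediate). First I would record two structural facts. Because $X$ is minimal $\mathfrak{M}$-universal, condition (iii) of Proposition~\ref{vspompr} holds, so $X$ is not shifted; hence every self embedding $X\hookrightarrow X$ is an isometry, in particular surjective. Second, fixing any $x_0\in X$, minimality (Definition~\ref{muniv}) tells us that the proper subspace $X\setminus\{x_0\}$ is not $\mathfrak{M}$-universal, so there is some $Y\in\mathfrak{M}$ with $Y\not\hookrightarrow X\setminus\{x_0\}$. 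The decisive reformulation of this last fact is: since $X$ is $\mathfrak{M}$-universal we still have $Y\hookrightarrow X$, and every isometric embedding $g\colon Y\hookrightarrow X$ must satisfy $x_0\in g(Y)$, for otherwise $g$ would corestrict to an embedding of $Y$ into $X\setminus\{x_0\}$.

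Fix one embedding $g\colon Y\hookrightarrow X$; it exists and has $x_0\in g(Y)$. The heart of the argument is to show $g(Y)=X$. I would argue by contradiction: suppose there is a point $x_1\in X\setminus g(Y)$. Using homogeneity (Definition~\ref{d2.20}) applied to the pair $(x_1,x_0)$, there is a self embedding $\varphi\colon X\hookrightarrow X$ with $\varphi(x_1)=x_0$, and by the first structural fact $\varphi$ is in fact a bijective isometry of $X$. Then $\varphi\circ g\colon Y\hookrightarrow X$ is again an isometric embedding of $Y$, so by the covering property its image $\varphi(g(Y))$ must contain $x_0$. On the other hand $x_1\notin g(Y)$ together with the injectivity of $\varphi$ gives $x_0=\varphi(x_1)\notin\varphi(g(Y))$, a contradiction. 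Hence no such $x_1$ exists and $g(Y)=X$.

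It then remains only to assemble the pieces: $g\colon Y\hookrightarrow X$ is an isometric embedding with $g(Y)=X$, hence an isometry, and therefore $X\simeq Y$ with $Y\in\mathfrak{M}$, which is the assertion.

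I expect the main obstacle to be locating the right reformulation and the right use of homogeneity in the middle step: the naive attempt only shows that the union of \emph{all} embedded images of $Y$ covers $X$, not that one image does. The trick that makes it work is to transport a hypothetical uncovered point $x_1$ back to the forced point $x_0$ by an isometry $\varphi$ and observe that $\varphi\circ g$ would then be an embedding of $Y$ that misses $x_0$, contradicting the covering property; this is precisely where both homogeneity and the not-shifted property (the latter needed to guarantee that $\varphi$ is bijective, so that missing $x_1$ is transported to missing $x_0$) are essential.
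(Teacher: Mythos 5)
Your proof is correct, and its engine is the same as the paper's: homogeneity is used to transport a point missed by an isometric embedding of some $Y\in\mathfrak{M}$ onto a prescribed point of $X$. The organization, however, is genuinely different. The paper argues by contraposition: supposing $X\not\simeq Y$ for \emph{every} $Y\in\mathfrak{M}$, each $Y$ admits an embedding $f\colon Y\hookrightarrow X$ whose image omits some point $x_f$ (else $f$ would be a surjective embedding, hence an isometry), and a homogeneity map $g$ with $g(x_f)=a$ funnels all these images into the single subspace $X\setminus\{a\}$; thus $\mathfrak{M}\hookrightarrow X\setminus\{a\}$, contradicting minimality. You argue directly: minimality at one point $x_0$ yields a single witness $Y\in\mathfrak{M}$ with $Y\not\hookrightarrow X\setminus\{x_0\}$, so every embedding of $Y$ into $X$ covers $x_0$, and homogeneity then forces any one such embedding to be onto. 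Your variant gives slightly more information---it exhibits the isometric copy concretely (any witness $Y$ attached to the point $x_0$) and shows that every isometric embedding of that $Y$ into $X$ is surjective---at the cost of an extra preparatory step, while the paper's contrapositive is shorter and needs only universality plus homogeneity. One correction to your closing commentary: the not-shifted property (your appeal to Proposition~\ref{vspompr}) is not needed, let alone essential. To pass from $x_1\notin g(Y)$ to $x_0=\varphi(x_1)\notin\varphi(g(Y))$ you only need $\varphi$ to be injective, and every isometric embedding is automatically injective; accordingly, the paper's proof never invokes not-shiftedness at all. A last, pedantic point: in your disposal of $X=\varnothing$, the subcase $\mathfrak{M}=\varnothing$ actually falsifies the conclusion (the empty space is then vacuously minimal $\mathfrak{M}$-universal, yet no $Y\in\mathfrak{M}$ exists); this degenerate case equally afflicts the paper's formulation, so tacitly assuming $\mathfrak{M}$ contains at least one space is harmless, but ``immediate'' oversells it.
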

\begin{proof}
Suppose $X$ is $\mathfrak{M}$-universal and $X\not\simeq Y$ holds for every $Y\in\mathfrak{M}.$ We must show that $X$ is not minimal $\mathfrak M$-universal. By the supposition, for every $Y\in\mathfrak{M},$ there are $f: Y\hookrightarrow X$ and $x_{f}\in X$ such that $$f(Y)\subseteq X\setminus\{x_f\}.$$
Let $a$ be a point of $X.$ Since $X$ is homogeneous, there is $g: X \hookrightarrow X$ such that $g(x_f)=a.$ The function
$$
\begin{array}{cccc}
Y & \xrightarrow{\ \ \mbox{\emph{f}}\ \ } &
X\xrightarrow {\ \ \mbox{\emph{g}}\ \ } & X
\end{array}
$$
is an isometric embedding of $Y$ in $X$ for which
$$
g(f(X))\subseteq X\setminus\{a\}.
$$
Consequently, we have $\mathfrak{M}\hookrightarrow X\setminus\{a\}$. Thus $X$ is not minimal $\mathfrak{M}$-universal, that is a contradiction.
\end{proof}

\begin{cor}
Let $\mathfrak{M}$ be a class of metric spaces and let $X$ be a homogeneous metric space.
If $X$ is minimal $\mathfrak{M}$-universal, then $X\simeq Y$ holds for every minimal $\mathfrak{M}$-universal metric space $Y.$
\end{cor}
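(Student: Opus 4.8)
The plan is to reduce this corollary to the already-established Corollary~\ref{cor2} by first exhibiting a $\mathfrak{M}$-universal space that actually belongs to the class $\mathfrak{M}$. Once such a space is in hand, Corollary~\ref{cor2} delivers the isometry of any two minimal $\mathfrak{M}$-universal spaces directly.

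The first step is to invoke Proposition~\ref{pr3}. Since $X$ is homogeneous and minimal $\mathfrak{M}$-universal, Proposition~\ref{pr3} produces a space $Y_{0}\in\mathfrak{M}$ with $X\simeq Y_{0}$. Now $X$ being minimal $\mathfrak{M}$-universal means in particular that $\mathfrak{M}\hookrightarrow X$. The property of being $\mathfrak{M}$-universal is invariant under isometry: for each $A\in\mathfrak{M}$, composing an embedding $A\hookrightarrow X$ with an isometry $X\to Y_{0}$ gives $A\hookrightarrow Y_{0}$, so $\mathfrak{M}\hookrightarrow Y_{0}$. Thus $Y_{0}$ is a $\mathfrak{M}$-universal metric space that lies in $\mathfrak{M}$.

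The second step is to apply Corollary~\ref{cor2} with $\mathfrak{Y}=\mathfrak{M}$. The existence of the $\mathfrak{M}$-universal member $Y_{0}\in\mathfrak{M}$ is precisely the hypothesis of that corollary, whose conclusion is that every two minimal $\mathfrak{M}$-universal metric spaces are isometric. Applying this to the minimal $\mathfrak{M}$-universal spaces $X$ and $Y$ yields $X\simeq Y$, which is the assertion. (Alternatively, one could finish via Proposition~\ref{prisom} applied to the pair $Y_{0}\in\mathfrak{M}$ and $Y$, using that $Y_{0}\simeq X$ is itself minimal $\mathfrak{M}$-universal; this gives $Y_{0}\simeq Y$ and hence $X\simeq Y$.)

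I do not anticipate any real obstacle: the corollary is essentially the composition of Proposition~\ref{pr3} and Corollary~\ref{cor2}. The only point requiring a word of care is the isometry-invariance of $\mathfrak{M}$-universality used in the first step, but this is immediate from the definition of an isometric embedding.
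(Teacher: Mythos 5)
Your proof is correct and follows essentially the route the paper intends: the corollary is stated immediately after Proposition~\ref{pr3} without a separate proof, and the expected derivation is exactly your composition of Proposition~\ref{pr3} (to produce a $\mathfrak{M}$-universal $Y_{0}\in\mathfrak{M}$ isometric to $X$) with Corollary~\ref{cor2} (or, equivalently, Proposition~\ref{prisom}) to conclude $X\simeq Y$. The isometry-invariance of universality that you flag is indeed the only point needing mention, and you justify it correctly.
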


The so--called strongly rigid metric spaces give us another subclass of not shifted metric spaces.

\begin{defin}(\cite{LJ})\label{sr}
A metric space $X$ is said to be strongly rigid if, for all $x, y, u, v\in X,$
$$d_{X}(x, y)=d_{X}(u, v) \quad\mbox{and}\quad x\ne y$$
imply that $\{x, y\}=\{u, v\}.$
\end{defin}

\begin{lem}\label{l:sr}
Every strongly rigid metric space is not shifted.
\end{lem}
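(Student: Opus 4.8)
The plan is to use the characterization recorded right after Definition~\ref{shemb}: a non-empty metric space is not shifted precisely when every isometric self embedding of it is a surjection. The empty space is not shifted by Example~\ref{ex1*}, so I may assume $X\neq\varnothing$. Thus, given a strongly rigid space $X$ and an arbitrary isometric self embedding $f\colon X\hookrightarrow X$, it suffices to prove that $f$ is onto.

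The key observation is that strong rigidity forces $f$ to permute each pair of distinct points within itself. Indeed, fix distinct $x,y\in X$. Since $f$ preserves distances, $d_{X}(f(x),f(y))=d_{X}(x,y)$, and since $f$ is injective, $f(x)\neq f(y)$. Applying the defining property from Definition~\ref{sr} to the equal distances $d_{X}(f(x),f(y))=d_{X}(x,y)$ together with $f(x)\neq f(y)$, I obtain $\{f(x),f(y)\}=\{x,y\}$.

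From this the surjectivity of $f$ follows immediately. Let $x\in X$ be arbitrary. If $|X|=1$, then $f$ is the identity and $x\in f(X)$ trivially. Otherwise I choose some $y\in X$ with $y\neq x$; then $\{f(x),f(y)\}=\{x,y\}$, so $x$ equals either $f(x)$ or $f(y)$, and in either case $x\in f(X)$. Hence $f(X)=X$, so $f$ is an isometry, and by the characterization above $X$ is not shifted.

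There is no serious obstacle here: the entire argument rests on the single step of applying strong rigidity to the pair $\{f(x),f(y)\}$ against the pair $\{x,y\}$, which is available directly from Definition~\ref{sr}, and everything else is bookkeeping. The only minor care needed is the degenerate case $|X|\le 1$, which is handled separately above (and for $X=\varnothing$ is in any event covered by Example~\ref{ex1*}).
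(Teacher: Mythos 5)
Your proof is correct and follows essentially the same route as the paper's: apply strong rigidity to the equal distances $d_{X}(x,y)=d_{X}(f(x),f(y))$ to conclude $\{f(x),f(y)\}=\{x,y\}\subseteq f(X)$, hence $f$ is surjective and thus an isometry. Your explicit handling of the degenerate cases $|X|\le 1$ is a minor refinement the paper glosses over, but it does not change the argument.
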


\begin{proof}
For every $f: X\hookrightarrow X$ the equality
$$
d_{X}(x, y)=d_{X}(f(x), f(y))
$$
holds for all $x, y\in X$. Let $X$ be a strongly rigid metric space. From Definition~\ref{sr} it follows that
$$
\{x\}\subseteq\{x, y\}=\{f(x), f(y)\}\subseteq f(X)
$$
hold for all distinct $x, y\in X.$ It follows that $x\in f(X)$ holds for every $x\in X$ and every $f: X\hookrightarrow X$. Hence, every self embedding $f: X\hookrightarrow X$ is a surjection and, therefore, is an isometry.
\end{proof}

\begin{defin}\label{d:esr}
A metrizable space $X$ is said to be eventually strongly rigid if there is
a compatible with the topology of $X$ metric $d_X$ such that $(X, d_{X})$ is strongly rigid. (See \cite{LJ}, \cite{HWM}).
\end{defin}

\begin{lem}(\cite{HWM})\label{l:esr}
Let $X$ be a non--empty metrizable space with covering dimension zero, $\emph{dim}(X)=0.$ If $|X|\le \mathfrak{c},$ where $\mathfrak{c}$ is the cardinality of continuum, then $X$ is eventually strongly rigid.
\end{lem}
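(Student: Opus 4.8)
The plan is to use the zero-dimensionality of $X$ to equip it with a well-behaved compatible ultrametric, and then to perturb that ultrametric into a \emph{strongly rigid} compatible metric, the cardinality bound $|X|\le\mathfrak c$ serving only to guarantee that there is enough room in $\mathbb R$ to force all distances between distinct pairs to be different. By Definition~\ref{d:esr} it suffices to produce one compatible metric $\rho$ on $X$ for which $(X,\rho)$ is strongly rigid in the sense of Definition~\ref{sr}.

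First I would reduce to the ultrametric case. Since $X$ is metrizable with $\dim X=0$, it is strongly zero-dimensional and hence ultrametrizable: there is a compatible ultrametric $d_0$ on $X$, which we may take to be bounded by $1$ and to arise from a sequence $(\mathcal P_n)$ of refining clopen partitions of mesh tending to $0$, with $d_0(x,y)$ read off from the first level at which $x$ and $y$ lie in distinct members. Observe that $d_0$ \emph{itself} can never be strongly rigid once $|X|\ge 3$, because in any ultrametric triple two of the three distances coincide, so distinct pairs sharing a vertex realise equal distances. Thus the whole problem is one of \emph{breaking ties}, and a genuine perturbation is unavoidable.

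The second step is an observation that makes both the triangle inequality and the topological compatibility automatic, isolating the real difficulty. If $\sigma$ is any continuous pseudometric on $(X,d_0)$, then $\rho:=d_0+\sigma$ is again a metric (a metric plus a pseudometric), and since $\rho\ge d_0$ while $\rho$ is $d_0$-continuous, $\rho$ induces exactly the topology of $X$. Natural candidates have the form
\[
\sigma(x,y)=\sum_{k} c_k\,|f_k(x)-f_k(y)|,
\]
where the $f_k$ are bounded continuous functions and the weights $c_k>0$ are chosen so that the series converges uniformly; the individual summands are continuous pseudometrics, so any such $\rho$ is automatically a compatible metric. Consequently, strong rigidity of $(X,\rho)$ reduces to the \emph{single} requirement that $\rho$ be injective on unordered pairs of distinct points.

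Third, I would build such a separating perturbation by transfinite recursion of length at most $\mathfrak c$: this is where $|X|\le\mathfrak c$ enters, since the set of potential coincidences $\rho(\{x,y\})=\rho(\{u,v\})$ has cardinality at most $\mathfrak c$. At each stage one destroys a surviving coincidence by adding a small bump $|c\,\mathbf 1_U(x)-c\,\mathbf 1_U(y)|$ carried by a clopen set $U$ (available because $\dim X=0$) chosen to straddle one offending pair but not the other. Using that a zero-dimensional metrizable space has a $\sigma$-discrete clopen base, the bumps can be kept subordinate to a locally finite clopen family, so that at each pair only countably many are nonzero and the accumulated perturbation is a well-defined continuous pseudometric despite the recursion being uncountable. \textbf{The main obstacle} is the bookkeeping of this recursion: each bump alters \emph{all} distances straddling $U$ and can therefore resurrect a coincidence removed at an earlier stage (the injury problem), while the weights must simultaneously decrease fast enough that no later stage recreates an already-separated gap and that injectivity genuinely holds in the limit. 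Showing that at every stage a clopen set and a weight can be chosen generically with respect to the constraints active so far, and that the length-$\mathfrak c$ recursion closes up with all pairs separated, is the technical heart of the argument and the reason the statement is attributed to~\cite{HWM}.
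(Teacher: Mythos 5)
The paper contains no proof of this lemma to compare against: it is imported verbatim from Martin's paper \cite{HWM}, with Janos's separable predecessor in \cite{LJ}. Judged on its own terms, your proposal is a program rather than a proof. Your first two steps are correct: a metrizable space with $\dim X=0$ is strongly zero-dimensional and hence admits a compatible ultrametric $d_0$ (de Groot), an ultrametric is never strongly rigid once $|X|\ge 3$ by the isosceles property, and $\rho=d_0+\sigma$ with $\sigma$ a $d_0$-continuous pseudometric is automatically a compatible metric, so everything reduces to making $\rho$ injective on unordered pairs. But the entire content of the lemma sits in your third step, and there you concede the point explicitly: the injury problem, the convergence and continuity of an accumulation of up to $\mathfrak c$ bumps, and the fact that the length-$\mathfrak c$ recursion ``closes up'' are, in your own words, the technical heart of the argument --- and none of it is supplied. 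An argument that names the main difficulty and defers it to the cited reference establishes nothing.

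Moreover, the difficulty is not mere bookkeeping; as set up, the recursion meets concrete obstructions. Every bump alters the distances of continuum many pairs at once, so killing one coincidence can create continuum many new ones, and no ordinal measure of progress is given that would make the recursion terminate. Each coincidence $\rho(\{x,y\})=\rho(\{u,v\})$ is an equation involving all (in general infinitely many) weights of bumps straddling either pair, so weights cannot be fixed one at a time in a way that is stable under later stages; and if $X$ is separable, a $\sigma$-discrete clopen family is countable, leaving countably many real parameters against up to $\mathfrak c$ constraints, where genericity arguments (Baire category or measure) do not survive $\mathfrak c$ many conditions. It is worth knowing that the classical route avoids the coincidence-killing recursion altogether: in the separable case one embeds $X$ homeomorphically into a Cantor set $K\subseteq\mathbb R$ whose points are linearly independent over $\mathbb Q$ (such Cantor sets exist), whereupon the induced Euclidean metric is strongly rigid outright, since $|a-b|=|c-d|$ with $\{a,b\}\ne\{c,d\}$ would be a nontrivial rational relation among elements of $K$; Martin's nonseparable extension in \cite{HWM} is likewise a global construction from a suitably independent family, not a repair-by-stages scheme. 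Your outline would become a proof only if the third step were replaced by such a one-stroke independence construction, or if the stability and termination of your recursion were actually established.
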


In analogy with Definition~\ref{d:esr}, we shall say that a metrizable space $X$ is eventually not shifted if there is a compatible with the topology of $X$ metric $d_{X}$ such that $(X, d_{X})$ is not shifted.

Lemma~\ref{l:sr} and Lemma~\ref{l:esr} imply the following proposition.

\begin{prop}\label{pr2.35}
Let $X$ be a non--empty metrizable space with $\emph{dim}(X)=0.$ If $|X|\le \mathfrak{c},$ then $X$ is eventually not shifted.
\end{prop}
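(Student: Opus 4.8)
The plan is to chain together the two lemmas stated immediately before the proposition, using the fact that a single metric can serve simultaneously as a witness for both ``eventually strongly rigid'' and ``eventually not shifted.''

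First I would invoke Lemma~\ref{l:esr}. The hypotheses of the proposition---that $X$ is a non--empty metrizable space with $\operatorname{dim}(X)=0$ and $|X|\le\mathfrak{c}$---are precisely the hypotheses of Lemma~\ref{l:esr}. Hence $X$ is eventually strongly rigid. Unfolding Definition~\ref{d:esr}, this yields a concrete object to work with: a metric $d_{X}$ on $X$, compatible with the topology of $X$, such that the metric space $(X, d_{X})$ is strongly rigid.

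Second, I would apply Lemma~\ref{l:sr} to this particular metric space $(X, d_{X})$. Since $(X, d_{X})$ is strongly rigid, Lemma~\ref{l:sr} tells us directly that $(X, d_{X})$ is not shifted. The crucial observation---and it is the only thing to check---is that the metric $d_{X}$ produced in the first step is the very same metric that must be shown to make $X$ not shifted; no change of metric is needed between the two applications.

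Finally, I would conclude by matching the situation against the definition of \emph{eventually not shifted} given just before the statement: we have exhibited a metric $d_{X}$, compatible with the topology of $X$, for which $(X, d_{X})$ is not shifted, which is exactly what that definition requires. I do not anticipate any genuine obstacle here, since the argument is a straightforward composition of Lemma~\ref{l:esr} and Lemma~\ref{l:sr}; the mild point worth stating explicitly is that the two ``eventually'' notions (Definition~\ref{d:esr} and its not--shifted analogue) are phrased in parallel, so that a single compatible metric bridges them. The entire proof is thus essentially one line, and its content lies in recognizing that Lemma~\ref{l:sr} upgrades the output of Lemma~\ref{l:esr} from strong rigidity to the not shifted property without disturbing the metric.
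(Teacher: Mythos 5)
Your proof is correct and is exactly the argument the paper intends: the paper simply states that Lemma~\ref{l:sr} and Lemma~\ref{l:esr} imply the proposition, and your write-up supplies the (routine) details of that same chain, correctly noting that the single compatible metric produced by Lemma~\ref{l:esr} is the witness required by the definition of eventually not shifted.
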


\section{Not shifted metric subspaces of minimal universal metric spaces}
\label{sect4}

For a metric space $X$ denote by $\operatorname{Iso}(X)$ the group of all isometries of $X$. In what follows we do not presuppose that $\operatorname{Iso}(X)$ is equipped with any topology.

\begin{thm}\label{t2.1*}
Let $X$ be a not shifted and complete metric space. Then the following statements hold.
\begin{itemize}
  \item [(i)] If for every $g\in \operatorname{Iso}(X)$ and every $x\in X$ there is a positive integer number $m$ such that the equality
      \begin{equation}\label{e3.1*}
      g^m(x)=x
      \end{equation}
      holds, then every dense subset of $X$ is not shifted.
  \item [(ii)] If $X$ contains no isolated points and all dense subsets of $X$ are not shifted, then for every $g\in \operatorname{Iso}(X)$ and every $x\in X$, there is a positive integer number $m$ such that~(\ref{e3.1*}) holds.
\end{itemize}
\end{thm}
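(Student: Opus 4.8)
The plan is to prove the two implications separately, using throughout the characterization (stated right after Definition~\ref{shemb}) that a non-empty space is not shifted exactly when every isometric self-embedding of it is surjective.

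For~(i) I would start from a dense subset $D\subseteq X$ and an arbitrary isometric embedding $\varphi\colon D\hookrightarrow D$, and aim to show $\varphi$ is onto. Since $\varphi$ is distance-preserving it is uniformly continuous, and because $X$ is complete with $D$ dense, $\varphi$ extends uniquely to an isometric embedding $\Phi\colon X\hookrightarrow X$ with $\Phi|_{D}=\varphi$ (distance-preservation passes to the closure by continuity). As $X$ is not shifted, $\Phi$ is surjective, so $\Phi\in\operatorname{Iso}(X)$. Now fix $d\in D$ and apply the hypothesis to $g=\Phi$ and $x=d$: there is $m\geq 1$ with $\Phi^{m}(d)=d$. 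Since $\varphi(D)\subseteq D$ and $\Phi$ extends $\varphi$, a trivial induction gives $\Phi^{k}(d)=\varphi^{k}(d)\in D$ for all $k\geq 0$, whence $d=\varphi^{m}(d)=\varphi(\varphi^{m-1}(d))$ with $\varphi^{m-1}(d)\in D$. Thus $d\in\varphi(D)$, so $\varphi$ is surjective and $D$ is not shifted.

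For~(ii) I would argue by contraposition: assuming some $g\in\operatorname{Iso}(X)$ and some $x_{0}\in X$ satisfy $g^{m}(x_{0})\neq x_{0}$ for all $m\geq 1$, I would exhibit a dense subset of $X$ that \emph{is} shifted. First I note that the full orbit $\{g^{n}(x_{0}):n\in\mathbb{Z}\}$ consists of pairwise distinct points (any coincidence $g^{i}(x_0)=g^{j}(x_0)$ yields $g^{i-j}(x_{0})=x_{0}$ after applying a power of $g^{-1}$), so the strict backward orbit $B=\{g^{-n}(x_{0}):n\geq 1\}$ is a countable set with $x_{0}\notin B$. Set $D=X\setminus B$. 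Using that $g$ is a bijection one computes $g(B)=\{x_{0}\}\cup B$, hence $g(D)=D\setminus\{x_{0}\}$. Therefore $g|_{D}\colon D\to D$ is an isometric self-embedding whose image omits the point $x_{0}\in D$, so $g|_{D}$ is not surjective and $D$ is shifted, contradicting the hypothesis that all dense subsets are not shifted --- provided $D$ is dense.

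The density of $D=X\setminus B$ is the crux, and the step I expect to be the main obstacle; it is here that the no-isolated-points assumption and completeness are used. It suffices to show the countable set $B$ has empty interior. If $X\setminus B$ were not dense, some non-empty open $U\subseteq X$ would be contained in $B$ and hence countable. But $U$, being open in the complete space $X$, is a $G_{\delta}$ and so completely metrizable; by the Baire category theorem a countable completely metrizable space has a point isolated in $U$, and since $U$ is open in $X$ this point would be isolated in $X$, contrary to hypothesis. Hence no such $U$ exists, $D$ is dense, and the contradiction completes~(ii).
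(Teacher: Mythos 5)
Your proposal is correct and follows essentially the same route as the paper's proof: for (i) you extend the self-embedding of the dense subset to an isometric self-embedding of $X$ via completeness, use that $X$ is not shifted to get an isometry, and then apply the periodicity hypothesis; for (ii) you argue by contraposition, delete the backward orbit $\{g^{-n}(x_0):n\geq 1\}$, and establish density by a Baire category argument. The only cosmetic difference is in the density step, where the paper applies Baire directly to the intersection of the open dense complements of the orbit points, while you route it through complete metrizability of an open subset and countability; these are interchangeable forms of the same idea.
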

\begin{proof}
(i). Suppose that the condition of statement (i) holds. Let $Y$ be a dense subset of $X$ and let $f$ be an isometric self embedding of $Y$. We must show that $f\in \operatorname{Iso}(Y)$.

If $Z$, $S$ and $W$ are metric spaces,  $S$ is complete and $Z$ is a dense subspace of $W$, then for every uniformly continuous $\psi : Z\to S$ there is a unique continuous $g : W\to S$ such that $g|_{Z}=\psi$ (see, for example,~\cite[p.176]{Se}). The metric space $Y$ is a dense subspace of $X$ and the metric space $X$ is complete and, moreover, the isometric embedding
$$
Y\xrightarrow{f}Y\xrightarrow{in}X
$$
is, evidently, uniformly continuous. Hence there is a continuous mapping $g$ such that the diagram
\begin{equation}\label{3.2*}
\begin{diagram}
\node{Y}
      \arrow{e,t}{f} \arrow{s,l}{in}
   \node{Y} \arrow{e,t}{in}
      \node{X}
\\
\node{X}\arrow{ene,b}{g}
\end{diagram}
\end{equation}
is commutative. Let us show that $g$ is an isometry. Indeed if $x,y \in X$, then there are sequences $(x_n)_{n\in \mathbb N}$ and $(y_n)_{n\in \mathbb N}$ such that $x_n, y_n\in Y$ for every $n\in \mathbb N$ and
\begin{equation}\label{eq3.3}
x = \lim\limits_{n\to \infty}x_n \,\text{ and } \, y = \lim\limits_{n\to \infty}y_n.
\end{equation}
The function $f$ is an isometric self embedding and the function $g$ is continuous. Consequently, using \eqref{eq3.3} and the commutativity of \eqref{3.2*}, we obtain
\begin{equation*}
d_{X}(g(x), g(y))=\lim_{n\to\infty}d_{X}(g(x_n), g(y_n))=\lim_{n\to\infty}d_{X}(f(x_n), f(y_n))
\end{equation*}
\begin{equation*}
=\lim_{n\to\infty}d_{Y}(f(x_n), f(y_n))=\lim_{n\to\infty}d_{Y}(x_n, y_n)=\lim_{n\to\infty}d_{X}(x_n, y_n)=d_{X}(x, y).
\end{equation*}
Thus $g$ is an isometric self embedding. It follows that $g \in \operatorname{Iso}(X)$ because $X$ is not shifted. Suppose now that $f\notin \operatorname{Iso}(Y)$. Then the set $Y \setminus f(Y)$ is not empty. Let us consider an arbitrary point $x\in Y \setminus f(Y)$. Let $m$ be a positive integer number such that
\begin{equation}\label{3.3*}
    g^m(x)=x.
\end{equation}
It is clear that
$$
f^m(x) \in f^m(Y) \subseteq f^{m-1}(Y)\subseteq ... \subseteq f(Y).
$$
Since $g|_Y=f$, ~(\ref{3.3*}) implies $x=f^m(x)$. Hence $x\in f(Y)$, contrary to $x\in Y \setminus f(Y)$.

(ii). Suppose now that $X$ contains no isolated points and all dense subsets of $X$ are not shifted. If $g\in \operatorname{Iso} (X)$, $x_0\in X$ and $g^m(x_0)\neq x_0$ for every $m\in \mathbb N$, then $g^{n_1}(x_0)\neq g^{n_2}(x_0)$ for all distinct $n_1, n_2 \in \mathbb Z$. Write
$$
A:=\{x\in X : x=g^{-n}(x_0), n\in \mathbb N\}.
$$
The set $X\setminus A$ is a dense subset of $X$. Indeed, we have
$$
X\setminus A = \bigcap\limits_{n=1}^{\infty}(X\setminus \{g^{-n}(x_0)\})
$$
where $\{g^{-n}(x_0)\}$ is the one point set consisting of the unique point $g^{-n}(x_0)$. Since $X$ has no isolated points, the sets $X \setminus \{g^{-n}(x_0)\}$ are open dense subsets of $X$. Using the Baire category theorem we obtain that $X \setminus  A$ is also dense in $X$. Let $p$ be an arbitrary point of $X \setminus  A$. It is easy to show that $g(p) \in X \setminus  A$. Indeed, if $g(p) \notin X \setminus  A$, then there is $n \in \mathbb N$ such that
$$
g(p) = g^{-n}(x_0).
$$
Consequently, we obtain $p = g^{-n-1}(x_0)$. Hence $p$ belongs to $A$, contrary to $p\in X \setminus  A$. Thus the restriction $g|_{X \setminus  A}$ is an isometric self embedding of $X \setminus  A$. Let us prove the statement
\begin{equation}\label{3.4*}
x_0\notin g(X \setminus  A).
\end{equation}
Suppose $t\in (X \setminus  A)$ and $g(t)=x_0$ hold. The last equality implies $t=g^{-1}x_0$. Hence, by the definition of $X \setminus  A$, we have $t\in A$. Is is a contradiction of $t\in X \setminus  A$, so that~(\ref{3.4*}) holds. Since~(\ref{3.4*}) implies
$$
g|_{X \setminus  A}\notin \operatorname{Iso}(X\setminus A),
$$
the metric space $X \setminus  A$ is not shifted. Statement (ii) follows.
\end{proof}
Recall that a group $G$ is torsion if for every $g\in G$ there is $n \in \mathbb N$ such that $g^n = e$ where $e$ is the identity element of $G$.

\begin{cor}\label{c3.2}
Let $X$ be not shifted and complete. If the isometry group $\operatorname{Iso}(X)$ is torsion, then every dense subset of $X$ is not shifted.
\end{cor}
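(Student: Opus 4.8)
The plan is to deduce this directly from part (i) of Theorem~\ref{t2.1*}, since the torsion hypothesis on $\operatorname{Iso}(X)$ is precisely a uniform (in $x$) strengthening of the iterate condition appearing there. First I would observe that $X$ is assumed not shifted and complete, so both standing hypotheses of Theorem~\ref{t2.1*}(i) on the ambient space are already in force. It therefore remains only to verify the periodicity condition: for every $g\in\operatorname{Iso}(X)$ and every $x\in X$ there is a positive integer $m$ with $g^m(x)=x$.

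To check this, fix an arbitrary $g\in\operatorname{Iso}(X)$. Because $\operatorname{Iso}(X)$ is torsion, there is $n\in\mathbb N$ such that $g^n=e$, where $e$ denotes the identity isometry of $X$. The equality $g^n=e$ of maps means $g^n(x)=x$ for every $x\in X$ simultaneously; in particular, for each $x\in X$ one may take $m=n$. Thus the hypothesis of Theorem~\ref{t2.1*}(i) holds (indeed with a single exponent $m$ independent of $x$). Having verified that hypothesis, Theorem~\ref{t2.1*}(i) applies and yields at once that every dense subset of $X$ is not shifted, which is the desired conclusion.

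As for the main obstacle, there is essentially none at the level of the corollary itself, since it is a one-line specialization of Theorem~\ref{t2.1*}(i). All the genuine work has already been done in establishing that theorem, namely the extension of a uniformly continuous self-embedding of a dense subset to an isometry of the completion via the commutative diagram~\eqref{3.2*}, followed by the use of a fixed exponent $g^m(x)=x$ to push a supposedly missed point of the dense set back into its image. The only point worth flagging is the direction of the quantifiers: torsion supplies one exponent $n$ valid for all points at once, whereas Theorem~\ref{t2.1*}(i) requests only a pointwise exponent, so the torsion condition is strictly stronger than what is needed and the implication is immediate.
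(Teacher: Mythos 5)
Your proof is correct and is exactly the argument the paper intends: the corollary is stated without a separate proof precisely because, as you observe, the torsion hypothesis gives for each $g\in\operatorname{Iso}(X)$ an exponent $n$ with $g^n=e$, hence $g^n(x)=x$ for all $x$ simultaneously, which is (more than) the pointwise condition required in Theorem~\ref{t2.1*}(i). Your remark on the direction of the quantifiers is also the right observation to make, and nothing further is needed.
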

Let $X$ be a not shifted and complete metric space without isolated points. Suppose that there are $x \in X$ and $g \in \operatorname{Iso}(X)$ such that $g^m(x)=x$ holds if and only if $m=0$. Then Theorem \ref{t2.1*} implies the existence of $A \subseteq X$ which is shifted and dense (in $X$). Let us consider an example of such spaces.

\begin{exa}\label{ex3.3}
Let $\mathbb S = \{z \in \mathbb C: |z|=1\}$ be the unit circle in the complex plane $\mathbb C$. It is clear that $\mathbb S$ is a compact subset of $\mathbb C$ without isolated points. By Lemma \ref{compsh} the metric space $\mathbb S$ is not shifted. An irrational rotation is a map $T_\theta: \mathbb S\to \mathbb S$ with
$$
T_\theta(z) = ze^{2\pi i \theta}
$$
where $\theta$ is an irrational number. Every irrational rotation $T_\theta$ is an isometry of $\mathbb S$ and, in addition,
$$
T^m_\theta (z) = ze^{2\pi m i } = z
$$
holds if and only if $m=0$. Consequently, by Theorem \ref{t2.1*} there is $A \subseteq \mathbb S$ which is shifted and dense in $\mathbb S$. In fact the set
$$
A = \{T_\theta^n(z): n \in \mathbb N\}
$$
is dense and shifted for every $z \in \mathbb S$.
\end{exa}

The previous example is fundamental in the theory of dynamical systems. Using this example we can simply show that the balls and spheres in the finite dimensional Euclidean spaces have some dense shifted subsets. In the converse direction we have the following.

\begin{prop}\label{pr3.4}
There is a not shifted metric space $X$ with $|X|=\mathfrak{c}$ such that every subspace $Y$ of $X$ is also not shifted.
\end{prop}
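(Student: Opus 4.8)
The plan is to produce $X$ as a \emph{strongly rigid} metric space and then exploit the fact that strong rigidity is inherited by every subspace. First I would fix a convenient underlying topological space: let $K$ be a non--empty zero--dimensional metrizable space with $|K|=\mathfrak{c}$, for instance the Cantor set $\{0,1\}^{\mathbb N}$, which is compact and totally disconnected, hence satisfies $\dim(K)=0$, and has cardinality $\mathfrak{c}$. Since $|K|\le\mathfrak{c}$, Lemma~\ref{l:esr} furnishes a compatible metric $d_X$ such that $X:=(K, d_X)$ is strongly rigid. By Lemma~\ref{l:sr} the space $X$ is then not shifted, and $|X|=\mathfrak{c}$ by construction.

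It remains to check that every subspace $Y\subseteq X$ is not shifted, and the key observation --- essentially the only nontrivial point --- is that strong rigidity is hereditary. Indeed, the metric on $Y$ is the restriction of $d_X$, so for $x, y, u, v\in Y$ the equalities $d_Y(x,y)=d_X(x,y)$ hold and the defining implication of Definition~\ref{sr} for $Y$ is simply the corresponding implication for $X$ with all four points constrained to lie in the smaller set $Y$. Hence $Y$ is strongly rigid whenever $X$ is. Applying Lemma~\ref{l:sr} to $Y$ gives that $Y$ is not shifted.

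The main obstacle is thus reduced to locating a strongly rigid space of the prescribed cardinality, and Lemma~\ref{l:esr} supplies this; once such an $X$ is in hand, the heredity of strong rigidity makes the subspace assertion immediate. I would present the argument in exactly this order: invoke Lemma~\ref{l:esr} to obtain a strongly rigid metric on a zero--dimensional space of cardinality $\mathfrak{c}$, record the one--line remark that strong rigidity passes to subspaces, and close by applying Lemma~\ref{l:sr} to an arbitrary $Y\subseteq X$.
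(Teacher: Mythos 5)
Your proposal is correct and is exactly the paper's argument: the authors prove Proposition~\ref{pr3.4} by citing Lemma~\ref{l:sr} and Lemma~\ref{l:esr}, relying (implicitly) on the same observation you make explicit, namely that strong rigidity is inherited by every subspace since Definition~\ref{sr} quantifies over arbitrary quadruples of points. Your write-up just instantiates the zero-dimensional space concretely (the Cantor set) and spells out the hereditary step, which the paper leaves to the reader.
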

\begin{proof}
It follows from Lemma~\ref{l:sr} and Lemma~\ref{l:esr}.
\end{proof}

\section{Minimal universality of classes of metric spaces}
\label{sect5}

 Let $\mathfrak{M}$ and $\mathfrak{B}$ be classes of metric spaces. We write $\mathfrak{M}\hookrightarrow\mathfrak{B}$ if for every $X\in\mathfrak{M}$ there is $Y\in\mathfrak{B}$ such that $X\hookrightarrow Y$.

The notation $\mathfrak{M}\preceq\mathfrak{B}$ means that, for every $X\in\mathfrak{M}$, there is $Y\in\mathfrak{B}$ such that $X\subseteq Y.$ It is easy to see that $\mathfrak{M}\subseteq\mathfrak{B}$ implies $\mathfrak{M}\preceq\mathfrak{B}$ but not conversely.

\begin{defin}
Let $X$ and $Y$ be metric spaces. If $X\hookrightarrow Y$ or $Y\hookrightarrow X$, then $X$ and $Y$ are said to be comparable. Otherwise, $X$ and $Y$ are incomparable.
\end{defin}

The following two definitions are similar to Definition~\ref{univ} and Definition~\ref{muniv} respectively.

\begin{defin}\label{guniv}
Let $\mathfrak{M}$ be a class of metric spaces. A class $\mathfrak{A}$ of metric spaces is said to be universal for $\mathfrak{M}$ or $\mathfrak{M}$-universal if $\mathfrak{M}\hookrightarrow\mathfrak{A}$.
\end{defin}

\begin{defin}\label{guniv*}
Let $\mathfrak{M}$ and $\mathfrak{A}$ be classes of metric spaces. The class $\mathfrak{A}$ is minimal $\mathfrak{M}$-universal if the following conditions hold:
\begin{enumerate}
\item[\rm(i)] $\mathfrak{A}$ is $\mathfrak{M}$-universal;

\item[\rm(ii)] The implication
\begin{equation}\label{impl}
(\mathfrak{M}\hookrightarrow\mathfrak{B})\wedge (\mathfrak{B}\preceq\mathfrak{A})\Rightarrow (\mathfrak{A}\preceq\mathfrak{B})
\end{equation} holds for every class $\mathfrak{B}$ of metric spaces;

\item[\rm(iii)] Distinct elements of $\mathfrak{A}$ are incomparable metric spaces.
\end{enumerate}
\end{defin}

\begin{prop}\label{p3.3v}
Let $\mathfrak{N}, \mathcal{P}$ and $\mathfrak{A}$ be classes of metric spaces. If $\mathfrak{A}$ is minimal $\mathfrak{N}$-universal, and
$$\mathfrak{N}\hookrightarrow\mathcal{P}\quad\mbox{and}\quad\mathcal{P}\hookrightarrow\mathfrak{N},$$
then $\mathfrak{A}$ is also minimal $\mathcal{P}$-universal.
\end{prop}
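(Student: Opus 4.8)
The plan is to reduce everything to the transitivity of the relation $\hookrightarrow$ on classes of metric spaces, which in turn follows from the elementary fact that a composition of isometric embeddings is again an isometric embedding. Concretely, I would first record the auxiliary observation that if $\mathfrak{C}\hookrightarrow\mathfrak{D}$ and $\mathfrak{D}\hookrightarrow\mathfrak{E}$, then $\mathfrak{C}\hookrightarrow\mathfrak{E}$: given $X\in\mathfrak{C}$, choose $Y\in\mathfrak{D}$ with $X\hookrightarrow Y$, then $Z\in\mathfrak{E}$ with $Y\hookrightarrow Z$, and compose the two embeddings to get $X\hookrightarrow Z$. With this in hand the rest is bookkeeping.

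Next I would verify, in turn, the three conditions of Definition~\ref{guniv*} for $\mathfrak{A}$ relative to $\mathcal{P}$. For condition (i), that $\mathfrak{A}$ is $\mathcal{P}$-universal (i.e.\ $\mathcal{P}\hookrightarrow\mathfrak{A}$), I combine the hypothesis $\mathcal{P}\hookrightarrow\mathfrak{N}$ with $\mathfrak{N}\hookrightarrow\mathfrak{A}$ --- the latter holding because $\mathfrak{A}$ is minimal $\mathfrak{N}$-universal --- and apply transitivity.

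For condition (ii), let $\mathfrak{B}$ be an arbitrary class with $\mathcal{P}\hookrightarrow\mathfrak{B}$ and $\mathfrak{B}\preceq\mathfrak{A}$; I must produce $\mathfrak{A}\preceq\mathfrak{B}$. Here I would use the \emph{other} hypothesis $\mathfrak{N}\hookrightarrow\mathcal{P}$: combined with $\mathcal{P}\hookrightarrow\mathfrak{B}$ and transitivity it yields $\mathfrak{N}\hookrightarrow\mathfrak{B}$. Now $\mathfrak{N}\hookrightarrow\mathfrak{B}$ together with $\mathfrak{B}\preceq\mathfrak{A}$ are exactly the premises of the implication \eqref{impl} guaranteed by the minimal $\mathfrak{N}$-universality of $\mathfrak{A}$, and that implication delivers the desired conclusion $\mathfrak{A}\preceq\mathfrak{B}$. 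Finally, condition (iii), that distinct elements of $\mathfrak{A}$ are incomparable, is a property of the class $\mathfrak{A}$ alone and does not refer to $\mathcal{P}$ or $\mathfrak{N}$ at all; it therefore transfers verbatim from the $\mathfrak{N}$-minimality of $\mathfrak{A}$.

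There is no real obstacle in this argument: the entire content lies in noticing that both hypotheses are genuinely needed --- $\mathcal{P}\hookrightarrow\mathfrak{N}$ for the universality condition (i), and $\mathfrak{N}\hookrightarrow\mathcal{P}$ for the minimality implication (ii) --- while condition (iii) comes for free. The only point requiring a moment's care is tracking the directions of the embeddings so that the two applications of transitivity chain in the correct order.
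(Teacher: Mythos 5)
Your proof is correct and follows essentially the same route as the paper: it verifies conditions (i)--(iii) of Definition~\ref{guniv*} with $\mathfrak{M}=\mathcal{P}$, using $\mathcal{P}\hookrightarrow\mathfrak{N}$ together with $\mathfrak{N}\hookrightarrow\mathfrak{A}$ for universality, using $\mathfrak{N}\hookrightarrow\mathcal{P}$ plus transitivity of $\hookrightarrow$ to reduce to the minimality implication \eqref{impl} for $\mathfrak{N}$, and observing that condition (iii) is independent of the reference class. Incidentally, your explicit intermediate statement $\mathfrak{N}\hookrightarrow\mathfrak{B}$ in step (ii) is exactly what the paper intends at the point where it misprints $\mathfrak{N}\preceq\mathfrak{B}$.
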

\begin{proof}
Suppose that $\mathfrak{A}$ is minimal $\mathfrak{N}$-universal and $\mathfrak{N}\hookrightarrow\mathcal{P}\quad\mbox{and}\quad\mathcal{P}\hookrightarrow\mathfrak{N}.$
We must show that conditions (i)--(iii) of Definition~\ref{guniv*} hold with $\mathfrak{M}=\mathcal{P}.$
\begin{enumerate}
\item[\rm(i).] Since $\mathfrak{A}$ is minimal $\mathfrak{N}$-universal, we have $\mathfrak{N}\hookrightarrow\mathfrak{A},$ that, together with $\mathcal{P}\hookrightarrow\mathfrak{N},$ implies $\mathcal{P}\hookrightarrow\mathfrak{A}.$
\item[\rm(ii).]  Let $\mathfrak{B}$  an arbitrary class of metric spaces. Since condition (ii) holds with $\mathfrak{M}=\mathfrak{N},$  we have
\begin{equation}\label{e3.2}
(\mathfrak{N}\hookrightarrow\mathfrak{B})\wedge (\mathfrak{B}\preceq\mathfrak{A})\Rightarrow (\mathfrak{A}\preceq\mathfrak{B}).
\end{equation}
From $\mathfrak{N}\hookrightarrow\mathcal{P}$ it follows that
\begin{equation*}
(\mathcal{P}\hookrightarrow\mathfrak{B})\wedge (\mathfrak{B}\preceq\mathfrak{A})\Rightarrow
(\mathfrak{N}\preceq\mathfrak{B})\wedge (\mathfrak{B}\preceq\mathfrak{A}).                                                           \end{equation*}
The last implication together with \eqref{e3.2} give us \eqref{impl}  with $\mathfrak{M}=\mathcal{P}.$
\item[\rm(iii).] Condition (iii) does not depend from the choice of class $\mathfrak{M}$ in Definition~\ref{guniv*}. Hence (iii) automatically holds.
\end{enumerate}
\end{proof}
\begin{rem}\label{eqvrem}
Condition (ii) of Definition~\ref{guniv*} admits the following equivalent form:
\newline $(\textrm{ii}^{*})$ The implication
\begin{equation}\label{impl2}
(\mathfrak{M}\hookrightarrow\mathfrak{B})\wedge (\mathfrak{B}\preceq \mathfrak{A})\Rightarrow (\mathfrak{A}\subseteq \mathfrak{B})
\end{equation} holds for every class $\mathfrak{B}$ of metric spaces.
\end{rem}

For the proof we need the following lemma.

\begin{lem}\label{Lvsp1}
Let $\mathfrak{M}$ and $\mathfrak{N}$ be classes such that $\mathfrak{M}\preceq \mathfrak{N}$ and $\mathfrak{N}\preceq \mathfrak{M}$. Suppose the implication
\begin{equation}\label{s1}
(A\subseteq C)\Rightarrow (A= C)
\end{equation}
holds foe all $A, C\in\mathfrak{M}.$ Then we have the inclusion $\mathfrak{M}\subseteq\mathfrak{N}.$
\end{lem}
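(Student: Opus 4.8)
The plan is to take an arbitrary member of $\mathfrak{M}$ and sandwich it between two members of $\mathfrak{M}$, then collapse the sandwich using the antichain hypothesis~\eqref{s1}. Before starting I would record the two structural facts about the subspace relation that the whole argument rests on: the relation $\subseteq$ on metric spaces (``is a metric subspace of'') is \emph{transitive}, and it is \emph{antisymmetric}, in the sense that $A\subseteq B$ together with $B\subseteq A$ forces $A=B$, since then $A$ and $B$ share the same support and carry the same induced metric. These are the only properties of $\subseteq$ that I will use.

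First I would fix an arbitrary $M\in\mathfrak{M}$. Applying the hypothesis $\mathfrak{M}\preceq\mathfrak{N}$ to $M$, I obtain some $N\in\mathfrak{N}$ with $M\subseteq N$. Applying the hypothesis $\mathfrak{N}\preceq\mathfrak{M}$ to this $N$, I obtain some $M'\in\mathfrak{M}$ with $N\subseteq M'$. By transitivity of $\subseteq$ this yields $M\subseteq M'$ with both $M$ and $M'$ lying in $\mathfrak{M}$.

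Next I would invoke~\eqref{s1} with $A=M$ and $C=M'$ to conclude $M=M'$. The chain $M\subseteq N\subseteq M'=M$ then forces, by antisymmetry of $\subseteq$, the equalities $M=N=M'$; in particular $M=N\in\mathfrak{N}$. Since $M\in\mathfrak{M}$ was arbitrary, every element of $\mathfrak{M}$ belongs to $\mathfrak{N}$, which is exactly the desired inclusion $\mathfrak{M}\subseteq\mathfrak{N}$.

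There is no genuine difficulty here beyond the bookkeeping of the sandwich; the one point that deserves a moment's care is the antisymmetry step, where $M\subseteq N\subseteq M$ must be read as giving $N=M$ as \emph{metric spaces} and not merely as sets. This is precisely where the paper's convention that a subspace inherits the induced metric is used, and it is the only place where one must resist treating $\subseteq$ as plain set inclusion.
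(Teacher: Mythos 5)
Your proof is correct and follows essentially the same route as the paper's: fix an element of $\mathfrak{M}$, build the sandwich $M\subseteq N\subseteq M'$ using the two $\preceq$ hypotheses, collapse it with \eqref{s1}, and conclude $M=N\in\mathfrak{N}$ by antisymmetry of $\subseteq$. Your explicit remark that $M\subseteq N\subseteq M$ yields equality of \emph{metric spaces} (not just sets) makes precise a step the paper leaves implicit, but the argument is the same.
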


\begin{proof}
Let $A\in\mathfrak{M}.$ Since $\mathfrak{M}\preceq\mathfrak{N},$ there is $B\in\mathfrak{N}$ such that $A\subseteq B$. From $\mathfrak{N}\preceq\mathfrak{M}$ it follows that there is $C\in\mathfrak{M}$ satisfying $B\subseteq C.$ Thus, we have
\begin{equation}\label{s2}
A\subseteq B\subseteq C
\end{equation}
and $A, C\in\mathfrak{M}$. Using \eqref{s1} we see that $A=C$. This equality and \eqref{s2} imply $A=B$. Consequently, for every $A\in\mathfrak{M}$ there is $B\in\mathfrak{N}$ such that $A=B$, i.e. $\mathfrak{M}\subseteq\mathfrak{N}$.
\end{proof}

\noindent \emph{Proof of Remark}~\ref{eqvrem}. Note that $(\textrm{ii}^{*})\Rightarrow \textrm{(ii)}$ is valid, because we have
$$
(\mathfrak{A}\subseteq\mathfrak{B})\Rightarrow(\mathfrak{A}\preceq \mathfrak{B})
$$
for arbitrary $\mathfrak{A}$ and $\mathfrak{B}$. Moreover, condition $\textrm{(iii)}$ of Definition~\ref{guniv*} gives us \eqref{s1} for $A, C\in \mathfrak{A}.$ Using Lemma~\ref{Lvsp1}, we obtain that $((\textrm{iii})\wedge (\textrm{ii}))\Rightarrow (\textrm{ii}^{*}).$ $\qquad\qquad\qquad\qquad\qquad\qquad\qquad\qquad\qquad\qquad\qquad\qquad\qquad\qquad\qquad\square$

\begin{cor}\label{Lmin}
Let $\mathfrak{M}$ and $\mathfrak{A}$ be classes of metric spaces and let $\mathfrak{A}$ be minimal $\mathfrak{M}$-universal. Then the implication
\begin{equation}\label{s3}
(\mathfrak{M}\hookrightarrow\mathfrak{B})\wedge (\mathfrak{B}\subseteq \mathfrak{A})\Rightarrow (\mathfrak{B}=\mathfrak{A})
\end{equation} holds for every class $\mathfrak{B}$ of metric spaces.
\end{cor}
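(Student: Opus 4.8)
The plan is to deduce this corollary almost immediately from the equivalent reformulation of condition (ii) established in Remark~\ref{eqvrem}. Since $\mathfrak{A}$ is minimal $\mathfrak{M}$-universal, condition $(\textrm{ii}^{*})$ holds for $\mathfrak{A}$, i.e.
$$
(\mathfrak{M}\hookrightarrow\mathfrak{B})\wedge (\mathfrak{B}\preceq \mathfrak{A})\Rightarrow (\mathfrak{A}\subseteq \mathfrak{B})
$$
for every class $\mathfrak{B}$. The strategy is to verify the hypotheses of $(\textrm{ii}^{*})$ from the hypotheses of \eqref{s3} and then combine the resulting inclusion with the assumed inclusion $\mathfrak{B}\subseteq\mathfrak{A}$.

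First I would fix an arbitrary class $\mathfrak{B}$ satisfying $\mathfrak{M}\hookrightarrow\mathfrak{B}$ and $\mathfrak{B}\subseteq\mathfrak{A}$. The one observation needed is that $\mathfrak{B}\subseteq\mathfrak{A}$ entails $\mathfrak{B}\preceq\mathfrak{A}$; this is exactly the remark recorded just after the definition of $\preceq$ in Section~\ref{sect5}, namely that $\subseteq$ implies $\preceq$ (take $Y=X$ for each $X$). Thus both antecedents of $(\textrm{ii}^{*})$ are in force.

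Applying $(\textrm{ii}^{*})$ then yields $\mathfrak{A}\subseteq\mathfrak{B}$. Together with the standing hypothesis $\mathfrak{B}\subseteq\mathfrak{A}$ this gives the two-sided inclusion, hence $\mathfrak{B}=\mathfrak{A}$, which is the conclusion of \eqref{s3}.

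There is essentially no hard step here: the entire content is the passage from the weaker relation $\preceq$ in Definition~\ref{guniv*}(ii) to the inclusion-based form $(\textrm{ii}^{*})$, and that passage has already been carried out in Remark~\ref{eqvrem} (whose nontrivial direction rests on Lemma~\ref{Lvsp1} and the incomparability of distinct elements of $\mathfrak{A}$). Consequently the only thing to be careful about is citing $(\textrm{ii}^{*})$ rather than the original condition (ii), since the conclusion $\mathfrak{A}\subseteq\mathfrak{B}$ — not merely $\mathfrak{A}\preceq\mathfrak{B}$ — is what pairs with $\mathfrak{B}\subseteq\mathfrak{A}$ to force equality.
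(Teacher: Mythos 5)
Your proof is correct and follows exactly the paper's own argument: both pass from $\mathfrak{B}\subseteq\mathfrak{A}$ to $\mathfrak{B}\preceq\mathfrak{A}$, invoke the reformulated condition $(\textrm{ii}^{*})$ of Remark~\ref{eqvrem} (implication \eqref{impl2}) to obtain $\mathfrak{A}\subseteq\mathfrak{B}$, and combine this with the hypothesis $\mathfrak{B}\subseteq\mathfrak{A}$ to conclude equality. Nothing further is needed.
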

\begin{proof}
Let $\mathfrak{B}$ be a class of metric spaces. Then, as was shown above, implication \eqref{impl2} holds. Since we evidently have
\begin{equation*}
(\mathfrak{M}\hookrightarrow\mathfrak{B})\wedge (\mathfrak{B}\subseteq\mathfrak{A})\Rightarrow (\mathfrak{M}\hookrightarrow\mathfrak{B})\wedge(\mathfrak{B}\preceq \mathfrak{A}),
\end{equation*} using \eqref{impl2}, we obtain
\begin{equation*}
(\mathfrak{M}\hookrightarrow\mathfrak{B})\wedge (\mathfrak{B}\subseteq\mathfrak{A})\Rightarrow (\mathfrak{A}\subseteq\mathfrak{B}).
\end{equation*}
The last implication and the trivial implication
\begin{equation*}
(\mathfrak{M}\hookrightarrow\mathfrak{B})\wedge (\mathfrak{B}\subseteq\mathfrak{A})\Rightarrow (\mathfrak{B}\subseteq\mathfrak{A})
\end{equation*} give \eqref{s3}.
\end{proof}

Let us describe the ``degenerate'' minimal universal classes of metric spaces.

\begin{prop}
Let $\mathfrak{M}$ be a non--empty class of metric spaces and let $\mathfrak{A}$ be a minimal $\mathfrak{M}$-universal class of metric spaces. The following statements hold.
\begin{enumerate}
\item[\rm(i)]\textit{We have $\varnothing\in\mathfrak{A}$ if and only if $\mathfrak{A}=\mathfrak{M}=\{\varnothing\}$.}

\item[\rm(ii)]\textit{The following conditions are equivalent:}
\begin{enumerate}
\item[($\textrm{ii}_{1}$)]\textit{There is $A\in\mathfrak{A}$ such that $|A|=1;$}

\item[($\textrm{ii}_{2}$)]\textit{The equality $\mathfrak{A}=\{A\}$ holds with some $A$ having $|A|=1;$}

\item[($\textrm{ii}_{3}$)]\textit{The inequality $|X|\le 1$ holds for every $X\in\mathfrak{M}$ and, in addition, there is $Y\in\mathfrak{M}$ such that $|Y|=1.$}
\end{enumerate}
\end{enumerate}
\end{prop}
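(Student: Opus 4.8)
The plan is to handle part~(i) and part~(ii) separately, throughout exploiting two elementary facts about isometric embeddings recorded in the introduction: the empty space satisfies $\varnothing\hookrightarrow B$ for every metric space $B$ (so $\varnothing$ is \emph{comparable} with everything), while a one--point space embeds isometrically into every non--empty metric space. These two facts let me play condition~(iii) of Definition~\ref{guniv*} (pairwise incomparability of the elements of $\mathfrak{A}$) against $\mathfrak{M}$-universality.

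For part~(i) the reverse implication is immediate. For the forward one I would assume $\varnothing\in\mathfrak{A}$ and first collapse $\mathfrak{A}$: if some $B\in\mathfrak{A}$ were distinct from $\varnothing$, then $\varnothing\hookrightarrow B$ would make $\varnothing$ and $B$ comparable, contradicting condition~(iii) of Definition~\ref{guniv*}; hence $\mathfrak{A}=\{\varnothing\}$. Then $\mathfrak{M}$-universality of $\mathfrak{A}$ forces $X\hookrightarrow\varnothing$ for every $X\in\mathfrak{M}$, and since $Y\hookrightarrow\varnothing$ holds only for $Y=\varnothing$, every $X\in\mathfrak{M}$ is empty; as $\mathfrak{M}\ne\varnothing$ by hypothesis, $\mathfrak{M}=\{\varnothing\}$.

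For part~(ii) I would route the three statements through $(\textrm{ii}_{2})$. The implication $(\textrm{ii}_{2})\Rightarrow(\textrm{ii}_{1})$ is trivial. For $(\textrm{ii}_{1})\Rightarrow(\textrm{ii}_{2})$, let $A\in\mathfrak{A}$ with $|A|=1$; part~(i) gives $\varnothing\notin\mathfrak{A}$ (otherwise $A=\varnothing$), so every $B\in\mathfrak{A}$ is non--empty and therefore $A\hookrightarrow B$; comparability together with condition~(iii) forces $B=A$, i.e.\ $\mathfrak{A}=\{A\}$. For $(\textrm{ii}_{2})\Rightarrow(\textrm{ii}_{3})$, universality of $\mathfrak{A}=\{A\}$ gives $X\hookrightarrow A$ and hence $|X|\le 1$ for all $X\in\mathfrak{M}$; to produce a one--point $Y\in\mathfrak{M}$ I argue by contradiction, assuming every member of $\mathfrak{M}$ is empty so that $\mathfrak{M}=\{\varnothing\}$, and then feed the test class $\mathfrak{B}=\{\varnothing\}$ into condition~(ii) of Definition~\ref{guniv*}: since $\mathfrak{M}\hookrightarrow\mathfrak{B}$ and $\varnothing\subseteq A$ give $\mathfrak{B}\preceq\mathfrak{A}$, we would get $\mathfrak{A}\preceq\mathfrak{B}$, i.e.\ $A\subseteq\varnothing$, which is impossible.

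The remaining and most delicate implication is $(\textrm{ii}_{3})\Rightarrow(\textrm{ii}_{2})$, which I expect to be the main obstacle because it is the only one genuinely using the minimality clause~(ii) with a nontrivial test class. Starting from $(\textrm{ii}_{3})$, a one--point $Y\in\mathfrak{M}$ embeds into some $A\in\mathfrak{A}$, so $A$ is non--empty; I fix a point $a\in A$ and take $\mathfrak{B}=\{\{a\}\}$, the one--point subspace of $A$. The hypothesis $|X|\le 1$ for all $X\in\mathfrak{M}$ yields $\mathfrak{M}\hookrightarrow\mathfrak{B}$, and $\{a\}\subseteq A$ yields $\mathfrak{B}\preceq\mathfrak{A}$; condition~(ii) then gives $\mathfrak{A}\preceq\mathfrak{B}$, which forces $C\subseteq\{a\}$ and hence $|C|\le 1$ for every $C\in\mathfrak{A}$. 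Since $(\textrm{ii}_{3})$ excludes $\mathfrak{M}=\{\varnothing\}$, part~(i) gives $\varnothing\notin\mathfrak{A}$, so in fact $|C|=1$ for every $C\in\mathfrak{A}$. All elements of $\mathfrak{A}$ are then one--point spaces, hence pairwise isometric and so pairwise comparable, and condition~(iii) collapses $\mathfrak{A}$ to a single one--point space, giving $(\textrm{ii}_{2})$. The one point demanding care is the choice of $\mathfrak{B}$ as a genuine one--point \emph{subspace} of $A$ rather than an abstract one--point space, so that the relation $\mathfrak{B}\preceq\mathfrak{A}$, which is about literal subspaces, holds on the nose.
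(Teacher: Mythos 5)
Your proof is correct. For part (i) it is exactly the paper's argument: since $\varnothing\hookrightarrow A$ for every $A\in\mathfrak{A}$, the empty space is comparable with everything, so condition (iii) of Definition~\ref{guniv*} collapses $\mathfrak{A}$ to $\{\varnothing\}$, and then $\mathfrak{M}$-universality forces $X\hookrightarrow\varnothing$, i.e. $X=\varnothing$, for every $X\in\mathfrak{M}$. For part (ii) the paper gives no proof at all, remarking only that it ``can be proved similarly''; your write-up supplies the missing details, and it is worth noting that they are not literally similar to (i): the implications $(\textrm{ii}_{2})\Rightarrow(\textrm{ii}_{3})$ and $(\textrm{ii}_{3})\Rightarrow(\textrm{ii}_{2})$ genuinely require the minimality clause (ii) of Definition~\ref{guniv*} applied to concrete test classes ($\mathfrak{B}=\{\varnothing\}$ and $\mathfrak{B}=\{\{a\}\}$ respectively), whereas the paper's proof of (i) uses only incomparability (iii) and universality. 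Your insistence that $\{a\}$ be a literal one-point subspace of $A$ is exactly the right precaution, since $\preceq$ is defined through genuine subspaces rather than embeddings. One small simplification is available at the end: once condition (ii) yields $C\subseteq\{a\}$ for every $C\in\mathfrak{A}$, and part (i) rules out $C=\varnothing$, you get $C=\{a\}$ outright, so $\mathfrak{A}=\{\{a\}\}$ without the final appeal to pairwise comparability and condition (iii).
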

\begin{proof}
Let $\varnothing\in\mathfrak{A}$ and $A\in\mathfrak{A}.$ Since $\varnothing\hookrightarrow A,$ condition $\textrm{(iii)}$ of Definition~\ref{guniv*} implies the equality $\varnothing=A.$ The equality $\mathfrak{A}=\{\varnothing\}$ follows. If $X$ is an arbitrary metric space belonging to $\mathfrak{M},$ then from $\mathfrak{M}\hookrightarrow\mathfrak{A}$ and $\mathfrak{A}=\{\varnothing\}$ it follows that $X\hookrightarrow\varnothing,$ thus $X=\varnothing.$ The last equality holds for every $X\in\mathfrak{M},$ i.e. $\mathfrak{M}=\{\varnothing\}.$ The statement $\textrm{(i)}$ is proved. Statement $\textrm{(ii)}$ can be proved similarly, so that we omit it here.
\end{proof}

The next proposition is an analog of Proposition~\ref{vspompr}.

\begin{prop}\label{vspompr*}
Let $\mathfrak{A}$ be a non--empty class of metric spaces. The following conditions are equivalent.
\begin{enumerate}
\item[\rm(i)]\textit{Every $X\in\mathfrak{A}$ is not shifted and every two distinct $Y,$ $Z\in\mathfrak{A}$ are incomparable.}

\item[\rm(ii)]\textit{$\mathfrak{A}$ is minimal universal for itself.}

\item[\rm(iii)]\textit{There is a class $\mathfrak{M}$ of metric spaces such that $\mathfrak{A}$ is minimal $\mathfrak{M}$-universal.}
\end{enumerate}
\end{prop}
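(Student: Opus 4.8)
The plan is to establish the cyclic chain $\textrm{(i)}\Rightarrow\textrm{(ii)}\Rightarrow\textrm{(iii)}\Rightarrow\textrm{(i)}$, reading condition (ii), that $\mathfrak{A}$ is minimal universal for itself, as the assertion that the three clauses of Definition~\ref{guniv*} hold with $\mathfrak{M}=\mathfrak{A}$. Two preliminary observations streamline everything. First, clause (i) of Definition~\ref{guniv*}, namely $\mathfrak{A}\hookrightarrow\mathfrak{A}$, is automatic for any class because $X\hookrightarrow X$. Second, clause (iii) of Definition~\ref{guniv*} is word for word the incomparability requirement appearing in condition (i) of the proposition. Thus the real content lies entirely in relating the ``not shifted'' property to the minimality implication \eqref{impl} taken with $\mathfrak{M}=\mathfrak{A}$.

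For $\textrm{(i)}\Rightarrow\textrm{(ii)}$ I verify \eqref{impl} with $\mathfrak{M}=\mathfrak{A}$. Given a class $\mathfrak{B}$ with $\mathfrak{A}\hookrightarrow\mathfrak{B}$ and $\mathfrak{B}\preceq\mathfrak{A}$, fix $A\in\mathfrak{A}$ and chain the hypotheses to produce $B\in\mathfrak{B}$ and $C\in\mathfrak{A}$ with $A\hookrightarrow B\subseteq C$. Then $A\hookrightarrow C$, so incomparability forces $C=A$, whence $A\hookrightarrow B\subseteq A$. Composing the embedding $A\hookrightarrow B$ with the inclusion $B\subseteq A$ gives an isometric self-embedding of $A$ with image inside $B$; since $A$ is not shifted this self-embedding is surjective, forcing $B=A\in\mathfrak{B}$ and hence $\mathfrak{A}\preceq\mathfrak{B}$. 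The implication $\textrm{(ii)}\Rightarrow\textrm{(iii)}$ is immediate on taking $\mathfrak{M}=\mathfrak{A}$.

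The step I expect to be the main obstacle is $\textrm{(iii)}\Rightarrow\textrm{(i)}$. The incomparability clause is again free from clause (iii) of Definition~\ref{guniv*}, so it remains to show that no $A\in\mathfrak{A}$ is shifted. I argue by contradiction: if some $A\in\mathfrak{A}$ were shifted, Definition~\ref{shemb} provides $A'\subseteq A$ with $A'\simeq A$ and $A\setminus A'\neq\varnothing$, and incomparability guarantees $A'\notin\mathfrak{A}$. The key device is the trimmed class $\mathfrak{B}=(\mathfrak{A}\setminus\{A\})\cup\{A'\}$. I then check that $\mathfrak{M}\hookrightarrow\mathfrak{B}$, using $\mathfrak{M}\hookrightarrow\mathfrak{A}$ and replacing any target $A$ by its isometric copy $A'$, and that $\mathfrak{B}\preceq\mathfrak{A}$, using $A'\subseteq A$. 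Minimality \eqref{impl} then yields $\mathfrak{A}\preceq\mathfrak{B}$, so there is $Y\in\mathfrak{B}$ with $A\subseteq Y$: the case $Y\in\mathfrak{A}\setminus\{A\}$ contradicts incomparability, while $Y=A'$ contradicts $A\setminus A'\neq\varnothing$. The delicate points are confirming $A'\notin\mathfrak{A}$ and verifying that universality of $\mathfrak{M}$ is not destroyed when $A$ is swapped for $A'$; once the trimming construction is in place the two contradictions are routine.
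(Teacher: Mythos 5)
Your proposal is correct and takes essentially the same route as the paper: the same cyclic chain $\textrm{(i)}\Rightarrow\textrm{(ii)}\Rightarrow\textrm{(iii)}\Rightarrow\textrm{(i)}$, the same compose-embedding-with-inclusion argument exploiting the not-shifted hypothesis in the first step, and the same trimmed class $(\mathfrak{A}\setminus\{A\})\cup\{A'\}$ driving the contradiction in the last step. The only cosmetic difference is that the paper verifies the strengthened condition $(\textrm{ii}^{*})$ of Remark~\ref{eqvrem} whereas you verify \eqref{impl} directly; since your argument actually yields $B=A\in\mathfrak{B}$, i.e. the inclusion $\mathfrak{A}\subseteq\mathfrak{B}$, the two are the same argument.
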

\begin{proof}
Let $\textrm{(i)}$ hold. We prove that $\mathfrak{A}$ is minimal $\mathfrak{A}$-universal. It suffices to show that $(\textrm{ii}^{*})$ holds with $\mathfrak{M}=\mathfrak{A}.$ Suppose that for a given class $\mathfrak{B}$ of metric spaces we have $$\mathfrak{A}\hookrightarrow\mathfrak{B}\quad\mbox{and}\quad \mathfrak{B}\preceq\mathfrak{A}.$$ Let $X\in\mathfrak{A}.$ Since $\mathfrak{A}\hookrightarrow\mathfrak{B},$ there is $Y\in\mathfrak{B}$ such that $X\hookrightarrow Y$. The condition $\mathfrak{B}\preceq\mathfrak{A}$ implies that there is $W\in\mathfrak{A}$ satisfying the inclusion $Y\subseteq W.$ The last inclusion together with $X\hookrightarrow Y$ imply $X\hookrightarrow W.$ Hence $X$ and $W$ are comparable. By condition $\textrm{(i)},$ every two distinct $X, W\in\mathfrak{A}$ are incomparable. Hence, $X=W$ holds. Let us consider an arbitrary $f: X\hookrightarrow Y$. Since $X$ is not shifted, the isometric embedding
$$
\begin{array}{cccc}
X\xrightarrow {\ \ \mbox{\emph{f}}\ \ } &
Y\xrightarrow {\ \ \mbox{\emph{in}}\ \ } &
W \xrightarrow {\ \ \mbox{\emph{id}}\ \ } X,
\end{array}
$$
where $in$ is the natural inclusion of $Y$ in $W$ and $id$ is the identical map, is an isometry.
Hence $in(Y)=W$, i.e. the equality $Y=X$ holds. Consequently, $\mathfrak{A}\subseteq\mathfrak{B}$ holds.
Thus we have condition $(\textrm{ii}^{*})$ with $\mathfrak{M}=\mathfrak{A}$.

The implication $\textrm{(ii)}\Rightarrow \textrm{(iii)}$ is evident. Let us prove $\textrm{(iii)}\Rightarrow \textrm{(i)}.$ Suppose that there is a class $\mathfrak{M}$ of metric spaces such that $\mathfrak{A}$ is minimal $\mathfrak{M}$-universal. It suffices to show that every $X\in\mathfrak{A}$ is not shifted. Suppose, contrary, that $X\in\mathfrak{A}$ is shifted. Let $X_{0}\subseteq X,$ $X\ne X_{0}$ and $X\hookrightarrow X_{0}.$ Write
\begin{equation}\label{eqvvsp}
\mathfrak{A}^{0}:=(\mathfrak{A}\setminus\{X\})\cup\{X_{0}\}.
\end{equation}
Since $X\hookrightarrow X_{0}$ and $\mathfrak{M}\hookrightarrow\mathfrak{A},$ we have $\mathfrak{M}\hookrightarrow\mathfrak{A}^{0}.$ It is clear that $\mathfrak{A}^{0}\preceq\mathfrak{A}.$ Using \eqref{impl} with $\mathfrak{B}=\mathfrak{A}^{0},$ we obtain $\mathfrak{A}\preceq\mathfrak{A}^{0}.$ Hence there is $Y\in\mathfrak{A}^{0}$ such that $X\subseteq Y.$ It follows directly from \eqref{eqvvsp}, that $Y\ne X_{0}$. Thus we have $Y\in\mathfrak{A}^{0}\setminus\{X_{0}\},$ i.e. $Y\in\mathfrak{A}\setminus\{X\}.$ Hence, $X$ and $Y$ are some distinct elements of $\mathfrak{A}.$ The implication $$(X\subseteq Y)\Rightarrow (X\hookrightarrow Y)$$ and condition (iii) of Definition~\ref{guniv*} show that $X=Y.$ Hence $X\in\mathfrak{A}^{0}$, contrary to \eqref{eqvvsp}. The implication $\textrm{(iii)}\Rightarrow \textrm{(i)}$ follows.
\end{proof}

As was shown in Example~\ref{ex2}, there is a family $\mathfrak{Y}$ of metric spaces and there are minimal $\mathfrak{Y}$-universal metric spaces $X$ and $Y$ such that $X\not\simeq Y$. The situation is much more satisfactory when we consider the minimal $\mathfrak{Y}$-universal classes of metric spaces: such classes are always isomorphic. For the exact formulation of this result we need the following definition.

\begin{defin}
Let $\mathfrak{A}$ and $\mathfrak{B}$ be classes of metric spaces. A map $F:\mathfrak{A}\rightarrow\mathfrak{B}$ is an isomorphism if it is bijective and $F(X)\simeq X$ holds for every $X\in\mathfrak{A}.$
\end{defin}

We shall say that classes $\mathfrak{A}$ and $\mathfrak{B}$ of metric spaces are isomorphic and write $\mathfrak{A}\simeq \mathfrak{B}$ if there is an isomorphism $F:\mathfrak{A}\rightarrow\mathfrak{B}.$

\begin{rem}\label{R3.1}
If $\mathfrak{M}, \mathfrak{A}$ and $\mathfrak{B}$ are classes of metric spaces and $\mathfrak{A}\simeq \mathfrak{B},$ and $\mathfrak{A}$ is (minimal) $\mathfrak{M}$-universal, then $\mathfrak{B}$ is also (minimal) $\mathfrak{M}$-universal.
\end{rem}

\begin{rem}\label{R3.2}
Let $X$ and $Y$ be metric spaces and let $\mathfrak{A}=\{X\}$ and $\mathfrak{B}=\{Y\}.$ Then the classes $\mathfrak{A}$ and $\mathfrak{B}$ are isomorphic if and only if $X$ and $Y$ are isometric.
\end{rem}

\begin{thm}\label{T3.main}
Let $\mathfrak{M}$ be a non--empty class of metric spaces. The following statements hold.
\begin{enumerate}
\item[\rm(i)]\textit{If $\mathfrak{A}$ and $\mathfrak{B}$ are minimal $\mathfrak{M}$-universal classes of metric spaces, then $\mathfrak{A}\simeq \mathfrak{B}$.}

\item[\rm(ii)]\textit{If there exists a minimal $\mathfrak{M}$-universal class $\mathfrak{A}$ of metric spaces, then there exists $\mathfrak{B}\subseteq\mathfrak{M}$ such that $\mathfrak{A}\simeq \mathfrak{B}.$}
\end{enumerate}
\end{thm}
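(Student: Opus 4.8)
The plan is to reduce both parts to a single key fact: if $\mathfrak{A}$ is minimal $\mathfrak{M}$-universal, then every $X\in\mathfrak{A}$ is isometric to some member of $\mathfrak{M}$. Granting this, part~(ii) is almost immediate and part~(i) follows from a short comparison argument, so I would concentrate the work on establishing this fact. The hard part is precisely here, and within it the obstacle is the mismatch between the embedding relation $\hookrightarrow$ (which is all that the hypothesis $\mathfrak{M}\hookrightarrow\mathfrak{A}$ supplies) and the literal-subspace relation $\preceq$ appearing in condition~(ii) of Definition~\ref{guniv*}.

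To prove the key fact I would bridge that gap by realizing $\mathfrak{M}$ as honest subspaces of members of $\mathfrak{A}$. For each $M\in\mathfrak{M}$ fix an isometric embedding $\phi_M:M\hookrightarrow A_M$ with $A_M\in\mathfrak{A}$ (possible since $\mathfrak{M}\hookrightarrow\mathfrak{A}$), put $M':=\phi_M(M)\subseteq A_M$, and set $\mathfrak{M}':=\{M':M\in\mathfrak{M}\}$. Then $M'\simeq M$ gives both $\mathfrak{M}\hookrightarrow\mathfrak{M}'$ and $\mathfrak{M}'\hookrightarrow\mathfrak{M}$, while by construction $\mathfrak{M}'\preceq\mathfrak{A}$. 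By Proposition~\ref{p3.3v}, $\mathfrak{A}$ is then also minimal $\mathfrak{M}'$-universal. Now I apply the equivalent form $(\mathrm{ii}^{*})$ of condition~(ii), from Remark~\ref{eqvrem}, to the pair $(\mathfrak{M}',\mathfrak{A})$ with the test class $\mathfrak{B}=\mathfrak{M}'$: since $\mathfrak{M}'\hookrightarrow\mathfrak{M}'$ holds trivially and $\mathfrak{M}'\preceq\mathfrak{A}$ was just arranged, $(\mathrm{ii}^{*})$ yields $\mathfrak{A}\subseteq\mathfrak{M}'$. Hence every $X\in\mathfrak{A}$ equals some $M'$, so $X\simeq M\in\mathfrak{M}$, which is the key fact. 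The passage from embeddings to the inclusions that condition~(ii) requires is exactly what the subspace realization $\mathfrak{M}'$ together with Proposition~\ref{p3.3v} accomplishes.

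For part~(ii) I would use the key fact to pick, for each $X\in\mathfrak{A}$, a space $M_X\in\mathfrak{M}$ with $M_X\simeq X$. By Proposition~\ref{vspompr*} the members of $\mathfrak{A}$ are pairwise incomparable, hence pairwise non-isometric, so distinct $X$ receive distinct $M_X$; thus $X\mapsto M_X$ is a bijection of $\mathfrak{A}$ onto $\mathfrak{B}:=\{M_X:X\in\mathfrak{A}\}\subseteq\mathfrak{M}$ with $M_X\simeq X$, i.e. an isomorphism $\mathfrak{A}\simeq\mathfrak{B}$. For part~(i), let $\mathfrak{A},\mathfrak{B}$ both be minimal $\mathfrak{M}$-universal. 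Applying the key fact to $\mathfrak{A}$ together with $\mathfrak{M}\hookrightarrow\mathfrak{B}$ gives, for each $X\in\mathfrak{A}$, some $M\in\mathfrak{M}$ with $X\simeq M$ and then some $Y\in\mathfrak{B}$ with $M\hookrightarrow Y$, whence $X\hookrightarrow Y$; thus $\mathfrak{A}\hookrightarrow\mathfrak{B}$, and symmetrically $\mathfrak{B}\hookrightarrow\mathfrak{A}$. Now fix $X\in\mathfrak{A}$, choose $Y\in\mathfrak{B}$ with $X\hookrightarrow Y$ and then $X'\in\mathfrak{A}$ with $Y\hookrightarrow X'$; then $X\hookrightarrow X'$ forces $X=X'$ by incomparability in $\mathfrak{A}$, so $Y\hookrightarrow X$ as well. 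Since $X$ is not shifted (Proposition~\ref{vspompr*}), the self-embedding $X\hookrightarrow Y\hookrightarrow X$ is onto, so $Y\hookrightarrow X$ is a surjective isometric embedding and $X\simeq Y$. Incomparability in $\mathfrak{B}$ makes this $Y$ unique, so $F(X):=Y$ is a well-defined map with $F(X)\simeq X$; it is injective because isometric members of $\mathfrak{A}$ coincide, and surjective by running the same argument from each $Y\in\mathfrak{B}$. Hence $F$ is an isomorphism and $\mathfrak{A}\simeq\mathfrak{B}$. Everything after the key fact is bookkeeping with the not-shifted and incomparability properties furnished by Proposition~\ref{vspompr*}.
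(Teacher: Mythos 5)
Your proof is correct, and its core --- the ``key fact'' that every member of a minimal $\mathfrak{M}$-universal class $\mathfrak{A}$ is isometric to a member of $\mathfrak{M}$ --- is established by the same device as the paper's proof of part (ii): your class $\mathfrak{M}'$ is the paper's $\mathfrak{M}_{\mathfrak{A}}$, and both arguments feed the realized class into the equivalent form $(\textrm{ii}^{*})$ of Remark~\ref{eqvrem} to conclude $\mathfrak{A}\subseteq\mathfrak{M}'$. Your variant is in one respect tidier: the paper asserts $\mathfrak{M}_{\mathfrak{A}}\simeq\mathfrak{M}$, which tacitly requires the assignment $M\mapsto M'$ to be injective (it need not be when $\mathfrak{M}$ contains distinct isometric spaces), whereas you only use the mutual embeddability $\mathfrak{M}\hookrightarrow\mathfrak{M}'$ and $\mathfrak{M}'\hookrightarrow\mathfrak{M}$, routed through Proposition~\ref{p3.3v}, which holds regardless; your part (ii) then recovers the bijection onto a subclass of $\mathfrak{M}$ via incomparability, repairing this point. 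Where you genuinely diverge is part (i). The paper runs the realization trick a second time: assuming via (ii) that $\mathfrak{A},\mathfrak{B}\subseteq\mathfrak{M}$, it realizes $\mathfrak{A}$ as a class $\tilde{\mathfrak{A}}$ of literal subspaces of members of $\mathfrak{B}$, notes that $\tilde{\mathfrak{A}}$ is still minimal $\mathfrak{M}$-universal, obtains $\mathfrak{B}\subseteq\tilde{\mathfrak{A}}$ from $(\textrm{ii}^{*})$, and then the equality $\tilde{\mathfrak{A}}=\mathfrak{B}$ from Corollary~\ref{Lmin}. You instead deduce $\mathfrak{A}\hookrightarrow\mathfrak{B}$ and $\mathfrak{B}\hookrightarrow\mathfrak{A}$ from the key fact and build the isomorphism pointwise, using incomparability and the not-shifted property from Proposition~\ref{vspompr*}; your inline observation that $X$ not shifted together with $X\hookrightarrow Y\hookrightarrow X$ forces $X\simeq Y$ is exactly the paper's Lemma~\ref{L:isom}, which the paper states only later in the section and does not use here. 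The paper's route stays entirely inside the $(\textrm{ii}^{*})$/Corollary~\ref{Lmin} machinery and yields a literal equality of classes; yours is more elementary and self-contained after the key fact, at the price of an explicit element-by-element verification of well-definedness and bijectivity that the paper's equality makes unnecessary.
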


\begin{proof}
(ii). Let $\mathfrak{A}$ be a minimal $\mathfrak{M}$-universal class of metric spaces. Then there is a class $\mathfrak{M}_{\mathfrak{A}}$ of metric spaces such that $$\mathfrak{M}_{\mathfrak{A}}\simeq\mathfrak{M}\quad\mbox{and}\quad\mathfrak{M}_{\mathfrak{A}}\preceq\mathfrak{A}.$$
Since $\mathfrak{M}_{\mathfrak{A}}\preceq\mathfrak{M}_{\mathfrak{A}}$  evidently holds, condition $(\textrm{ii}^{*})$ implies $\mathfrak{A}\subseteq\mathfrak{M}_{\mathfrak{A}}.$ To prove it we use \eqref{impl2} with $\mathfrak{M}_{\mathfrak{A}}$ instead of $\mathfrak{M}$ and $\mathfrak{B}.$ Now statement (ii) follows from $\mathfrak{M}_{\mathfrak{A}}\simeq\mathfrak{M}$ and $\mathfrak{A}\subseteq\mathfrak{M}_{\mathfrak{A}}.$

(i). Let $\mathfrak{A}$ and $\mathfrak{B}$ be minimal $\mathfrak{M}$-universal classes of metric spaces. We prove that $\mathfrak{A}\simeq\mathfrak{B}$. Using statement (ii), we may suppose that $$\mathfrak{A}\subseteq\mathfrak{M}\quad\mbox{and}\quad\mathfrak{B}\subseteq\mathfrak{M}.$$
For every $X\in\mathfrak{A}$ there are $Y$ and $Y_X$ such that $$Y\in\mathfrak{B}\quad\mbox{and}\quad Y_{X}\subseteq Y\quad\mbox{and}\quad Y_{X}\simeq X.$$
Define a class $\tilde{\mathfrak{A}}$ as $\{Y_{X}: X\in\mathfrak{A}\}$. It is clear that $\tilde{\mathfrak{A}}\simeq\mathfrak{A}$ and $\tilde{\mathfrak{A}}\preceq\mathfrak{B}.$
Since $\mathfrak{A}$ is minimal $\mathfrak{M}$-universal and $\tilde{\mathfrak{A}}\simeq\mathfrak{A},$ the class $\tilde{\mathfrak{A}}$ is also minimal $\mathfrak{M}$-universal. In particular, we have $\mathfrak{M}\hookrightarrow\tilde{\mathfrak{A}}$. Using condition $(\textrm{ii}^{*})$, we obtain $\mathfrak{B}\subseteq\tilde{\mathfrak{A}}.$ By Corollary~\ref{Lmin}, the equality $\tilde{\mathfrak{A}}=\mathfrak{B}$ follows. This equality and $\tilde{\mathfrak{A}}\simeq\mathfrak{A}$ imply $\mathfrak{B}\simeq\mathfrak{A}.$
\end{proof}

\begin{cor}\label{L3.vsp2}
Let $\mathfrak{M}$ be a class of metric spaces and let $\mathfrak{A}$ and $\mathfrak{B}$ be two minimal $\mathfrak{M}$-universal classes of metric spaces. Then $\mathfrak{A}$ and $\mathfrak{B}$ are isomorphic.
\end{cor}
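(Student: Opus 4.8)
The plan is to read off this corollary from Theorem~\ref{T3.main}(i), whose conclusion is literally the assertion $\mathfrak{A}\simeq\mathfrak{B}$ that we want; the only gap is that the theorem presupposes $\mathfrak{M}\ne\varnothing$, whereas the corollary allows an arbitrary class $\mathfrak{M}$. So I would split the argument into two cases according to whether $\mathfrak{M}$ is empty.

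First, if $\mathfrak{M}\ne\varnothing$, there is nothing new to do: Theorem~\ref{T3.main}(i) applies verbatim to the two minimal $\mathfrak{M}$-universal classes $\mathfrak{A}$ and $\mathfrak{B}$ and delivers $\mathfrak{A}\simeq\mathfrak{B}$ at once.

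The remaining case is $\mathfrak{M}=\varnothing$, which I would dispose of by showing that the empty class is the unique minimal $\varnothing$-universal class. Indeed, when $\mathfrak{M}=\varnothing$ the relation $\mathfrak{M}\hookrightarrow\mathfrak{C}$ holds vacuously for every class $\mathfrak{C}$, so condition (ii) of Definition~\ref{guniv*} reduces to the implication $(\mathfrak{C}\preceq\mathfrak{A})\Rightarrow(\mathfrak{A}\preceq\mathfrak{C})$ for all $\mathfrak{C}$. Testing this with $\mathfrak{C}=\varnothing$, for which $\varnothing\preceq\mathfrak{A}$ is vacuously true, I obtain $\mathfrak{A}\preceq\varnothing$; but by the definition of $\preceq$ this forces every $X\in\mathfrak{A}$ to be a subset of some member of the empty class, which is impossible unless $\mathfrak{A}=\varnothing$. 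The identical argument gives $\mathfrak{B}=\varnothing$, so $\mathfrak{A}=\mathfrak{B}=\varnothing$ and the empty map serves as the required isomorphism.

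I do not expect any genuine obstacle here, since the substance lives in Theorem~\ref{T3.main}; the only point that needs a moment's care is verifying that the degenerate case $\mathfrak{M}=\varnothing$ really collapses every minimal universal class to $\varnothing$, that is, that choosing the test class $\mathfrak{C}=\varnothing$ in condition (ii) is legitimate and that $\mathfrak{A}\preceq\varnothing$ can hold only for the empty class.
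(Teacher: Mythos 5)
Your proposal is correct and follows essentially the same route as the paper: Corollary~\ref{L3.vsp2} appears there with no separate proof, being read off directly from Theorem~\ref{T3.main}(i). Your extra verification that for $\mathfrak{M}=\varnothing$ the empty class is the unique minimal $\varnothing$-universal class (via condition (ii) of Definition~\ref{guniv*} tested against the empty class, forcing $\mathfrak{A}\preceq\varnothing$ and hence $\mathfrak{A}=\varnothing$) is also correct, and it quietly fills the gap created by the corollary dropping the theorem's non-emptiness hypothesis --- a point the paper passes over without comment.
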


The following proposition is a particular case of Theorem~\ref{T3.main}.

\begin{prop}\label{p3.15}
Let $\mathfrak{M}$ be an arbitrary class of metric spaces. If there exists a metric space $X$ such that the class $\{X\}$ is minimal $\mathfrak{M}$-universal, then there exists a minimal $\mathfrak{M}$-universal metric space $Y\in\mathfrak{M}$ and $Y\simeq X$.
\end{prop}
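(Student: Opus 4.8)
The plan is to deduce Proposition~\ref{p3.15} directly from Theorem~\ref{T3.main} applied to the singleton class $\mathfrak{A}=\{X\}$. By statement (ii) of that theorem, since $\{X\}$ is minimal $\mathfrak{M}$-universal, there exists a subclass $\mathfrak{B}\subseteq\mathfrak{M}$ with $\{X\}\simeq\mathfrak{B}$. An isomorphism between classes is a bijection, so $\mathfrak{B}$ must be a singleton, say $\mathfrak{B}=\{Y\}$ with $Y\in\mathfrak{M}$, and the isomorphism condition forces $Y\simeq X$ (this is exactly the content of Remark~\ref{R3.2}). This already gives the desired $Y\in\mathfrak{M}$ isometric to $X$.

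It remains to verify that this $Y$ is a minimal $\mathfrak{M}$-universal \emph{metric space} in the sense of Definition~\ref{muniv}, not merely a member of a minimal universal class. First I would record that $Y$ is $\mathfrak{M}$-universal: since $\{X\}$ is $\mathfrak{M}$-universal we have $\mathfrak{M}\hookrightarrow\{X\}$, i.e. every $Z\in\mathfrak{M}$ satisfies $Z\hookrightarrow X$, and $X\simeq Y$ then yields $Z\hookrightarrow Y$. For minimality, I would invoke Proposition~\ref{vspompr*}: condition (iii) there is satisfied by $\{X\}$, so condition (i) holds, which in particular says every element of $\{X\}$ is not shifted; hence $X$ is not shifted, and the isometric copy $Y$ is not shifted either. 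Since $Y\in\mathfrak{M}$ and $\mathfrak{M}\hookrightarrow Y$, Theorem~\ref{mainth1} (with $\mathfrak{Y}=\mathfrak{M}$) gives the equivalence of ``$Y$ is not shifted'' with ``$Y$ is minimal $\mathfrak{M}$-universal'', so $Y$ is minimal $\mathfrak{M}$-universal.

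I expect no serious obstacle here, as the statement is advertised as ``a particular case'' of Theorem~\ref{T3.main}; the only point requiring care is the translation between the class-level notion of minimality (Definition~\ref{guniv*}) and the space-level notion (Definition~\ref{muniv}). The cleanest route is to let Theorem~\ref{T3.main}(ii) produce $Y\in\mathfrak{M}$ with $Y\simeq X$, and then let Theorem~\ref{mainth1} upgrade the ``not shifted'' property of $Y$ to minimal $\mathfrak{M}$-universality. One should be slightly attentive to the degenerate case $X=\varnothing$, but this is covered since the empty space is not shifted (Example~\ref{ex1*}) and the argument via Theorem~\ref{mainth1} goes through unchanged.
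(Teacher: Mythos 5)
Your proposal is correct and follows essentially the same route as the paper's own proof: Theorem~\ref{T3.main}(ii) plus Remark~\ref{R3.2} to extract $Y\in\mathfrak{M}$ with $Y\simeq X$, then Proposition~\ref{vspompr*} to see that $X$ (hence $Y$) is not shifted, and finally Theorem~\ref{mainth1} to upgrade this to minimal $\mathfrak{M}$-universality of $Y$. The only addition is your remark on the case $X=\varnothing$, which the paper leaves implicit; otherwise the two arguments coincide step for step.
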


\begin{proof}
Let $X$ be a metric space and let the class $\mathfrak{A}=\{X\}$ be minimal $\mathfrak{M}$-universal. Theorem~\ref{T3.main} implies that there exists $\mathfrak{B}\subseteq\mathfrak{M}$ such that $\mathfrak{A}\simeq \mathfrak{B}.$ Since $\mathfrak{A}$ is an one-element class, $\mathfrak{A}\simeq \mathfrak{B}$ and  $\mathfrak{B}\subseteq\mathfrak{M},$ there is $Y\in\mathfrak{M}$ such that $\mathfrak{B}=\{Y\}.$
As was noted in Remark~\ref{R3.2}, from $\{Y\}\simeq\{X\}$ follows that $X\simeq Y.$ Since $\mathfrak{M}\hookrightarrow\mathfrak{A},$ we have $\mathfrak{M}\hookrightarrow X.$ Now from $\mathfrak{M}\hookrightarrow X$ and $X\simeq Y$ it follows that $\mathfrak{M}\hookrightarrow Y.$ Using Proposition~\ref{vspompr*}, we see that $X$ is not shifted. Hence $Y$ is also not shifted. By Theorem~\ref{mainth1}, $Y$ is minimal $\mathfrak{M}$-universal.
\end{proof}

A natural question is how the minimal $\mathfrak{M}$-universal subclasses of $\mathfrak{M}$ can be described.
\begin{prop}\label{p3.15a}
Let $\mathfrak{M}$ be a class of metric spaces and let $\mathfrak{A}\subseteq\mathfrak{M}$. If $\mathfrak{M}\hookrightarrow\mathfrak{A}$ holds, then the following conditions are equivalent:
\begin{enumerate}
\item[\rm(i)]\textit{$\mathfrak{A}$ is minimal $\mathfrak{M}$-universal;}
\item[\rm(i)]\textit{All metric spaces $X\in\mathfrak{A}$ are not shifted and every two distinct $Y, Z\in\mathfrak{A}$ are incomparable.}
\end{enumerate}
\end{prop}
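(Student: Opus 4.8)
The plan is to reduce the equivalence to two earlier structural results: Proposition~\ref{vspompr*}, which characterizes those classes that are minimal universal for some index class (equivalently, for themselves), and Proposition~\ref{p3.3v}, which transfers minimal universality between two mutually embeddable index classes. The decisive observation is that the condition ``all $X\in\mathfrak{A}$ are not shifted and distinct members of $\mathfrak{A}$ are incomparable'' is word-for-word condition (i) of Proposition~\ref{vspompr*}.

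For the forward implication, suppose $\mathfrak{A}$ is minimal $\mathfrak{M}$-universal. Then in particular there exists a class, namely $\mathfrak{M}$ itself, for which $\mathfrak{A}$ is minimal universal, so condition (iii) of Proposition~\ref{vspompr*} holds. The implication (iii)$\Rightarrow$(i) of that proposition immediately gives that every $X\in\mathfrak{A}$ is not shifted and that distinct $Y,Z\in\mathfrak{A}$ are incomparable, which is the second condition of the present statement. No additional argument is required here.

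For the converse, assume every member of $\mathfrak{A}$ is not shifted and distinct members are incomparable. By the implication (i)$\Rightarrow$(ii) of Proposition~\ref{vspompr*}, the class $\mathfrak{A}$ is minimal $\mathfrak{A}$-universal, i.e.\ minimal universal for itself. It then remains to upgrade ``minimal $\mathfrak{A}$-universal'' to ``minimal $\mathfrak{M}$-universal,'' and I would do this by applying Proposition~\ref{p3.3v} with $\mathfrak{N}:=\mathfrak{A}$ and $\mathcal{P}:=\mathfrak{M}$. The relation $\mathfrak{M}\hookrightarrow\mathfrak{A}$ is exactly the standing hypothesis of the proposition, while $\mathfrak{A}\hookrightarrow\mathfrak{M}$ follows from $\mathfrak{A}\subseteq\mathfrak{M}$ together with the identity embedding $X\hookrightarrow X$ for each $X\in\mathfrak{A}$. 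Since $\mathfrak{A}$ is minimal $\mathfrak{A}$-universal and these two embedding relations hold, Proposition~\ref{p3.3v} yields that $\mathfrak{A}$ is minimal $\mathcal{P}$-universal, that is, minimal $\mathfrak{M}$-universal.

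The proof is essentially a bookkeeping combination of the two cited propositions, so I do not anticipate a genuine obstacle. The one point that merits care is getting the direction of the embeddings right when matching the hypotheses of Proposition~\ref{p3.3v}: one must notice that the inclusion $\mathfrak{A}\subseteq\mathfrak{M}$ supplies $\mathfrak{A}\hookrightarrow\mathfrak{M}$ via identity maps, so that $\mathfrak{A}$ and $\mathfrak{M}$ are mutually embeddable, which is precisely what lets minimal universality for $\mathfrak{A}$ be transported to minimal universality for $\mathfrak{M}$.
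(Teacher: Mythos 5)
Your proposal is correct and follows essentially the same route as the paper: both reduce the statement to Proposition~\ref{vspompr*} (the characterization via not-shifted, pairwise incomparable members) together with Proposition~\ref{p3.3v} applied using the mutual embeddability $\mathfrak{M}\hookrightarrow\mathfrak{A}$ and $\mathfrak{A}\hookrightarrow\mathfrak{M}$ (the latter from $\mathfrak{A}\subseteq\mathfrak{M}$). The only cosmetic difference is that your forward direction invokes the implication (iii)$\Rightarrow$(i) of Proposition~\ref{vspompr*} directly instead of first passing through minimal $\mathfrak{A}$-universality, which is an equally valid bookkeeping choice.
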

\begin{proof}
Suppose $\mathfrak{M}\hookrightarrow\mathfrak{A}$ holds. It is clear that $$(\mathfrak{A}\subseteq\mathfrak{M})\Rightarrow (\mathfrak{A}\hookrightarrow\mathfrak{M}).$$ Consequently, by
Proposition~\ref{p3.3v}, $\mathfrak{A}$ is minimal $\mathfrak{M}$-universal if and only if $\mathfrak{A}$ is minimal
$\mathfrak{A}$-universal. Now the logical equivalence $(\textrm{i})\Leftrightarrow(\textrm{ii})$ follows from Proposition~\ref{vspompr*}.
\end{proof}

We shall give a description of minimal $\mathfrak{M}$-universal subclasses $\mathfrak{A}$ of $\mathfrak{M}$ in the case when $\mathfrak{M}$ is a set of metric spaces.

\begin{defin}
A binary relation $\leq$ on a set $P$ is said to be a quasi-order if $\leq$ is reflexive and transitive. An antisymmetric quasi-order $\leq$ on $P$ is a partial order on $P.$ Recall that a binary relation $\leq$ on $P$ is antisymmetric if  $$(x\leq y \, \wedge  \, y\leq x)\Rightarrow (x=y)$$ holds for all $x, y\in P.$
\end{defin}

\begin{lem}\label{L3.main}
If $\leq$ is a quasi-order on a set $P,$ then a relation $\Theta_{\leq}$ defined by
\begin{equation}\label{l3eq1}
(a \,\, \Theta_{\leq}\,\, b)\Leftrightarrow (a\leq b \, \wedge  \, b\leq a)
\end{equation}
is an equivalence relation on $P.$ Moreover, a relation $\sqsubseteq$ defined on the quotient set $P/ \Theta_{\leq}$ as
\begin{equation}\label{l3eq2}
([a]_{\Theta_{\leq}}\sqsubseteq [b]_{\Theta_{\leq}})\Leftrightarrow (\exists\, x\in [a]_{\Theta_{\leq}} \,\wedge\, \exists\, y\in [b]_{\Theta_{\leq}}: \, x\leq y)
\end{equation}
is a correctly defined partial order on $P/\Theta_{\leq}$.
\end{lem}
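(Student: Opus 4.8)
The plan is to treat the two assertions separately, beginning with the easy one. First I would verify that $\Theta_{\leq}$ is an equivalence relation directly from the two axioms of a quasi-order. Reflexivity is immediate: since $\leq$ is reflexive, $a\leq a$ holds, so the right-hand side of \eqref{l3eq1} is met with $b=a$. Symmetry is built into the definition, because the conjunction $a\leq b\wedge b\leq a$ is unchanged under interchange of $a$ and $b$. For transitivity, if $a\,\Theta_{\leq}\,b$ and $b\,\Theta_{\leq}\,c$, then all four relations $a\leq b$, $b\leq a$, $b\leq c$, $c\leq b$ hold, and two applications of transitivity of $\leq$ give $a\leq c$ and $c\leq a$, hence $a\,\Theta_{\leq}\,c$. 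This part is entirely routine. Throughout the rest of the argument I abbreviate $[a]_{\Theta_{\leq}}$ to $[a]$.

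The substantive content is the second assertion, and the key step I would isolate first is a representative-independence lemma asserting that
\[
([a]\sqsubseteq[b])\Leftrightarrow\bigl(x\leq y\text{ for all }x\in[a],\,y\in[b]\bigr).
\]
The direction ``$\Leftarrow$'' is trivial, since equivalence classes are non-empty. For ``$\Rightarrow$'', suppose $x_0\leq y_0$ for some witnesses $x_0\in[a]$ and $y_0\in[b]$, and let $x\in[a]$ and $y\in[b]$ be arbitrary. Then $x\,\Theta_{\leq}\,x_0$ gives $x\leq x_0$ and $y_0\,\Theta_{\leq}\,y$ gives $y_0\leq y$, so transitivity applied along $x\leq x_0\leq y_0\leq y$ yields $x\leq y$. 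This lemma shows that the existential definition \eqref{l3eq2} is equivalent to the corresponding universal statement, which is precisely what guarantees that $\sqsubseteq$ is a correctly defined relation on $P/\Theta_{\leq}$ and makes the remaining verifications clean.

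With this in hand I would check that $\sqsubseteq$ is reflexive and transitive. Reflexivity follows from \eqref{l3eq2} by taking $x=y=a$ together with $a\leq a$. For transitivity, assume $[a]\sqsubseteq[b]$ and $[b]\sqsubseteq[c]$ and choose witnesses $x\leq y$ with $x\in[a]$, $y\in[b]$, and $y'\leq z$ with $y'\in[b]$, $z\in[c]$; since $y\,\Theta_{\leq}\,y'$ we have $y\leq y'$, and transitivity along $x\leq y\leq y'\leq z$ produces $x\leq z$, whence $[a]\sqsubseteq[c]$.

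The main obstacle, and the only place where the passage to the quotient is genuinely exploited, is antisymmetry of $\sqsubseteq$. Here I would invoke the representative-independence lemma: if $[a]\sqsubseteq[b]$ and $[b]\sqsubseteq[a]$, then applying the lemma to the representatives $a$ and $b$ themselves gives both $a\leq b$ and $b\leq a$, so $a\,\Theta_{\leq}\,b$ by \eqref{l3eq1}, which means $[a]=[b]$. The only care needed throughout is to keep track of which inequalities hold among the chosen representatives and to use the lemma to move freely between them; once that bookkeeping is in place, every individual step reduces to a single application of reflexivity or transitivity of $\leq$.
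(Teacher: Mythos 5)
Your proof is correct and complete: the verification that $\Theta_{\leq}$ is an equivalence relation, the representative-independence lemma (the existential condition in \eqref{l3eq2} is equivalent to the universal one), and the ensuing checks of reflexivity, transitivity and antisymmetry of $\sqsubseteq$ are all carried out accurately. Note that the paper itself gives no proof of this lemma --- it is dismissed as a standard fact from the theory of ordered sets with a reference to Davey--Priestley --- so your writeup simply supplies the omitted routine argument, and it is exactly the standard one; isolating the representative-independence lemma is a clean way to organize it, since it handles both the ``correctly defined'' clause and the antisymmetry step (the only place the passage to the quotient is genuinely used) in one stroke.
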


This lemma is a standard fact from the theory of ordered sets (see, for example, \cite[p. 141]{DP}), so that we omit the proof here.

If $X, Y$ and $Z$ are metric spaces, then we evidently have $X\hookrightarrow X$ and
$$(X\hookrightarrow Y)\, \wedge\, (Y\hookrightarrow Z)\Rightarrow (X\hookrightarrow Z).$$
Hence, if $\mathfrak{M}$ is an arbitrary non--empty set of metric spaces, then the restriction of the binary relation $\hookrightarrow$ on the set
$\mathfrak{M}\times\mathfrak{M}$ is an quasi-order on $\mathfrak{M}.$ Let us define the relations $\Theta_{\hookrightarrow}$ and $\sqsubseteq$ as in \eqref{l3eq1} and \eqref{l3eq2} respectively. The following proposition is a reformulation of Lemma~\ref{L3.main} for the case when $\leq$ is $\hookrightarrow.$

\begin{prop}
Let $\mathfrak{M}$ be an arbitrary non--empty set of metric spaces. Then $\mathfrak{M}/\Theta_{\hookrightarrow}$ is a poset with the partial order $\sqsubseteq$.
\end{prop}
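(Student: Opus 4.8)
The plan is to treat this proposition as an essentially immediate corollary of Lemma~\ref{L3.main}, since it is explicitly a reformulation of that lemma with the abstract quasi-order $\leq$ replaced by the embedding relation $\hookrightarrow$. The entire task therefore reduces to checking that the hypotheses of Lemma~\ref{L3.main} are met, i.e.\ that the restriction of $\hookrightarrow$ to $\mathfrak{M}\times\mathfrak{M}$ genuinely is a quasi-order on the \emph{set} $\mathfrak{M}$, and then invoking the lemma to read off the conclusion.

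First I would verify that $\hookrightarrow$ is reflexive and transitive on $\mathfrak{M}$. Reflexivity holds because for each $X\in\mathfrak{M}$ the identity map is an isometric embedding $X\hookrightarrow X$; transitivity holds because a composition of isometric embeddings is again an isometric embedding, giving
$$
(X\hookrightarrow Y)\wedge(Y\hookrightarrow Z)\Rightarrow(X\hookrightarrow Z).
$$
Both facts were already recorded in the paragraph preceding the statement. The essential point that makes these into a bona fide quasi-order rather than a mere relation on a class is that $\mathfrak{M}$ is assumed to be a \emph{non--empty set}: this is exactly what guarantees that $\hookrightarrow$ restricts to a binary relation on the set $\mathfrak{M}\times\mathfrak{M}$ in the sense used in Lemma~\ref{L3.main}, and that the quotient $\mathfrak{M}/\Theta_{\hookrightarrow}$ exists as a set.

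With the quasi-order in hand, I would apply Lemma~\ref{L3.main} with $P=\mathfrak{M}$ and $\leq\;=\;\hookrightarrow$. The lemma then yields at once that $\Theta_{\hookrightarrow}$, defined by \eqref{l3eq1}, is an equivalence relation on $\mathfrak{M}$, and that $\sqsubseteq$, defined by \eqref{l3eq2}, is a well-defined partial order on the quotient set $\mathfrak{M}/\Theta_{\hookrightarrow}$. Since a poset is by definition a set equipped with a partial order, the pair $(\mathfrak{M}/\Theta_{\hookrightarrow},\sqsubseteq)$ is a poset, which is the assertion. The only genuine subtlety in the argument is the set-versus-class distinction emphasized above; there is no real computational obstacle, as the correct definedness and antisymmetry of $\sqsubseteq$ are precisely the content already established in Lemma~\ref{L3.main}.
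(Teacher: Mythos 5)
Your proposal is correct and follows exactly the paper's own route: the paper likewise observes (in the paragraph preceding the proposition) that reflexivity comes from the identity embedding and transitivity from composition of isometric embeddings, so that $\hookrightarrow$ restricted to the set $\mathfrak{M}\times\mathfrak{M}$ is a quasi-order, and then treats the proposition as a direct reformulation of Lemma~\ref{L3.main} with $\leq\;=\;\hookrightarrow$. Your additional emphasis on the set-versus-class point is a fair elaboration of why the hypothesis that $\mathfrak{M}$ is a set matters, but it does not change the argument.
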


\begin{rem}If $X$ and $Y$ are isometric metric spaces, then the statements $X\hookrightarrow Y$ and $Y\hookrightarrow X$ are evident. In general, the converse is not true. Indeed, if $X=[0, \infty)$ and $Y=(0, \infty),$ then we have $X\hookrightarrow Y$ and $Y\hookrightarrow X,$ and $X\not\simeq Y.$
\end{rem}

\begin{lem}\label{L:isom}
Let $X$ and $Y$ be metric spaces. Suppose $X$ is not shifted. If there exist $f: X\hookrightarrow Y$ and $g: Y\hookrightarrow X,$ then the mappings $f$ and $g$ are isometries.
\end{lem}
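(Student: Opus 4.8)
The plan is to exploit the composition $g\circ f$, which is an isometric self embedding of $X$, together with the single hypothesis that $X$ is not shifted. Recall the observation made just after Definition~\ref{shemb}: a non--empty metric space is not shifted if and only if each of its self embeddings is an isometry, i.e. a surjection. This is the only feature of $X$ that the argument needs.

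First I would observe that $g\circ f\colon X\hookrightarrow X$ is an isometric embedding, being a composition of the two given isometric embeddings. Since $X$ is not shifted, this self embedding must be surjective, which gives the key equality $g(f(X))=X$. (In the degenerate case $X=\varnothing$ the hypothesis $g\colon Y\hookrightarrow\varnothing$ already forces $Y=\varnothing$, and both maps are the empty isometry, so I may assume $X\neq\varnothing$ and use the equivalence quoted above.)

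Next I would extract the surjectivity of $g$. Because $f(X)\subseteq Y$, the chain $X=g(f(X))\subseteq g(Y)\subseteq X$ forces $g(Y)=X$, so $g$ is onto and hence an isometry. Finally I would deduce the surjectivity of $f$ from the injectivity of $g$: the two equalities $g(f(X))=X$ and $g(Y)=X$ give $g(f(X))=g(Y)$, and since $g$ is injective this yields $f(X)=Y$, so $f$ is onto and therefore an isometry as well.

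I do not expect any real obstacle, as the whole argument is short. The only step deserving a moment of care is the last one, where surjectivity of $f$ is obtained not from any ``not shifted'' property of $Y$ (which is not assumed) but purely from the injectivity of $g$ applied to the equality $g(f(X))=g(Y)$; keeping the roles of $f$ and $g$ straight there is the one place a careless reader might stumble.
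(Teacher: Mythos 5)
Your proof is correct and follows essentially the same route as the paper: both exploit the fact that $g\circ f\colon X\hookrightarrow X$ must be an isometry because $X$ is not shifted, then read off surjectivity of $g$ from $g(f(X))=X$ and surjectivity of $f$ from the injectivity of $g$. Your write-up merely spells out the final deductions (and the empty-space case) more explicitly than the paper does.
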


\begin{proof}
Since $X$ is not shifted, the mapping
$$\begin{array}{cccc}
X & \xhookrightarrow{f} &
Y\xhookrightarrow{g} & X
\end{array}$$
is an isometry. Hence $f$ and $g$ are surjections. The surjective isometric embeddings are isometries.
\end{proof}

Lemma~\ref{L:isom} implies the following corollary.

\begin{cor}\label{C3}
Let $\mathfrak{M}$ be an arbitrary non--empty set of metric spaces. If $$Y\in [X]_{\Theta_{\hookrightarrow}}$$ and $X$ is not shifted, then $Y$ is not shifted and $X\simeq Y.$
\end{cor}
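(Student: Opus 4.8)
The plan is to reduce the entire statement to Lemma~\ref{L:isom}, which is tailor-made for exactly this situation. First I would unpack the hypothesis $Y\in [X]_{\Theta_{\hookrightarrow}}$. By the definition~\eqref{l3eq1} of the equivalence relation $\Theta_{\hookrightarrow}$ (with $\leq$ taken to be $\hookrightarrow$), membership $Y\in [X]_{\Theta_{\hookrightarrow}}$ means precisely that $X\,\Theta_{\hookrightarrow}\,Y$, i.e. that $X\hookrightarrow Y$ and $Y\hookrightarrow X$ both hold. Thus there exist isometric embeddings $f\colon X\hookrightarrow Y$ and $g\colon Y\hookrightarrow X$.

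Next, since $X$ is not shifted by assumption, Lemma~\ref{L:isom} applies directly to this pair $f,g$ and yields that both $f$ and $g$ are isometries. In particular, the existence of the surjective isometric embedding $f\colon X\to Y$ shows that $X$ and $Y$ are isometric, which is already one of the two conclusions, $X\simeq Y$.

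It then remains only to verify that $Y$ is not shifted, and the natural route is to transport self embeddings of $Y$ back to $X$ through the isometry $f$. Concretely, if $h\colon Y\hookrightarrow Y$ is an arbitrary isometric self embedding, then $f^{-1}\circ h\circ f\colon X\hookrightarrow X$ is an isometric self embedding of $X$; because $X$ is not shifted this composite is an isometry, and conjugating back by $f$ shows that $h$ is an isometry as well. Hence every self embedding of $Y$ is a surjection, so $Y$ is not shifted. I do not expect any genuine obstacle in this argument: the corollary is essentially a repackaging of Lemma~\ref{L:isom} together with the evident fact that being not shifted is an invariant of the isometry class. The only point demanding any care is the correct interpretation of the quotient-set notation $[X]_{\Theta_{\hookrightarrow}}$ as encoding the two-sided embeddability $X\hookrightarrow Y$ and $Y\hookrightarrow X$; once that is spelled out, the rest is immediate.
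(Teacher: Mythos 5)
Your proposal is correct and follows the same route as the paper, which derives the corollary directly from Lemma~\ref{L:isom}: unpack $Y\in [X]_{\Theta_{\hookrightarrow}}$ into the two embeddings $X\hookrightarrow Y$ and $Y\hookrightarrow X$, apply the lemma to get isometries, and transport the not-shifted property through the resulting isometry. Your explicit conjugation argument $f^{-1}\circ h\circ f$ just spells out the detail the paper leaves implicit.
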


Let $P$ be a partially ordered set with partial order $ \leq$. A point $p\in P$ is maximal if the implication $$(p\leq x)\Rightarrow (x=p)$$ holds for every $x\in P.$ We shall denote by $Max_{\leq}$ the set of maximal points of the partially ordered set $P.$

Let $\mathfrak{F}=\{A_{i}: i\in I\}$ be a family of sets. A system of distinct representatives for $\mathfrak{F}$ is a set $\{a_{i}: i\in I\}$ with the property that $a_{i}\in A_{i}$ and $a_{i}\ne a_{j}$ for all distinct $i, j\in I.$ The Axiom of Choice states that a system distinct representatives exists for each family $\{A_{i}: i\in I\}$ with mutually disjoint non--empty $A_{i}.$

\begin{thm}\label{T3.Max}
Let $\mathfrak{M}$ be an arbitrary non--empty set of metric spaces and let $Max_{\sqsubseteq}$ be the set of maximal elements of the partially ordered set $\mathfrak{M}/\Theta_{\hookrightarrow}.$ The following statements are equivalent.
\begin{enumerate}
\item[\rm(i)]\textit{There is a minimal $\mathfrak{M}$-universal class of metric spaces.}

\item[\rm(ii)]\textit{A system of the distinct representatives for the set $Max_{\sqsubseteq}$ is a minimal $\mathfrak{M}$-universal class.}

\item[\rm(iii)]\textit{For every $\gamma\in \mathfrak{M}/\Theta_{\hookrightarrow}$ there are $\beta\in Max_{\sqsubseteq}$ and $X\in\beta$ such that $\gamma\sqsubseteq\beta$ and $X$ is not shifted.}
\end{enumerate}
\end{thm}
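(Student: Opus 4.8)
The plan is to establish the cycle of implications $\mathrm{(ii)}\Rightarrow\mathrm{(i)}\Rightarrow\mathrm{(iii)}\Rightarrow\mathrm{(ii)}$. The implication $\mathrm{(ii)}\Rightarrow\mathrm{(i)}$ is immediate, since $\mathrm{(ii)}$ exhibits a concrete minimal $\mathfrak{M}$-universal class. The substantive work lies in the other two implications, and the key organizing observation is a dictionary between the language of minimal universal classes and the poset $\mathfrak{M}/\Theta_{\hookrightarrow}$: whenever $\mathfrak{B}\subseteq\mathfrak{M}$, the relation $X\hookrightarrow B$ is, via \eqref{l3eq2}, exactly $[X]_{\Theta_{\hookrightarrow}}\sqsubseteq[B]_{\Theta_{\hookrightarrow}}$, and two spaces are incomparable precisely when their $\Theta_{\hookrightarrow}$-classes are $\sqsubseteq$-incomparable.

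For $\mathrm{(i)}\Rightarrow\mathrm{(iii)}$ I would start from a minimal $\mathfrak{M}$-universal class $\mathfrak{A}$ and use Theorem~\ref{T3.main}(ii) together with Remark~\ref{R3.1} to replace it by an isomorphic $\mathfrak{B}\subseteq\mathfrak{M}$ that is still minimal $\mathfrak{M}$-universal; Proposition~\ref{p3.15a} then tells me that every $B\in\mathfrak{B}$ is not shifted and that distinct members of $\mathfrak{B}$ are incomparable. Given an arbitrary class $\gamma=[X]_{\Theta_{\hookrightarrow}}$, universality yields $B\in\mathfrak{B}$ with $X\hookrightarrow B$, i.e.\ $\gamma\sqsubseteq[B]$. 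The crux is to verify that $[B]$ is maximal: if $[B]\sqsubseteq\delta$, then applying universality again to a representative of $\delta$ produces $B'\in\mathfrak{B}$ with $\delta\sqsubseteq[B']$, whence $B\hookrightarrow B'$; incomparability of distinct elements of $\mathfrak{B}$ forces $B=B'$ and hence $\delta=[B]$. Since $B\in[B]$ is not shifted, $\beta=[B]$ witnesses $\mathrm{(iii)}$.

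For $\mathrm{(iii)}\Rightarrow\mathrm{(ii)}$ the first step is to feed $\gamma=\beta$, for a maximal $\beta$, into $\mathrm{(iii)}$; maximality collapses the resulting $\beta'\sqsupseteq\beta$ to $\beta$ itself, so every maximal class already contains a not-shifted space, and by Corollary~\ref{C3} every member of such a class is not shifted. Invoking the Axiom of Choice I would then select a system of distinct representatives $\mathfrak{A}=\{a_{\beta}:\beta\in Max_{\sqsubseteq}\}$ with $a_{\beta}\in\beta\subseteq\mathfrak{M}$; the previous step guarantees each $a_{\beta}$ is not shifted. Universality $\mathfrak{M}\hookrightarrow\mathfrak{A}$ follows from $\mathrm{(iii)}$ through the dictionary above, while incomparability of distinct $a_{\beta},a_{\beta'}$ follows from the maximality of $\beta$ and $\beta'$. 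Proposition~\ref{p3.15a}, applicable since $\mathfrak{A}\subseteq\mathfrak{M}$ and $\mathfrak{M}\hookrightarrow\mathfrak{A}$, then certifies that $\mathfrak{A}$ is minimal $\mathfrak{M}$-universal, which is $\mathrm{(ii)}$.

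The main obstacle I anticipate is the bookkeeping forced by the definition of $\sqsubseteq$: since $[a]\sqsubseteq[b]$ only asserts the existence of \emph{some} $x\in[a]$, $y\in[b]$ with $x\hookrightarrow y$, every passage between the order-theoretic statements and the embedding statements requires chaining embeddings through representatives, using that all members of a single $\Theta_{\hookrightarrow}$-class embed into one another. Keeping these chains straight, and ensuring that the ``not shifted'' witness is transported correctly along them via Corollary~\ref{C3}, is where care is needed; the purely order-theoretic content, namely the maximality arguments, is then routine.
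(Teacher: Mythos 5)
Your proposal is correct and takes essentially the same route as the paper: the identical cycle of implications ((i)$\Rightarrow$(iii), (iii)$\Rightarrow$(ii), (ii)$\Rightarrow$(i)), the same use of Theorem~\ref{T3.main} to move the minimal universal class inside $\mathfrak{M}$, the same maximality-collapse arguments via incomparability, and the same transport of the not-shifted property along $\Theta_{\hookrightarrow}$-classes by Corollary~\ref{C3} (your first step of (iii)$\Rightarrow$(ii) is exactly the content of the paper's Remark~\ref{R3}). The only difference is organizational: you package the final verifications through Proposition~\ref{p3.15a}, where the paper inlines the equivalent reasoning using Proposition~\ref{vspompr*} and Definition~\ref{guniv*} directly.
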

\begin{rem}\label{R3}
Statement $\textrm{(ii)}$ of Theorem~\ref{T3.Max} is logically equivalent to the conjunction of the following two conditions.
\begin{enumerate}
\item[$\rm(ii_{1})$] For every $\gamma\in \mathfrak{M}/\Theta_{\hookrightarrow}$ there is $\beta\in Max_{\sqsubseteq}$ such that $\gamma\sqsubseteq\beta.$

\item[$\rm(ii_{2})$] If $\gamma\in Max_{\sqsubseteq}$ and $X\in\gamma,$ then $X$ is not shifted.
\end{enumerate}
\end{rem}
\noindent To see it we can use Corollary~\ref{C3} and note that the implication $$(\beta \sqsubseteq\gamma)\Rightarrow (\gamma=\beta)$$ holds for all $\gamma\in\mathfrak{M}/\Theta_{\hookrightarrow}$ and $\beta\in Max_{\sqsubseteq}.$

\begin{proof}[Proof of Theorem~\ref{T3.Max}]
$\textrm{(i)}\Rightarrow \textrm{(iii)}.$ Let $\mathfrak{A}$ be a minimal $\mathfrak{M}$-universal class of metric spaces. We must show that, for every $\gamma\in\mathfrak{M}/\Theta_{\hookrightarrow}$, there are $\beta\in Max_{\sqsubseteq}$ and $X\in\beta$ such that $\gamma\sqsubseteq\beta$ and $X$ is not shifted. Let $\gamma\in\mathfrak{M}/\Theta_{\hookrightarrow}$  and let $Z$ be an arbitrary metric space belonging to $\gamma.$ Since $\gamma\subseteq\mathfrak{M}$ and $\mathfrak{A}$ is minimal $\mathfrak{M}$-universal, then there is $Y\in\mathfrak{A}$ such that $Z\hookrightarrow Y$. By Theorem~\ref{T3.main}, there is $X\in\mathfrak{M}$ such that $X\simeq Y.$ Consequently, $Z\hookrightarrow X.$  Define $\beta:=[X]_{\Theta_{\hookrightarrow}}.$ Using \eqref{l3eq2} with $a=Z$, $b=X$ and $\Theta_{\leq}=\Theta_{\hookrightarrow}$, we obtain from $Z\hookrightarrow X$ that $\gamma\sqsubseteq\beta$. To prove that $X$ is not shifted recall that $X\simeq Y$ and $Y\in\mathfrak{A}$ where $\mathfrak{A}$ is minimal $\mathfrak{M}$-universal. By Proposition~\ref{vspompr*}, the space $Y$ is not shifted. Hence $X$ is not shifted as a space which is isometric to $Y$. It still remains to prove that $\beta\in Max_{\sqsubseteq}$. The statement $\beta\in Max_{\sqsubseteq}$ holds if for every $\alpha\in\mathfrak{M}/\Theta_{\hookrightarrow}$ the inequality $\beta\sqsubseteq\alpha$ implies $\beta=\alpha$. Let $\beta=[X]_{\Theta_{\hookrightarrow}}$ and $\alpha=[W]_{\Theta_{\hookrightarrow}}$ where $X$ is the same as above and $W$ is an arbitrary element of $\alpha.$ Inequality $\beta\sqsubseteq\alpha$ implies that $X\hookrightarrow W.$ Since $\alpha\subseteq\mathfrak{M},$ we have $W\in\mathfrak{M}.$ Consequently, there is a space $Q\in\mathfrak{A}$ such that $W\hookrightarrow Q.$ Let $X\hookrightarrow Y$ with the same $Y\in\mathfrak{A}$ as above. By Proposition~\ref{vspompr*}, we have either $Q=Y$ or $Q$ and $Y$ are incomparable. Since $X\hookrightarrow W,$ $W\hookrightarrow Q$ and $Y\simeq X$ hold, the metric spaces $Q$ and $Y$ are comparable. Hence, the equality $Q=Y$ holds. In particular, we obtain $Q\hookrightarrow X.$ From $X\hookrightarrow W,$ $W\hookrightarrow Q$ and $Q\hookrightarrow X$ it follows that $X\hookrightarrow W$ and $W\hookrightarrow X.$ Hence we have $\beta=[X]_{\Theta_{\hookrightarrow}}=[W]_{\Theta_{\hookrightarrow}}=\alpha.$ The implication $\textrm{(i)}\Rightarrow \textrm{(iii)}$ follows.

$\textrm{(iii)}\Rightarrow \textrm{(ii)}.$ Suppose condition $\textrm{(iii)}$ holds. Let
$$
\mathfrak{A}=\{A_{\alpha}: \alpha\in Max_{\sqsubseteq}\}
$$
where $A_{\alpha}\in\alpha$, be a system of distinct representatives for the set $Max_{\sqsubseteq}$.
We claim that $\mathfrak{A}$ is minimal $\mathfrak{M}$-universal. We first establish that $\mathfrak{A}$ is $\mathfrak{M}$-universal. Let $X\in\mathfrak{M}$ and let $\gamma\in\mathfrak{M}/\Theta_{\hookrightarrow}$ such that $X\in\gamma.$ By $\textrm{(iii)}$, there is $\beta\in Max_{\sqsubseteq}$ such that $\gamma\sqsubseteq\beta$. Using \eqref{l3eq2}, we obtain $X\hookrightarrow A_{\beta}$. Using \eqref{l3eq2} it is also easy to prove that $A_{\alpha}$ and $A_{\beta}$ are incomparable if $\alpha \ne \beta$. Hence, conditions $\textrm{(i)}$ and $\textrm{(iii)}$ of Definition~\ref{guniv*} are satisfied. We end the proof by demonstration that condition $\textrm{(ii)}$ of this definition holds. In accordance with Remark~\ref{R3}, we can suppose that all $A_{\alpha}$ are not shifted. Hence, by Proposition~\ref{vspompr*}, $\mathfrak{A}$ is minimal universal for $\mathfrak{A}$ itself. Consequently, by Definition~\ref{guniv*}, we have
$$
(\mathfrak{A}\hookrightarrow\mathfrak{B})\wedge(\mathfrak{B}\preceq\mathfrak{A})\Rightarrow(\mathfrak{A}\preceq\mathfrak{B})
$$
for every class $\mathfrak{B}$ of metric spaces. Since $\mathfrak{A}\preceq\mathfrak{M},$ we have
$$
(\mathfrak{M}\hookrightarrow\mathfrak{B})\wedge(\mathfrak{B}\preceq\mathfrak{A}) \Rightarrow(\mathfrak{A}\hookrightarrow\mathfrak{B}) \wedge(\mathfrak{B}\preceq\mathfrak{A}).
$$
Consequently, we obtain \eqref{impl},
$$
(\mathfrak{M}\hookrightarrow\mathfrak{B}) \wedge(\mathfrak{B}\preceq\mathfrak{A})\Rightarrow(\mathfrak{A}\preceq\mathfrak{B})
$$
for every class $\mathfrak{B}$ of metric spaces. Condition $\textrm{(ii)}$ of Definition~\ref{guniv*} follows.

To complete the proof, it suffices to observe that the implication $\textrm{(ii)}\Rightarrow \textrm{(i)}$ is trivial.
\end{proof}

We finish a section by construction of a minimal universal class for the class of linear normed spaces with given finite dimension.

\begin{lem}\label{l3.25}
let $X$ be a complete non--empty metric space and let $Y$ be a connected metric space. Then every open $f: X\hookrightarrow Y$  is an isometry.
\end{lem}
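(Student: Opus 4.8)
The plan is to reduce everything to showing that $f$ is surjective, since an isometric embedding that is onto is precisely an isometry. Thus I would aim to prove $f(X)=Y$, and the strategy is to verify that $f(X)$ is a \emph{non-empty clopen} subset of $Y$; connectedness of $Y$ then forces $f(X)=Y$. The two hypotheses not yet used — openness of $f$ and completeness of $X$ — are exactly what supply openness and closedness of $f(X)$, respectively.

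First I would record that $f(X)$ is open in $Y$. This is immediate: $X$ is an open subset of itself, and $f$ is an open map by assumption, so $f(X)$ is open in $Y$. Non-emptiness of $f(X)$ follows from the hypothesis that $X$ is non--empty.

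Next I would show that $f(X)$ is closed in $Y$, and this is where completeness enters. Because $f\colon X\hookrightarrow Y$ is an isometric embedding, the subspace $f(X)$ of $Y$ is isometric to $X$, hence $f(X)$ is itself a complete metric space in the metric induced from $Y$. A complete subspace of a metric space is always closed, so $f(X)$ is closed in $Y$.

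Finally, $f(X)$ is a non--empty subset of the connected space $Y$ that is simultaneously open and closed; therefore $f(X)=Y$. Consequently $f$ is a surjective isometric embedding, that is, an isometry. I do not anticipate a genuine obstacle: the only step requiring a moment's care is the deduction of closedness of $f(X)$ from completeness of $X$ (via the elementary fact that complete subspaces are closed), while the remaining implications are direct.
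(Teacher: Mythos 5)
Your proof is correct and follows essentially the same route as the paper: show $f(X)$ is a non--empty open and closed subset of the connected space $Y$, so that $f(X)=Y$ and the isometric embedding $f$ is a surjection. The only difference is cosmetic — where you cite the standard fact that a complete subspace is closed (after noting $f(X)\simeq X$ is complete), the paper proves exactly that fact inline by taking a point $a\in\overline{f(X)}\setminus f(X)$, pulling back a convergent sequence to a Cauchy sequence in $X$, and using completeness and continuity to conclude $a\in f(X)$.
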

\begin{proof}
Let $f: X\hookrightarrow Y$ be open. The mapping $f$ is an isometry if $f(X)=Y$. The set $f(X)$ is an open non--empty subset of $Y$. Since $Y$ is connected it is sufficient to show that $f(X)$ is closed.
Suppose that a point $a$ belongs to the set $\overline{f(X)}\setminus f(X),$ where $\overline{f(X)}$  is the closure of $f(X)$ in $Y.$
Then there is a sequence $(y_n)_{n\in\mathbb N}$ such that $\mathop{\lim}\limits_{n\to\infty}d_{Y}(y_n, a)=0$ and $y_n\in f(X)$ for every $n\in\mathbb N.$
Since $f$ is an isometric embedding, the sequence $(f^{-1}(y_n))_{n\in\mathbb N}$ is a Cauchy sequence in $X$.
The completeness of $X$ implies that there is $b\in X$ such that $$\lim_{n\to\infty}d_{X}(f^{-1}(y_n), b)=0.$$ Using the continuity of $f$, we obtain
$$f(b)=\lim_{n\to\infty}f(f^{-1}(y_n))=\lim_{n\to\infty}y_n=a.$$
Hence $f(b)=a,$ so that $a\in f(X)$ contrary to $a\in\overline{f(X)}\setminus f(X).$ Thus, $\overline{f(X)}=f(X),$ i.e. $f(X)$ is closed.
\end{proof}

\begin{prop}\label{p3.26}
Let $n$ be a positive integer number and let $\mathcal{N}_{n}$  be the class of all normed $n$-dimensional linear spaces over the field $\mathbb R.$ Then there is a minimal $\mathcal{N}_{n}$-universal subset of $\mathcal{N}_{n}.$
\end{prop}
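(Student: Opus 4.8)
The plan is to reduce everything to Proposition~\ref{p3.15a}. That proposition characterizes, for a class $\mathfrak{M}$ and a subclass $\mathfrak{A}\subseteq\mathfrak{M}$ with $\mathfrak{M}\hookrightarrow\mathfrak{A}$, minimal $\mathfrak{M}$-universality of $\mathfrak{A}$ by the two conditions that every $X\in\mathfrak{A}$ be not shifted and that distinct members of $\mathfrak{A}$ be incomparable. It is stated for classes, so the fact that $\mathcal{N}_n$ is a proper class causes no trouble. Hence it suffices to produce a \emph{set} $\mathfrak{A}\subseteq\mathcal{N}_n$ such that $\mathcal{N}_n\hookrightarrow\mathfrak{A}$, every member of $\mathfrak{A}$ is not shifted, and distinct members of $\mathfrak{A}$ are incomparable.

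First I would record the easy half: every $X\in\mathcal{N}_n$ is not shifted. Indeed a finite-dimensional normed space over $\mathbb{R}$ is boundedly compact (closed bounded sets are compact) and homogeneous in the sense of Definition~\ref{d2.20}, since the translations $x\mapsto x+a$ act transitively and isometrically; thus Lemma~\ref{homsh} (equivalently Remark~\ref{rem2.24}) applies. This simultaneously handles the ``not shifted'' requirement and will be used in the incomparability step.

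The crucial step, and the one I expect to be the main obstacle, is to show that for $X,Y\in\mathcal{N}_n$ the relation $X\hookrightarrow Y$ already forces $X\simeq Y$, so that comparability on $\mathcal{N}_n$ coincides with the isometry relation. Given an isometric embedding $f\colon X\hookrightarrow Y$, both $X$ and $Y$ are homeomorphic to $\mathbb{R}^n$, and $f$ is a continuous injection between $n$-dimensional topological manifolds; by Brouwer's invariance of domain $f$ is then an open map. Since $X$ is complete and $Y$ is connected, Lemma~\ref{l3.25} yields that $f$ is an isometry, whence $X\simeq Y$. Consequently two members of $\mathcal{N}_n$ are comparable if and only if they are isometric, so ``pairwise non-isometric'' and ``pairwise incomparable'' coincide inside $\mathcal{N}_n$. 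The only delicate point of the whole argument is this appeal to invariance of domain to guarantee openness of $f$; everything else is assembly of cited results.

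Finally I would construct $\mathfrak{A}$. Every $X\in\mathcal{N}_n$ is linearly isomorphic, hence isometric, to some $(\mathbb{R}^n,\|\cdot\|)$, and the norms on $\mathbb{R}^n$ form a set, so the spaces $(\mathbb{R}^n,\|\cdot\|)$ form a set $\mathcal{B}_n\subseteq\mathcal{N}_n$ with $\mathcal{N}_n\hookrightarrow\mathcal{B}_n$. Using the Axiom of Choice I select one representative from each isometry class occurring in $\mathcal{B}_n$, obtaining a set $\mathfrak{A}\subseteq\mathcal{N}_n$ for which $\mathcal{N}_n\hookrightarrow\mathfrak{A}$, whose distinct members are non-isometric and therefore, by the previous paragraph, incomparable, and each of which is not shifted. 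Proposition~\ref{p3.15a} then shows that $\mathfrak{A}$ is minimal $\mathcal{N}_n$-universal, which is exactly the assertion.
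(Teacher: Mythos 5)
Your proposal is correct and takes essentially the same route as the paper: both verify that members of $\mathcal{N}_{n}$ are not shifted via Lemma~\ref{homsh}, show that comparability within $\mathcal{N}_{n}$ forces isometry by combining Brouwer's invariance of domain with Lemma~\ref{l3.25}, and then apply Proposition~\ref{p3.15a} to a set of representatives of the isometry classes. The only (immaterial) difference is the set-theoretic bookkeeping used to obtain such a set: the paper invokes the Banach--Mazur theorem to realize the representatives inside $C[0,1]$, whereas you realize every space as $(\mathbb{R}^{n},\|\cdot\|)$ for some norm on $\mathbb{R}^{n}$, which is if anything slightly more elementary.
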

\begin{proof}
Every finite--dimensional normed linear space over $\mathbb R$ is separable. Hence, by the Banach-Mazur theorem, for every $Y\in\mathcal{N}_{n},$ there is $Y'\subseteq C[0, 1]$ such that $Y\simeq Y'.$  Consequently, without loss of generality, we may identify $\mathcal{N}_{n}$ with a subset of $2^{C[0, 1]}$ and denote this subset by the same symbol $\mathcal{N}_{n}$. Let $\{[Y]: Y\in\mathcal{N}_{n}\}$  be the quotient set of $\mathcal{N}_{n}$ by the relation $\simeq$, where, as above, $X\simeq Y$ means that $X$ and $Y$ are isometric. Let $\mathfrak{M}$ be a system of representatives for $\{[Y]: Y\in\mathcal{N}_{n}\}$. We claim that $\mathfrak{M}$ is a minimal $\mathcal{N}_{n}$-universal set of metric spaces that $\mathfrak{M}\subseteq\mathcal{N}_{n}$ and $\mathfrak{N}_{n}\hookrightarrow\mathfrak{M}.$ By Proposition~\ref{p3.15a}, $\mathfrak{M}$ is minimal $\mathfrak{N}_{n}$--universal if and only if:

\medskip

\noindent $(\textrm{i}_1)$  Every $Y\in\mathfrak{M}$ is not shifted;

\medskip

\noindent $(\textrm{i}_2)$  Every two distinct $X, Y\in\mathfrak{M}$ are incomparable.

\medskip

\noindent Since  every finite-dimensional normed linear space over $\mathbb R$ is homogeneous and boundedly compact, property $(\textrm{i}_1)$
follows from Lemma~\ref{homsh}. Let us consider $(\textrm{i}_2)$. Brouwer's invariance  of domain theorem says that a continuous injective map $f: U\to\mathbb R^{n}$ is open for every open $U\subseteq\mathbb R^{n}$ (see, for example, \cite[p. 34]{JWV}). Since all $X, Y\in\mathfrak{M}$
are homeomorphic to $\mathbb R^{n}$, we see that conditions of Lemma~\ref{l3.25} are valid for every $f:X\hookrightarrow Y$ with $X, Y\in\mathfrak{M}$. From this lemma and from the definition of $f$ it follows that if $X$ and $Y$ are comparable and $X, Y\in\mathfrak{M}$, then $X=Y$.  Condition $(\textrm{i}_2)$ follows.
\end{proof}

Using the minimal $\mathcal{N}_{n}$-universal set $\mathfrak{M}$ that was constructed in the proof of Proposition~\ref{p3.26}, we can construct a disjoint union
$$
X=\coprod_{Y\in\mathfrak M}Y
$$
such that $X$  is minimal $\mathcal{N}_{n}$-universal. See Corollary~\ref{c4.12} in the next section.

\section{From minimal universal classes to minimal universal spaces. Disjoint unions of metric spaces.}
\label{sect6}

 In this section we shall denote by $I$ a non--empty index set.

\begin{defin}\label{part} A metric space $X$ is a disjoint union of metric spaces $Y_{i},$ $i\in I,$ if there is a partition $\{X_{i}: i\in I\}$ of the set $X$ such that $X_{i}\simeq Y_{i}$ for every $i\in I.$ In this case we write
$$
X=\mathop{\coprod}\limits_{i\in I}Y_{i}.
$$
\end{defin}

Recall that $\{X_{i}: i\in I\}$ is a partition of $X$ if $X=\mathop{\bigcup}\limits_{i\in I}X_{i},$ $X_{i}\ne\varnothing$ and $X_{i}\cap X_{j}=\varnothing$ for all distinct $i, j\in I.$ In particular, if $X=\mathop{\coprod}\limits_{i\in I}Y_{i},$ then all spaces $Y_{i}$ are non--empty.

The next proposition follows from Proposition~3.8 of \cite{DMV}.

\begin{prop}
A disjoint union $\mathop{\coprod}\limits_{i\in I}Y_{i}$ exists for every non--empty family of non--empty metric spaces $Y_i,$ $i\in I.$
\end{prop}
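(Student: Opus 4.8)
The plan is to construct $X$ explicitly by gluing the spaces $Y_i$ along a common ``hub''. First I would form the set-theoretic disjoint union $X=\{(y,i):i\in I,\ y\in Y_i\}$ and set $X_i=\{(y,i):y\in Y_i\}$, so that $\{X_i:i\in I\}$ is automatically a partition of $X$ into non--empty pieces; here the non--emptiness of each $Y_i$ is exactly what guarantees $X_i\ne\varnothing$, and $I\ne\varnothing$ guarantees $X\ne\varnothing$. The map $(y,i)\mapsto y$ will furnish the required isometry $X_i\simeq Y_i$ once $X$ is equipped with a suitable metric whose restriction to $X_i$ reproduces $d_{Y_i}$.

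The substantive step is the choice of metric. Invoking the Axiom of Choice (the family $\{Y_i\}$ consists of non--empty sets), I would select a base point $p_i\in Y_i$ for each $i\in I$ and define
\begin{equation*}
d_X\bigl((x,i),(y,j)\bigr)=
\begin{cases}
d_{Y_i}(x,y) & \text{if } i=j,\\
d_{Y_i}(x,p_i)+1+d_{Y_j}(p_j,y) & \text{if } i\ne j.
\end{cases}
\end{equation*}
Here the summand $1$ plays the role of a common positive distance between all the base points; it is exactly what prevents $d_X$ from vanishing on distinct points lying in different components, so that the separation axiom $d_X(u,v)=0\Leftrightarrow u=v$ holds (within a component this reduces to the same axiom for $d_{Y_i}$, and across components $d_X\ge 1>0$). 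Symmetry and non--negativity are immediate from the corresponding properties of the $d_{Y_i}$.

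The main obstacle, and the only part requiring care, is the triangle inequality, which I would verify by a case analysis according to how three points $a,b,c$ distribute among the components. When all three lie in a single $X_i$ the inequality is just the triangle inequality in $Y_i$; when they occupy exactly two components it reduces, after cancelling the constant and the base-point terms, to a triangle inequality inside one of the $Y_i$ together with the non--negativity of the remaining summands; and when they lie in three distinct components every pairwise distance has the same shape $d_{Y_\bullet}(\cdot,p_\bullet)+1+d_{Y_\bullet}(p_\bullet,\cdot)$, so passing through an intermediate component $X_j$ contributes an extra $1+2\,d_{Y_j}(b,p_j)\ge 0$. In each case the inequality holds for any fixed positive value of the cross-term constant, so no compatibility conditions between the (possibly unbounded) metrics $d_{Y_i}$ are needed.

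Finally, having checked that $d_X$ is a metric, I would record that $(x,i)\mapsto x$ restricts to an isometry of $(X_i,d_X)$ onto $Y_i$, so $X_i\simeq Y_i$ for every $i\in I$, while $\{X_i:i\in I\}$ partitions $X$. By Definition~\ref{part} this exhibits $X=\coprod_{i\in I}Y_i$, completing the proof.
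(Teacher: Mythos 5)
Your argument is correct: the base-point gluing metric you define satisfies separation (the constant $1$ bounds all cross-component distances from below), symmetry, and the triangle inequality in each of your three cases, its restriction to $X_i$ is the pullback of $d_{Y_i}$ under $(y,i)\mapsto y$, and $\{X_i : i\in I\}$ is a partition, so Definition~\ref{part} is verified. The paper, however, proves the proposition differently --- in fact it gives no construction at all, but disposes of the statement in one line as a consequence of Proposition~3.8 of \cite{DMV}, i.e., as a special case of the metrization theory for weighted graphs (pseudometrizable weights and weighted shortest-path pseudometrics), the same machinery that reappears in Lemma~\ref{l4.11} and in the proof of Theorem~\ref{t4.9}. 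Your metric is exactly what that recipe produces for the graph in which each $X_i$ carries its own metric as edge weights and all base points $p_i$ are joined pairwise by edges of weight $1$: a shortest path between points of distinct components must use the single cross edge between the two base points, which yields precisely your formula. What your route buys is a short, self-contained, elementary verification needing nothing beyond the metric axioms (plus the Axiom of Choice to select the $p_i$ --- which the paper's approach also needs, in the guise of a system of distinct representatives, so this is not an extra cost); what the paper's route buys is uniformity, since the cited graph machinery is reused later in Section~\ref{sect6}, where the cross-component weights must be tuned against an $\varepsilon$-connectedness parameter to get not merely the existence of a disjoint union but its minimal universality.
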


It is clear that $\mathfrak{M}\hookrightarrow\mathop{\coprod}\limits_{i\in I}Y_{i}$ holds for $\mathfrak{M}=\{Y_i: i\in I\}.$ A legitimate question to raise at this point is whether there exists a metric $d_X$ on $X=\mathop{\coprod}\limits_{i\in I}Y_{i}$ such that $X$ is minimal $\mathfrak{M}$-universal.

For simplicity, we assume that the metric spaces $Y_i$, $i\in I$, in the following proposition, are pairwise disjoint, $Y_i \cap Y_j=\varnothing$ if $i\ne j$, and $\mathop{\coprod}\limits_{i\in I}Y_{i}$ is a metric space with the ground set $X=\mathop{\bigcup}\limits_{i\in I}Y_{i}$ such that the restriction $d_{X}|_{Y_i \times Y_i}$ is equal to $d_{Y_i}$ for every $i\in I.$

\begin{prop}\label{pr4}
Let $\mathfrak{M}=\{Y_{i}: i\in I\}$ be a set of metric spaces. If a disjoint union $X=\mathop{\coprod}\limits_{i\in I}Y_{i}$ is a minimal $\mathfrak{M}$-universal metric space, then all $Y_{i}\in\mathfrak{M}$ are not shifted and every two distinct $Y_{i}, Y_{j}\in\mathfrak{M}$ are incomparable.
\end{prop}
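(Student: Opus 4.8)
The plan is to prove the contrapositive: assuming the conclusion fails, I will exhibit a single point $p\in X$ whose removal leaves a proper subspace $X\setminus\{p\}$ that is still $\mathfrak{M}$-universal, which contradicts the minimality of $X$ in the sense of Definition~\ref{muniv}. Throughout I use the standing convention stated just before the proposition: the $Y_i$ are pairwise disjoint, $X=\bigcup_{i\in I}Y_{i}$, and $d_X$ restricts to $d_{Y_i}$ on each $Y_i$. In particular each inclusion $Y_i\hookrightarrow X$ is an isometric embedding, and each $Y_i$ is non--empty by the definition of a disjoint union.

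First I would treat the claim that each $Y_i$ is not shifted. Suppose, to the contrary, that some $Y_{i_0}$ is shifted. By Definition~\ref{shemb} (see the remark immediately following it) there is a self embedding $f\colon Y_{i_0}\hookrightarrow Y_{i_0}$ that is not an isometry, so I may pick a point $p\in Y_{i_0}\setminus f(Y_{i_0})$. I claim $\mathfrak{M}\hookrightarrow X\setminus\{p\}$. For every index $k\ne i_0$ disjointness gives $p\notin Y_k$, so the inclusion embeds $Y_k$ into $X\setminus\{p\}$; and $f$ embeds $Y_{i_0}$ into $Y_{i_0}\setminus\{p\}\subseteq X\setminus\{p\}$, since $p\notin f(Y_{i_0})$. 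Because $p\in X$, the subspace $X\setminus\{p\}$ is proper, contradicting the minimality of $X$.

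Next I would treat incomparability. Suppose some two distinct $Y_i,Y_j$ are comparable; relabelling if necessary, assume $Y_i\hookrightarrow Y_j$ via an isometric embedding $g$. I choose any point $q\in Y_i$ (possible since $Y_i\ne\varnothing$) and set $X_0=X\setminus\{q\}$. Then $g(Y_i)\subseteq Y_j$, and disjointness of $Y_i$ and $Y_j$ yields $q\notin Y_j\supseteq g(Y_i)$, so $g$ embeds $Y_i$ into $X_0$; for every $k\ne i$ disjointness again gives $q\notin Y_k$, so the inclusion embeds $Y_k$ into $X_0$. Hence $\mathfrak{M}\hookrightarrow X_0$ with $X_0\subsetneq X$, again contradicting minimality.

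Both cases follow the identical strategy---locate a single ``redundant'' point that some embedding can route around---so I do not expect any serious obstacle. The only step demanding care is the bookkeeping of disjointness: one must ensure that the removed point lies in the single block $Y_{i_0}$ (respectively $Y_i$) that is being re--embedded, while every other block survives untouched inside $X_0$ and is embedded there by its inclusion.
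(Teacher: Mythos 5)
Your proof is correct and follows essentially the same route as the paper's: in both cases one contradicts minimality by exhibiting a proper subspace of $X$ that is still $\mathfrak{M}$-universal, rerouting the affected block through the shift self-embedding (resp.\ the comparability embedding $Y_i\hookrightarrow Y_j$) and using the inclusions for the untouched blocks. The only cosmetic difference is that the paper deletes the whole set $Y_{i_1}$ (resp.\ $Y_{i_1}\setminus Y_{i_1}^{0}$) while you delete a single point, which is equally sufficient since Definition~\ref{muniv} quantifies over all proper subspaces.
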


\begin{proof}
Let $X$ be minimal $\mathfrak{M}$-universal. Suppose that, contrary our claim, $Y_{i_1}\hookrightarrow Y_{i_2}$ for some distinct $i_1, i_2\in I$. Then the metric space
$$
X^{i_1}=\mathop{\coprod}\limits_{i\in I, i\ne i_1}Y_{i}
$$
with the metric induced from $X$, is $\mathfrak{M}$-universal and $X\setminus X^{i_1}\ne\varnothing,$ in contradiction with the minimal universality of $X.$

Similarly, if $Y_{i_1}$ is shifted and $Y_{i_1}^{0}\subseteq Y_{i_1}$ such that
$$
Y_{i_1}\simeq Y_{i_1}^{0} \text{ and } Y_{i_1}\setminus Y_{i_1}^{0}\ne\varnothing,
$$
then the metric space
$$
X_{0}^{i_1}=\left(\coprod_{i\in I, i\ne i_1}Y_i\right)\sqcup Y_{i_1}^{0}
$$
with the metric induced from $X$, is $\mathfrak{M}$-universal, contrary to the condition.
\end{proof}

\begin{rem}\label{rem2.4.4}
In the next section (see Corollary~\ref{c5.4}), we shall construct a family $\mathfrak{M}=\{Y_i:i\in I\}$ such that:
\begin{itemize}
  \item $\mathfrak{M}$ admits a minimal universal metric space ;
  \item Every $Y_i$ is not shifted and every two distinct $Y_i, Y_j$ are incomparable;
  \item There is no minimal $\mathfrak{M}$-universal $X$ having a form $X=\coprod\limits_{i\in I}Y_i$.
\end{itemize}
\end{rem}

Proposition~\ref{pr4} and Proposition~\ref{vspompr*} imply the following.

\begin{cor}\label{c4.3}
If $X=\mathop{\coprod}\limits_{i\in I}Y_{i}$ is a minimal universal metric space for a class $\mathfrak{M}=\{Y_{i}: i\in I\},$ then there is a class $\mathfrak{A}$ of metric spaces such that $\mathfrak{M}$ is minimal $\mathfrak{A}$-universal.
\end{cor}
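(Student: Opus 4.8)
The plan is to chain together Proposition~\ref{pr4} and Proposition~\ref{vspompr*}, and the cleanest witness for the sought class $\mathfrak{A}$ turns out to be $\mathfrak{M}$ itself. Before anything else I would record a small bookkeeping point: since $I$ is a non--empty index set by the standing convention of this section, the class $\mathfrak{M}=\{Y_i:i\in I\}$ is non--empty. This is needed only to make Proposition~\ref{vspompr*} applicable, as that proposition is stated for non--empty classes.

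Next I would feed the hypothesis into Proposition~\ref{pr4}. By assumption $X=\coprod_{i\in I}Y_i$ is minimal $\mathfrak{M}$-universal, so Proposition~\ref{pr4} immediately delivers the two structural facts about $\mathfrak{M}$: every $Y_i\in\mathfrak{M}$ is not shifted, and any two distinct $Y_i,Y_j\in\mathfrak{M}$ are incomparable. These are exactly the two clauses comprising condition (i) of Proposition~\ref{vspompr*}, read with the class called $\mathfrak{A}$ in that proposition taken to be our $\mathfrak{M}$.

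Finally I would invoke the equivalence of Proposition~\ref{vspompr*}. From condition (i) holding for $\mathfrak{M}$, the implication $(\mathrm{i})\Rightarrow(\mathrm{iii})$ yields a class $\mathfrak{A}$ of metric spaces such that $\mathfrak{M}$ is minimal $\mathfrak{A}$-universal, which is precisely the assertion of the corollary. In fact the implication $(\mathrm{i})\Rightarrow(\mathrm{ii})$ already shows that $\mathfrak{M}$ is minimal universal for itself, so one may simply take $\mathfrak{A}=\mathfrak{M}$ and avoid an existential step altogether.

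There is no genuine obstacle here; the argument is a two--line citation of the two named propositions. The single point demanding attention is the notational clash between the class $\mathfrak{A}$ named in the corollary and the class likewise denoted $\mathfrak{A}$ inside Proposition~\ref{vspompr*}: in the latter the relevant class is the one whose elements are not shifted and pairwise incomparable, which here is $\mathfrak{M}$, while the existentially produced class plays the role of the corollary's $\mathfrak{A}$. Keeping these roles straight is all that is required to make the deduction rigorous.
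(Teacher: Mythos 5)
Your proposal is correct and follows exactly the paper's own route: the paper derives Corollary~\ref{c4.3} by combining Proposition~\ref{pr4} (which gives that all $Y_i$ are not shifted and pairwise incomparable) with Proposition~\ref{vspompr*} (whose condition (i) then yields that $\mathfrak{M}$ is minimal universal for itself, hence for some class $\mathfrak{A}$). Your additional remarks on the non-emptiness of $\mathfrak{M}$ and on the notational roles of $\mathfrak{A}$ are sound bookkeeping but do not change the argument.
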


The following theorem describes, for given family $\mathfrak{M}$ of metric spaces, the structure of minimal $\mathfrak{M}$-universal metric spaces $X$ which have a form $X=\coprod\limits_{Y\in \mathfrak{M}}Y$.

\begin{thm}\label{t4.4}
Let $\mathfrak{M}=\{Y_{i}: i\in I\}$ be a non--empty set of non--empty metric spaces and let $X$ be a minimal $\mathfrak{M}$-universal metric space. Suppose that all $Y_i \in\mathfrak{M}$ are not shifted and every two distinct $Y_{i_1},\, Y_{i_2}\in\mathfrak{M}$ are incomparable. Then the following statements are equivalent.
\begin{enumerate}
\item[\rm(i)]\textit{The metric space $X$ is a disjoint union of metric spaces $Y_i,$ $i\in I,$
$$
X=\mathop{\coprod}\limits_{i\in I}Y_{i}.
$$
}

\item[\rm(ii)]\textit{The metric space $X$ satisfies the following conditions.}

\smallskip

\begin{enumerate}

\item[($\textrm{ii}_{1}$)]\textit{For every $x_0 \in X$, there is a unique $Y_{i_0}\in\mathfrak{M}$ such that $$Y_{i_0}\not\hookrightarrow X\setminus\{x_0\}.$$}
\item[($\textrm{ii}_{2}$)]\textit{For every non--empty subset $\mathfrak{M}_{0}$ of the set $\mathfrak{M},$ there is a unique minimal $\mathfrak{M}_{0}$-universal metric subspace $X^{0}$ of the space $X.$}
\end{enumerate}
\end{enumerate}
\end{thm}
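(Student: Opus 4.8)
The plan is to reduce both implications to a single rigidity lemma: \emph{the only isometric copy of $Y_i$ contained in $X$ is the canonical piece}, which I will extract from the interaction between minimality and the not shifted hypothesis. The basic tool is a single--point removal argument. Since $X$ is minimal $\mathfrak{M}$-universal, for each $x_0\in X$ we have $\mathfrak{M}\not\hookrightarrow X\setminus\{x_0\}$, so at least one $Y_i$ fails to embed into $X\setminus\{x_0\}$; the content of $(\textrm{ii}_1)$ is that the failing index is unique, and the content of the lemma is that this forces copies of $Y_i$ to be rigid. Throughout I keep the standing hypotheses (each $Y_i$ not shifted, distinct $Y_i,Y_j$ incomparable) in reserve.

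For the implication $\textrm{(i)}\Rightarrow\textrm{(ii)}$, I would start from a partition $\{X_i\}$ of $X$ with $X_i\simeq Y_i$. Fixing $x_0\in X$, say $x_0\in X_{i_0}$, every other piece $X_i$ with $i\neq i_0$ lies in $X\setminus\{x_0\}$, so $Y_i\hookrightarrow X\setminus\{x_0\}$; since minimality forces some $Y_i$ to fail, it must be $Y_{i_0}$, and it is the only one. This yields $(\textrm{ii}_1)$ and, taking $x_0=p$ to range over $X_{i_0}$, the rigidity lemma: any copy $S$ of $Y_{i_0}$ must contain each $p\in X_{i_0}$ (else $S\subseteq X\setminus\{p\}$ would embed $Y_{i_0}$), so $X_{i_0}\subseteq S$ with $X_{i_0}\simeq S$, whence $X_{i_0}=S$ because $S\simeq Y_{i_0}$ is not shifted. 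For $(\textrm{ii}_2)$ I would take $\mathfrak{M}_0=\{Y_i:i\in I_0\}$ and propose $X^0=\bigcup_{i\in I_0}X_i$: it is $\mathfrak{M}_0$-universal, and deleting any $x_0\in X_{i_0}\subseteq X^0$ kills $Y_{i_0}$ by $(\textrm{ii}_1)$, so $X^0$ is minimal. Uniqueness is where the lemma pays off: a minimal $\mathfrak{M}_0$-universal $X^1$ must contain the unique copy $X_i$ of each $Y_i$ ($i\in I_0$), hence $X^0\subseteq X^1$, and minimality of $X^1$ gives $X^0=X^1$.

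For $\textrm{(ii)}\Rightarrow\textrm{(i)}$, I would first apply $(\textrm{ii}_2)$ to the singletons $\mathfrak{M}_0=\{Y_i\}$. Since $Y_i$ is not shifted, Proposition~\ref{vspompr} shows that the minimal $\{Y_i\}$-universal subspaces of $X$ are precisely the isometric copies of $Y_i$, so $(\textrm{ii}_2)$ furnishes a \emph{unique} copy $X_i\simeq Y_i$. With uniqueness available, the condition $Y_i\not\hookrightarrow X\setminus\{x_0\}$ becomes equivalent to $x_0\in X_i$. Feeding this into $(\textrm{ii}_1)$ shows that for every $x_0\in X$ there is exactly one index $i$ with $x_0\in X_i$; that is, the sets $X_i$ are nonempty, pairwise disjoint, and cover $X$. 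This is exactly the assertion that $\{X_i\}$ is a partition with $X_i\simeq Y_i$, i.e.\ $X=\coprod_{i\in I}Y_i$.

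The hard part will be the rigidity lemma, namely excluding ``straddling'' copies of $Y_i$ that use points from several pieces: a priori the inter-piece distances in a disjoint union are unconstrained, so one might fear spurious embedded copies. The resolution is that minimal $\mathfrak{M}$-universality already forbids them through the single--point removal argument, and the not shifted hypothesis then upgrades the inclusion $X_{i_0}\subseteq S$ to an equality. Once this lemma is in place both implications are short bookkeeping; the only additional care needed is the routine verification, via Proposition~\ref{vspompr}, that minimal $\{Y_i\}$-universal subspaces coincide with isometric copies of $Y_i$.
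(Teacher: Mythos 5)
Your proposal is correct and follows essentially the same route as the paper: single--point removal to get $(\textrm{ii}_{1})$, Proposition~\ref{vspompr} to identify minimal $\{Y_i\}$-universal subspaces with isometric copies of $Y_i$, and the not shifted hypothesis to upgrade inclusions between copies to equalities. Your ``rigidity lemma'' (each $Y_i$ has a unique isometric copy in $X$) is precisely the condition $(\textrm{ii}_{2}^{0})$ that the paper itself isolates in Remark~\ref{r4.6}, and your only deviation --- deriving the minimality and uniqueness parts of $(\textrm{ii}_{2})$ from $(\textrm{ii}_{1})$ and rigidity rather than by gluing a proper subspace with the complementary pieces $\bigcup_{i\in I\setminus I_{0}}X_{i}$ as the paper does --- is a cosmetic streamlining rather than a different method.
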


\begin{proof}
Let (i) hold. Then, by Definition~\ref{part}, there is a partition $\{X_i : i\in I\}$ of $X$ such that $X_i\simeq Y_i$ holds for every $i\in I.$ If $x_0$ is an arbitrary point of $X,$ then there is a unique $i_0 \in I$ with $x_{0}\in X_{i_0}.$ It is clear that
$$
Y_{i}\hookrightarrow \bigcup_{i\in I,\, i\ne i_{0}}X_{i}\hookrightarrow X\setminus\{x_0\}
$$
holds for every $i\in I\setminus\{i_0\}.$
Consequently we have
$$
Y_{i_0}\not\hookrightarrow X\setminus\{x_0\},
$$
because $X$ is minimal $\mathfrak{M}$-universal. The implication $(\textrm{i})\Rightarrow (\textrm{ii}_{1})$ follows.

To deal with $(\textrm{i})\Rightarrow (\textrm{ii}_{2}),$ consider an arbitrary non--empty $I_{0}\subseteq I.$ Let $\mathfrak{M}_{0}=\{Y_i: i\in I_{0}\}.$ It is easy to see that the metric space $\mathop{\cup}\limits_{i\in I_0}X_{i},$ with the metric induced from $X$, is minimal $\mathfrak{M}_{0}$-universal. Indeed, it is clear that $\mathfrak{M}_{0}\hookrightarrow\mathop{\bigcup}\limits_{i\in I_0}X_{i}.$
If there is $X^{0}\subseteq\mathop{\bigcup}\limits_{i\in I_0}X_{i}$ such that $\mathfrak{M}_{0}\hookrightarrow X^{0}$ and
\begin{equation}\label{4(1)}
\left(\bigcup_{i\in I_0}X_i\right)\setminus X^{0}\ne\varnothing,
\end{equation}
then
\begin{equation}\label{4(2)}
\mathfrak{M}\hookrightarrow X^{0}\cup\left(\bigcup_{i\in I\setminus I_{0}} X_i\right),
\end{equation}
because
\begin{equation*}
\mathfrak{M}=\mathfrak{M}_{0}\cup (\mathfrak{M}\setminus\mathfrak{M}_{0}) \ \,\text{ and } \ \, \mathfrak{M}\setminus\mathfrak{M}_{0}\hookrightarrow\bigcup_{i\in I\setminus I_0}X_i.
\end{equation*}

It follows from \eqref{4(1)} and \eqref{4(2)} that $X$ is not minimal $\mathfrak{M}$-universal, contrary to the condition of the theorem. Now suppose that $X^{1}\subseteq X$ is minimal $\mathfrak{M}_{0}$-universal and
\begin{equation}\label{4(5)}
X^{1}\ne\bigcup_{i\in I_{0}}X_i.
\end{equation}
If $X^{1}\supseteq\bigcup_{i\in I_{0}}X_i,$ then \eqref{4(5)} implies that $X^{1}$ is not minimal $\mathfrak{M}_{0}$-universal. Hence, there is $x_{0}\in\bigcup_{i\in I_{0}}X_i$ such that $x_{0}\not\in X^{1}.$ Consequently, we have
$$
\mathfrak{M}\hookrightarrow X^{1}\cup\left(\mathop{\bigcup}\limits_{i\in I\setminus I_0}X_i\right)\quad\mbox {and}\quad X^{1}\cup\left(\mathop{\bigcup}\limits_{i\in I\setminus I_0}X_i\right)\subseteq X\setminus\{x_0\},
$$
contrary to the minimality of $X$. The implication $(\textrm{i})\Rightarrow (\textrm{ii}_{2})$ is proved, so that $(\textrm{i}) \Rightarrow (\textrm{ii})$ follows.

Let us prove the validity of $(\textrm{ii})\Rightarrow (\textrm{i})$. Suppose that $(\textrm{ii})$ holds. Since $X$ is $\mathfrak{M}$-universal, for every $i\in I$, there is $f_{i}: Y_{i}\hookrightarrow X$. Write $X_{i}=f_{i}(Y_{i})$. It is clear that $X_{i}\simeq Y_i$ holds for every $i\in I.$ Hence it suffices to prove that $\{X_i: i\in I\}$ is a partition of $X$. All sets $Y_i$ are non--empty. Hence all $X_i$ are non--empty also. Since $X$ is minimal $\mathfrak{M}$-universal and
\begin{equation*}
\mathfrak{M}\hookrightarrow\bigcup_{i\in I}X_{i} \quad\mbox{and}\quad \bigcup_{i\in I}X_{i}\subseteq X,
\end{equation*}
we have $X=\mathop{\bigcup}\limits_{i\in I}X_{i}.$ It remains to prove that
$$
X_{i}\cap X_{j}=\varnothing
$$
holds for every pair distinct $i, j\in I$. Suppose contrary that there exist $i_1, i_2\in I$ with
\begin{equation*}
X_{i_1}\cap X_{i_2}\ne\varnothing \quad\mbox{and}\quad i_1\ne i_2.
\end{equation*}
Let $i\in I$. Since $Y_i \in\mathfrak{M}$ is not shifted, Proposition~\ref{vspompr} implies, for every $A_{i}\subseteq X$ with $A_{i}\simeq Y_i,$ that $A_i$ is a minimal universal metric space for the class $\{Y_i\}$. Using condition $(\textrm{ii}_{2})$, with $\mathfrak{M}_{0}=\{Y_{i_k}\},$ $k=1, 2,$ we obtain that there is a unique $A_{i_k}\subseteq X$ such that $A_{i_k}\simeq Y_{i_k}.$ By definition, $X_{i_k}\subseteq X$ and, moreover, $X_{i_k}\simeq Y_{i_k}$ holds for $k=1, 2.$ Consequently, if $$x_{0}\in X_{i_1}\cap X_{i_2},$$ then we have
\begin{equation*}
Y_{i_1}\not\hookrightarrow X\setminus\{x_0\} \quad\mbox{and}\quad Y_{i_2}\not\hookrightarrow X\setminus\{x_0\},
\end{equation*}
contrary to condition $(\textrm{ii}_{2}).$
\end{proof}

\begin{rem}\label{r4.6}
Theorem~\ref{t4.4} remains valid if we replace condition $(\textrm{ii}_{2})$ by the following:
\begin{enumerate}
\item[($\textrm{ii}_{2}^{0}$)]\textit{For every $Y_{i}\in\mathfrak{M}$, there is a unique $X^{i}\subseteq X$ such that $X^{i}\simeq Y_{i}.$}
\end{enumerate}
\end{rem}

\begin{proof}
By the hypothesis of Theorem~\ref{t4.4}, all $Y_{i}\in\mathfrak{M}$ are not shifted. From Proposition~\ref{vspompr} it follows that a metric space $X^{i}$ is minimal universal for the class $\{Y_i\}$ if and only if $X^{i}\simeq Y_i.$ Hence $(\textrm{ii}_{2}^{0})$ is an equivalent for the condition:
\begin{enumerate}
\item[($\textrm{ii}_{2}^{1}$)]For every $i\in I$ there is a unique $X^{i}\subseteq X$ which is minimal universal for $\{Y_i\}.$
\end{enumerate}
We evidently have $(\textrm{ii}_{2})\Rightarrow(\textrm{ii}_{2}^{1}).$ To complete the proof it suffices to note that in the proof of the implication $(\textrm{ii})\Rightarrow(\textrm{i})$ we use, in fact, condition $(\textrm{ii}_{2}^{1})$ instead of $(\textrm{ii}_{2}).$
\end{proof}

\begin{thm}\label{t4.7}
Let $\mathfrak{M}=\{Y_{i}: i\in I\}$ be a non--empty family of non--empty metric spaces. A disjoint union
\begin{equation*}
X=\coprod_{i\in I}Y_{i}
\end{equation*}
is a minimal $\mathfrak{M}$-universal metric space if and only if the following statements hold.
\begin{enumerate}
\item[\rm(i)] \textit{Every $Y_i\in\mathfrak{M}$ is not shifted}.
\item[\rm(ii)] \textit{Metric spaces $Y_i$ and $Y_j$ are incomparable for distinct $i,j\in I,$}.
\item[\rm(iii)] \textit{For every $Y_i\in\mathfrak{M}$ there is a unique $X^{i}\subseteq X$ such that $Y_{i}\simeq X^{i}.$}
\end{enumerate}
\end{thm}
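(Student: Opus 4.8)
The statement is a biconditional, and the plan is to treat its two implications separately, reusing the machinery already developed for disjoint unions rather than arguing from scratch.

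For the forward implication I would assume that $X=\coprod_{i\in I}Y_i$ is minimal $\mathfrak{M}$-universal. Proposition~\ref{pr4} then applies directly and yields conditions~(i) and~(ii) at once: all $Y_i$ are not shifted and distinct members of $\mathfrak{M}$ are incomparable. With~(i) and~(ii) in hand the standing hypotheses of Theorem~\ref{t4.4} are satisfied, so I would invoke its equivalence: since the present $X$ \emph{is} a disjoint union, statement~$(\textrm{i})$ of Theorem~\ref{t4.4} holds, whence statement~$(\textrm{ii})$ holds, and in particular so does condition~$(\textrm{ii}_2)$. Remark~\ref{r4.6} permits replacing $(\textrm{ii}_2)$ by $(\textrm{ii}_2^0)$, and $(\textrm{ii}_2^0)$ is word for word condition~(iii) of the present theorem. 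This settles the forward direction with no new computation.

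For the converse I cannot cite Theorem~\ref{t4.4}, because that result presupposes the very minimality I would be trying to establish; instead I would verify minimality by hand, using only hypothesis~(iii). Fix a partition $\{X_i:i\in I\}$ witnessing $X=\coprod_{i\in I}Y_i$, so that $X_i\simeq Y_i$ for each $i$; the disjoint-union structure already gives $\mathfrak{M}\hookrightarrow X$, so $X$ is $\mathfrak{M}$-universal. To see minimality, take an arbitrary $x_0\in X$: it lies in exactly one block $X_{i_0}$, and $X_{i_0}$ is a subspace of $X$ isometric to $Y_{i_0}$. By~(iii) this copy is the \emph{unique} subspace of $X$ isometric to $Y_{i_0}$, so every isometric embedding $g\colon Y_{i_0}\hookrightarrow X$ satisfies $g(Y_{i_0})=X_{i_0}\ni x_0$. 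Hence $Y_{i_0}\not\hookrightarrow X\setminus\{x_0\}$, so $\mathfrak{M}\not\hookrightarrow X\setminus\{x_0\}$; as $x_0$ was arbitrary, $X$ is minimal $\mathfrak{M}$-universal.

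Most of the work thus reduces to correctly chaining Proposition~\ref{pr4}, Theorem~\ref{t4.4} and Remark~\ref{r4.6} in the first direction. The one genuinely delicate point, where I would be most careful, is the uniqueness step of the converse: I must check that an embedded image $g(Y_{i_0})$ really is one of the subspaces counted by~(iii), so that uniqueness forces $g(Y_{i_0})=X_{i_0}$ and therefore $x_0\in g(Y_{i_0})$ --- precisely what excludes a copy of $Y_{i_0}$ avoiding $x_0$. It is worth noting that conditions~(i) and~(ii), though part of the statement, are consumed only inside Proposition~\ref{pr4} in the forward direction and play no role in the converse.
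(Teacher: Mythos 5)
Your proposal is correct, and in the forward direction it is exactly the paper's proof: Proposition~\ref{pr4} yields (i) and (ii), then Theorem~\ref{t4.4} together with Remark~\ref{r4.6} yields (iii). In the converse direction the paper takes a slightly different scaffolding: it first asserts, via Theorem~\ref{t4.4} and Remark~\ref{r4.6}, that $X$ is minimal $\mathfrak{M}$-universal if and only if condition $(\textrm{ii}_1)$ of Theorem~\ref{t4.4} holds, and then verifies $(\textrm{ii}_1)$; the decisive step there --- if $Y_{i_0}\hookrightarrow X\setminus\{x_0\}$, its image would be a second subspace of $X$ isometric to $Y_{i_0}$, contradicting (iii) --- is word for word your uniqueness argument. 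Your version is, if anything, cleaner: you verify Definition~\ref{muniv} directly (any proper $\mathfrak{M}$-universal subspace lies inside some $X\setminus\{x_0\}$, which (iii) forbids), thereby avoiding an appeal to Theorem~\ref{t4.4}, a statement whose standing hypothesis is the very minimality being established. This also makes visible the observation you close with: given the disjoint-union structure, (iii) alone suffices for the converse, since (iii) in fact forces (i) (a shifted $Y_i$ would produce two distinct copies of $Y_i$ inside $X_i$) and (ii) (an embedding $Y_i\hookrightarrow Y_j$ with $i\ne j$ would produce a copy of $Y_i$ inside $X_j$, disjoint from $X_i$). The paper's detour through $(\textrm{ii}_1)$ buys only the additional fact that the non-embeddable $Y_{i_0}$ is unique, which minimality does not require.
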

\begin{proof}
Suppose that $X$ is minimal $\mathfrak{M}$-universal. Then, by Proposition~\ref{pr4}, conditions $(\textrm{i})$ and $(\textrm{ii})$ are valid. Now Theorem~\ref{t4.4} implies $(\textrm{iii})$. Conversely, if $(\textrm{i}),$ $(\textrm{ii})$ and $(\textrm{iii})$ hold, then, using Theorem~\ref{t4.4} and Remark~\ref{r4.6}, we see that $X$ is minimal $\mathfrak{M}$-universal if and only if condition $(\textrm{ii}_{1})$ of Theorem~\ref{t4.4} holds. To prove $(\textrm{ii}_{1})$ suppose that $x_0$ is an arbitrary point of $X.$ Since we have $X=\mathop{\sqcup}\limits_{i\in I}Y_i,$ there is a partition $\{X_i: i\in I\}$ of $X$ such that $X_i\simeq Y_i$ for every $i\in I.$ Let $X_{i_0}$ be a part such that $x_0\in X_{i_0},$ $i_{0}\in I.$ It suffices to show that $Y_{i_0}\not\hookrightarrow X\setminus\{x_0\}$ and $$(Y_{i}\not\hookrightarrow X\setminus\{x_0\})\Rightarrow(i=i_0)$$ for every $i\in I.$ As in the proof of Theorem~\ref{t4.4}, we obtain $Y_{i}\hookrightarrow X\setminus\{x_0\}$ for $i\in I\setminus\{i_0\}.$ Suppose we also have $Y_{i_0}\hookrightarrow X\setminus\{x_0\}.$ Then there is $X^{i_0}\subseteq X\setminus\{x_0\}$ such that $Y_{i_0}\simeq X^{i_0}$. Thus we obtain $X^{i_0}\ne X_{i_0},$ because $x_0\notin X^{i_0}$ and $x_0\in X_{i_0}.$ It is a contradiction with $(\textrm{iii}).$
\end{proof}

Let us give some applications of Theorem~\ref{t4.7} to construction of minimal universal metric spaces based on the disjoint unions.

\begin{defin}\label{d4.8}
Let $\varepsilon\in (0, \infty).$ A metric space $X$ is said to be $\varepsilon$-connected if for every $x, y\in X$ there is a finite sequence $(x_i),$ $i=1,...,n,$ such that:

$x_{i}\in X$, for all $i\in 1, \ldots, n$ and  $x_1 =x$, $x_n = y$ and
$$
d_{X}(x_i, x_{i+1})\le\varepsilon
$$
for every $i\in\{1,..., n-1\}$.
\end{defin}

\begin{thm}\label{t4.9}
Let $\mathfrak{M}=\{Y_i: i\in I\}$ be a non--empty set of non--empty metric spaces. Suppose all $Y_i$ are not shifted and $Y_i\not\hookrightarrow Y_j$ if $i\ne j$. If there exists $\varepsilon>0$ such that $Y_i$ is $\varepsilon$-connected for every $i\in I,$ then there is a metric on
$$
X=\coprod_{i\in I}Y_{i}
$$
such that the disjoint union $X$ with this metric is a minimal $\mathfrak{M}$-universal metric space.
\end{thm}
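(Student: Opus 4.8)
The plan is to reduce everything to Theorem~\ref{t4.7} and then to build a single metric on the underlying disjoint union that forces each $Y_i$ to have a unique isometric copy. Conditions $(\textrm{i})$ and $(\textrm{ii})$ of Theorem~\ref{t4.7} are handed to us directly: the spaces $Y_i$ are not shifted by hypothesis, and $Y_i\not\hookrightarrow Y_j$ for $i\ne j$ gives, after interchanging the roles of $i$ and $j$, that distinct $Y_i,Y_j$ are incomparable. Thus all the work goes into producing a metric realizing the disjoint union for which condition $(\textrm{iii})$ holds.

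First I would fix, for each $i\in I$, a base point $p_i\in Y_i$ (the $Y_i$ are non--empty) and a real number $\rho>\varepsilon$, and define on the disjoint union $X=\bigsqcup_{i\in I}Y_i$ the function
\begin{equation*}
d_X(x,y)=\begin{cases} d_{Y_i}(x,y) & x,y\in Y_i,\\ d_{Y_i}(x,p_i)+\rho+d_{Y_j}(p_j,y) & x\in Y_i,\ y\in Y_j,\ i\ne j. \end{cases}
\end{equation*}
A routine case analysis of the triangle inequality, splitting according to how the three points distribute among the pieces, shows that $d_X$ is a metric whose restriction to each $Y_i$ equals $d_{Y_i}$; hence $X=\coprod_{i\in I}Y_{i}$ in the sense of Definition~\ref{part}. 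I deliberately avoid the bounded gluing lemma here, since the $Y_i$ need not be bounded and $I$ may be infinite, so a constant cross-distance is unavailable; the base-point bridge of length $\rho$ repairs this. The one point I need to record is that \emph{every} cross-distance is at least $\rho$, and therefore strictly larger than $\varepsilon$.

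The crux is condition $(\textrm{iii})$, and this is where $\varepsilon$-connectedness does the work. Let $A\subseteq X$ be isometric to some $Y_i$. Since $Y_i$ is $\varepsilon$-connected (Definition~\ref{d4.8}) and $A\simeq Y_i$, any two points of $A$ are joined by an $\varepsilon$-chain lying in $A$; as consecutive members of such a chain are at distance $\le\varepsilon<\rho$, they must lie in the same piece, and propagating this along the sequence forces all of $A$ into a single piece $Y_j$. Then $Y_i\simeq A\subseteq Y_j$ yields $Y_i\hookrightarrow Y_j$, so $j=i$ by the hypothesis $Y_i\not\hookrightarrow Y_j$ for $i\ne j$. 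Thus $A\subseteq Y_i$; since $Y_i$ is not shifted, the isometry $Y_i\to A$ composed with the inclusion $A\hookrightarrow Y_i$ is a surjective self-embedding, whence $A=Y_i$. So the copy $X^i:=Y_i$ is the unique subspace of $X$ isometric to $Y_i$, which is exactly condition $(\textrm{iii})$.

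With $(\textrm{i})$, $(\textrm{ii})$ and $(\textrm{iii})$ in hand, Theorem~\ref{t4.7} immediately yields that $X$ with this metric is minimal $\mathfrak{M}$-universal. I expect the main obstacle to be not the metric verification but the uniqueness argument: one must check that $\varepsilon$-connectedness is preserved under isometry and, crucially, that a \emph{single} uniform $\varepsilon$ (guaranteed by the hypothesis) separates every piece from every other, so that no isometric copy of $Y_i$ can spread across several pieces nor sit inside $Y_i$ as a proper subset.
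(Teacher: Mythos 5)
Your proof is correct, and it shares its skeleton with the paper's: both reduce to Theorem~\ref{t4.7}, note that conditions (i) and (ii) are immediate from the hypotheses, and settle the crucial uniqueness condition (iii) by the same $\varepsilon$-chain argument (every cross-piece distance exceeds $\varepsilon$, so an isometric copy of an $\varepsilon$-connected $Y_i$ cannot straddle two pieces; incomparability then pins it inside $Y_i$, and not-shiftedness forces it to equal $Y_i$). Where you genuinely diverge is in how the metric on the disjoint union is produced. The paper builds a weighted graph whose edges are all pairs inside each piece together with all pairs of chosen representatives $\{a_i\}$ carrying weight $\varepsilon_1>\varepsilon$, and then invokes the metrization machinery of \cite{DMV} through Lemma~\ref{l4.11}; verifying the cycle condition \eqref{e4.9} for this graph, and then the positivity condition, occupies the bulk of the paper's proof. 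Your explicit formula $d_X(x,y)=d_{Y_i}(x,p_i)+\rho+d_{Y_j}(p_j,y)$ for $x\in Y_i$, $y\in Y_j$, $i\ne j$, is in fact precisely the shortest-path metric of the paper's graph (with $p_i=a_i$ and $\rho=\varepsilon_1$), but defining it in closed form lets you replace the whole cycle analysis by a short case check of the triangle inequality, which does go through: the only nontrivial cases collapse to the triangle inequality inside a single piece, since each cross term contributes a full $+\rho$ plus nonnegative leftovers. So your route is a genuine streamlining --- same metric, same uniqueness argument, but self-contained where the paper leans on an external metrization lemma. Your side remark is also on target: the constant-cross-distance gluing of \eqref{equat5} is unavailable here because the $Y_i$ need not have bounded diameter, which is exactly why a base-point bridge (or, in the paper, a representative-to-representative edge) is needed rather than a single uniform cross distance.
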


The proof of Theorem~\ref{t4.9} can be divided into three following steps.

\medskip

\noindent $\bullet$ We consider a disjoint metric spaces $X_i\simeq Y_i$, $i\in I$ and construct a connected weighted graph $(G, w)$ with the vertex set $V(G),$ the edge set $E(G)$ and the weight $w: E(G)\rightarrow\mathbb R^{+}$, $\mathbb R^{+}=[0, \infty)$, such that:

$V(G)=X$ with $X=\bigcup\limits_{i\in I}X_i$;

If $x, y$ are distinct points of $X_i,$ $i\in I,$ then $\{x, y\}\in E(G)$;

The equality $$w(\{x, y\})=d_{X_i}(x, y)$$ holds for every $i\in I$ and all distinct $x, y\in X_i.$

\medskip

\noindent $\bullet$ Using some results of \cite{DMV}, we show that the weighted shortest path pseudometric $d_{w, X}$ is a metric on $X$ satisfying the equality $$d_{w, X}(x, y)= d_{X_i}(x, y)$$ for every $i\in I$ and all $x, y\in X_i.$ Thus, $X$ with the metric $d_{w, X}$ is a disjoint union of metric spaces $Y_i,$ $i\in I.$

\medskip

\noindent $\bullet$ We apply Theorem~\ref{t4.7} to the metric space $(X, d_{w, X})$ to obtain that it is minimal $\mathfrak{M}$-universal.

In the proof of Theorem~\ref{t4.9} we will use a basic terminology of the Graph Theory (see, for example, \cite{BM}).  All our graphs $G$ are simple, so that we can identify the edge set $E(G)$ with a set of two-elements subsets of the vertex set $V(G)$. We use the standard definition of the path and the cycle. It should be noted that all vertices of any cycle $C$ can be labeled as $x_0,...,x_n$ with $|V(C)|=n$, $x_0=x_n$ and for $0\leqslant i < j \leqslant n,$ $\{x_j,x_i\}\in E(G)$ if and only if $j-i=1$.

If $(G, w)$ is a weighted graph, then for each subgraph $F$ of the graph $G$ define
\begin{equation}\label{weight}
w(F):=\sum_{e\in E(F)}w(e).
\end{equation}

Recall that a symmetric function $\rho: X\times X\rightarrow\mathbb R^{+}$ is a pseudometric on $X$ if $\rho$ satisfies the triangle inequality $\rho(x, y)\le\rho(x, z)+\rho (z, y)$ and the equality $\rho(x, y)=0$ for all $x, y, z\in X,$ but, in general, the implication
$$(\rho(x, y)=0)\Rightarrow(x=y)$$ does not hold.

\begin{defin}
Let $(G, w)$ be a weighted graph with $E(G)\ne\varnothing.$ The weight $w$ is pseudometrizable if there is a pseudometric $d_{V}$ on the vertex set $V=V(G)$ such that
\begin{equation}\label{cond}
d_{V}(x, y)=w(\{x, y\}) \quad\mbox{for every}\quad \{x, y\}\in E(G).
\end{equation}
A pseudometrizable weight $w$ is metrizable if there is a metric $d_{V}$ satisfying \eqref{cond}.
\end{defin}

Let $(G, w)$ be a weighted graph and let $u, v$ be distinct vertices of $G.$ Let us denote by $P_{u, v}$ the set of all paths joining $u$ and $v$ in $G.$ Thus, a finite sequence $(t_0, ..., t_n)$ belongs to $P_{u, v}$ if and only if $u=t_0,$ $v=t_n,$ $t_{i}\in V(G)$ for $0\le i\le n,$ and $t_i \ne t_j$ for $0\le i<j\le n$ and $\{t_{i}, t_{i+1}\}\in E(G)$ for $0\le i\le n-1.$

Let $u, v\in V(G)$. Write
\begin{equation}\label{e4.8}
d_{w, G}(u, v):=\begin{cases}
         \inf\{w(F): F\in P_{u, v}\} & \mbox{if} $ $ u \ne v\\
         0 & \mbox{if}$ $ u = v. \\
         \end{cases}
\end{equation}
Recall that a graph $G$ is connected if $P_{u, v}\ne\varnothing$ for all distinct $u, v\in V(G).$ For the connected weighted graphs it is well known that the function $d_{w, G}$ is a pseudometric on the set $V(G).$ This pseudometric will be termed as the weighted shortest path pseudometric. It coincides with the usual path metric if $w(e)=1$ for every $e\in E(G).$

The next lemma follows at once from Proposition~2.1 and Proposition~3.1 of \cite{DMV}.

\begin{lem}\label{l4.11}
Let $(G, w)$ be a connected weighted graph with $V(G)\ne\varnothing.$ The following conditions are equivalent:
\begin{enumerate}
\item[\rm(i)]\textit{The weight $w$ is pseudometrizable;}

\item[\rm(ii)]\textit{For every cycle  $C=(t_0, t_1, ..., t_n)$ in $G$ we have the inequality}
\begin{equation}\label{e4.9}
2\max_{0\le i\le n-1}w\{t_i, t_{i+1}\}\le w(C),
\end{equation}
\textit{where $w(C)$ is determined in accordance with \eqref{weight}.}
\end{enumerate}
A pseudometrizable weight $w$ is metrizable if and only if the inequality
\begin{equation}
\inf\{w(F): F\in P_{u, v}\}>0
\end{equation}
holds for every pair of distinct $u, v\in V(G).$ If $w$ is a metrizable weight, then the weighted shortest path pseudometric $d_{w,G}$ is a metric and condition \eqref{cond} holds with $d_{V}=d_{w, G}$.
\end{lem}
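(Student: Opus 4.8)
The plan is to route the entire lemma through the weighted shortest path pseudometric $d_{w,G}$ itself, treating it as the canonical candidate for a pseudometric realizing $w$. First I would record the standing fact that, for a connected $G$, the function $d_{w,G}$ defined in \eqref{e4.8} is always a pseudometric on $V(G)$: symmetry and $d_{w,G}(u,u)=0$ are immediate, finiteness follows from connectedness (so each $P_{u,v}\neq\varnothing$), and the triangle inequality follows by concatenating a near-optimal path from $u$ to $z$ with one from $z$ to $v$ to get a walk, then extracting a path from that walk; since $w\ge 0$, deleting the repeated portions cannot increase the weight, so $d_{w,G}(u,v)\le d_{w,G}(u,z)+d_{w,G}(z,v)$ after letting the slack tend to $0$.

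Next I would prove the comparison inequality that pins $d_{w,G}$ down. If $d_V$ is any pseudometric with $d_V=w$ on edges, then for every path $F=(t_0,\dots,t_n)\in P_{u,v}$ the triangle inequality gives $d_V(u,v)\le\sum_{i=0}^{n-1}d_V(t_i,t_{i+1})=w(F)$, whence $d_V(u,v)\le d_{w,G}(u,v)$ for all $u,v$. On the other hand the single-edge path shows $d_{w,G}(x,y)\le w(\{x,y\})$ for every edge $\{x,y\}$. Combining the two, any realizing $d_V$ must satisfy $d_{w,G}(x,y)=w(\{x,y\})=d_V(x,y)$ on edges. Consequently $w$ is pseudometrizable if and only if $d_{w,G}$ agrees with $w$ on edges, and in that case $d_{w,G}$ is itself a realizing pseudometric; this reduces condition (i) to the single statement ``$d_{w,G}=w$ on edges''.

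The heart of the argument is translating this statement into the cycle inequality \eqref{e4.9}. For $\textrm{(i)}\Rightarrow\textrm{(ii)}$: given a cycle $C$, let $e=\{t_j,t_{j+1}\}$ be a heaviest edge, of weight $M$; deleting $e$ leaves a path $P$ from $t_j$ to $t_{j+1}$ with $w(P)=w(C)-M$, and pseudometrizability forces $M=w(e)=d_{w,G}(t_j,t_{j+1})\le w(P)$, i.e. $2M\le w(C)$. For $\textrm{(ii)}\Rightarrow\textrm{(i)}$ I would argue by contradiction: if some edge $\{x,y\}$ failed $d_{w,G}(x,y)=w(\{x,y\})$, there would be a path $P$ from $x$ to $y$ with $w(P)<w(\{x,y\})$, necessarily of length at least two; adjoining $\{x,y\}$ then yields a genuine simple cycle $C$ in which $\{x,y\}$ is the strict heaviest edge (each summand of $w(P)$ is dominated by $w(P)<w(\{x,y\})$), so \eqref{e4.9} gives $2w(\{x,y\})\le w(C)=w(\{x,y\})+w(P)$, contradicting $w(P)<w(\{x,y\})$.

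Finally, for the metrizability claims I would use that, once $w$ is pseudometrizable, $d_{w,G}$ realizes it, so $w$ is metrizable exactly when $d_{w,G}$ is a genuine metric, i.e. when $d_{w,G}(u,v)>0$ for all distinct $u,v$; by \eqref{e4.8} this is precisely $\inf\{w(F):F\in P_{u,v}\}>0$. The comparison inequality supplies the converse direction, since any realizing metric $d_V$ satisfies $0<d_V(u,v)\le d_{w,G}(u,v)$, forcing the infima to be positive. Taken together these yield the stated equivalence and, when $w$ is metrizable, both that $d_{w,G}$ is a metric and that it satisfies \eqref{cond} with $d_V=d_{w,G}$. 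I expect the only delicate point to be the $\textrm{(ii)}\Rightarrow\textrm{(i)}$ step: one must check that the offending path has length at least two, so that adjoining the edge produces a bona fide simple cycle, and that the heaviest-edge bookkeeping is exact; everything else is routine manipulation of the shortest-path pseudometric.
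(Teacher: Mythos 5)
Your proof is correct. It is worth pointing out that the paper itself contains no proof of Lemma~\ref{l4.11}: the lemma is justified only by the citation of Propositions~2.1 and~3.1 of \cite{DMV}, so your self-contained argument genuinely adds something rather than paralleling an argument in the text. Your route is the natural one and presumably close to that of the cited source: $d_{w,G}$ is a pseudometric on every connected weighted graph; every pseudometric $d_V$ realizing $w$ satisfies $d_V\le d_{w,G}$, while the one-edge path gives $d_{w,G}\le w$ on edges, so pseudometrizability reduces to the single identity $d_{w,G}(x,y)=w(\{x,y\})$ for edges $\{x,y\}$; this identity is equivalent to \eqref{e4.9} by heaviest-edge deletion in one direction and by closing a too-light path into a cycle in the other; and both metrizability claims drop out of the same comparison inequality. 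What your version buys is self-containedness, at the cost of length. Your handling of the delicate points is sound: a path $P$ from $x$ to $y$ with $w(P)<w(\{x,y\})$ cannot be the one-edge path, hence has length at least two and closes to a genuine cycle, and since $w\ge 0$ each edge of $P$ has weight at most $w(P)<w(\{x,y\})$, so $\{x,y\}$ is indeed the heaviest edge of that cycle. The only caveat: if one read the paper's labelling remark literally (``$\{x_j,x_i\}\in E(G)$ if and only if $j-i=1$''), cycles would have to be chordless, and the cycle you construct need not be; under that reading, (ii)$\Rightarrow$(i) would need a routine supplementary induction splitting a chorded cycle along a chord, the inequality for the two pieces yielding it for the whole. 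Since the paper declares that the standard definition of a cycle is used, this is a non-issue and your proof stands as written.
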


\begin{proof}[Proof of Theorem~\ref{t4.9}]
Let $\{X_i: i\in I\}$ be a family of metric spaces satisfying the conditions
$$
X_{i}\cap X_{j}=\varnothing \quad\mbox{and}\quad X_{i}\simeq Y_{i}
$$
for all distinct $i, j\in I.$ Write
$$
X=:\bigcup_{i\in I}X_i.
$$
By conditions of the theorem all $Y_i$ are not shifted and $Y_{i}\not\hookrightarrow Y_{j}$ if $i\ne j$ and $i, j\in I$. Suppose that there is $\varepsilon >0$ such that $Y_i$ is $\varepsilon$-connected for every $i\in I$. Since, for every $i\in I,$ $Y_i$ is not empty, there is a system of distinct representatives $\{a_i: i\in I\}$ for the family $\{X_i: i\in I\},$ i.e., $$a_{i}\in X_{i}\quad\mbox{and}\quad a_i\ne a_j$$
for all distinct $i, j\in I.$ Let us consider a graph $G$ with the vertex set $V(G)=X$ and with the edge set $E(G)$ defined by the rule: if $x, y$ are distinct points of $X$, then
\begin{equation}\label{e4.11}
(\{x, y\}\in E(G))\Leftrightarrow ((\exists\, i\in I: x, y\in X_i)\,\vee\, (\exists\, i, j\in I:x=a_i\, \wedge\, y=a_j))
\end{equation}
(see Figure 1). The graph $G$ is connected, because $\{X_i: i\in I\}$ is a partition of $X=V(G)$.
\begin{figure}\label{f1}
\begin{center}
\includegraphics[width=0.8\textwidth]{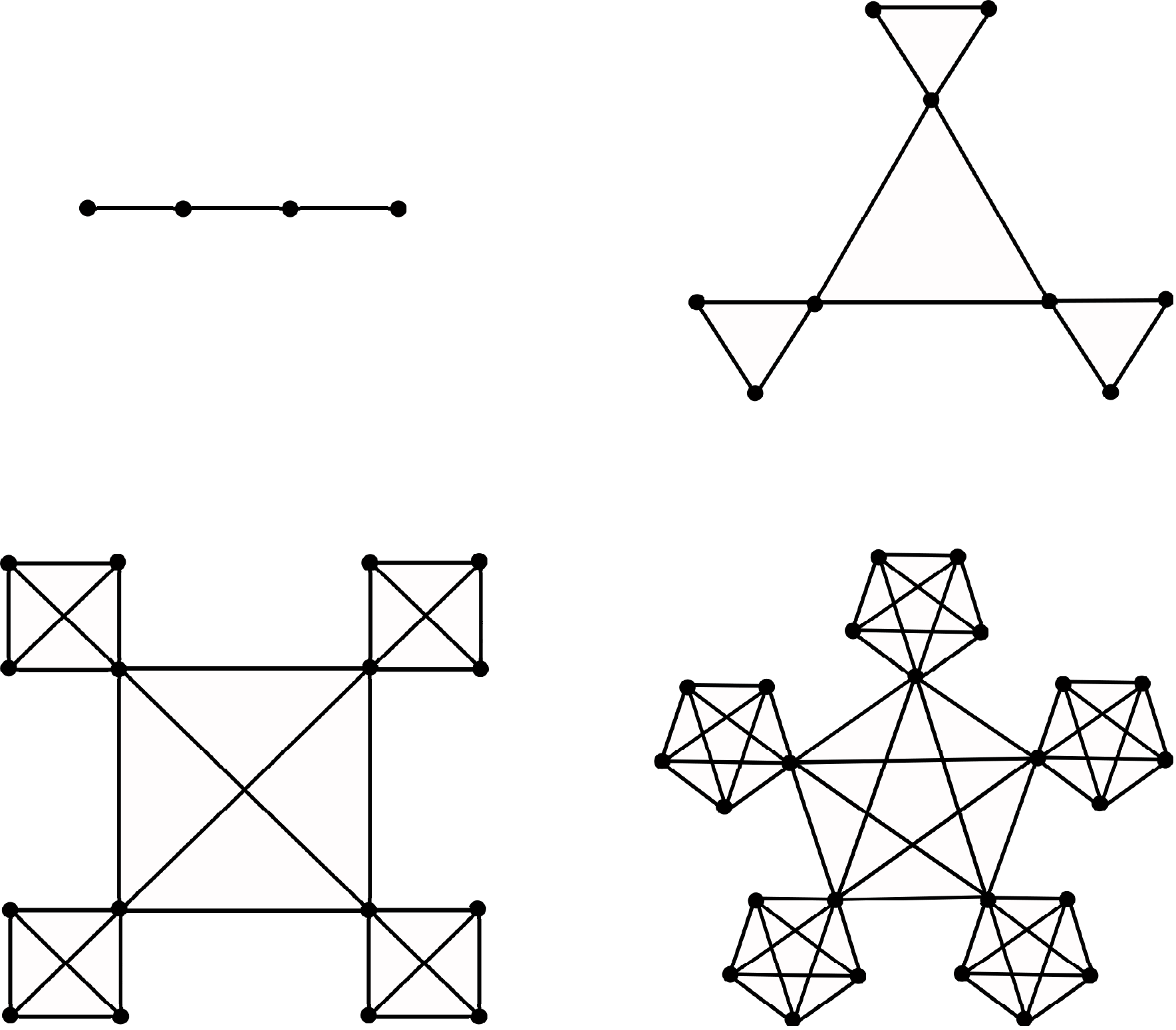}\\
\caption{The graphs $G$ for the families $\{Y_i:i\in I\}$ with $I=\{1, 2, ..., k\}$ and $|Y_1|=|Y_2|=...=|Y_k|$, \, $k=2, 3, 4, 5.$}
\end{center}
\end{figure}
Let $\varepsilon<\varepsilon_{1}<\infty$. Define a weight $w: E(G)\rightarrow\mathbb R^{+}$ as follows
\begin{equation}\label{e4.12}
w(\{x, y\}):=\begin{cases}
         d_{X_i}(x, y) & \mbox{if} $ $ \{x, y\}\subseteq X_i \quad \mbox{for some} \quad i\in I\\
         \varepsilon_{1} & \mbox{if}$ $ x = a_i\, \,\mbox{and} \,\, y=a_j \,\, \mbox{for some distinct} \,\, i, j\in I. \\
         \end{cases}
\end{equation}
By Lemma~\ref{l4.11}, the weight $w$ is pseudometrizable if inequality \eqref{e4.9} holds for every cycle $C$ in $G$. Let $C=(t_0, ..., t_n),$ $t_0=t_n,$ be an arbitrary cycle in $G.$  If there is $i\in I$ such that $V(C)\subseteq X_i,$ then \eqref{e4.9} holds because $C$ is a cycle in the induced subgraph $G[X_i]$ for which the restriction $w\mid_{E(G[X_i])}$ of the weight function $w$ is evidently metraizable. Recall that, by the definition of induced subgraphs, we have $V(G[X_i])=X_i$ and, for every $x, y\in X_i$
$$
(\{x, y\}\in E(G[X_i]))\Leftrightarrow (\{x,y\}\in E(G)).
$$
Suppose now that
\begin{equation}\label{e4.13}
V(C)\not\subseteq X_i \quad\mbox{for any}\quad i\in I.
\end{equation}
Let $f: X\to I$ be a function satisfying the statement
$$
(f(x)=i)\Leftrightarrow(x\in X_i)
$$
for all $x\in X$, $i\in I$.  Condition \eqref{e4.13} implies the existence of $k\in\{0, ..., n-1\}$ such that
\begin{equation}\label{e4.14}
f(t_k)\ne f(t_{k+1}).
\end{equation}
Indeed, if the equality $f(t_k)=f(t_{k+1})$ holds for every $k\in\{0, ..., n-1\},$ then we have
\begin{equation*}
V(C)=\{t_0, ..., t_{n-1}\}\subseteq X_{f(t_0)},
\end{equation*}
contrary to \eqref{e4.13}. After a renumbering of the vertices of $C$, we may suppose
\begin{equation}\label{e4.15}
f(t_0)\ne f(t_{1}).
\end{equation}
It follows from \eqref{e4.15} and the definitions of $w$ and $f$ that
\begin{equation}\label{e4.16}
t_{0}=a_{f(t_0)}\quad\mbox{and}\quad t_1=a_{f(t_1)}.
\end{equation}
In addition, since $t_0 =t_n,$ we also have
\begin{equation}\label{e4.17}
t_{n}=a_{f(t_n)}.
\end{equation}
We claim that \eqref{e4.16} and \eqref{e4.17} together with \eqref{e4.11} imply the equality
\begin{equation}\label{e4.18}
t_{i}=a_{f(t_i)}
\end{equation}
for every $i\in\{0, ..., n\}.$ Suppose contrary that there is $i\in\{0, ..., n\}$ such that $t_{i}\ne a_{f(t_i)}.$ Then we can find the smallest index $i_0$ such that
\begin{equation*}
t_{i_0}\ne a_{f(t_{i_0})}\quad\mbox{and}\quad t_i=a_{f(t_i)} \quad\mbox{for}\quad i\in \{0, ..., i_0 -1\}.
\end{equation*}
From the definition of $i_0$ and \eqref{e4.17}, we have
\begin{equation}\label{e4.19}
t_{i_0-1}= a_{f(t_{i_0-1})}\quad\mbox{with}\quad 2\le i_{0}-1 <i_0\le n-1.
\end{equation}
Since
$$
\{t_{i_0 -1}, t_{i_0}\}\in E(C)\subseteq E(G) \text{ and } t_{i_0}\in X_{f(t_{i_0})}\setminus\{a_{f(t_{i_0})}\},
$$
rule~\eqref{e4.11} implies that
\begin{equation}\label{e4.20}
f(t_{i_0})=f(t_{i_0-1}).
\end{equation}
Let $i_1\in\{i_0, i_{0}+1, ..., n\}$ such that
\begin{equation}\label{e4.21}
t_{i_0}\ne a_{f(t_{i_0})}, \,\, t_{i_0+1}\ne a_{f(t_{i_0+1})},..., \,\,t_{i_1}\ne a_{f(t_{i_1})}
\end{equation}
and
\begin{equation}\label{e4.22}
t_{i_1+1}= a_{f(t_{i_1+1})}.
\end{equation}
Then we have $i_0\le i_1 \le n-1$ and, similarly \eqref{e4.20}, the equality
\begin{equation}\label{e4.23}
f(t_{i_1})=f(t_{i_1+1})
\end{equation}
holds. Using \eqref{e4.21} and \eqref{e4.11} we can also prove the chain of the equalities
\begin{equation}\label{e4.24}
f(t_{i_0})=f(t_{i_0+1})=...=f(t_{i_1+1}).
\end{equation}
Thus we have $t_{i_{0}-1}=a_{f(t_{i_{0}-1})},$ by \eqref{e4.19}, $a_{f(t_{i_{0}-1})}=a_{f(t_{i_0})},$ by \eqref{e4.20}, $a_{f(t_{i_0})}=a_{f(t_{i_0+1})}=...=a_{f(t_{i_1+1})},$ by \eqref{e4.24} and $a_{f(t_{i_1+1})}=t_{i_1+1}$ by \eqref{e4.22}. Consequently, the equality
\begin{equation}\label{e4.25}
t_{i_0-1}=t_{i_1+1}
\end{equation}
holds. Since $2\le i_{0}-1<i_0<i_1+1\le n,$ equality \eqref{e4.25} leads to a contradiction of the definition of cycles.

From \eqref{e4.18} and \eqref{e4.12} we obtain that
\begin{equation*}
w(\{t_0, t_1\})=w(\{t_1, t_2\})=...=w(\{t_{n-1}, t_n\})=\varepsilon_1.
\end{equation*}
This chain of equalities evidently implies inequality \eqref{e4.9}. Hence, by Lemma~\ref{l4.11}, the weight $w$ defined by \eqref{e4.12}, is pseudometraizable. Using this lemma again, we obtain that $w$ is metraizable if the inequality
\begin{equation}\label{e4.26}
\inf\{w(F): F\in P_{x, y}\}>0
\end{equation}
holds for all distinct $x, y\in X.$ Let us prove \eqref{e4.26}. Let $x$ and $y$ be two distinct points of $X$ and let $F=(t_0, ..., t_n)$ be a path joining $x$ and $y$ in $G$, $x=t_0$ and $y=t_n.$ If there exists $i\in I$ such that $\{t_k, t_{k+1}\}\subseteq X_{i}$ for every $k\in\{0, ..., n-1\},$ then $x, y\in X_i$ and using the triangle equality, we see that
\begin{equation*}
w(F)=\sum_{k=0}^{n-1}w(\{t_k, t_{k+1}\})=\sum_{k=0}^{n-1}d_{X_i}(t_k, t_{k+1})\ge d_{X_i}(x, y)>0.
\end{equation*}
(Note that $x$ and $y$ are distinct points of $X$). Otherwise, there is $k_0\in\{0, ..., n-1\}$ such that $f(t_{k_0})\ne f(t_{k_0}+1)$ and similarly \eqref{e4.16} we obtain
\begin{equation}\label{e4.27}
t_{k_0}=a_{f(t_{k_0})}\quad\mbox{and}\quad t_{k_0+1}=a_{f(t_{k_0+1})}.
\end{equation}
Since $w$ is nonnegative, \eqref{e4.27} and \eqref{e4.12} imply that
\begin{equation*}
w(F)=\sum_{k=0}^{n-1}w(\{t_k, t_{k+1}\})\ge w(\{t_{k_0}, t_{k_0+1}\})=w(\{a_{f(t_{k_0})}, a_{f(t_{k_0+1})}\})=\varepsilon_{1}.
\end{equation*}
Thus, for every $F\in P_{x, y},$  we obtain
\begin{equation}\label{e4.28}
w(F) \ge d_{X_i}(x, y)
\end{equation}
if there is $i\in I$ such that $x, y\in X_i$, and
\begin{equation}\label{e4.29}
w(F)\ge\varepsilon_1\
\end{equation}
if $x\in X_i$ and $y\in Y_j$ with distinct $i, j\in I$. Inequality \eqref{e4.26} follows from \eqref{e4.28} and \eqref{e4.29}. Hence the weight $w$ is metraizable.

Let $d_{w, G}$ be the weighted shortest path metric defined by \eqref{e4.8}. (Note that inequality \eqref{e4.26} implies that $d_{w, G}$ really is a metric). Using the last statement of Lemma~\ref{l4.11}, we see that the metric space $X$ with the metric $d_{X}=d_{w, G}$ is $\mathfrak{M}$-universal, i.e.
\begin{equation*}
X=\coprod_{i\in I}Y_i\quad\mbox{if}\quad d_{X}=d_{w, G}.
\end{equation*}
In accordance with Theorem~\ref{t4.7}, to prove that $X$ is minimal $\mathfrak M$-universal it suffices to show that for every $i\in I$ there is a unique $X^{i}\subseteq X$ such that
$$
Y_{i}\simeq X^{i}.
$$
It is clear that $Y_{i}\simeq X_{i}$ for every $i\in I$. Suppose there is $i_0\in I$ such that $X^{i_0}\ne X_{i_0}$ and $Y_{i_0}\simeq X^{i_0}$. If $X^{i_0}\subseteq X_{i_0}$, then we have $X^{i_0}= X_{i_0}$ because $X_{i_0}\simeq Y_{i_0}\simeq X^{i_0}$ and $Y_{i_0}$ is not shifted. Hence there is a point $x_1\in X^{i_0}$ and an index $i_1\in I$ such that $x_1\in X_{i_1}$ and $i_1\neq i_0$. If the inclusion $X^{i_0}\subseteq X_{i_1}$ holds, then $Y_{i_0}\hookrightarrow X_{i_1}$ and consequently $Y_{i_0}\hookrightarrow Y_{i_1}$. This leads to a contradiction with the condition that $Y_{i_0}$ and $Y_{i_1}$ are incomparable if $i_0\ne i_1$. Consequently, there is $i_2\in I$ and $x_2\in X^{i_0}$ such that $x_2\in X_{i_2}$ with $i_2\ne i_1.$ Since $x_1, x_2\in X^{i_0},$ $X^{i_0}\simeq Y_{i_0}$ and $Y_{i_0}$ is $\varepsilon$-connected, there is a finite sequence $t_0, ..., t_n$ of points from $X^{i_0}$ with
\begin{equation}\label{e4.30}
t_{i}\in X, \, \, t_0=x_1, \  t_n=x_2, \,\,\mbox{and}\  d_{X}(t_i, t_{i+1})<\varepsilon
\end{equation}
for every $i\in\{0, ..., n-1\}$. Since $t_0=x_1$ and $t_n=x_2$, we have $t_0\in X_{i_1}$ and $t_n\in X_{i_2}$  with $i_1\ne i_2$. Consequently, we can find $i\in\{0, ..., n-1\}$  such that $f(t_i)\ne f(t_{i+1})$.  Now using \eqref{e4.29}, we obtain
\begin{equation*}
d_{X}(t_i, t_{i+1})=d_{w, G}(t_i, t_{i+1})=\inf\{w(F): F\in P_{t_i, t_{i+1}}\}\ge\varepsilon_1,
\end{equation*}
contrary to \eqref{e4.30}. Hence if $X^{i_0}\ne X_{i_0},$ then $X^{i_0}\not\simeq Y_{i_0}$ holds. It follows that $X$ is minimal $\mathfrak{M}$-universal.
\end{proof}

It is well known that every connected metric space is $\varepsilon$-connected for all $\varepsilon>0$. Hence, Theorem~\ref{t4.9} entails the following corollary.
\begin{cor} \label{c4.12}
Let $\mathfrak{M}$  be a non--empty set of connected non--empty metric spaces $Y$. Suppose that all $Y$ are not shifted and pairwise incomparable, then there is a metric $d_X$ on
$$
X=\mathop{\coprod}\limits_{Y\in\mathfrak{M}}
$$
such that $(X, d_X)$ is minimal $\mathfrak{M}$--universal.
\end{cor}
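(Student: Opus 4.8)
The plan is to obtain the corollary as a direct application of Theorem~\ref{t4.9}, so that the whole task reduces to checking that its three hypotheses are met by the family $\mathfrak{M}$. Writing $\mathfrak{M}=\{Y_i:i\in I\}$ (where we may simply take $I=\mathfrak{M}$ and index each space by itself), two of the hypotheses are essentially given. Every $Y_i$ is not shifted by assumption, and the pairwise incomparability of the members of $\mathfrak{M}$ yields in particular $Y_i\not\hookrightarrow Y_j$ whenever $i\ne j$, since by the definition of incomparable spaces neither $Y_i\hookrightarrow Y_j$ nor $Y_j\hookrightarrow Y_i$ can hold.

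The remaining hypothesis asks for a single $\varepsilon>0$ such that each $Y_i$ is $\varepsilon$-connected. First I would record the well-known fact that a connected metric space is $\varepsilon$-connected for every $\varepsilon>0$: fixing a point $x$ and a number $\varepsilon>0$, the set $A$ of points reachable from $x$ by a finite $\varepsilon$-chain in the sense of Definition~\ref{d4.8} is both open and closed, because any point lying within distance less than $\varepsilon$ of a chain endpoint can be appended to the chain, and symmetrically such a point cannot lie outside $A$; hence $A=X$ by connectedness. Since each $Y_i\in\mathfrak{M}$ is connected, each $Y_i$ is $\varepsilon$-connected for \emph{every} $\varepsilon>0$, so it suffices to fix any positive number, say $\varepsilon=1$. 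Then $Y_i$ is $1$-connected for all $i\in I$ simultaneously, and the corresponding hypothesis of Theorem~\ref{t4.9} holds.

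With all three hypotheses verified, Theorem~\ref{t4.9} supplies a metric $d_X$ on $X=\coprod_{Y\in\mathfrak{M}}Y$ for which the disjoint union is minimal $\mathfrak{M}$-universal, which is exactly the assertion of the corollary. There is no genuine obstacle here; the only point deserving care is that Theorem~\ref{t4.9} demands one value of $\varepsilon$ serving the entire family at once, and this is precisely what the ``for all $\varepsilon>0$'' strength of connectedness---as opposed to mere $\varepsilon$-connectedness for some single $\varepsilon$---guarantees.
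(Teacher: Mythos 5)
Your proof is correct and follows essentially the same route as the paper: the paper likewise observes that a connected metric space is $\varepsilon$-connected for every $\varepsilon>0$ and then invokes Theorem~\ref{t4.9}. Your only addition is spelling out the standard open-and-closed chain argument for that observation, which the paper simply cites as well known.
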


The following corollary can be considered as a special case of Corollary~\ref{c4.12}.

\begin{cor}\label{c4.13}
Let $n$ be a positive integer number and let $\mathcal{N}_{n}$ be the class of all normed $n$-dimensional linear spaces over the field $\mathbb R.$ Then there is a set $\mathfrak{M}\subseteq\mathcal{N}_{n}$
and a disjoint union
$$
X=\mathop{\coprod}\limits_{Y\in \mathfrak{M}}Y
$$
such that $\mathfrak{M}$ is a minimal $\mathcal{N}_{n}$-universal set of metric spaces and $X$ is a minimal $\mathcal{N}_{n}$-universal metric space.
\end{cor}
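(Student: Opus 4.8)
The plan is to assemble the result from the two ingredients already in hand: Proposition~\ref{p3.26}, which produces the required set $\mathfrak{M}$, and Corollary~\ref{c4.12}, which produces the metric on the disjoint union. The entire analytic construction (metrizing the disjoint union via a weighted shortest-path metric) has already been carried out in the proof of Theorem~\ref{t4.9}, so here I only need to check that the hypotheses apply and then reconcile the two notions of universality.

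First I would invoke Proposition~\ref{p3.26} to obtain a set $\mathfrak{M}\subseteq\mathcal{N}_n$ that is minimal $\mathcal{N}_n$-universal; since $\mathcal{N}_n\ne\varnothing$, the set $\mathfrak{M}$ is non--empty. This already yields the first assertion of the corollary. Inspecting the proof of Proposition~\ref{p3.26} (or, equivalently, combining Proposition~\ref{p3.15a} with Proposition~\ref{vspompr*}) shows that every $Y\in\mathfrak{M}$ is not shifted and that any two distinct members of $\mathfrak{M}$ are incomparable. Moreover each $Y\in\mathfrak{M}$ is a normed linear space over $\mathbb{R}$, hence convex and therefore connected and non--empty. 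Thus $\mathfrak{M}$ satisfies exactly the hypotheses of Corollary~\ref{c4.12}.

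Next I would apply Corollary~\ref{c4.12} to $\mathfrak{M}$, which furnishes a metric $d_X$ on $X=\coprod_{Y\in\mathfrak{M}}Y$ such that $(X,d_X)$ is minimal $\mathfrak{M}$-universal. It then remains only to upgrade ``minimal $\mathfrak{M}$-universal'' to ``minimal $\mathcal{N}_n$-universal,'' which is the one genuine verification, though it is short. Since $\mathfrak{M}$ is $\mathcal{N}_n$-universal as a class, we have $\mathcal{N}_n\hookrightarrow\mathfrak{M}$; composing with $\mathfrak{M}\hookrightarrow X$ gives $\mathcal{N}_n\hookrightarrow X$, so $X$ is $\mathcal{N}_n$-universal. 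For minimality, suppose $X_0\subseteq X$ satisfies $\mathcal{N}_n\hookrightarrow X_0$. Because $\mathfrak{M}\subseteq\mathcal{N}_n$, every $Y\in\mathfrak{M}$ lies in $\mathcal{N}_n$ and hence embeds into $X_0$; thus $\mathfrak{M}\hookrightarrow X_0$. The minimal $\mathfrak{M}$-universality of $X$ then forces $X_0=X$, which is precisely Definition~\ref{muniv} for the class $\mathcal{N}_n$.

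I do not anticipate any serious obstacle. The difficult part, constructing a metric on the disjoint union that restricts correctly to each summand, is entirely absorbed into Corollary~\ref{c4.12}; the only new content is the bookkeeping transfer between the two universality notions, which rests on the trivial inclusion $\mathfrak{M}\subseteq\mathcal{N}_n$ together with the class-universality $\mathcal{N}_n\hookrightarrow\mathfrak{M}$ recorded in Proposition~\ref{p3.26}. If anything is delicate, it is merely confirming that the $\mathfrak{M}$ extracted from Proposition~\ref{p3.26} genuinely carries the ``not shifted plus pairwise incomparable plus connected'' package demanded by Corollary~\ref{c4.12}, which is immediate once connectedness of normed spaces is noted.
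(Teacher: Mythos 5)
Your proof is correct and takes essentially the same route as the paper: Proposition~\ref{p3.26} supplies the minimal $\mathcal{N}_n$-universal set $\mathfrak{M}\subseteq\mathcal{N}_n$, and the connectedness of normed spaces allows Corollary~\ref{c4.12} to metrize the disjoint union. The only difference is that you make explicit the final upgrade from ``minimal $\mathfrak{M}$-universal'' to ``minimal $\mathcal{N}_n$-universal'' (using $\mathfrak{M}\subseteq\mathcal{N}_n$ together with $\mathcal{N}_n\hookrightarrow\mathfrak{M}$ and Definition~\ref{muniv}), a step the paper leaves implicit, so your write-up is if anything more complete.
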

\begin{proof}
The existence of minimal $\mathcal{N}_{n}$-universal $\mathfrak{M}\subseteq \mathcal{N}_{n}$ was proved in Proposition~\ref{p3.26}. Since every normed vector space over $\mathbb R$ is connected, the existence of a minimal $\mathcal{N}_{n}$-universal disjoint union $\mathop{\coprod}\limits_{Y\in \mathfrak{M}}Y$ follows from Corollary~\ref{c4.12}.
\end{proof}

\section{Metric betweenness and minimal universality}
\label{sect7}

The ternary ``betweenness'' relation was introduced in explicit form by D.~Hil\-bert in \cite{Hil}. In the theory of metric spaces, the notion of ``metric betweenness'', which is essential in the present section first appeared at K.~Menger's paper \cite{Men} in the following form.

\begin{defin}
Let $X$ be a metric space and let $x, y$ and $z$ be points of $X.$ One says that $y$ lies between $x$ and $z$ if
\begin{equation*}
d_{X}(x,z)=d_{X}(x,y)+d_{X}(y,z).
\end{equation*}
\end{defin}

The ``betweenness'' relation thus defined is fundamental for the theory of geodesics on metric spaces (see, e. g., \cite{Papad}), and it is naturally arises in the studies of the best approximations in metric spaces \cite{DS}.

Characteristic properties of the ternary relations that are ''metric betweenness'' relations for (real-valued) metrics were determined by A.~Wald in \cite{Wald}. Later, the problem of ''metrization'' of betweenness relations (not necessarily by real-valued metrics) was considered in  \cite{Mendris}, \cite{Mos} and \cite{Sim}. An infinitesimal version of the metric betweenness was obtained in \cite{BD} and \cite{DD}.

Let $X$ be a metric space. It is easy to verify that, for every three points $x,y,z\in X,$ the equality
\begin{equation}\label{e5.1}
2\max\{d_{X}(x,y), d_{X}(x,z), d_{X}(y,z)\}=d_{X}(x,y)+d_{X}(x,z)+d_{X}(y,z)
\end{equation}
holds if and only if one of these points lies between the other two points. The necessary and sufficient condition for \eqref{e5.1} is the equality of the Cayley-Menger determinant to zero:
$$
\textrm{det}\begin{vmatrix}
    0&d_{X}^{2}(x,y)&d_{X}^{2}(x,z)&1\\
    d_{X}^{2}(y,x)&0&d_{X}^{2}(y,z)&1\\
    d_{X}^{2}(z,x)&d_{X}^{2}(z,y)&0&1\\
    1&1&1&0\\
\end{vmatrix}=0,
$$
(see, e.g.,  \cite[p. 290]{Berger}).

Let $\mathfrak{M}\mathfrak{B}$ be the class of metric spaces $X$, with points $x,y,z$ satisfying~\eqref{e5.1}. Thus $X\in\mathfrak{M}\mathfrak{B}$ if and only if, among any three points of $X$, there exists a point that lies between the others.

Let $\mathbb R$ denote the real line with the standard metric $d_{\mathbb R}(x,y)=|x-y|.$ It is clear that $\mathbb R\in\mathfrak{M}\mathfrak{B}.$

\begin{defin}\label{d5.2*}
A metric space $X$ is called a pseudolinear quadruple if $X\not\hookrightarrow \mathbb R$ and $A\hookrightarrow \mathbb R$ for every $A \subseteq X$ with $|A|\leqslant 3$.
\end{defin}

\begin{lem}\label{5.2**}
A metric space $X$ is a pseudo-linear quadruple if and only if $|X|=4$ and the points of $X$ may be labeled $p_0, p_1, p_2, p_3$ so that
$$
d_X(p_0,p_1)=d_X(p_2,p_3), \quad d_X(p_1,p_2)=d_X(p_3,p_0)
$$
and
$$
d_X(p_0,p_2)=d_X(p_0,p_1)+ d_X(p_1,p_2)=d_X(p_1,p_3).
$$
\end{lem}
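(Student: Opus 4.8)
The plan is to reduce everything to the elementary fact that a three-point metric space embeds in $\mathbb R$ if and only if its largest distance equals the sum of the other two, i.e. one of its points lies between the other two. Thus the hypothesis ``$A\hookrightarrow\mathbb R$ for every $A\subseteq X$ with $|A|\le 3$'' is precisely the statement that every triple of $X$ is \emph{collinear} in the metric sense of Section~\ref{sect7}. I would then treat the two implications separately.

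The converse direction is a direct verification. Writing $\alpha:=d_X(p_0,p_1)$ and $\beta:=d_X(p_1,p_2)$ (both positive, since the four points are distinct), each of the four triples is degenerate---one distance equals the sum of the other two---so each embeds in $\mathbb R$. To see $X\not\hookrightarrow\mathbb R$, I would attempt an embedding $f$ with $f(p_0)=0$ and, after a reflection, $f(p_1)=\alpha$; betweenness in $\{p_0,p_1,p_2\}$ forces $f(p_2)=\alpha+\beta$, and then $d_X(p_1,p_3)=\alpha+\beta$ together with $d_X(p_0,p_3)=\beta$ forces $f(p_3)=-\beta$. The last distance would then read $|f(p_2)-f(p_3)|=\alpha+2\beta\ne\alpha=d_X(p_2,p_3)$, a contradiction.

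For the forward direction, assume every triple of $X$ is collinear but $X\not\hookrightarrow\mathbb R$. First I would fix two points $a,b$ and assign to each $x\in X$ the coordinate $g(x)$ equal to the position of $x$ in the line embedding of $\{a,b,x\}$ normalized by $a\mapsto 0$, $b\mapsto d_X(a,b)$ (well defined, since fixing both endpoints removes the reflection). If $d_X(x,y)=|g(x)-g(y)|$ held for all $x,y$, then $g$ would embed $X$ in $\mathbb R$; hence there is a bad pair $x_0,y_0$ with $d_X(x_0,y_0)\ne|g(x_0)-g(y_0)|$, and one checks that $a,b,x_0,y_0$ are four distinct points with $\{a,b,x_0,y_0\}\not\hookrightarrow\mathbb R$. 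Next I would prove that a four-point, all-triples-collinear, non-embeddable space is a parallelogram: picking a pair $\{a,d\}$ of maximal distance $D$, the points $b,c$ both lie between $a$ and $d$, so in the frame $a\mapsto0,d\mapsto D$ they receive coordinates $\beta_1,\gamma_1$; since $X$ does not embed we have $d(b,c)\ne|\beta_1-\gamma_1|$, and then collinearity of the two remaining triples forces $d(b,c)=\beta_1+\gamma_1=2D-\beta_1-\gamma_1$, whence $\beta_1+\gamma_1=D$ and $d(b,c)=D$. This is exactly the asserted labeling, so $X$ contains a parallelogram $B=\{p_0,p_1,p_2,p_3\}$.

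It remains to show $X=B$, and this will be the main obstacle. The key tool is a counting sublemma: if $u,v$ with $d(u,v)=D>0$ are assigned coordinates $P\ne Q$ with $D\ne|P-Q|$, and a set $R$ of points each carries a distinct coordinate $g(r)\notin\{P,Q\}$ with $d(r,u)=|g(r)-P|$, $d(r,v)=|g(r)-Q|$ and $\{r,u,v\}$ collinear, then $|R|\le 2$; indeed a reference point strictly between $P$ and $Q$ forces $D<|P-Q|$ while one outside $[P,Q]$ forces $D>|P-Q|$, so all of $R$ is of a single type, and each type yields at most two coordinate values. Now given $e\in X\setminus B$, put $F=B\cup\{e\}$, so $F\not\hookrightarrow\mathbb R$ while all triples stay collinear. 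Each four-point set $F\setminus\{p_i\}$ is either embeddable or a parallelogram. If some $F\setminus\{p_i\}$ is a parallelogram, I would show (comparing the maximal distance to the diagonal length) that $e$ is equidistant to the remaining three points exactly as $p_i$ is, and then two of the triples $\{e,p_i,p_j\}$ force $d(e,p_i)=0$, i.e. $e=p_i$. Otherwise all four $F\setminus\{p_i\}$ embed; gluing two of them along their common three points gives a coordinate $g$ on $F$ consistent on every pair except $\{p_0,p_1\}$, which is therefore bad, and the sublemma applied to the reference set $\{e,p_2,p_3\}$ yields $3\le 2$. Both cases are contradictions, so no fifth point exists, $|X|=4$, and $X$ carries the required labeling.
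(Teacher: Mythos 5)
Your proof is correct, but you should know that the paper never actually proves Lemma~\ref{5.2**}: it observes that the lemma and Definition~\ref{d5.2*} are the one-dimensional case of Blumenthal's general theory of isometric embeddings of quadruples into $E^n$ (\cite{Bl}, Definition 44.1 and Theorem 45.4) and refers to \cite{DD} for an elementary proof. So your self-contained argument is necessarily a different route, and it is a sound one. The converse direction is the expected direct computation. In the forward direction your three steps all check out: extracting a non-embeddable four-point subset from a ``bad pair'' of the coordinatization built on a fixed base pair $\{a,b\}$; showing that every non-embeddable quadruple with collinear triples is a parallelogram, by placing the two non-diametrical points between a diametrical pair and applying the betweenness trichotomy to the two remaining triples; and excluding a fifth point $e$ via the dichotomy that each $F\setminus\{p_i\}$ either embeds (killed by your counting sublemma, which is itself correct: interior points force $D<|P-Q|$, exterior points force $D>|P-Q|$, and each type admits only the coordinates $(P+Q\pm D)/2$) or is a parallelogram (which forces $d_X(e,p_j)=d_X(p_i,p_j)$ for all $j\ne i$ and hence $e=p_i$). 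Two details deserve explicit mention in a final write-up: in the all-embeddable case the sublemma's hypothesis $P\ne Q$ holds automatically, since $|g(p_0)-g(p_2)|=d_X(p_0,p_2)\ne d_X(p_1,p_2)=|g(p_1)-g(p_2)|$; and in the parallelogram case you should spell out why the diagonal pair of $F\setminus\{p_i\}$ lying inside $B\setminus\{p_i\}$ must be its diametrical pair (diagonal pairs of a parallelogram are exactly its pairs of maximal distance), since that is what pins down all three distances from $e$. What your approach buys is a proof using nothing beyond the betweenness trichotomy --- precisely the machinery of Section~\ref{sect7} --- while the route through \cite{Bl} buys generality, namely the analogous characterization for embeddings into $E^n$ for every $n$.
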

Definition~\ref{d5.2*} and Lemma~\ref{5.2**} are the one-dimensional cases of much more general Definition 44.1 and Theorem 45.4 of Blumenthal's book~\cite{Bl} dealing with isometric embeddings of semimetric spaces in the finite dimensional Euclidean spaces $E^n$ with an arbitrary $n\geqslant 1$. An elementary proof of Lemma~\ref{5.2**} can be found in~\cite{DD}.

Write
$$
\mathfrak{M}\mathfrak{B}^{5}:=\{X\in\mathfrak{M}\mathfrak{B}: |X|\ge 5\}.
$$

\begin{prop}\label{pr1}
The space $\mathbb R$ is minimal $\mathfrak{M}\mathfrak{B}^{5}$-universal. If $X$ is universal for $\mathfrak{M}\mathfrak{B}^{5}$ and $X\in\mathfrak{M}\mathfrak{B}^{5}$ or if $X$ is minimal $\mathfrak{M}\mathfrak{B}^{5}$-universal, then $X\simeq \mathbb R.$
\end{prop}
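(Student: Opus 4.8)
The plan is to reduce everything to a single embedding statement, namely that every $X\in\mathfrak{M}\mathfrak{B}^{5}$ embeds isometrically in $\mathbb{R}$, and then to read off minimality and the two uniqueness assertions from the general machinery of Section~\ref{sect3}. As a preliminary I would record that $\mathbb{R}\in\mathfrak{M}\mathfrak{B}^{5}$ (among any three reals the middle one lies between the other two, and $|\mathbb{R}|\ge 5$) and that $\mathbb{R}$ is not shifted, the latter because $\mathbb{R}$ is a one--dimensional normed space over $\mathbb{R}$, hence boundedly compact and homogeneous, so that Lemma~\ref{homsh} applies.

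For the universality $\mathfrak{M}\mathfrak{B}^{5}\hookrightarrow\mathbb{R}$ I would argue through subsets. Any subset of $X$ with at most three points embeds in $\mathbb{R}$: a one-- or two--point space trivially, and a triple $\{x,y,z\}$ with, say, $y$ between $x$ and $z$ via $x\mapsto 0$, $y\mapsto d_X(x,y)$, $z\mapsto d_X(x,z)$. By Definition~\ref{d5.2*}, a four--point subset $A$ of $X$ either embeds in $\mathbb{R}$ or is a pseudolinear quadruple, since all its $\le 3$--point subsets embed as $A\subseteq X\in\mathfrak{M}\mathfrak{B}$. Invoking the classical fact that the congruence order of $\mathbb{R}=E^{1}$ equals $4$ (see~\cite{Bl}), it then suffices to prove that no four--point subset of an $X\in\mathfrak{M}\mathfrak{B}$ with $|X|\ge 5$ is a pseudolinear quadruple; this is the crux of the whole proposition.

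The key sub-lemma I would isolate is that a pseudolinear quadruple $Q=\{p_0,p_1,p_2,p_3\}$ admits no extension to a five--point space in $\mathfrak{M}\mathfrak{B}$. Using the normal form of Lemma~\ref{5.2**}, write $d_X(p_0,p_1)=d_X(p_2,p_3)=a$, $d_X(p_1,p_2)=d_X(p_0,p_3)=b$, $d_X(p_0,p_2)=d_X(p_1,p_3)=a+b$ with $a,b>0$, and set $c_i=d_X(q,p_i)$ for a putative fifth point $q$. I would then impose the betweenness condition on the six triples $\{q,p_i,p_j\}$. The two triples carrying the long leg $a+b$, namely $\{q,p_0,p_2\}$ and $\{q,p_1,p_3\}$, split the analysis into the cases where $q$ lies between a diametral pair (so $c_0+c_2=a+b$, resp. $c_1+c_3=a+b$) or beyond one of its endpoints (so $|c_0-c_2|=a+b$, resp. $|c_1-c_3|=a+b$). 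Feeding each case into the four short--leg triples $\{q,p_0,p_1\}$, $\{q,p_1,p_2\}$, $\{q,p_2,p_3\}$, $\{q,p_0,p_3\}$ forces, after elementary manipulation, one of the equalities $c_i=0$, that is $q\in Q$, contradicting $|X|=5$. I expect this finite case analysis to be the main obstacle: it is the only genuinely computational step, and it is precisely here that the hypothesis $|X|\ge 5$ (equivalently, the exclusion of the isolated pseudolinear quadruple from $\mathfrak{M}\mathfrak{B}^{5}$) is used.

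Finally I would assemble the three conclusions from the earlier theory. From $\mathfrak{M}\mathfrak{B}^{5}\hookrightarrow\mathbb{R}$, $\mathbb{R}\in\mathfrak{M}\mathfrak{B}^{5}$, and $\mathbb{R}$ not shifted, Theorem~\ref{mainth1} gives at once that $\mathbb{R}$ is minimal $\mathfrak{M}\mathfrak{B}^{5}$-universal. If $X$ is $\mathfrak{M}\mathfrak{B}^{5}$-universal with $X\in\mathfrak{M}\mathfrak{B}^{5}$, then Proposition~\ref{prisom}, applied with the minimal universal space $\mathbb{R}$, yields $X\simeq\mathbb{R}$; and if $X$ is merely minimal $\mathfrak{M}\mathfrak{B}^{5}$-universal, then, since the $\mathfrak{M}\mathfrak{B}^{5}$-universal space $\mathbb{R}$ belongs to $\mathfrak{M}\mathfrak{B}^{5}$, Corollary~\ref{cor2} gives $X\simeq\mathbb{R}$ as well.
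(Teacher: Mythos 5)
Your proposal is correct, and its overall architecture coincides with the paper's: check $\mathbb{R}\in\mathfrak{M}\mathfrak{B}^{5}$, establish $\mathfrak{M}\mathfrak{B}^{5}\hookrightarrow\mathbb{R}$, and then let the machinery of Section~\ref{sect3} finish. In fact your closing assembly via Theorem~\ref{mainth1}, Proposition~\ref{prisom} and Corollary~\ref{cor2}, with Lemma~\ref{homsh} supplying that $\mathbb{R}$ is not shifted, is precisely the content of Theorem~\ref{mainth3*}, which the paper invokes in one step. Where you genuinely diverge is the universality step: the paper dispatches it with the one-line remark that it ``follows from Lemma~\ref{5.2**}'', i.e.\ by deferring to the Blumenthal theory of pseudolinear quadruples (Theorem 45.4 of \cite{Bl}), whereas you decompose it into two explicit pieces --- the classical fact that the congruence order of $E^{1}$ is $4$, together with a new key lemma that a pseudolinear quadruple admits no fifth point keeping all triples linear, proved by a finite case analysis on the betweenness alternatives for the six triples through $q$. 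Your route is more self-contained and makes visible exactly where the hypothesis $|X|\ge 5$ enters (the paper's citation hides this), at the price of being the only computational work in the argument; the paper's route is shorter but asks the reader to extract the embedding theorem from \cite{Bl}. Two small caveats on your sketch: the case analysis does not always terminate in an equality $c_i=0$ as you state --- several branches close instead with $a=0$, $b=0$, or a sign impossibility --- though every branch does close; and one should note that the congruence-order theorem for $E^{1}$ holds for semimetric spaces of arbitrary cardinality, which is what licenses the passage from four-point subsets to all of $X$.
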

\begin{proof}
The universality of $\mathbb R$ for $\mathfrak{M}\mathfrak{B}^{5}$ follows from Lemma~\ref{5.2**}.
Since $\mathbb R$ is boundedly compact, homogeneous and belongs to $\mathfrak{M}\mathfrak{B}^{5},$ Theorem~\ref{mainth3*} implies that $\mathbb R$ is minimal $\mathfrak{M}\mathfrak{B}^{5}$-universal and $\mathbb R\simeq X$ for every $\mathfrak{M}\mathfrak{B}^{5}$-universal $X\in\mathfrak{M}\mathfrak{B}^{5}$ and every minimal $\mathfrak{M}\mathfrak{B}^{5}$-universal $X.$
\end{proof}

Let $\mathfrak{F}_2$ be the class of all metric spaces $X$ with $|X|\leqslant 2$. The class $\mathfrak{F}_2$ admits a minimal universal set $\mathfrak{F}_{2}^{1}\subseteq \mathfrak{F}_2$. We can construct such set by the rule: a metric space $Y$ belongs to $\mathfrak{F}_{2}^{1}$ if and only if  there is $t\in (0, \infty)$ such that $Y=\{0,t\}$. Thus, by definition, we have
$$
\mathfrak{F}_{2}^{1}=\{Y_t: t\in (0,\infty)\}
$$
where $Y_t=\{0,t\}$.
It is evident that every $Y_t\in \mathfrak{F}_{2}^{1}$ is not shifted and $Y_{t_1}\not\hookrightarrow Y_{t_2}$ if $t_1\neq t_2$.

\begin{prop}\label{p5.3}
Let $T$ be a non--empty subset of $(0,\infty)$ and let
$$
\mathfrak{A}=\{Y_t : t\in T\} \subseteq \mathfrak{F}_{2}^{1}.
$$
The following conditions are equivalent.
\begin{itemize}
\item [(i)] For every $Z\in \mathfrak{A}$ there is $Y\in \mathfrak{F}_{2}^{1} \setminus \mathfrak{A}$ such that
$$
\operatorname{diam} Z< \operatorname{diam} Y.
$$
\item [(ii)]There is a disjoint union
$$
X=\coprod\limits_{t\in T}Y_t
$$
with a metric $d_X$ such that $(X,d_X)$ is minimal $\mathfrak{A}$-universal.
\end{itemize}
\end{prop}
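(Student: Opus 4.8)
The plan is to use Theorem~\ref{t4.7} as the central reduction. Since every $Y_t\in\mathfrak{F}_2^1$ is a two-point space, it is finite and hence not shifted (Example~\ref{ex1*}), and two distinct $Y_{t_1},Y_{t_2}$ with $t_1\ne t_2$ are incomparable because a two-point space embeds in another only when they are isometric. Thus conditions (i) and (ii) of Theorem~\ref{t4.7} hold automatically for $\mathfrak{A}=\{Y_t:t\in T\}$, and a disjoint union $X=\coprod_{t\in T}Y_t$ is minimal $\mathfrak{A}$-universal if and only if condition (iii) holds: for each $t\in T$ there is a \emph{unique} two-point subset of $X$ at distance $t$. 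Writing $X_t\subseteq X$ for the part isometric to $Y_t$, its two points already realize the distance $t$, while distinct within-part pairs realize the values of $T$ exactly once each; so (iii) is equivalent to the requirement that every \emph{cross-part} distance lie in $S:=(0,\infty)\setminus T$. Noting that $\operatorname{diam} Y_t=t$, I would first record the reformulation of condition (i): it is equivalent to ``$S$ is unbounded above''. The implication $\Leftarrow$ is immediate, and $\Rightarrow$ follows because if $S\subseteq(0,M]$ then $(M,\infty)\subseteq T$, whence (i) applied to $t=M+1\in T$ produces $s\in S$ with $s>M$, a contradiction.

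For the necessity (ii)$\Rightarrow$(i), I would argue through this reformulation. Assume $X=\coprod_{t\in T}Y_t$ carries a metric making it minimal $\mathfrak{A}$-universal; by the reduction all cross-part distances lie in $S$. If $S$ were bounded above by $M$, then $(M,\infty)\subseteq T$, so there are infinitely many parts, and I may pick a part $X_{t_0}$ with $t_0>2M$ together with any other part $X_{t_1}$. For a point $c\in X_{t_1}$ and the two points $a,b\in X_{t_0}$, the cross-part distances $d_X(a,c),d_X(b,c)$ are at most $M$, so the triangle inequality gives $t_0=d_X(a,b)\le d_X(a,c)+d_X(c,b)\le 2M<t_0$, a contradiction. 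Hence $S$ is unbounded, i.e.\ (i) holds.

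For the sufficiency (i)$\Rightarrow$(ii) I must \emph{construct} an admissible metric on $X=\coprod_{t\in T}Y_t$ all of whose cross-part distances lie in $S$; this is the main obstacle, because when $T$ is unbounded no single constant cross-distance can dominate all the within-part diameters, so the elementary gluing \eqref{equat5} and Theorem~\ref{t4.9} (which needs a uniform $\varepsilon$-connectivity) are unavailable. The idea is to let the cross-distances grow with the parts while staying inside $S$. Using that $S$ is unbounded above, for each $t\in T$ I would choose $h(t)\in S$ with $h(t)\ge t/2$. Realizing each $Y_t$ as a disjoint part $X_t=\{a_t,b_t\}$ with $d_X(a_t,b_t)=t$, I then set every cross-part distance between $X_s$ and $X_t$ ($s\ne t$) equal to the single value
$$
c(s,t):=\max\{h(s),h(t)\}.
$$

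The verification then splits into three routine points. First, $c(s,t)$ equals one of $h(s),h(t)$, both of which lie in $S$, so every cross-part distance lies in $S$. Second, $d_X$ is a metric: a triple of distinct points is either split $2+1$ across two parts or $1+1+1$ across three parts (no part has three points); in the $2+1$ case the only nontrivial inequality is $s\le 2c(s,t)$, which holds since $c(s,t)\ge h(s)\ge s/2$, while the $1+1+1$ case reduces to the triangle inequality for $c$, valid because $(s,t)\mapsto\max\{h(s),h(t)\}$ is an ultrametric on the index set. Third, the within-part distances are preserved, so indeed $X=\coprod_{t\in T}Y_t$. Since all cross-part distances avoid $T$, Theorem~\ref{t4.7} yields that $X$ is minimal $\mathfrak{A}$-universal, establishing (ii). The crux is precisely the choice $c(s,t)=\max\{h(s),h(t)\}$ with $h(t)\ge t/2$: the maximum simultaneously forces the triangle inequality and keeps every cross-distance inside $S$ (as the max of two elements of $S$), which is exactly what the unbounded case requires and what distinguishes it from the elementary bounded-diameter gluing.
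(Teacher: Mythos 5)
Your proof is correct, and its skeleton matches the paper's --- both hinge on Theorem~\ref{t4.7} (whose conditions (i) and (ii) hold automatically for subfamilies of $\mathfrak{F}_{2}^{1}$) together with the reformulation of condition (i) as unboundedness of $S=(0,\infty)\setminus T$ --- but your execution of (i)$\Rightarrow$(ii) is genuinely different and simpler. The paper splits into two cases: for bounded $T$ it invokes the uniform $\varepsilon$-connectedness gluing theorem (Theorem~\ref{t4.9}), while for $T$ and $S$ both unbounded it builds an interleaved sequence $p_1=0$, $p_n\in S$, $(p_{n-1},p_n)\cap T\ne\varnothing$, and assigns the cross-part distance $p_n$ whenever $\max\{t_1,t_2\}\in(p_{n-1},p_n)$; that is precisely your $\max\{h(t_1),h(t_2)\}$ for the monotone step function $h(t)=p_n$, which satisfies $h(t)>t$ and hence yields an ultrametric. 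You replace this by the single formula $c(s,t)=\max\{h(s),h(t)\}$ with an arbitrary choice $h(t)\in S$, $h(t)\ge t/2$: the maximum alone carries the triangle inequality (your $d_X$ is then only a metric, not an ultrametric, since $h(t)\ge t/2$ does not force $s\le c(s,t)$), no monotonicity or interleaving is needed, the bounded case requires no separate treatment, and Theorem~\ref{t4.9} is bypassed entirely. Your (ii)$\Rightarrow$(i) is likewise a touch more direct: the paper contradicts the uniqueness clause of Theorem~\ref{t4.7} by producing two pairs realizing $T$-distances greater than $t_0$ that share a point, whereas you note that all cross-part distances lie in $S\subseteq(0,M]$ and let a part of diameter greater than $2M$ violate the triangle inequality outright. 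What the paper's route buys is reuse of its general machinery and an explicitly ultrametric gluing; what yours buys is uniformity and brevity, plus the explicit, reusable reduction that for $\mathfrak{A}\subseteq\mathfrak{F}_{2}^{1}$ a disjoint union is minimal $\mathfrak{A}$-universal exactly when every cross-part distance avoids $T$.
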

\begin{proof}
Let (i) hold. It is easy to prove that (i) holds if and only if the set $(0,\infty) \setminus T$ is not bounded. If $T$ is bounded, then all $Y_t$, $t\in T$ are $\varepsilon$-connected metric space with
$$
\varepsilon =\sup\limits_{t\in T}(\operatorname{diam} Y_t).
$$
In this case condition (ii) follows from Theorem~\ref{t4.9}. If $T$ and $(0,\infty) \setminus T$ are both not bounded, then there is a sequence $(p_n)_{n\in \mathbb{N}}$ such that $$ p_1=0, \, p_n\in (0,\infty)\setminus T\quad\mbox{and}\quad (p_{n-1},p_n)\cap T\neq \varnothing$$ for every $n\geqslant 2$ and
$$
\bigcup\limits_{n=2}^{\infty}T\cap (p_{n-1},p_n)=T.
$$
Let $\{X_t: t\in T\}$ be a disjoint family of metric spaces with $X_t\simeq Y_t$ for every $t\in T.$ Write
$$
X=\bigcup\limits_{t\in T} X_t.
$$
Let us define a function $d_X: X\times X\rightarrow\mathbb R^{+}$ as
\begin{equation}\label{eq5.2}
d_X(x,y)=
\begin{cases}
d_{X_t}(x,y) &\text{if there is }\, t\in T \, \text{ such that } \, x,y \in X_t, \\
p_n &\text{if } \, x\in X_{t_1}, \, \ y\in X_{t_2}, \, \ t_1 \neq t_2 \, \\ &\text{and } \, \max\{t_1,t_2\}\in (p_{n-1}, p_n).
\end{cases}
\end{equation}
The function $d_X$ is symmetric and non-negative and, moreover,
$$
(d_X(x,y)=0)\Longleftrightarrow(x=y)
$$
holds for all $x,y\in X$.  We claim also that the strong triangle inequality
$$
d_X(x,y)\leqslant \max\{d_X(x,z),d_X(z,y)\}
$$
is valid for all $x,y,z \in X$. Indeed, if there is $t\in T$ such that $x,y,z\in X_t$, then there is nothing to prove. In the case when $x\in X_{t_1}$, $y\in X_{t_2}$, $z\in X_{t_3}$ and
$$
\min\{t_1, t_2, t_3\}\neq \max \{t_1, t_2, t_3\}\quad\mbox{and}\quad\max\{t_1, t_2, t_3\}\in (p_{n-1}, p_n)
$$
we have
$$
d_X(x,y)\leqslant p_n \,\,\mbox{ and}\,\, \max\{d_X(x,z), d_X(z,y)\}=p_n.
$$
Thus, ~\eqref{eq5.2} holds for all $x,y,z \in X$. Consequently $d_X$ is an ultrametric on $X$.

Since $p_n\in (0,\infty) \setminus T$ for every $n\ge 2$,  it follows from ~\eqref{eq5.2} that, for every $t\in T,$ there is a unique two-point set $\{x,y\}\subseteq X$ such that $d_X(x,y)=t$. In the correspondence with Theorem~\ref{t4.7} the disjoint union $X=\coprod\limits_{t\in T}Y_t$ with the metric $d_X$ is a minimal $\mathfrak A$-universal metric space. The implication (i)$\Rightarrow$(ii) follows.

Suppose now that (ii) holds but (i) does not hold. As was noted above, (i) holds if and only if the set $(0,\infty) \setminus T$ is not bounded.
Consequently there is $t_0\in (0,\infty)$ such that
\begin{equation}\label{eq5.4}
(0,\infty) \setminus T \subseteq (0,t_0).
\end{equation}
Let $t_1 \in (2t_0,\infty)$. Inclusion~\eqref{eq5.4} implies that $t_1\in T$. Hence  $Y_{t_1}\in \mathfrak A$ holds. In the correspondence with the supposition, a disjoint union $X=\coprod\limits_{t\in T}Y_t$ with a metric $d_X$ is a minimal $\mathfrak{A}$-universal metric space. Using~\eqref{eq5.4} and Theorem~\ref{t4.7} we see that for every $t'>t_0$ there is a unique two-point set $\{x',y'\}\subseteq X$ such that $d_X(x',y')=t'$ and, moreover, if $t''>t_0$ and
$$
\{x'',y''\}\subseteq X, \quad d_X(x'',y'')=t'',
$$
then
\begin{equation}\label{eq5.5}
\{x'y'\}\cap\{x'',y''\}=\varnothing.
\end{equation}
Note that~\eqref{eq5.5} follows from the above mentioned uniqueness and Definition~\ref{part}. Let $\{x_1,y_1\}\subseteq X$ and $d_X(x_1,y_1)=t_1$.  Let $z_1$ be an arbitrary point of $X\setminus \{x_1,y_1\}$. From the triangle inequality it follows that we have at least one from the inequalities
\begin{equation}\label{eq5.6}
d_X(x_1, z_1)\geqslant \frac12t_1\,\,\mbox{or}\,\,d_X(y_1, z_1)\geqslant \frac12t_1.
\end{equation}
Without loss of generality, we can suppose that the first inequality of \eqref{eq5.6} holds. Then we have
$$
\{x_1, y_1\}\subseteq X, \quad \{x_1,z_1\}\subseteq X
$$
and
$$
d_X(x_1,y_1)\in T, \quad d_X(x_1, z_1)\in T,
$$
and
$$
\{x_1,y_1\}\cap\{x_1,z_1\}=\{x_1\}
$$
contrary to~\eqref{eq5.5}. The implication (ii)$\Rightarrow$(i) follows.
\end{proof}

\begin{cor}\label{c5.4}
The set $\mathfrak{F}_{2}^{1}$ has the following properties.
\begin{itemize}
\item [(i)] Every $Y\in \mathfrak{F}_{2}^{1}$ is a not shifted metric space and if \, $Y_{t_1}$ and $Y_{t_2}$ are distinct, then $Y_{t_1}\not\hookrightarrow Y_{t_2}$.
\item [(ii)] Any disjoint union $\coprod\limits_{Y\in\mathfrak{F}_{2}^{1} }Y$ is a universal but not minimal universal metric space for $\mathfrak{F}_{2}^{1}$.
\item [(iii)] There are subsets $\mathfrak{A}_1$ and $\mathfrak{A}_2$ of $\mathfrak{F}_{2}^{1}$ such that
$$
\mathfrak{A}_1\cap\mathfrak{A}_2 =\varnothing,\quad  \mathfrak{A}_1 \cup \mathfrak{A}_2 = \mathfrak{F}_{2}^{1}
$$
and the disjoint unions
$$
\coprod\limits_{Y\in \mathfrak{A}_1}Y\quad\mbox{and}\quad\coprod\limits_{Y\in \mathfrak{A}_2}Y
$$
are minimal universal metric spaces for $\mathfrak{A}_1$ and $\mathfrak{A}_2$ respectively.
\end{itemize}
\end{cor}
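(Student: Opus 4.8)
The plan is to read off all three statements from Proposition~\ref{p5.3}, since that proposition already isolates the decisive dichotomy. First I would dispose of statement (i), which is essentially the remark recorded just before Proposition~\ref{p5.3}. Each $Y_t=\{0,t\}$ has exactly two points, so it is a finite metric space and hence not shifted by Example~\ref{ex1*}. For incomparability, if $f\colon Y_{t_1}\hookrightarrow Y_{t_2}$ with $t_1\ne t_2$, then $f(0)$ and $f(t_1)$ are two distinct points of $Y_{t_2}$, forcing $t_1=d(0,t_1)=d(f(0),f(t_1))=t_2$, a contradiction; thus $Y_{t_1}\not\hookrightarrow Y_{t_2}$.

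For statement (ii), universality is immediate from the definition of a disjoint union: $\coprod_{Y\in\mathfrak{F}_{2}^{1}}Y$ contains an isometric copy of every member of $\mathfrak{F}_{2}^{1}$, so $\mathfrak{F}_{2}^{1}\hookrightarrow\coprod_{Y\in\mathfrak{F}_{2}^{1}}Y$. To rule out minimality I would apply Proposition~\ref{p5.3} with $\mathfrak{A}=\mathfrak{F}_{2}^{1}$, i.e. with $T=(0,\infty)$. Here $\mathfrak{F}_{2}^{1}\setminus\mathfrak{A}=\varnothing$, so condition (i) of that proposition is vacuously false (there is no candidate $Y$ of larger diameter), and therefore its equivalent condition (ii) is false as well. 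Hence no disjoint union of the form $\coprod_{t\in(0,\infty)}Y_t$ admits a metric making it minimal $\mathfrak{F}_{2}^{1}$-universal; in particular every such disjoint union is universal but not minimal universal.

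For statement (iii) I would use the equivalence established inside the proof of Proposition~\ref{p5.3}, namely that condition (i) there holds exactly when $(0,\infty)\setminus T$ is unbounded. The idea is to split $(0,\infty)$ into two unbounded pieces: set $T_1=\bigcup_{n=0}^{\infty}(2n,2n+1]$ and $T_2=\bigcup_{n=0}^{\infty}(2n+1,2n+2]$, and put $\mathfrak{A}_k=\{Y_t:t\in T_k\}$ for $k=1,2$. Then $T_1\cap T_2=\varnothing$ and $T_1\cup T_2=(0,\infty)$, giving $\mathfrak{A}_1\cap\mathfrak{A}_2=\varnothing$ and $\mathfrak{A}_1\cup\mathfrak{A}_2=\mathfrak{F}_{2}^{1}$. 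Since $(0,\infty)\setminus T_1=T_2$ and $(0,\infty)\setminus T_2=T_1$ are both unbounded, Proposition~\ref{p5.3} applies to each $\mathfrak{A}_k$ and yields a disjoint union $\coprod_{Y\in\mathfrak{A}_k}Y$ that is minimal $\mathfrak{A}_k$-universal.

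The proof is almost mechanical once Proposition~\ref{p5.3} is in hand; the only point requiring any attention is the translation of condition (i) of that proposition into the boundedness language, which is precisely what makes (ii) collapse (empty complement) and (iii) succeed (both complements unbounded). I expect no genuine obstacle beyond choosing the partition in (iii) so that neither piece is bounded above.
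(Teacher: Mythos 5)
Your proposal is correct and follows essentially the same route the paper intends: part (i) is the observation recorded just before Proposition~\ref{p5.3} (finite spaces are not shifted, and distinct distances preclude embeddings), while parts (ii) and (iii) fall out of the equivalence in Proposition~\ref{p5.3} via the boundedness reformulation of its condition (i) --- the complement $(0,\infty)\setminus T$ is empty (hence bounded) when $T=(0,\infty)$, and your partition into $T_1=\bigcup_{n\ge 0}(2n,2n+1]$ and $T_2=\bigcup_{n\ge 0}(2n+1,2n+2]$ makes both complements unbounded. No gaps; the choice of partition is exactly the kind of splitting the corollary anticipates.
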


Thus, there is no minimal $\mathfrak{F}_{2}$-universal metric spaces $X$ of the form
\begin{equation*}
X=\coprod_{Y\in\mathfrak{F}_{2}^{1}}Y.
\end{equation*}
Nevertheless, the next simple example of a minimal $\mathfrak{F}_{2}$-universal metric space was constructed by Macej Wozniak in 2008.

\begin{exa}(\cite{Ho2})
The set
$$
X=(-1, 0)\cup \left(\bigcup_{n=1}^{\infty}\{n\}\right)
$$
with the metric induced from $\mathbb R$ is a minimal $\mathfrak{F}_{2}$-universal metric space (see Figure~2). The minimality of $X$ is, in fact, a consequence of the uniqueness of the representation of $x\in (0, \infty)$ in the form
\begin{equation}\label{eq5.7}
x=n+t
\end{equation}
where $n\in\mathbb N\cup\{0\}$ and $t\in [0, 1)$. For the proof note that if \eqref{eq5.7} holds, then the equalities
$$
n=\lfloor x\rfloor \text{ and } t=x-\lceil x\rceil
$$
hold.
\end{exa}
\begin{rem}\label{r6.7'}
An example of some non--separable minimal $\mathfrak{F}_{2}$-universal metric space was constructed by Wlodzimer Holst\'{y}nski in~\cite{Ho2}.
\end{rem}

The next our goal is the building of a minimal $\mathfrak{M}\mathfrak{B}$-universal metric space. Let $\mathfrak{PL}$  be the class of all pseudo-linear quadruples. It is clear that $\mathfrak{PL} \subseteq \mathfrak{MB}$ and before building of a minimal $\mathfrak{MB}$- universal metric space we shall construct a minimal $\mathfrak{PL}$-universal metric space.

\begin{figure}\label{f5}
  \begin{center}
  \includegraphics[width=0.8\textwidth]{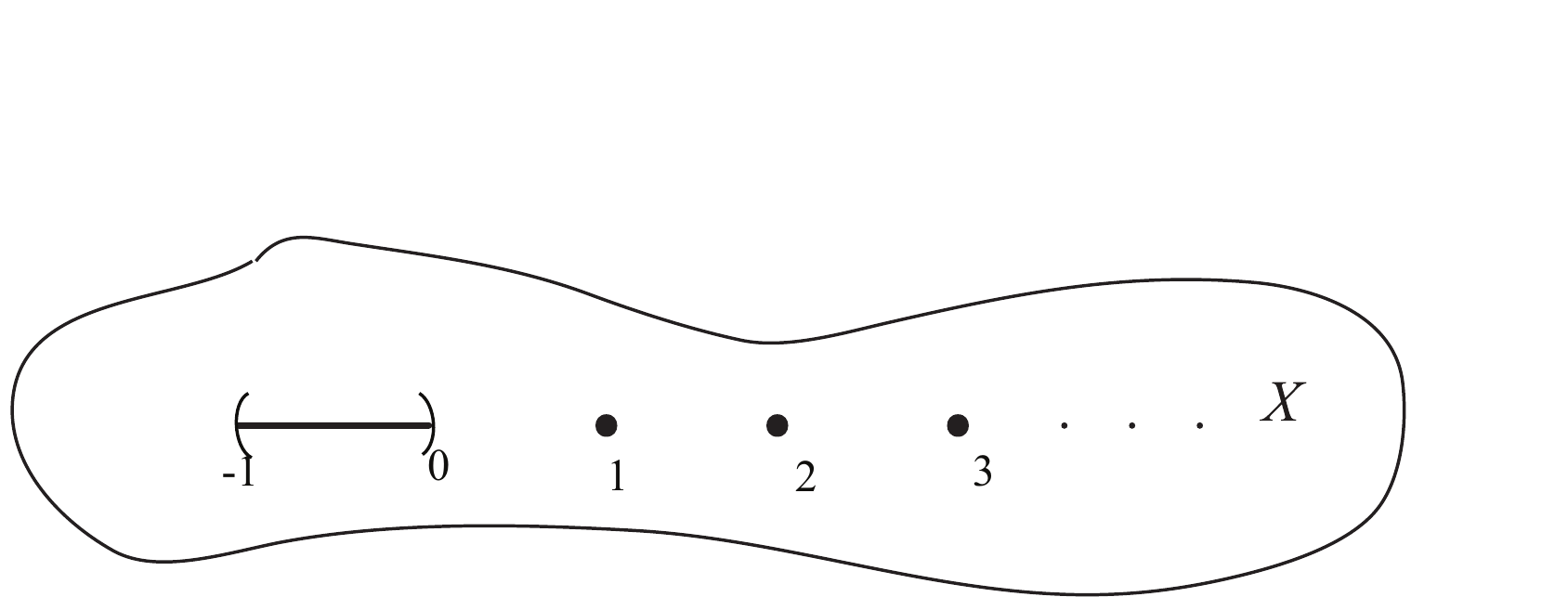}\\
 \caption{The $\mathfrak{F}_{2}$-universal space $\mathbb R$ contains a minimal $\mathfrak{F}_{2}$-universal subspace $X$.}
 \end{center}
\end{figure}

Let $X$ be a metric space and let $A\subseteq X$. We shall say that a two-element subset $\{a,b\}$ of $A$ is a diametrical pair for $A$ if
$$
d_X(a,b) = \operatorname{diam}A
$$
where as usual
$$
\operatorname{diam}A =\sup\{d_X(x,y) : x,y \in A\}.
$$
A point $a\in X$ is a diametrical point for $A$ if $a\in A$ and there is $b\in A$ such that $\{a,b\}$ is a diametrical pair for $A$.

\begin{lem}\label{l5.6}
If $X$ is a pseudo-linear quadruple, then every $x\in X$ is a diametrical point for $X$ and the number of diametrical pairs for $X$ is 2. Furthermore, if $Y$ is also a pseudo-linear quadruple and there are three point sets $\{x_1, x_2, x_3\}\subseteq X$ and $\{y_1, y_2, y_3\}\subseteq Y$ such that
$$\{x_1, x_2, x_3\}\simeq\{y_1, y_2, y_3\},$$
then $X\simeq Y.$
\end{lem}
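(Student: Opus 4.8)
The plan is to extract everything from the canonical description of a pseudo-linear quadruple furnished by Lemma~\ref{5.2**}. Fix the labelling $p_0,p_1,p_2,p_3$ of $X$ given there and set $a=d_X(p_0,p_1)$ and $b=d_X(p_1,p_2)$; then the six pairwise distances are $a$ on the pairs $\{p_0,p_1\}$ and $\{p_2,p_3\}$, the value $b$ on $\{p_1,p_2\}$ and $\{p_0,p_3\}$, and $a+b$ on the two ``diagonals'' $\{p_0,p_2\}$ and $\{p_1,p_3\}$. Since the four points are distinct we have $a,b>0$, hence $a+b>\max\{a,b\}$, so $\operatorname{diam}X=a+b$ and the pairs realising the diameter are exactly $\{p_0,p_2\}$ and $\{p_1,p_3\}$. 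This gives that there are precisely two diametrical pairs, and since these two pairs together exhaust $X$, every point of $X$ is an endpoint of a diametrical pair, i.e. a diametrical point. This settles the first two assertions.

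For the last assertion the key observation is that a pseudo-linear quadruple is determined up to isometry by the \emph{unordered} pair $\{a,b\}$ of its two short side lengths, and that this pair can be recovered from any three of its points. First I would inspect the four three-point subsets in the canonical labelling and check that each is an isometric copy of three collinear points whose multiset of pairwise distances is exactly $\{a,b,a+b\}$; in particular the largest distance strictly dominates the other two and equals their sum, so $\{a,b\}$ is read off unambiguously as the two non-maximal distances. Applying this to $X$ and to $Y$, and using that the given isometry $\{x_1,x_2,x_3\}\simeq\{y_1,y_2,y_3\}$ preserves the three pairwise distances, I would conclude that the parameter pairs coincide: $\{a_X,b_X\}=\{a_Y,b_Y\}$.

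It then remains to turn equality of parameter pairs into an isometry $X\simeq Y$. Here I would first note that every pseudo-linear quadruple admits a canonical labelling with \emph{either} ordering of its two parameters: applying the cyclic relabelling $p_0\mapsto p_1\mapsto p_2\mapsto p_3\mapsto p_0$ to a labelling with parameters $(a,b)$ yields a labelling still satisfying the conditions of Lemma~\ref{5.2**} but with parameters $(b,a)$. Hence I may choose canonical labellings $p_0^{X},\dots,p_3^{X}$ of $X$ and $p_0^{Y},\dots,p_3^{Y}$ of $Y$ with $a_X\le b_X$ and $a_Y\le b_Y$; the equality of unordered pairs then forces $a_X=a_Y=:a$ and $b_X=b_Y=:b$. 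With these normalised labellings, the six distances of $X$ and of $Y$ are given by the same formulas in $a$ and $b$, so the bijection $p_i^{X}\mapsto p_i^{Y}$ preserves all pairwise distances and is the desired isometry.

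I expect the only genuine subtlety to be this insensitivity to the ordering of $a$ and $b$: a three-point subset records the parameters only as an unordered pair, so without the cyclic-relabelling symmetry (equivalently, the freedom to normalise to $a\le b$) one could not pass from $\{a_X,b_X\}=\{a_Y,b_Y\}$ to an actual isometry. The remaining work—listing the six distances, locating the diameter, and verifying that the relabellings preserve distances—is a routine bookkeeping consequence of Lemma~\ref{5.2**}.
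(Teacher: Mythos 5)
Your proof is correct and takes essentially the same approach as the paper: the paper declares the lemma ``immediate from Lemma~\ref{5.2**},'' and your argument is precisely a careful working-out of that claim --- reading off the six distances $a$, $b$, $a+b$ from the canonical labelling, identifying the two diagonal pairs as the only diametrical ones, recovering the unordered pair $\{a,b\}$ from any three-point subset, and using the cyclic relabelling to normalise $a\le b$ before matching labels. Nothing further is needed.
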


A proof is immediate from Lemma~\ref{5.2**}.

\medskip

As in the case of the class $\mathfrak{F}_{2},$ there is a minimal $\mathfrak{PL}$-universal disjoint set $\mathfrak{PL}^{1}\subseteq\mathfrak{PL}.$ Let

\begin{equation}\label{eq6.6}
X:=\bigcup_{Y\in\mathfrak{PL}^{1}}Y
\end{equation}
and let $d_{X}: X\times X\to\mathbb R^{+}$ be a function such that
\begin{equation}\label{eq6.7}
d_X(z,y)=
\begin{cases}
d_{Y}(z,y) &\text{if there is }\, Y\in \mathfrak{PL}^{1} \, \text{ with } \, z,y \in Y \\
\textrm{diam} Y \vee \textrm{diam} Z &\text{if there are distinct} \, Y, Z\in \mathfrak{PL}^{1} \, \text{ with } y \in Y, z\in Z.  \\
\end{cases}
\end{equation}

Note that \eqref{eq6.7} is correct because $\mathfrak{PL}^{1}$ is disjoint.

\begin{lem}\label{lem6.9}
If $X$ and $d_{X}: X\times X\to\mathbb R^{+}$ are defined as in \eqref{eq6.6} and \eqref{eq6.7} respectively, then $d_{X}$ is a metric on $X.$
\end{lem}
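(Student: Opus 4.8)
The plan is to verify directly the three defining properties of a metric, namely symmetry, the condition $d_X(x,y)=0\Leftrightarrow x=y$ (with non-negativity), and the triangle inequality; only the last requires genuine work. First I would record the structural facts about $\mathfrak{PL}^{1}$ that the argument rests on: its members are pairwise disjoint, and each $Y\in\mathfrak{PL}^{1}$ is a pseudo-linear quadruple, so by Lemma~\ref{5.2**} we have $|Y|=4$ and hence $0<\operatorname{diam}Y<\infty$. Symmetry and non-negativity of $d_X$ are immediate from \eqref{eq6.7}, since each $d_Y$ is a metric and $\operatorname{diam}Y\vee\operatorname{diam}Z=\operatorname{diam}Z\vee\operatorname{diam}Y$. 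For the identity of indiscernibles, if $x,y$ lie in a common $Y$ then $d_X(x,y)=d_Y(x,y)$ vanishes exactly when $x=y$, whereas if $x\in Z$ and $y\in Y$ with $Z\ne Y$ then $d_X(x,y)=\operatorname{diam}Y\vee\operatorname{diam}Z>0$ and such points are automatically distinct.

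The heart of the proof is the triangle inequality $d_X(x,z)\le d_X(x,y)+d_X(y,z)$. I would isolate at the outset the elementary fact
\begin{equation*}
a\vee c\le (a\vee b)+(b\vee c)\qquad\text{for all }a,b,c\ge 0,
\end{equation*}
which follows from $a\le a\vee b$, $c\le b\vee c$ and $u\vee v\le u+v$. Then I would split into cases according to the distribution of $x,y,z$ among the members of $\mathfrak{PL}^{1}$. If all three lie in a single $Y$, then $d_X$ agrees with the metric $d_Y$ on $\{x,y,z\}$ and the inequality is that of $d_Y$. If they lie in three pairwise distinct members $Y_1,Y_2,Y_3$, then, writing $a=\operatorname{diam}Y_1$, $b=\operatorname{diam}Y_2$, $c=\operatorname{diam}Y_3$, every mutual $d_X$-distance is the corresponding maximum; all three required inequalities are then instances of the elementary fact above, taking for $b$ the diameter attached to the ``middle'' point in each case. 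Finally, if exactly two of the points share a member $Y_1$ and the third lies in $Y_2\ne Y_1$, set $M=\operatorname{diam}Y_1\vee\operatorname{diam}Y_2$: two of the three pairwise distances equal $M$ and the remaining one is a $d_{Y_1}$-distance, bounded by $\operatorname{diam}Y_1\le M$. Each of the three triangle inequalities then reduces to a trivial bound of the form $M\le M+d_{Y_1}(\cdot,\cdot)$ or $d_{Y_1}(\cdot,\cdot)\le 2M$.

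Since these cases exhaust every configuration, $d_X$ is a metric. I do not expect a genuine obstacle: the construction is an amalgamation close in spirit to Lemma~\ref{Ultra}, and the only point demanding a little care is the mixed case, where one must verify the triangle inequality in all three orderings of the triple rather than a single one. Each such verification collapses to the bound $\operatorname{diam}Y_1\le M$, so the whole argument is routine once the elementary inequality $a\vee c\le (a\vee b)+(b\vee c)$ and the finiteness and positivity of the diameters are in place.
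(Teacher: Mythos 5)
Your proof is correct: the structural facts you isolate (pairwise disjointness of $\mathfrak{PL}^{1}$ and $0<\operatorname{diam}Y<\infty$ via Lemma~\ref{5.2**}), the three-way split according to how $x,y,z$ are distributed among the members, and the elementary bound $a\vee c\le (a\vee b)+(b\vee c)$ do verify the triangle inequality in every configuration, and the symmetry and positivity checks are fine. The paper uses the same case decomposition but proves a sharper statement in the mixed cases: whenever the three points do not all lie in a single member of $\mathfrak{PL}^{1}$, the \emph{strong} triangle inequality $d_{X}(y,z)\le d_{X}(y,w)\vee d_{X}(w,z)$ of \eqref{eq6.8} holds, each of its four sub-cases reducing either to an equality of two mixed distances or to the chain $d_{Z}(y,z)\le \operatorname{diam}Z\le \operatorname{diam}Z\vee\operatorname{diam}W$. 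This costs no more effort than your additive argument and buys two things your version does not record: it localizes the failure of ultrametricity to the interiors of the quadruples (which is the content behind Remark~\ref{r6.9*}), and it shows that every mixed triple forms an isosceles triangle whose base is not longer than its legs --- a structural fact the paper explicitly reuses in the proof of Proposition~\ref{pr6.10}. So your route is a valid and slightly more pedestrian verification of the lemma as stated, but if you intend your proof to feed into the later minimality argument, you should establish the max-inequality for mixed triples rather than only the additive triangle inequality.
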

\begin{proof}
It suffices to show that the strong triangle inequality
\begin{equation}\label{eq6.8}
d_{X}(y, z)\le d_{X}(y, w)\vee d_{X}(w, z)
\end{equation}
holds if $y\in Y,\, z\in Z, \,w\in W$ and
$Y, Z, W\in\mathfrak{PL}^{1}$ and at least one from the conditions
\begin{equation*}
Y\ne Z, \quad Y\ne W \quad \text{and}\quad Z\ne W
\end{equation*}
is valid. Under these conditions there exist the next four possibilities:

\medskip

(i) $Y\ne Z$ and $Y\ne W$ and $Z\ne W;$

(ii) $Y\ne Z$ and $Z= W;$

(iii) $Y\ne Z$ and $Y= W;$

(iv) $Y= Z$ and $Z\ne W.$

In the case when (i) holds, inequality \eqref{eq6.8} is evident. Suppose we have~(ii). Then, the equalities
$$
\textrm{diam} Y \vee \textrm{diam} Z=d_{X}(y, z)
$$
and
$$
\textrm{diam} Y \vee \textrm{diam} Z=\textrm{diam} W \vee \textrm{diam} Z=d_{X}(y, w)
$$
hold that implies \eqref{eq6.8}. Case (iii) is similar to (ii). Let (iv) be valid. Then using \eqref{eq6.7}, we obtain
\begin{equation*}
d_{X}(y, z)\le d_{Z}(y, z)\le \textrm{diam} Z.
\end{equation*}
Moreover, from $Z\ne W$ and \eqref{eq6.7} it follows that
\begin{equation*}
d_{X}(z, w)= \textrm{diam} Z \vee \textrm{diam} W.
\end{equation*}
Inequality \eqref{eq6.8} follows.
\end{proof}

\begin{rem}\label{r6.9*}
The metric $d_X$ in the above lemma is not an ultrametric.
\end{rem}

\begin{prop}\label{pr6.10}
If $X$ and $d_{X}: X\times X\to\mathbb R^{+}$ are defined as in \eqref{eq6.6} and \eqref{eq6.7} respectively, then $(X, d_X)$ is a minimal $\mathfrak{PL}$-universal metric space.
\end{prop}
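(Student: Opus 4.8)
The plan is to verify the hypotheses of Theorem~\ref{t4.7} for the family $\mathfrak{M}=\mathfrak{PL}^{1}$, and then to upgrade minimal $\mathfrak{PL}^{1}$-universality to minimal $\mathfrak{PL}$-universality. By Lemma~\ref{lem6.9} the function $d_{X}$ is a metric, and since the members of $\mathfrak{PL}^{1}$ are pairwise disjoint and $d_{X}$ restricts to $d_{Y}$ on each $Y\in\mathfrak{PL}^{1}$, we indeed have $X=\coprod_{Y\in\mathfrak{PL}^{1}}Y$ in the sense of Definition~\ref{part}. Because $\mathfrak{PL}^{1}$ is a minimal $\mathfrak{PL}$-universal set, Proposition~\ref{vspompr*} guarantees that every $Y\in\mathfrak{PL}^{1}$ is not shifted (this also follows from Example~\ref{ex1*}, as each $Y$ has four points) and that distinct members of $\mathfrak{PL}^{1}$ are incomparable. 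Thus conditions (i) and (ii) of Theorem~\ref{t4.7} hold automatically, and everything reduces to condition (iii): for each $Y\in\mathfrak{PL}^{1}$ there is a \emph{unique} $X^{0}\subseteq X$ with $X^{0}\simeq Y$.

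The uniqueness in (iii) is the heart of the argument. Fix $Y\in\mathfrak{PL}^{1}$ and let $X^{0}\subseteq X$ satisfy $X^{0}\simeq Y$; then $X^{0}$ is itself a pseudo-linear quadruple, so $|X^{0}|=4$ and, by Lemma~\ref{l5.6}, its diameter is attained by exactly two diametrical pairs, which (since every point of $X^{0}$ is diametrical) form a perfect matching of the four points. Writing the distance multiset of a pseudo-linear quadruple in the form $\{a,a,b,b,D,D\}$ with $a+b=D$ and $a,b>0$ (Lemma~\ref{5.2**}), I note that the largest value $D$ occurs exactly twice, unless $a=b$, in which case $a$ occurs four times and $D$ still occurs twice. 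I would then split into two cases according to whether all four points of $X^{0}$ lie in a single component $Z\in\mathfrak{PL}^{1}$ or are spread over at least two components. In the first case $X^{0}\subseteq Z$ with $|X^{0}|=4=|Z|$ forces $X^{0}=Z$, and $Z\simeq X^{0}\simeq Y$ together with the incomparability of distinct members of $\mathfrak{PL}^{1}$ yields $Z=Y$, so $X^{0}=Y$.

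The remaining case is the main obstacle. By \eqref{eq6.7}, any two points of $X^{0}$ lying in distinct components $Z',Z''$ are at distance $\operatorname{diam}Z'\vee\operatorname{diam}Z''$, a single value determined by the two components alone. Running through the possible distributions of the four points over the components (four distinct components; three components; a $3+1$ split into two components; a $2+2$ split), I expect to find in each situation that some value is forced to repeat among the six pairwise distances more often than the pattern $\{a,a,b,b,D,D\}$ allows. For instance, a $2+2$ split makes the cross distance occur four times while both intra-component distances are strictly smaller, a $3+1$ split places the isolated point at one common distance from the other three, and four or three distinct components make the top diameter repeat. In every subcase the maximal distance of $X^{0}$ is realized by at least three pairs (or some point fails to be diametrical), so there are more than two diametrical pairs, contradicting Lemma~\ref{l5.6}. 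Hence $X^{0}$ cannot meet two components, $X^{0}=Y$, and condition (iii) follows; Theorem~\ref{t4.7} then shows that $X$ is minimal $\mathfrak{PL}^{1}$-universal.

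Finally I would upgrade to $\mathfrak{PL}$. Since $\mathfrak{PL}^{1}$ is $\mathfrak{PL}$-universal, every $W\in\mathfrak{PL}$ embeds into some member of $\mathfrak{PL}^{1}$, which is a subspace of $X$; hence $\mathfrak{PL}\hookrightarrow X$. Moreover, if $Y_{0}\subseteq X$ is $\mathfrak{PL}$-universal then, because $\mathfrak{PL}^{1}\subseteq\mathfrak{PL}$, it is also $\mathfrak{PL}^{1}$-universal, whence $Y_{0}=X$ by the minimal $\mathfrak{PL}^{1}$-universality just established. Therefore $(X,d_{X})$ is minimal $\mathfrak{PL}$-universal, as claimed.
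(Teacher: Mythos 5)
Your proof is correct, and its skeleton coincides with the paper's: both observe (via Lemma~\ref{lem6.9} and \eqref{eq6.7}) that $X=\coprod_{Y\in\mathfrak{PL}^{1}}Y$, both reduce everything through Theorem~\ref{t4.7} to the uniqueness condition (iii), and both pass from minimal $\mathfrak{PL}^{1}$-universality to minimal $\mathfrak{PL}$-universality by the same observation that any $\mathfrak{PL}$-universal subspace is in particular $\mathfrak{PL}^{1}$-universal. (Your explicit verification of conditions (i) and (ii) of Theorem~\ref{t4.7} via Proposition~\ref{vspompr*} is a point the paper glosses over.) The genuine difference is in how one rules out a pseudo-linear quadruple $X^{0}\subseteq X$ meeting two distinct components. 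The paper argues on \emph{three} points: any triple meeting two components is, by \eqref{eq6.7}, an isosceles triangle whose base is not longer than its legs, hence does not belong to $\mathfrak{MB}$ and cannot be embedded in $\mathbb R$, contradicting Definition~\ref{d5.2*}. You argue on all \emph{four} points, running through the distributions ($2{+}2$, $3{+}1$, three or four components) and contradicting the diametrical-pair count of Lemma~\ref{l5.6}. Both contradictions flow from the same structural fact --- cross-component distances depend only on the pair of components, so maximal distances repeat too often --- but the paper's three-point version avoids the case analysis over distributions and is correspondingly shorter, while yours needs only the combinatorial data of Lemma~\ref{l5.6} and never invokes embeddability in $\mathbb R$. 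Two cosmetic remarks on your cases: in the $2{+}2$ split the intra-component distances need not be \emph{strictly} smaller than the cross distance (only $\leq$ it), which is harmless since the cross distance already occurs four times; and in the $3{+}1$ split the cleanest finish is that a point of a pseudo-linear quadruple has distances $a$, $b$, $a+b$ (with $a,b>0$) to the other three, which can never be a constant triple.
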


\begin{proof}
It follows directly from \eqref{eq6.7} and Lemma~\ref{lem6.9}, that $(X, d_X)$ is a disjoint union of $Y\in\mathfrak{PL}^{1}$. Consequently, by Theorem~\ref{t4.7}, $(X, d_X)$ is minimal $\mathfrak{PL}^{1}$-universal if and only if, for every $Y\in\mathfrak{PL}^{1},$ there is a unique
$X_{Y}\subseteq X $ such that $X_Y\simeq Y.$ The last condition holds if for all distinct $y_i\in Y_i, \, i=1, ..., 4$ with $\{y_1, y_2, y_3, y_4\}\in\mathfrak{PL}$ we have
\begin{equation}\label{yk}
Y_1=Y_2=Y_3=Y_4.
\end{equation}
Suppose $y_i \in Y_i$, $i=1, \ldots, 4$ but  \eqref{yk} does not hold. Then, as was shown in the proof of Lemma~\ref{lem6.9} there are three distinct points $x_i, x_j, x_k\in\{x_1, ..., x_n\},$ which form an isosceles triangle with the base that is not longer than its legs. Hence the number of diametrical pairs of $\{x_{i_1}, x_{i_2}, x_{i_3}\}$ is more or equal two. Consequently, $\{x_{i_1}, x_{i_2}, x_{i_3}\}\not\in\mathfrak{MB}.$ By Definition~\ref{d5.2*}, it follows that $\{x_1, ..., x_4\}\not\in\mathfrak{PL}.$ Thus, $(X, d_X)$ is minimal $\mathfrak{PL}^{1}$-universal and, consequently, minimal $\mathfrak{PL}$-universal.
\end{proof}

Now we are ready to construct a minimal $\mathfrak{MB}$-universal metric space.

Let $p\in\mathbb R$ and $b\in X,$ where
\begin{equation*}
X=\bigcup_{Y\in\mathfrak{PL}^{1}}Y,
\end{equation*}
and let $r\in (0, \infty)$. For simplicity, we may suppose also that $X\cap\mathbb R=\varnothing$. Write $M:=X\cup\mathbb R$ and define a symmetric function $d_{M}: M\times M \rightarrow \mathbb R^{+}$ such that
\begin{equation}\label{eqv6.10}
d_M(x,y)=
\begin{cases}
d_{X}(x,y) &\text{if}\, x,y \in X \\
|x-y| &\text{if}\, x,y \in\mathbb R \\
|x-p|+r+ d_{X}(b,y)& \text{if}\, x\in\mathbb R \, \text{and} \, y\in X.\\
\end{cases}
\end{equation}

\begin{lem}\label{lem6.11}
The function $d_{M}: M\times M\to\mathbb R^{+}$ is a metric on $M$.
\end{lem}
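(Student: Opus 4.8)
The plan is to verify directly from \eqref{eqv6.10} that $d_M$ satisfies the three axioms of a metric. Symmetry holds by construction, so only nonnegativity together with the identity of indiscernibles, and the triangle inequality, require checking. Throughout I would use that $d_X$ is a metric on $X$ (Lemma~\ref{lem6.9}) and that $|\cdot-\cdot|$ is the standard metric on $\mathbb R$.

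Nonnegativity and definiteness are immediate. If $x,y$ lie both in $X$ or both in $\mathbb R$, then $d_M(x,y)$ equals $d_X(x,y)$ or $|x-y|$, which is nonnegative and vanishes exactly when $x=y$. In the remaining case $x\in\mathbb R$, $y\in X$, we have $d_M(x,y)=|x-p|+r+d_X(b,y)\ge r>0$; since $X\cap\mathbb R=\varnothing$ forces $x\ne y$, this is consistent with definiteness.

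The substance is the triangle inequality $d_M(x,z)\le d_M(x,y)+d_M(y,z)$, which I would prove by a case analysis according to how $x,y,z$ are distributed between $X$ and $\mathbb R$. If all three lie in $X$, or all three lie in $\mathbb R$, the inequality reduces at once to the triangle inequality for $d_X$, respectively for $|\cdot-\cdot|$. In every remaining (mixed) configuration the decisive dichotomy is whether the lone point lying opposite the other two is an endpoint ($x$ or $z$) or the midpoint $y$. If the lone point is an endpoint, say $z\in\mathbb R$ while $x,y\in X$, then substituting \eqref{eqv6.10} makes the bridge summand $|z-p|+r$ appear identically on both sides and cancel, and what survives is exactly $d_X(b,x)\le d_X(x,y)+d_X(b,y)$, a triangle inequality in $X$; the mirror configurations are handled the same way with the roles of $X$ and $\mathbb R$ exchanged. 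If instead the lone point is the midpoint $y$, say $x,z\in X$ and $y\in\mathbb R$, then the right-hand side equals $2|y-p|+2r+d_X(b,x)+d_X(b,z)$, so it suffices to note $d_X(x,z)\le d_X(b,x)+d_X(b,z)$ and to discard the nonnegative remainder $2|y-p|+2r$; the configuration with $x,z\in\mathbb R$ and $y\in X$ is symmetric, using $|x-z|\le|x-p|+|z-p|$.

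The main obstacle is purely the bookkeeping: one must keep track of all eight type-configurations and of the fact that $d_M$ is defined asymmetrically in its two arguments in the mixed case, so that the expression for $d_M(y,z)$ with $y\in X$ and $z\in\mathbb R$ must be read off via symmetry as $|z-p|+r+d_X(b,y)$. Once this is handled, no configuration imposes a constraint stronger than the triangle inequalities already available in $X$ and in $\mathbb R$, and hence $d_M$ is a metric on $M$.
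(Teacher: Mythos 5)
Your proof is correct: the eight placement configurations are all accounted for, the cancellation of the bridge summand in the ``endpoint'' cases and the crude bound via $d_X(b,x)+d_X(b,z)$ (respectively $|x-p|+|z-p|$) in the ``midpoint'' cases are exactly the right observations, and you correctly invoke Lemma~\ref{lem6.9} for the triangle inequality inside $X$ and the declared symmetry of $d_M$ to read off the mixed-case values. However, your route differs from the paper's. The paper does not carry out the case analysis at all: it asserts that the verification is direct and instead \emph{identifies} $d_M$ as the weighted shortest-path metric $d_{w,G}$ of a weighted graph $(G,w)$ on the vertex set $M=X\cup\mathbb R$, whose edges are all pairs inside $X$ (weighted by $d_X$), all pairs inside $\mathbb R$ (weighted by $|x-y|$), and the single bridge edge $\{p,b\}$ of weight $r$; the metric property then comes from the graph-metrization machinery of Section~\ref{sect6} (Lemma~\ref{l4.11}). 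The graph viewpoint buys a conceptual explanation of formula \eqref{eqv6.10} --- any path from $\mathbb R$ to $X$ must cross the unique bridge, so the distance is forced to be $|x-p|+r+d_X(b,y)$ --- but to be made rigorous it still requires checking the cycle condition \eqref{e4.9} and that the shortest-path distance actually coincides with \eqref{eqv6.10} (i.e.\ that detours through the bridge never shorten distances within $X$ or within $\mathbb R$), which is a verification of comparable length to yours. Your argument is the more elementary and self-contained of the two, at the cost of the bookkeeping you acknowledge; either is acceptable, and in effect you have supplied the ``direct'' proof that the paper claims but omits.
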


A proof of Lemma~\ref{lem6.11} can be obtained directly from \eqref{eqv6.10}. Note only, that $d_{M}$ is the weighted shortest-path metric for the weighted graph $(G, w)$ with the vertex set
$$
V(G)=M=X\cup\mathbb R,
$$
and the edge set
\begin{equation*}
E(G)=\{\{x, y\}: x, y\in X, x\ne y\}\cup\{\{p, b\}\}\cup\{\{x, y\}: x, y\in\mathbb R, \, x\ne y\},
\end{equation*}
and the weight
\begin{equation*}\label{eq6.10}
w(\{x, y\})=
\begin{cases}
d_{X}(x,y) &\text{if}\, \{x,y\} \subseteq X \\
|x-y| &\text{if}\, \{x,y\} \subseteq\mathbb R \\
r& \text{if}\, \{x, y\}=\{p,b\}.\\
\end{cases}
\end{equation*}

\begin{lem}\label{lem6.12}
Let $x, y\in\mathbb R,$ $x< y$ and let $z\in X$. If $\{x, y, z\}\in\mathfrak{M}\mathfrak{B},$ then $p$ does not belong to the interval $(x,y)=\{t\in\mathbb R: x<t<y\}$.
\end{lem}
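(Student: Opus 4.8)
The plan is to argue by contradiction, computing the three pairwise distances directly from \eqref{eqv6.10} and then testing them against the betweenness characterization. First I would record the distances. Setting $c := r + d_X(b,z)$, formula \eqref{eqv6.10} gives
$$
d_M(x,y) = y-x, \qquad d_M(x,z) = |x-p| + c, \qquad d_M(y,z) = |y-p| + c,
$$
where $c \geq r > 0$. The key structural observation is that the constant $c$ enters $d_M(x,z)$ and $d_M(y,z)$ in exactly the same way, so it will cancel whenever these two quantities are compared. Recall from the definition of $\mathfrak{M}\mathfrak{B}$ that $\{x,y,z\}\in\mathfrak{M}\mathfrak{B}$ precisely when one of the three points lies between the other two.

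Next I would suppose, toward a contradiction, that $p \in (x,y)$, so that $|x-p| = p-x$ and $|y-p| = y-p$; in particular $d_M(x,y) = (p-x)+(y-p)$. Since $\{x,y,z\}\in\mathfrak{M}\mathfrak{B}$, exactly one of three betweenness relations must hold, and I would eliminate each of them in turn. If $z$ lies between $x$ and $y$, then $d_M(x,y) = d_M(x,z)+d_M(z,y)$ yields $y-x = (y-x)+2c$, whence $2c=0$, contradicting $c>0$. If $x$ lies between $y$ and $z$, then $d_M(y,z) = d_M(y,x)+d_M(x,z)$ reduces (after cancelling $c$) to $y-p = (y-x)+(p-x)$, i.e. $p = x$, contradicting $x < p$. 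Finally, if $y$ lies between $x$ and $z$, then $d_M(x,z) = d_M(x,y)+d_M(y,z)$ reduces to $p-x = (y-x)+(y-p)$, i.e. $p = y$, contradicting $p < y$.

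Since all three betweenness relations are impossible under the assumption $p\in(x,y)$, the hypothesis $\{x,y,z\}\in\mathfrak{M}\mathfrak{B}$ forces $p \notin (x,y)$, which is the assertion of the lemma. I do not anticipate a genuine obstacle here: the argument is an elementary three-case analysis. The only point requiring care is the bookkeeping that makes the cancellation of $c = r + d_X(b,z)$ transparent, so that each case collapses cleanly to either $2c=0$, $p=x$, or $p=y$; once the distances are written in the normalized form above, the three contradictions are immediate.
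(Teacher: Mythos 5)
Your proof is correct and follows essentially the same route as the paper: assume $p\in(x,y)$, split into the three betweenness cases, and use formula \eqref{eqv6.10} to reduce each case to a contradiction ($2(r+d_X(b,z))=0$, $p=x$, or $p=y$). Your normalization $c=r+d_X(b,z)$ just streamlines the same cancellations the paper carries out explicitly.
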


\begin{proof}
Suppose $\{x, y, z\}\in\mathfrak{M}\mathfrak{B}.$ Then there exist three possibilities:

\medskip

(i) $z$ lies between $x$ and $y,$

\begin{equation}\label{eq6.11}
d_M(x,y)=d_M(x,z)+d_M(z,y);
\end{equation}

(ii) $y$ lies between $x$ and $z,$

\begin{equation}\label{eq6.12}
d_M(x,z)=d_M(x,y)+d_M(y,z);
\end{equation}

(iii) $x$ lies between $y$ and $z,$

\begin{equation}\label{eq6.13}
d_M(y,z)=d_M(y,x)+d_M(x,z).
\end{equation}

For $p\in (x, y),$ equality \eqref{eq6.11} can be written in the form
\begin{equation}\label{eq6.14}
|x-y|=(|x-p|+r+d_{X}(b,z))+(|y-p|+r+d_{X}(b,z))
\end{equation}
(see Figure~3 below).
The condition $p\in(x,y)$ implies
\begin{equation}\label{eqvr}
|x-p|+|y-p|=|x-y|.
\end{equation}
Hence from \eqref{eq6.14} we obtain
\begin{equation*}
0=2(r+d_{X}(b, z))\ge 2r,
\end{equation*}
contrary to the inequality $r>0.$ If we have \eqref{eq6.12}, then
\begin{equation*}
|x-p|+r+d_{X}(b,z)=|x-y|+|y-p|+z+d_{X}(b,z),
\end{equation*}
so that
\begin{equation*}
|x-p|=|x-y|+|y-p|.
\end{equation*}
The last equality and \eqref{eqvr} imply
\begin{equation*}
|x-p|=|x-p|+2|y-p|.
\end{equation*}
Hence we obtain $|y-p|=0.$ It is a contradiction with $p\in (x,y)$.

Case (iii) can be considered analogously to case (ii). Thus, $p\in (x,y)$ contradicts the supposition $\{x, y, z\}\in\mathfrak{M}\mathfrak{B}.$
\begin{figure}\label{f2}
\begin{center}
  \includegraphics[width=0.8\textwidth]{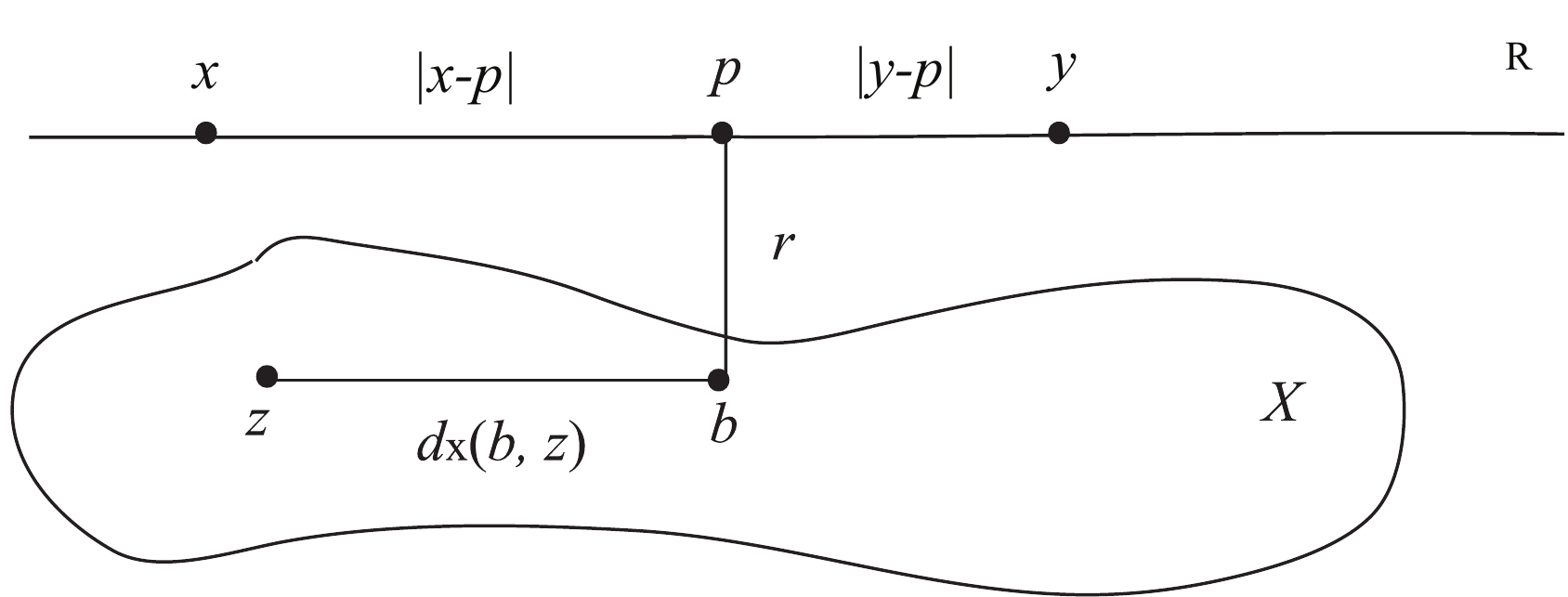}\\
\caption{\hspace{1mm}}
\end{center}
\end{figure}
\end{proof}

\begin{thm}
The metric space $(M, d_M)$ is minimal $\mathfrak{M}\mathfrak{B}$-universal.
\end{thm}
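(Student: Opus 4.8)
The plan is to verify the two halves of minimal universality separately, reducing each to the facts that $\mathbb{R}$ is minimal $\mathfrak{M}\mathfrak{B}^{5}$-universal (Proposition~\ref{pr1}) and that $X$ is minimal $\mathfrak{PL}$-universal (Proposition~\ref{pr6.10}); note that $\mathbb{R}$ and $X$ sit inside $M$ as isometric subspaces by the very definition \eqref{eqv6.10} of $d_{M}$.

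For universality I would split an arbitrary $A\in\mathfrak{M}\mathfrak{B}$ by cardinality. If $|A|\le 3$ then $A\hookrightarrow\mathbb{R}$, since a betweenness triple is exactly a triple of collinear reals; if $|A|\ge 5$ then $A\in\mathfrak{M}\mathfrak{B}^{5}$, so $A\hookrightarrow\mathbb{R}$ by Proposition~\ref{pr1}. In the remaining case $|A|=4$ either $A\hookrightarrow\mathbb{R}$, or else every three-point subset of $A$ is a betweenness triple and hence embeds in $\mathbb{R}$, so by Definition~\ref{d5.2*} the space $A$ is a pseudo-linear quadruple and $A\hookrightarrow X$ by Proposition~\ref{pr6.10}. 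Thus $A\hookrightarrow\mathbb{R}\subseteq M$ or $A\hookrightarrow X\subseteq M$ in every case.

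The heart of the argument is an ``unfolding'' lemma extending Lemma~\ref{lem6.12}: \emph{if $A\in\mathfrak{M}\mathfrak{B}$ embeds in $M$ with image $A'$ meeting both $\mathbb{R}$ and $X$, then $A'\hookrightarrow\mathbb{R}$.} Applying Lemma~\ref{lem6.12} to any two real points of $A'$ and one point of $A'\cap X$ shows that $A'\cap\mathbb{R}$ lies on one side of $p$; normalising by the reflection $t\mapsto 2p-t$ of $\mathbb{R}$ (extended by the identity on $X$), which is an isometry of $M$, I may assume $A'\cap\mathbb{R}\subseteq(-\infty,p]$. Then for $x\in A'\cap\mathbb{R}$ and $y,y'\in A'\cap X$ one has $d_{M}(x,y)=(p-x)+r+d_{X}(b,y)$, and checking which betweenness equality can hold for $\{x,y,y'\}$ forces $d_{X}(y,y')=|d_{X}(b,y)-d_{X}(b,y')|$; the ``$x$ in the middle'' case is excluded since it would require $d_{X}(y,y')=d_{X}(b,y)+d_{X}(b,y')+2((p-x)+r)$, impossible as $(p-x)+r>0$ while $d_{X}(y,y')\le d_{X}(b,y)+d_{X}(b,y')$. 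Consequently the map $\Phi$ with $\Phi(x)=x$ on $A'\cap\mathbb{R}$ and $\Phi(y)=p+r+d_{X}(b,y)$ on $A'\cap X$ preserves all distances, so $A'\hookrightarrow\mathbb{R}$. I would combine this with the short structural fact, read off from \eqref{eq6.7} and Lemma~\ref{5.2**}, that \emph{every betweenness subset of $X$ has at most four points}: two points of one quadruple together with a point of another, or three points in three distinct quadruples, always form an isosceles or equilateral triple with positive base, which is never a betweenness triple, so a betweenness subset of $X$ either lies in a single quadruple or meets at most two quadruples in one point each.

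For minimality I would treat the two kinds of points of $M$. If $m_{0}\in X$, choose by Proposition~\ref{pr6.10} a pseudo-linear quadruple $P$ with $P\not\hookrightarrow X\setminus\{m_{0}\}$; an image of $P$ in $M\setminus\{m_{0}\}$ cannot lie in $\mathbb{R}$ (as $P\not\hookrightarrow\mathbb{R}$) and cannot meet both $\mathbb{R}$ and $X$ (by the unfolding lemma that would give $P\hookrightarrow\mathbb{R}$), so it lies in $X\setminus\{m_{0}\}$, contradicting the choice of $P$. If $m_{0}\in\mathbb{R}$, I would use the single witness $A=\mathbb{R}\in\mathfrak{M}\mathfrak{B}$ and suppose $g\colon\mathbb{R}\hookrightarrow M\setminus\{m_{0}\}$: if $g(\mathbb{R})\subseteq\mathbb{R}$ then $g$ is an affine surjection, so $g(\mathbb{R})=\mathbb{R}\ni m_{0}$, a contradiction; if $g(\mathbb{R})$ meets $X$, the unfolding lemma yields an isometry $\Phi$ with $\Phi(g(\mathbb{R}))\subseteq(-\infty,p]\cup\Phi(g(\mathbb{R})\cap X)$, a set missing all but at most four points of $(p,\infty)$ by the four-point bound, which cannot be the full line $\Phi\circ g(\mathbb{R})=\mathbb{R}$. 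In both cases $\mathbb{R}\not\hookrightarrow M\setminus\{m_{0}\}$, so $M$ is minimal. The main obstacle is the unfolding lemma: the essential point is that any betweenness configuration straddling the bridge is automatically isometric to a configuration on the line, which is precisely what prevents $M$ from carrying any superfluous point.
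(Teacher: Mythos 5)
Your proposal is correct, and while its universality half coincides with the paper's (the split of $A\in\mathfrak{M}\mathfrak{B}$ by cardinality, using Definition~\ref{d5.2*}, Lemma~\ref{5.2**}, Proposition~\ref{pr1} and Proposition~\ref{pr6.10}, is exactly the paper's reduction), the minimality half follows a genuinely different route. The paper does not argue pointwise: it notes that $\mathfrak{PL}^{1}\cup\{\mathbb R\}$ is a minimal $\mathfrak{M}\mathfrak{B}$-universal subset of $\mathfrak{M}\mathfrak{B}$ and that $M$ is the disjoint union of its members, then invokes the disjoint-union criterion of Theorem~\ref{t4.7}, so everything reduces to the uniqueness of the isometric copy in $M$ of each $Z\in\mathfrak{PL}^{1}\cup\{\mathbb R\}$; that uniqueness is verified by an isolated-point argument for $Z=\mathbb R$ and, for pseudo-linear quadruples, by a three-case analysis on $|S\cap\mathbb R|\in\{1,2,3\}$ resting on Lemma~\ref{lem6.12} and the diametrical-pair Lemma~\ref{l5.6}. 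Your ``unfolding lemma'' replaces that entire case analysis by a single computation: any betweenness configuration in $M$ meeting both $\mathbb R$ and $X$ is isometric to a subset of $\mathbb R$ via the explicit straightening map $\Phi$, the derived constraint $d_{X}(y,y')=|d_{X}(b,y)-d_{X}(b,y')|$ being precisely what makes $\Phi$ distance-preserving. Together with your four-point bound for betweenness subsets of $X$ (which is correct: the cross-quadruple distances in \eqref{eq6.7} only produce isosceles triples with legs at least the base), this gives minimality directly, bypassing Theorem~\ref{t4.7} and Lemma~\ref{l5.6} altogether, and it subsumes the paper's uniqueness claims as by-products (a quadruple can never straddle the bridge; a copy of $\mathbb R$ can never touch $X$). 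The paper's route buys reuse of general machinery and a shorter treatment of the $\mathbb R$-copy via isolated points; your route buys a uniform conceptual reason for minimality and a considerably cleaner substitute for the paper's straddling cases, which in the original text are figure-dependent and somewhat rough. One small expository gap on your side: in the case $m_{0}\in\mathbb R$, before applying the unfolding lemma to $g(\mathbb R)$ you should note explicitly that $g(\mathbb R)\not\subseteq X$ --- immediate from your four-point bound since $g(\mathbb R)$ is infinite --- so that the image really does meet $\mathbb R$, as the hypothesis of the lemma requires.
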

\begin{proof}
From Lemma~\ref{lem6.11} and the definitions of $d_M$ and $X$, it follows that
\begin{equation*}
M=X\sqcup\mathbb R=\left(\coprod_{Y\in\mathfrak{PL}^{1}}Y\right)\sqcup\mathbb R.
\end{equation*}
Hence, $\{\mathbb R\}\cup\mathfrak{PL}\hookrightarrow M$. Moreover, using Definition~\ref{d5.2*} and Lemma~\ref{5.2**}, we see that, for every $Z\in\mathfrak{MB}$, either $Z\hookrightarrow\mathbb R$ or there is $Q\in\mathfrak{PL}$ such that $Q\simeq Z$. Consequently, $\mathfrak{M}\mathfrak{B}\hookrightarrow M$ and the set $\mathfrak{PL}^{1}\cup\{\mathbb R\}$ is a minimal $\mathfrak{M}\mathfrak{B}$-universal subset of $\mathfrak{M}\mathfrak{B}.$ Hence, it suffices to show that $M$ is a minimal universal metric space for $\mathfrak{PL}^{1}\cup\{\mathbb R\}.$
Condition (iii) of Theorem~\ref{t4.7} implies that $M$ is minimal universal for $\mathfrak{PL}^{1}\cup\{\mathbb R\}$ if and only if for every $Z\in\mathfrak{PL}^{1}\cup\{\mathbb R\}$ there is a unique $S_{Z}\subseteq M,$ such that $S_Z \simeq Z.$ Indeed, all $Z\in\mathfrak{PL}^{1}$ are not shifted as finite metric spaces and $\mathbb R$ is not shifted by Lemma~\ref{homsh}. Moreover, it follows from Definition~\ref{d5.2*}, Lemma~\ref{5.2**} and the definition of $\mathfrak{PL}^{1}$ that any distinct $Z, W\in\mathfrak{PL}^{1}\cup\{\mathbb R\}$ are incomparable. Thus, conditions (i) and (ii) of Theorem~\ref{t4.7} hold and it remains to prove condition (iii). Let $S\subseteq M$ and $S\simeq\mathbb R.$ We must show that $S=\mathbb R.$ Since $\mathbb R$ is not shifted, it suffices to prove the inclusion $S\subseteq\mathbb R.$ Suppose contrary that $S\cap X\ne\varnothing,$ where $X=\mathop{\coprod}\limits_{Y\in\mathfrak{PL}^{1}}Y.$ Note that every point $x_0\in X$ is an isolated point of $M.$ Indeed, the inequality $$d_{M}(x_0, y)\ge r$$ holds if $y\in\mathbb R$ (see \eqref{eqv6.10}) or we have the inequality
$$
d_{M}(x_0, y)\ge \textrm{diam} Z_{0},
$$
if $x_0\in Z_0\in\mathfrak{PL}^{1}$ and $y\in Z_1\in\mathfrak{PL}^{1}$, $Z_1\ne Z_0$ (see \eqref{eq6.7}). Moreover,
\begin{equation}\label{eq6.15}
d_{M}(x_0, y) \geq \min\{d_{M}(x, y): x, y\in Z_0, x\ne y\}>0,
\end{equation}
holds if $x_0, y\in Z_0\in\mathfrak{PL}^{1},$ $x_0\ne y$. (Note that the second inequality in \eqref{eq6.15} holds, because $Z_0$ is finite). Hence, if $S\cap X=\varnothing,$ then $S$ contains an isolated point, that is impossible in the case $S\simeq\mathbb R.$

Suppose now that $S\subseteq M$ and $S\simeq Z$, with $Z\in\mathfrak{PL}^{1}.$ If $S\subseteq X,$ then the equality
\begin{equation}\label{6.16}
S=Z
\end{equation}
was, in fact, proved in the proof of Proposition~\ref{pr6.10}. The inclusion $S\subseteq\mathbb R$ does not take place since $S\simeq Z$ and $Z\not\hookrightarrow\mathbb R$. Consequently, we have
$$
S\cap X\ne \varnothing\,\text{ and } S\cap\mathbb R\ne\varnothing.
$$
Since $|S|=4$ and $S\cap X\ne \varnothing$, we can consider the following possibilities
\begin{equation*}
|S\cap\mathbb R|=3, \, \, |S\cap\mathbb R|=2\,\,\mbox{or}\,\, |S\cap\mathbb R|=1.
\end{equation*}

\textbf{Case $|S\cap\mathbb R|=3$}. Let $S\cap\mathbb R=\{x, y, s\},$ $x< y,$ $S\cap X=\{z\}$ and let
$$
d_{M}(x, y)=\textrm{diam} (S\cap\mathbb R).
$$
Since $S\simeq Z$ and $Z\in\mathfrak{PL}^{1},$ the statement $\{x, y, z\}\in\mathfrak{M}\mathfrak{B}$ holds. Consequently, by Lemma~\ref{lem6.12}, we have $p\not\in(x, y)$ (see Figure~3).
For example, if $x<y\le y,$ we obtain
$$
\textrm{diam}\{x, y, s\}=d_{M}(x, y)=|x-y|< |x-y|+d_{M}(y, p)+r+d_{X}(b, z)\le \textrm{diam} S.
$$
Thus,
\begin{equation}\label{eq6.17}
\textrm{diam}\{x, y, s\}<\textrm{diam} S.
\end{equation}
Now note that Lemma~\ref{5.2**} implies the equality
\begin{equation*}
\textrm{diam}\{x, y, s\}=\textrm{diam} S,
\end{equation*}
contrary to \eqref{eq6.17}.

\textbf{Case $|S\cap\mathbb R|=2.$} Let $S\cap\mathbb R=\{x, y\}$ and $S\cap X=\{s, z\}.$ As in the case $|S\cap\mathbb R|=3,$ we can show that $\{x, y\}$ is not a diametrical pair for $S.$ Moreover, using Figure~4, it is easy to see that $x$ or $y$ is not a diametrical point for $X,$ contrary to Lemma~\ref{l5.6}.

\textbf{Case $|S\cap\mathbb R|=1.$} Let $S\cap\mathbb R=\{x\}$ and $S\cap X=\{y, s, z\}.$ Since
$S\in\mathfrak{PL}^{1},$ we have
\begin{equation}\label{eq6.18}
\{y,s,z\}\in\mathfrak{M}\mathfrak{B}.
\end{equation}

As was shown in the proof of Proposition~\ref{pr6.10}, statement \eqref{eq6.18} can be valid if and only if there is $W\in\mathfrak{PL}^{1}$ such that
\begin{equation*}
\{y, s, z\}\subseteq W,
\end{equation*}
(see \eqref{eqv6.10}). By Lemma~\ref{l5.6}, we obtain that $W\simeq S.$ Without loss of generality, we may suppose also that
\begin{equation}\label{eq6.22}
\textrm{diam} S=d_{X}(y, z).
\end{equation}
If $b\not\in W$, then it follows that
\begin{equation*}
d_{M}(y, z)=d_{X}(y, z)\le d_{X}(z, b)<d_{X}(z, b)+r+|p-x|=d_{M}(z, x).
\end{equation*}
Hence, $d_{M}(y, z)<d_{M}(z, x)$ holds, contrary to \eqref{eqv6.10}. Consequently, $b\in W$ holds. If $z=b,$ then
\begin{equation*}
d_{M}(y, z)=d_{M}(y, b)< d_{X}(y, b)<d_{X}(z, b)+r+|p-x|=d_{M}(y, x),
\end{equation*}
that contradicts \eqref{eq6.22}. Hence, we obtain $z\ne b.$ Similarly we have $s\ne b$. Suppose $y=b$. Since $\{y, z\}$ is a diametrical pair for $S$, Lemma~\ref{l5.6} implies that $\{x, s\}$ is also a diametrical pair for $S$. Using this lemma again, we obtain the inequality
\begin{equation}\label{eq6.23}
d_{M}(x, s)> d_{M}(x, y).
\end{equation}
From the definition of $d_M$, it follows that
\begin{equation*}
d_{M}(x, s)=d_{M}(x, b)=|x-p|+r \text{ and }
d_{M}(x, y)=|x-p|+r+d_{M}(b, y),
\end{equation*}
\begin{figure}
  \begin{center}
  \includegraphics[width=0.8\textwidth]{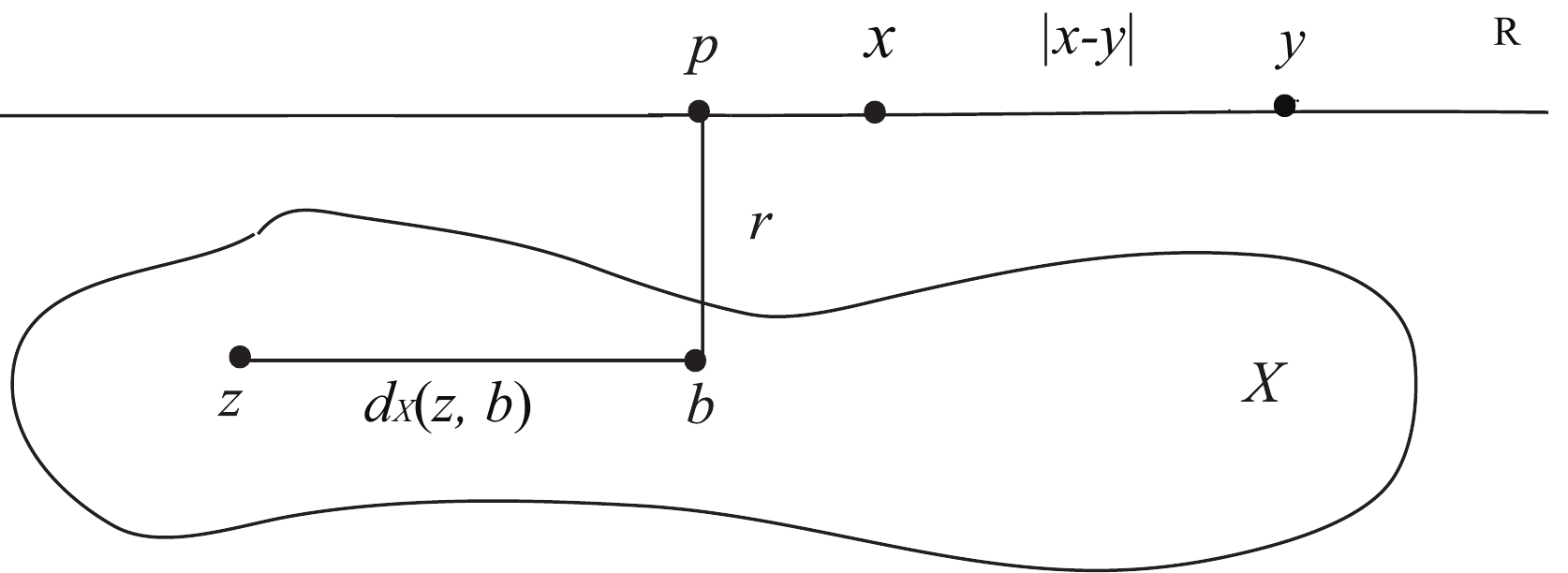}\\
 \end{center}
\end{figure}
\begin{figure}\label{f3}
  \begin{center}
  \includegraphics[width=0.8\textwidth]{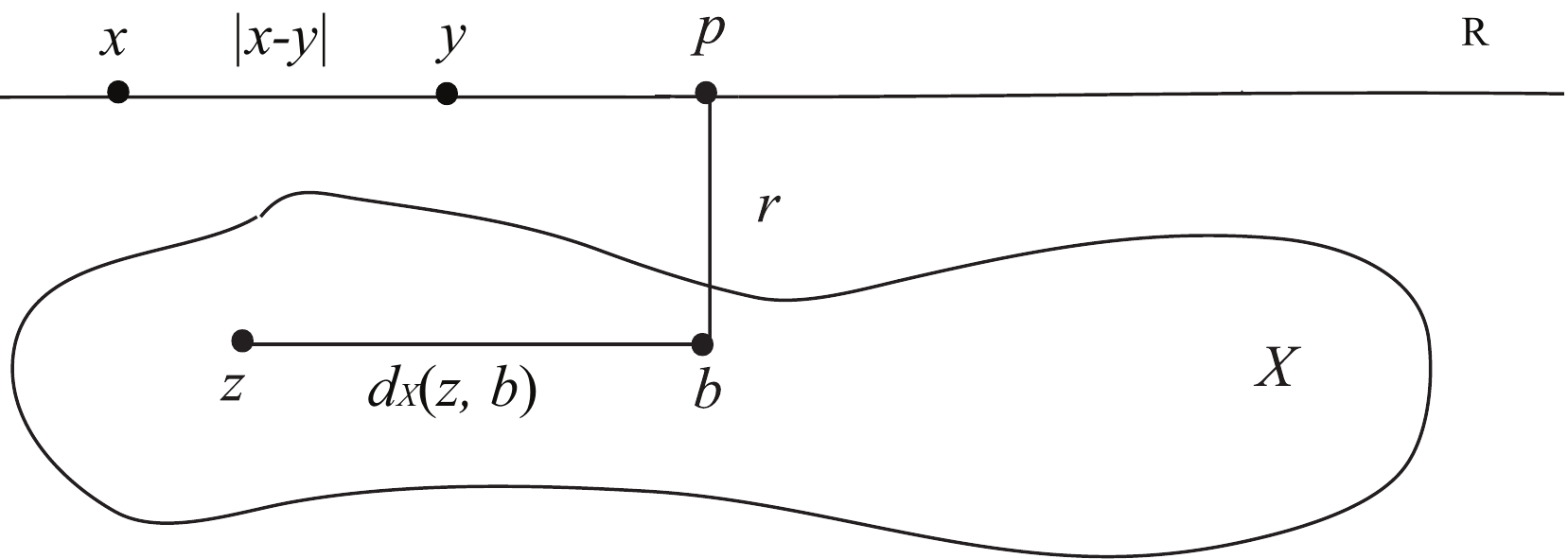}\\
 \caption{\ }
 \end{center}
\end{figure}
(see Figure~5). Thus, $d_{M}(x, s)<d_{M}(x, y),$ contrary to \eqref{eq6.23}. Let us denote by $t$ the fourth point of the pseudo-linear quadruple $W,$ $W=\{y, s, z, t\}.$ Since $b\not\in\{y, z, s\}$ and $b\in W,$ we have $t=b.$ Since $S$ and $W$ are pseudo-linear quadruples and
$$
\{y, s, z\}\subseteq W\cap S.
$$
Lemma~\ref{l5.6} implies that $W\simeq S.$ In particular, the equalities
\begin{equation*}
d_{M}(t, s)=\textrm{diam}W=\textrm{diam}S=d_{M}(s, x)
\end{equation*}
hold.
\begin{figure}\label{f6}
  \begin{center}
  \includegraphics[width=0.8\textwidth]{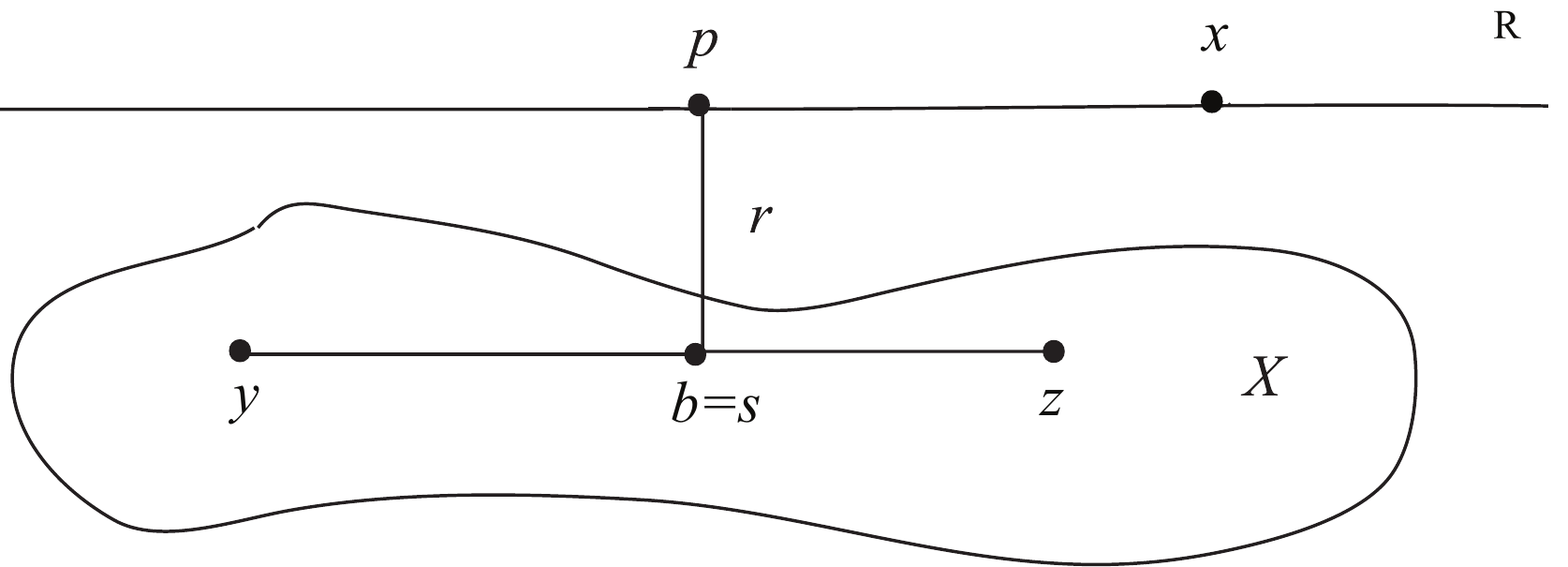}\\
 \caption{\ }
 \end{center}
\end{figure}
It follows that
\begin{equation*}
d_{M}(t, s)=d_{M}(s, x).
\end{equation*}
Now note that the last equality cannot be hold, because
The last equality does not hold
\begin{equation*}
d(x, s)=|x-p|+r+d_{M}(b, s),
\end{equation*}
(see Figure~6).
\begin{figure}\label{f7}
  \begin{center}
  \includegraphics[width=0.8\textwidth]{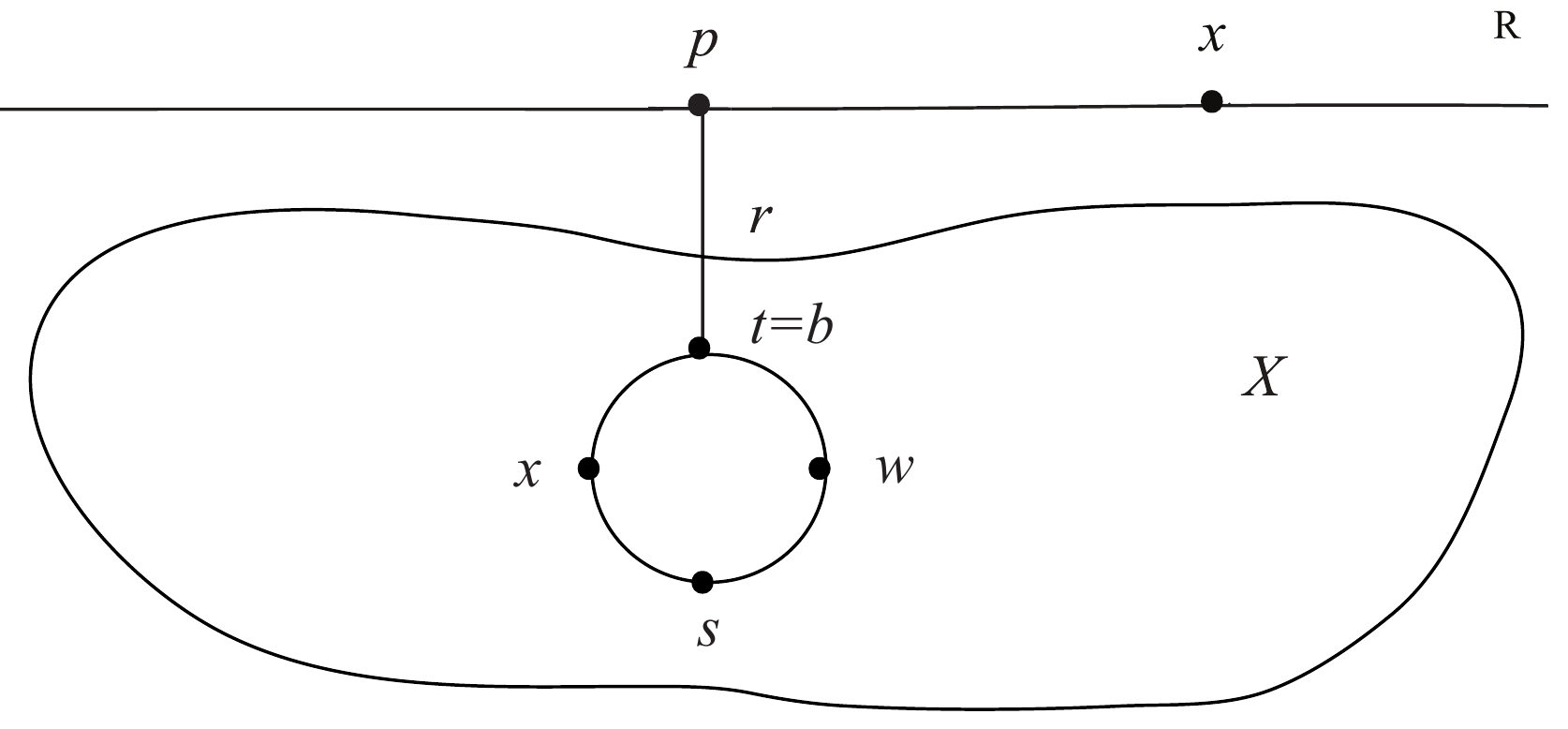}\\
 \caption{\ }
 \end{center}
\end{figure}
\end{proof}

The following proposition gives us an example of a class $\mathfrak{M}$ of metric spaces for which all minimal $\mathfrak{M}$--universal metric spaces are isometric.

\begin{prop}\label{Prop6.16}
Let $\mathfrak{M}$ be the set of all closed intervals $[a,b] \subseteq \mathbb R$ with $|a-b|<1$ and let $I$ be the open interval $(0,1)\subseteq \mathbb R$. Then $I$ is a minimal $\mathfrak{M}$-universal metric space. Moreover, if $X$ is an arbitrary minimal $\mathfrak{M}$-universal metric space, then $X \simeq I$.
\end{prop}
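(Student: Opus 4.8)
The plan is to verify the three assertions in turn, all resting on one elementary fact: an isometric embedding of a segment $[0,\ell]$ into $\mathbb R$ is affine, $t\mapsto \pm t+c$, so that a set $S\subseteq\mathbb R$ contains an isometric copy of $[0,\ell]$ if and only if $S$ contains a closed interval of length $\ell$. For the universality $\mathfrak{M}\hookrightarrow I$, I would note that every $[a,b]\in\mathfrak{M}$ is isometric to $[0,\ell]$ with $\ell=|a-b|<1$, and that $t\mapsto t+\tfrac{1-\ell}{2}$ sends $[0,\ell]$ onto $[\tfrac{1-\ell}{2},\tfrac{1+\ell}{2}]\subseteq(0,1)$.

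For the minimality of $I$, I would take a proper subspace $Y_0\subsetneq I$ and a point $c\in(0,1)\setminus Y_0$. Every closed interval contained in $Y_0$ avoids $c$, hence lies in $(0,c)$ or in $(c,1)$, and so has length strictly less than $\ell_0:=\max\{c,1-c\}$. As $\ell_0\in[\tfrac12,1)$, the space $[0,\ell_0]$ belongs to $\mathfrak{M}$ yet does not embed in $Y_0$; thus $Y_0$ is not $\mathfrak{M}$-universal and, by Definition~\ref{muniv}, $I$ is minimal $\mathfrak{M}$-universal.

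For the uniqueness I would first record a reduction that lets the minimality of $I$ do the work: if $X$ is minimal $\mathfrak{M}$-universal and there is an isometric embedding $h\colon X\hookrightarrow I$, then $h(X)$ is an $\mathfrak{M}$-universal subspace of $I$ (being isometric to the $\mathfrak{M}$-universal space $X$), so the minimality of $I$ forces $h(X)=I$; hence $h$ is onto and $X\simeq I$. Therefore it suffices to prove that \emph{every} minimal $\mathfrak{M}$-universal space embeds isometrically into $(0,1)$.

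The embedding $X\hookrightarrow I$ is the crux, and I expect it to be the main obstacle. Such an $X$ is not shifted (Proposition~\ref{vspompr}) and contains copies of $[0,\ell]$ for all $\ell<1$, so $\operatorname{diam}X\ge 1$. The plan is to extract from minimality a linear structure on $X$: I would show that among any three points of $X$ one lies between the other two, i.e. $X\in\mathfrak{M}\mathfrak{B}$, the point being that a genuine non-degenerate triangle or a branch point would supply a point whose deletion preserves universality, contradicting minimality. Once $X\in\mathfrak{M}\mathfrak{B}$ and $|X|\ge 5$, we have $X\in\mathfrak{M}\mathfrak{B}^5$, and the universality of $\mathbb R$ for $\mathfrak{M}\mathfrak{B}^5$ (Proposition~\ref{pr1}, with the quadruple analysis of Lemma~\ref{5.2**}) yields an isometry of $X$ onto a subset $S\subseteq\mathbb R$. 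A parallel use of minimality should give $\operatorname{diam}X=1$ with this value not attained and no two points farther apart than $1$, so that $S$ lies inside an open interval of length $1$ and hence $X\hookrightarrow I$. The delicate part is translating the purely combinatorial minimality condition ``deleting any point destroys some $[0,\ell]$'' into the geometric betweenness property together with the sharp diameter bound; everything after that is the elementary segment bookkeeping already used for the minimality of $I$.
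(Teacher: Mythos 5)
Your treatment of the universality and minimality of $I$ is correct, and in fact more careful than the paper's own argument for minimality (the paper asserts that $[0,t]\not\hookrightarrow I\setminus\{p\}$ whenever $p<t<1$, which fails for $p<\tfrac12$ and $t<1-p$, e.g. $p=0.2$, $t=0.3$, copy $[0.5,0.8]$; your choice $\ell_0=\max\{c,1-c\}$ is the right one). Your reduction of uniqueness to the statement that every minimal $\mathfrak{M}$-universal $X$ embeds isometrically in $I$ is also sound, and it is a small economy the paper does not use: the paper instead shows directly that $X$ is an open interval of length $1$.

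However, there is a genuine gap, and it sits exactly where you predicted: the passage from minimality of $X$ to $X\in\mathfrak{M}\mathfrak{B}$, and then to the containment of the image in an open unit interval, is never proved. The mechanism you suggest (a non-degenerate triangle or branch point supplies a point whose deletion preserves universality) does not work as stated: a non-collinear triple does not by itself produce a removable point, since each of its three points could a priori lie on every sufficiently long embedded copy of a segment. The paper's missing idea is the following claim. Fix, for each $t\in(0,1)$, one embedding $f_t\colon[0,t]\hookrightarrow X$ and put $X_t=f_t([0,t])$; then for every $p\in X$ there is $t_0\in(0,1)$ with $p\in X_t$ for all $t\in[t_0,1)$. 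Indeed, otherwise one takes $t_n\to1$ with $p\notin X_{t_n}$, and $Y=\bigcup_{n\in\mathbb N}X_{t_n}$ is a proper $\mathfrak{M}$-universal subspace of $X$, contradicting minimality. This single claim yields everything at once: any three points of $X$ lie in a common metric segment $X_{t_*}$, so \eqref{e5.1} holds and $X\in\mathfrak{M}\mathfrak{B}$, whence $X\hookrightarrow\mathbb R$ by Proposition~\ref{pr1}; any two points lie in a common connected $X_{t_*}$, so $X$ is connected, hence an interval of $\mathbb R$; and that interval must have length $1$ and be open by minimality. Note also that your proposed endgame (``diameter $1$, not attained, hence the image lies in an open unit interval'') is insufficient even granting the embedding in $\mathbb R$: the set $[0,1)$ has all pairwise distances strictly less than $1$ yet is contained in no open interval of length $1$, so one genuinely needs connectedness, or a separate minimality argument at the endpoints, which again comes from the claim above.
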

\begin{proof}
Let $X$ be a minimal $\mathfrak{M}$-universal metric space and let $p\in X$. Then for every $t \in (0,1)$ there is $f_t: [0,t] \hookrightarrow X$. Write $X_t = f_t([0,t])$. We claim that there exists $t_0 \in (0,1)$ such that the statement
$$
p \in X_t
$$
holds for all $t\in [t_0,1)$. Suppose the contrary and choose a sequence $(t_n)_{n \in \mathbb N}$ such that $t_n \in (0,1)$ and $\lim_{n\to\infty} t_n =1$ and $p\notin X_{t_n}$ for every $n \in \mathbb N$. Then
$$
Y = \bigcup_{n\in \mathbb N} X_{t_n}
$$
is a subspace of $X$, $p \notin Y$ and $\mathfrak{M}\hookrightarrow Y$. (The last follows from the equality $\lim_{n\to\infty} t_n =1$.) Hence $X$ is not minimal $\mathfrak{M}$-universal, contrary to the condition.

Now it is easy to show that $X \in \mathfrak{MB}$. Indeed, $X \in \mathfrak{MB}$ if and only if equality~\eqref{e5.1},
$$
2\max\{d_X(x,y), d_X(y,z), d_X(z,x)\} = d_X(x,y)+d_X(y,z)+d_X(z,x)
$$
holds for all $x,y,z \in X$. Using the above proved claim we can find $X_{t_*}$ such that $x,y,z \in X_{t_*}$ for given $x,y,z \in X$. It is clear that $X_{t_*} \in \mathfrak{MB}$. Consequently equality \eqref{e5.1} holds, so that $X \in \mathfrak{MB}$. Proposition \ref{pr1} implies that $X$ is isometric to a subspace $\mathbb R$. The metric space $X$ is connected because $X_t$ is connected for every $t \in (0,1)$ and, for given $x,y \in X$, there is $t_* \in (0,1)$  such that
$$
x,y \in X_{t_*} \subset X.
$$
A subset of $\mathbb R$ is connected if and only if this subset is an interval. Hence $X$ is an interval. Let us denote by $m(X)$ the length of $X$. It is clear that $\mathfrak{M}\not\hookrightarrow X$ if $m(X)<1$ and that $X$ is not minimal $\mathfrak{M}$-universal if $m(X)>1$. Consequently we have $m(X)=1$. Let $a$ and $b$  be the endpoints of $X$. Since $\mathfrak{M} \hookrightarrow X\setminus\{a,b\}$ and $X$ is minimal $\mathfrak{M}$-universal, the interval $X$ is open. Every open interval $(a,b)$ with $|a-b|=1$ is isometric to $I$. Thus $X \simeq I$ holds.

It still remains to note that $I$ is minimal $\mathfrak{M}$-universal. Indeed $\mathfrak{M}\hookrightarrow I$ is immediate. Now if $0<p<1$ and $p<t<1$, then we evidently have $[0,t] \in \mathfrak{M}$ and
$$
[0,t] \not\hookrightarrow I\setminus\{p\}.
$$
Thus $I$ is a minimal $\mathfrak{M}$-universal metric space as required.
\end{proof}

\section{Minimal universal subsets of $\mathbb R^2$ for the class of three--point metric spaces}
\label{sect8}

Let $\mathfrak{F}_3$ be the class of metric spaces $X$ with $|X|\leq 3$. It is a basic fact of the theory of metric spaces that the Euclidean plane $\mathbb R^2$ is a universal metric space for the class $\mathfrak{F}_3$. The main goal of the present section is to construct some minimal $\mathfrak{F}_3$-universal subsets of $\mathbb R^2$. Our first example of such subset is closely related to the so-called Fermat-Torricelli point of a triangle. Recall that $t\in\mathbb R^2$ is the Fermat-Torricelli point of a triangle $\{a,b,c\}$ if the inequality
\begin{equation}\label{e7.12}
    d_{\mathbb R^2}(a,t)+d_{\mathbb R^2}(b,t)+d_{\mathbb R^2}(c,t)\leq d_{\mathbb R^2}(a,x)+d_{\mathbb R^2}(b,x)+d_{\mathbb R^2}(c,x)).
\end{equation}
holds for every $x\in \mathbb R^2$.
The geometric construction of the Fermat-Torricelli points can be found, for example, in Coxeter's book~\cite[p. 21--22]{Co}.

If all angles of a given triangle $\{a,b,c\}$ are smaller than $\frac{2\pi}{3}$, then the Fermat-Torricelli point $t$ satisfies the condition
\begin{equation}\label{e7.2}
    \angle atb =\angle btc=\angle cta=\frac{2\pi}{3}.
\end{equation}
For the convenience of the reader we repeat the following lemma here.
\begin{lem}\label{l7.1}
Let $\{a,b,c\}$ be a triangle in $\mathbb R^2$. The following statements hold.
\begin{itemize}
  \item [(i)] All angles  of $\{a,b,c\}$ are smaller than $\frac{2\pi}{3}$ if and only if there is $t\in \mathbb R^2$ satisfying~(\ref{e7.2}).
  \item [(ii)] If $\mathbb R^2$ contains $t$ satisfying~(\ref{e7.2}), then such $t$ is unique and belongs to the interior of the triangle $\{a,b,c\}$.
\end{itemize}
\end{lem}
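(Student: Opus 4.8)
The plan is to derive both parts from the variational description of the Fermat--Torricelli point supplied by~\eqref{e7.12}. Set
$f(x)=d_{\mathbb{R}^2}(a,x)+d_{\mathbb{R}^2}(b,x)+d_{\mathbb{R}^2}(c,x)$ for $x\in\mathbb{R}^2$. This function is continuous and coercive ($f(x)\to\infty$ as $|x|\to\infty$), so it attains a global minimum at some point $t$, and any minimizer is precisely a point satisfying~\eqref{e7.12}. Moreover $f$ is \emph{strictly} convex: a sum of the convex functions $x\mapsto d_{\mathbb{R}^2}(v,x)$ can be affine on a nondegenerate segment only if each summand is, which would force that segment to be collinear with each of $a,b,c$, impossible since $\{a,b,c\}$ is a genuine triangle. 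Strict convexity yields at once that the minimizer $t$ is \emph{unique} and that, for a convex function, every point at which $f$ is differentiable with vanishing gradient is this unique minimizer. Thus the entire lemma reduces to a local analysis of $f$ at $t$.

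The key step is the first--order analysis, where the non--differentiability of $f$ at the vertices must be handled. Away from $\{a,b,c\}$ one has $\nabla f(x)=u_a(x)+u_b(x)+u_c(x)$, writing $u_v(x)$ for the unit vector pointing from $v$ towards $x$. On an open side, say the interior of $[a,b]$, the vectors $u_a$ and $u_b$ are antiparallel, so $\nabla f=u_c\neq 0$; hence no interior point of a side is critical, and if the minimizer is not a vertex it must be interior to the triangle. At such an interior minimizer $\nabla f(t)=0$, i.e. the three unit vectors $u_a(t),u_b(t),u_c(t)$ sum to zero; three unit vectors in the plane can sum to zero only when they are pairwise at angle $\tfrac{2\pi}{3}$, and the angle $\angle atb$ equals the angle between the corresponding two vectors, so $t$ satisfies~\eqref{e7.2}. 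It remains to test whether the minimizer sits at a vertex. The one--sided directional derivative of $f$ at $c$ in a unit direction $e$ equals $1+\langle u_a+u_b,e\rangle$, where now $u_a,u_b$ are the unit vectors from $a,b$ to $c$; hence $c$ is the minimizer iff $1+\langle u_a+u_b,e\rangle\ge 0$ for all unit $e$, that is iff $|u_a+u_b|\le 1$, which via $|u_a+u_b|^2=2+2\cos\angle acb$ is equivalent to $\angle acb\ge\tfrac{2\pi}{3}$.

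Assembling these facts proves the lemma. If all angles of $\{a,b,c\}$ are smaller than $\tfrac{2\pi}{3}$, then by the vertex test the minimizer is no vertex, hence interior, hence a point satisfying~\eqref{e7.2}; this is the nontrivial implication of~(i). Conversely, suppose some point $t$ satisfies~\eqref{e7.2}. Then $u_a(t)+u_b(t)+u_c(t)=0$, so $t$ is a critical point of the convex function $f$ and therefore coincides with its unique global minimizer; this gives the uniqueness in~(ii) immediately. Since the three angles in~\eqref{e7.2} are defined and positive, $t$ is none of the vertices, and a non--vertex minimizer is interior by the edge computation above, which is the interiority in~(ii). Finally, a non--vertex minimizer forces, by the contrapositive of the vertex test applied at each of $a,b,c$, that every angle of the triangle is strictly smaller than $\tfrac{2\pi}{3}$, completing the remaining implication of~(i).

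The step I expect to be the main obstacle is the vertex analysis: computing the one--sided directional derivative correctly through the non--differentiability of $f$ and converting the sharp condition $|u_a+u_b|\le 1$ into the threshold $\angle acb\ge\tfrac{2\pi}{3}$. A purely synthetic alternative would replace the variational argument by the inscribed--angle locus, realizing $t$ as the second intersection (besides $b$) of the two circular arcs over $ab$ and $bc$ on which the respective sides are seen at angle $\tfrac{2\pi}{3}$, with an intermediate--value argument along one arc; but there the same case distinction reappears as the task of checking that this intersection lands on the correct arcs and inside the triangle, so the convexity route seems more economical.
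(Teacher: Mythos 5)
Your argument is sound in substance, but it takes a genuinely different route from the paper. The paper's proof is the synthetic alternative you sketch in your last paragraph: by the inscribed-angle theorem, the locus of points $x$ with $\angle axb=\frac{2\pi}{3}$ is a symmetric lens formed by two circular arcs through $a$ and $b$ (Figure~\ref{f7.1}), and the lemma follows by observing that the lenses over two sides meet only at the common vertex when the angle there is at least $\frac{2\pi}{3}$, and otherwise meet in a single point interior to the triangle. That argument is short and visual but leaves the case analysis largely to the reader. Your variational route --- strict convexity and coercivity of $f(x)=d_{\mathbb R^2}(a,x)+d_{\mathbb R^2}(b,x)+d_{\mathbb R^2}(c,x)$, the gradient identity at non-vertex points, and the one-sided directional-derivative test at a vertex (all of which you carry out correctly, including the equivalence $|u_a+u_b|\le 1 \Leftrightarrow \angle acb\ge\frac{2\pi}{3}$) --- is longer but buys something the paper's proof does not: it shows that a point satisfying \eqref{e7.2} is precisely the minimizer in \eqref{e7.12}, i.e.\ the Fermat--Torricelli point. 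The paper asserts this link in the sentence preceding the lemma but never proves it; and your uniqueness argument (a solution of \eqref{e7.2} is a critical point of a strictly convex function, hence its unique minimizer) is cleaner than the implicit uniqueness of the intersection of two arcs.

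There is one step you have not fully justified, and it occurs twice: the claim that a non-vertex minimizer ``must be interior to the triangle.'' Your stated reason --- that no relative-interior point of a side is critical --- excludes the open sides but not the exterior of the triangle, so as written the interiority assertion in (ii) has a gap. Fortunately the fix is already contained in your own computation: at any non-vertex minimizer $t$ one has $u_a(t)+u_b(t)+u_c(t)=0$, i.e.\ $\sum_{v\in\{a,b,c\}}\lambda_v(t-v)=0$ with $\lambda_v=1/d_{\mathbb R^2}(v,t)>0$, whence $t=\mu_a a+\mu_b b+\mu_c c$ with $\mu_v=\lambda_v/(\lambda_a+\lambda_b+\lambda_c)>0$ and $\sum_v\mu_v=1$; a strictly positive convex combination of the vertices of a nondegenerate triangle lies in its interior. (Equivalently: the three angles in \eqref{e7.2} sum to $2\pi$, which cannot happen at a point outside the closed triangle.) With that one-line patch --- and noting that in your proof of (i) the detour through interiority can be skipped entirely, since non-vertexhood alone gives differentiability and hence $\nabla f(t)=0$ --- the proof is complete.
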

\begin{proof}
It suffices to note that for $x\in\mathbb R^2$ the equality $\angle axb =\frac{2\pi}{3}$ holds if and only if $x$ belongs to the symmetric lens $afbf'a$ (without points $a$ and $b$) where the lens is the union of the arcs $afb$ and $af'b$ of two symmetric circles passing through $a$ and $b$ such that the angles made by the chord $ab$ and the tangents $al$ and $al'$ are equal to $\frac{\pi}{3}$ (see Figure~\ref{f7.1}).
\begin{figure}[h]
\begin{center}
\includegraphics[width=0.5\textwidth]{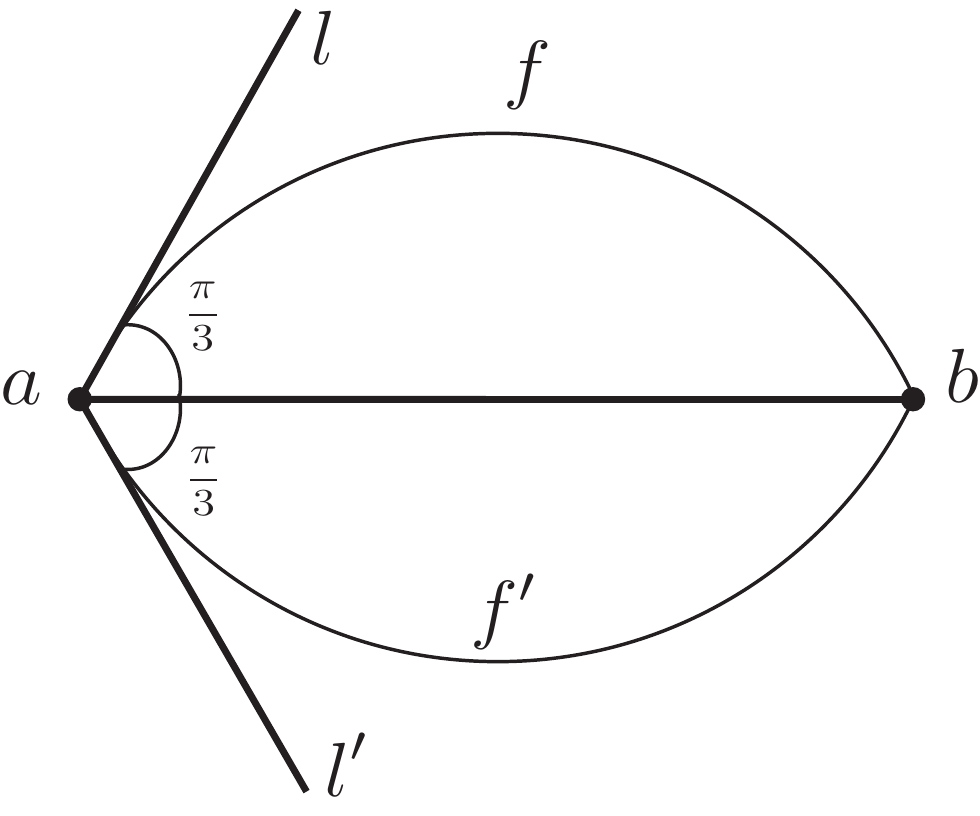}
\end{center}
\caption{The locus of points $x$ with $\angle axb =\frac{2\pi}{3}$.} \label{f7.1}
\end{figure}
Consequently if $\angle abc \geqslant \frac{2\pi}{3}$, then the lenses corresponding to the sides $[a,b]$ and $[b,c]$ meet at the point $b$ only. Otherwise there is a unique point $t$ satisfying~(\ref{e7.2}) and lying in the interior of the triangle $\{a,b,c\}$.
\end{proof}

\begin{thm}\label{t7.2}
Let $X\subseteq \mathbb R^2$ be the union of some rays $\overrightarrow{o\vphantom{b}a}$, $\overrightarrow{ob}$, and $\overrightarrow{o\vphantom{b}c}$ with
$$
\angle aob=\angle boc=\angle coa=\frac{2\pi}{3}.
$$
Then $X$ is minimal $\mathfrak{F}_3$-universal.
\end{thm}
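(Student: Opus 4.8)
The plan is to establish the two halves of minimal $\mathfrak{F}_3$-universality separately: first that every $S\in\mathfrak{F}_3$ embeds in $X$, and then that no single point of $X$ may be discarded. Throughout I identify $X$ with the union of three rays emanating from $o$ with unit direction vectors $e_1,e_2,e_3$ that are mutually at angle $\tfrac{2\pi}{3}$, and I use that $d_X$ is the Euclidean metric restricted to $X$. For the minimality half I shall use the reformulation, immediate from Definition~\ref{muniv}, that $X$ is minimal $\mathfrak{F}_3$-universal once $X$ is $\mathfrak{F}_3$-universal and, for each $x_0\in X$, the subspace $X\setminus\{x_0\}$ fails to be $\mathfrak{F}_3$-universal. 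The technical engine behind everything is the following rigidity observation, proved by an elementary law-of-cosines computation: if two points of an embedded triple lie on one ray, then the angle of the triangle at the inner of these two points (the one nearer $o$) is at least $\tfrac{2\pi}{3}$, with equality exactly when that inner point is $o$.

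For universality, realise an arbitrary $S\in\mathfrak{F}_3$ as at most three points of $\mathbb{R}^2$. If $|S|\le 2$, or if the three points are collinear, place them on a single ray, which is isometric to $[0,\infty)$; this is clearly possible. If $S$ is a genuine triangle, I split on its largest angle $\gamma$. When $\gamma<\tfrac{2\pi}{3}$ all three angles are $<\tfrac{2\pi}{3}$, so by Lemma~\ref{l7.1} there is a unique interior Fermat--Torricelli point $t$ realising the three $\tfrac{2\pi}{3}$-angles; applying the rigid motion sending $t$ to $o$ and the directions $tA,tB,tC$ onto $e_1,e_2,e_3$ carries the vertices onto the three rays at the positive distances $|tA|,|tB|,|tC|$, giving an isometric copy of $S$ in $X$. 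When $\gamma\ge\tfrac{2\pi}{3}$, say at the vertex $C$, I place $C$ together with one adjacent vertex on one ray (with $C$ between $o$ and that vertex, possibly $C=o$) and the remaining vertex on a second ray; matching the two legs at $C$ and the angle $\gamma$ reduces to a single equation whose nonnegative solution exists precisely because the foot-position computes to $p|\cos\gamma|-\tfrac{p}{\sqrt3}\sin\gamma\ge 0$, which holds iff $\gamma\ge\tfrac{2\pi}{3}$. These cases exhaust $\mathfrak{F}_3$, so $\mathfrak{F}_3\hookrightarrow X$.

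For minimality fix $x_0\in X$. If $x_0$ lies at distance $\rho>0$ on one of the rays, consider the equilateral triangle $E_\rho$ of circumradius $\rho$; all its angles equal $\tfrac{\pi}{3}<\tfrac{2\pi}{3}$, so by the rigidity observation no two of its vertices can share a ray. Hence any embedding puts one vertex on each ray, and the associated Fermat--Torricelli point coincides with the centre, forcing each vertex to distance $\rho$ from $o$; the image of every embedding of $E_\rho$ is therefore exactly $\{\rho e_1,\rho e_2,\rho e_3\}$, a set containing $x_0$, so $E_\rho\not\hookrightarrow X\setminus\{x_0\}$. If instead $x_0=o$, take any non-degenerate triangle whose largest angle equals $\tfrac{2\pi}{3}$: it cannot be placed with all vertices on distinct rays at positive distances, since that would make $o$ an interior Fermat--Torricelli point and force all angles strictly below $\tfrac{2\pi}{3}$; and any placement with two vertices on a common ray forces the inner vertex, which carries the $\tfrac{2\pi}{3}$-angle, to equal $o$. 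Either way the image contains $o$, so this triangle does not embed in $X\setminus\{o\}$. I expect the main obstacle to be the rigidity observation together with the uniqueness-of-placement arguments it feeds: the universality computations are routine trigonometry, but the force of minimality rests entirely on showing that the triangles with all angles $<\tfrac{2\pi}{3}$ admit essentially only the Fermat--Torricelli placement and that the borderline $\tfrac{2\pi}{3}$ case is pinned to $o$.
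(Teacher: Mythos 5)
Your proposal is correct and follows essentially the same route as the paper: Fermat--Torricelli placement (Lemma~\ref{l7.1}) for triangles with all angles below $\frac{2\pi}{3}$, a two-vertices-on-one-ray placement for triangles with largest angle at least $\frac{2\pi}{3}$, the equilateral triangle of circumradius $d_{\mathbb R^2}(o,x_0)$ to block deletion of any $x_0\ne o$, and a triangle with an angle exactly $\frac{2\pi}{3}$ to block deletion of $o$. Your rigidity observation and the explicit algebraic solution in the obtuse case merely supply details that the paper dispatches with ``it is easy to show'' and a parallel-shift argument, so the two proofs are substantively the same.
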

\begin{proof}
It is clear that $Y\hookrightarrow X$ holds for every $Y \in \mathfrak{MB}$ with $|Y|\leqslant 3$. Let a triangle $\{e,f,g\}\notin \mathfrak{MB}$ and let $\angle efg$ be the maximal angle of this triangle. Suppose $\angle efg\geqslant \frac{2\pi}{3}$. We can locate $\{e,f,g\}$ such that $f,g\in \overrightarrow{ob}$ and $e$ lies between
the rays $\overrightarrow{o\vphantom{b}a}$ and $\overrightarrow{ob}$ (see Figure~\ref{r1}). Doing a parallel shift of $\{e,f,g\}$ along the ray $\overrightarrow{ob}$ we can find a position of its image $\{e_1,f_1,g_1\}$ such that $e_1\in \overrightarrow{o\vphantom{b}a}$ (see Figure~\ref{r1}). Thus $\{e,f,g\}\hookrightarrow X$ holds.

\begin{figure}[h]
\begin{center}
\includegraphics[width=0.5\linewidth]{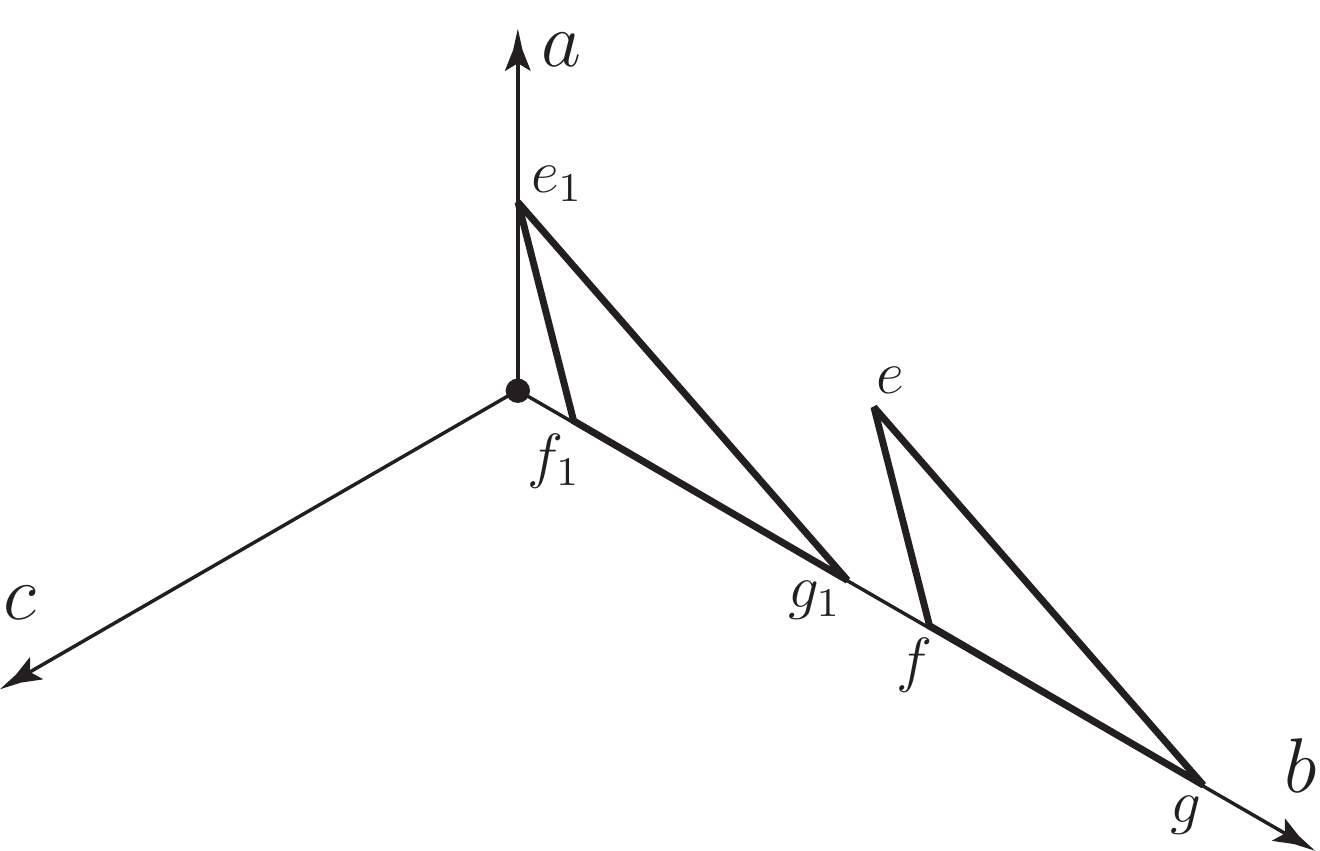}
\caption{}
\label{r1}
\end{center}
\end{figure}

Suppose now that the maximal angle of the triangle $\{e,f,g\}$ is strictly less than $\frac{2\pi}{3}$. Then $\{e,f,g\}\hookrightarrow X$ holds by Lemma~\ref{l7.1} (see Figure~\ref{r2}). Thus $X$ is $\mathfrak{F}_3$-universal.

\begin{figure}[h]
\begin{center}
\includegraphics[width=0.4\linewidth]{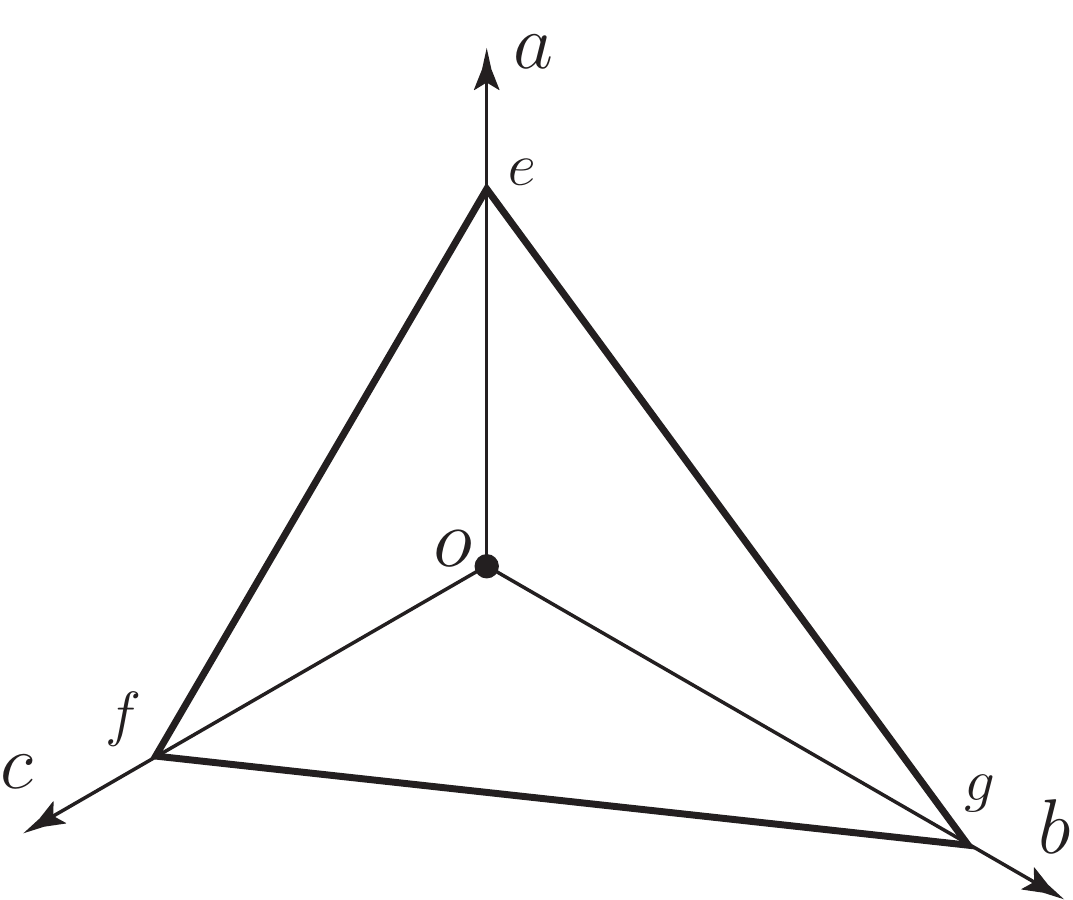}
\caption{}
\label{r2}
\end{center}
\end{figure}

Let us prove the minimality. Let $e$ be a point of $X\setminus \{o\}$. Without loss of generality we may take $e\in \overrightarrow{ob}$ (see Figure~\ref{r3}). Let us consider the points $f\in \overrightarrow{o\vphantom{b}c}$ and $g\in \overrightarrow{o\vphantom{b}a}$ satisfying the equalities
\begin{equation}\label{e7.1}
    d_{\mathbb R^2}(o,f)=d_{\mathbb R^2}(o,g)=d_{\mathbb R^2}(o,e).
\end{equation}
Let $\Psi$ be an arbitrary  isometric embedding of $\{e,f,g\}$ in $X$. We claim that
\begin{equation}\label{e7.23}
    \Psi(\{e,f,g\})=\{e,f,g\}.
\end{equation}
It is easy to show that the sets
$$
\Psi(\{e,f,g\})\cap(\overrightarrow{o\vphantom{b}a} \setminus \{o\}), \ \Psi(\{e,f,g\})\cap(\overrightarrow{o\vphantom{b}b} \setminus \{o\}) \text{ and } \Psi(\{e,f,g\})\cap(\overrightarrow{o\vphantom{b}c} \setminus \{o\})
$$
are non--empty.
\begin{figure}[h]
\begin{center}
\includegraphics[width=0.4\linewidth]{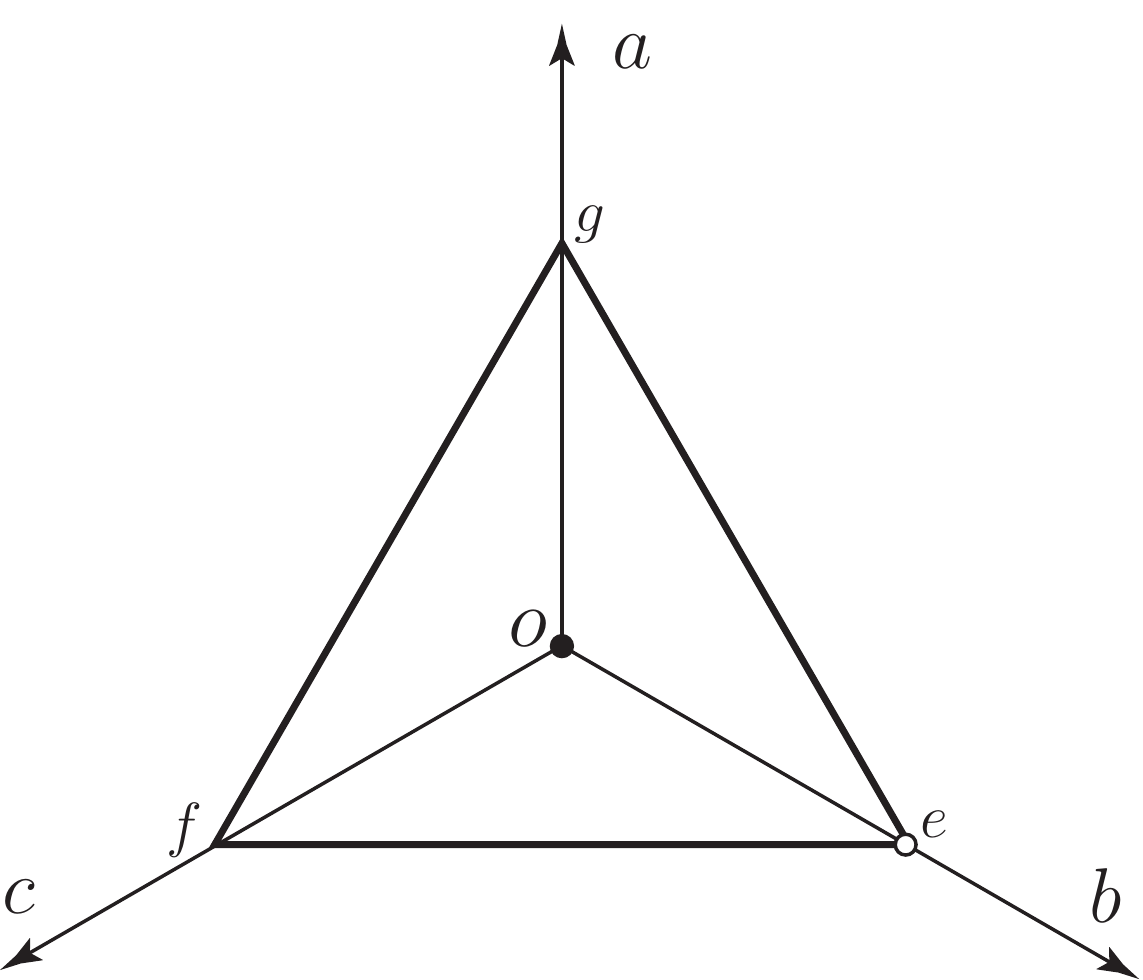}
\caption{} \label{r3}
\end{center}
\end{figure}
Moreover, we have the equalities
$$
\angle \Psi(e)o\Psi(f)=\angle \Psi(f)o\Psi(g)=\angle \Psi(g)o\Psi(e)=\frac{2\pi}{3}.
$$
Since $\{\Psi(e),\Psi(f),\Psi(g)\}$ is an equilateral triangle, Lemma~\ref{l7.1} implies that $o$ is the incenter of $\{\Psi(e),\Psi(f),\Psi(g)\}$, i.e. the point of the intersection of the interior angle bisectors of this triangle. Consequently we have
\begin{equation}\label{e7.3}
d_{\mathbb R^2}(o,\Psi(f))=d_{\mathbb R^2}(o,\Psi(g))=d_{\mathbb R^2}(o,\Psi(e)).
\end{equation}
In view that $\{e,f,g\}$ is equilateral and that
$$
\{e,f,g\}\simeq \{\Psi(e), \Psi(f), \Psi(g)\},
$$
(\ref{e7.23}) follows from~(\ref{e7.1}) and~(\ref{e7.3}).

Using Lemma~\ref{l7.1} we see also that $\{e,f,g\}\not\hookrightarrow X\setminus \{o\}$ if $\{e,f,g\}$ is an triangle with an angle equal to $\frac{2\pi}{3}$. This finishes the proof.
\end{proof}

\begin{thm}\label{t7.3}
Let $X_{\alpha}\subseteq \mathbb R^2$ be a set consisting of two rays $\overrightarrow{oa\vphantom{b}}$ and $\overrightarrow{ob}$ without the point $o$ and let $\alpha$ be the smaller angle between these rays. Then the metric space $X_{\alpha}$ is $\mathfrak{F}_3$-universal if and only if $0 < \alpha <\frac{\pi}{3}$. The space $X_{\alpha}$ is minimal $\mathfrak{F}_3$-universal if and only if $\frac{\pi}{5}\leqslant\alpha<\frac{\pi}{3}$.
\end{thm}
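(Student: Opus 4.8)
The plan is to reduce everything to a single embeddability criterion for $3$-point spaces, and then read off both the universality range and the minimality range from it. First I would dispose of the easy cases: any space with at most two points, and any degenerate $3$-point space (one point lying between the other two, i.e. a member of $\mathfrak{MB}$), embeds into a single ray and hence into $X_\alpha$ for every $\alpha$. Thus only genuine triangles $T$ matter. To embed $T$ one must put two vertices on one ray and the third on the other. Fixing a side $XY$ as base and the opposite vertex $Z$ as apex, I place $XY$ along $\overrightarrow{o\vphantom{b}a}$ and slide along it; the apex then traces a line parallel to that ray and meets $\overrightarrow{ob}$ in exactly one point, so the only genuine freedom is the position of $o$ on the base line. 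Imposing $\angle ZoX=\alpha$ together with the requirement that $o$ lie outside the segment $XY$ (so that $X,Y$ really lie on the ray and $o\notin X_\alpha$), a short computation shows that the admissible $\alpha$ fill the open interval $(0,\beta)$, where $\beta$ is the larger base angle at $X,Y$. Optimizing the choice of base (apex at the smallest angle) yields: $T\hookrightarrow X_\alpha$ if and only if $0<\alpha<\gamma_{\max}(T)$, the largest angle of $T$. Since $\gamma_{\max}(T)\ge\pi/3$ with equality exactly for equilateral $T$, it follows that $X_\alpha$ is $\mathfrak{F}_3$-universal iff $0<\alpha<\pi/3$, the equilateral triangles being the binding constraint.

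For the minimality statement I would first exploit symmetry. The set $X_\alpha$ is invariant under all homotheties centred at $o$ and under the reflection swapping the two rays; these similarities act transitively on $X_\alpha$, and since $\mathfrak{F}_3$ is closed under rescaling they carry universal subspaces to universal subspaces. Hence whether $X_\alpha\setminus\{p\}$ is universal is independent of $p$, and by Definition~\ref{muniv} minimality is equivalent to: removing one (equivalently every) point destroys universality. Calling $T$ \emph{forced through} $p$ when every isometric copy of $T$ in $X_\alpha$ contains $p$, one has that $X_\alpha\setminus\{p\}$ is non-universal iff some $T$ is forced through $p$; by the transitivity just noted, $X_\alpha$ is minimal iff there exists at least one forced triangle.

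Next I would construct a forced triangle for $\pi/5\le\alpha<\pi/3$. Take the isosceles triangle with apex angle $A$ determined by $\sin(A/2)=\sqrt{(1+\sin(\alpha/2))/2}$ and base angles $B=(\pi-A)/2$. Its only embeddings place the apex-angle vertex nearest $o$, and the two admissible labelings produce the \emph{same} image $J$ (I would verify this coincidence explicitly) together with its mirror $\sigma(J)$. A direct computation shows that in $J$ the apex lies at the same distance from $o$ as the nearer base vertex, so $J$ and $\sigma(J)$ share that point; rescaling places it at any prescribed $p$. The hypothesis $\alpha\ge\pi/5$ is exactly the inequality $\alpha\ge B$ (at $\alpha=\pi/5$ this is the $108^\circ$--$36^\circ$--$36^\circ$ golden gnomon, for which $B=\pi/5$), and it guarantees that no further embedding exists, since the competing placements with $o$ near a base-angle vertex would require $\alpha<B$. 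Consequently every embedding of this triangle contains $p$, so $p$ is essential; as $p$ is arbitrary, $X_\alpha$ is minimal.

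Finally, for $0<\alpha<\pi/5$ I must show that no triangle is forced. Every image $I$ is paired with its mirror $\sigma(I)$, and $I\cap\sigma(I)\neq\varnothing$ only when the apex-to-$o$ distance of $I$ equals one of its two base distances; if some image of $T$ avoids this coincidence, then $I$ and $\sigma(I)$ are disjoint and $T$ is forced through no point. The main obstacle is to prove that for $\alpha<\pi/5$ \emph{every} triangle admits such an image. The extremal configurations, in which all images exhibit the coincidence, are precisely the golden-gnomon--type isosceles triangles above, and the relation $\sin(A/2)=\sqrt{(1+\sin(\alpha/2))/2}$ forces their base angle to satisfy $B>\alpha$ as soon as $\alpha<\pi/5$; this makes the extra ``$o$ near a base vertex'' embeddings admissible, and one checks that these miss the shared point, so even these triangles cease to be forced. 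Making this case analysis uniform over all triangles is the technical heart of the proof, and it yields that $X_\alpha$ is minimal $\mathfrak{F}_3$-universal if and only if $\pi/5\le\alpha<\pi/3$.
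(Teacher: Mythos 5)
Your universality criterion and your minimality construction for $\frac{\pi}{5}\leqslant\alpha<\frac{\pi}{3}$ are correct and in substance coincide with the paper's. The sliding argument giving ``$T\hookrightarrow X_\alpha$ iff $\alpha$ is strictly less than the largest angle of $T$'' is the paper's universality proof, cleanly packaged; and your forced triangle is literally the paper's triangle $\{z,z_1,z_2\}$: the relation $\sin(A/2)=\sqrt{(1+\sin(\alpha/2))/2}$ is equivalent to $A=\frac{\pi}{2}+\frac{\alpha}{2}$, so the base angles are $B=\frac{\pi}{4}-\frac{\alpha}{4}$, which is the paper's angle $x$, and your hypothesis $\alpha\geqslant B$ is exactly the paper's $x\leqslant\alpha$, i.e.\ $\alpha\geqslant\frac{\pi}{5}$. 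I checked the two computations you defer: the apex-near placement has the lone base vertex at the same distance from $o$ as the apex precisely when $A=\frac{\pi}{2}+\frac{\alpha}{2}$, the two labelings of the base vertices do give one image, and that image and its mirror share the two points at that common distance, which a homothety centred at $o$ moves to any prescribed $p$. Your transitivity remark (homotheties at $o$ together with the reflection swapping the rays) is a genuine economy that the paper does not make explicit; it correctly reduces minimality to the non-universality of a single punctured set.

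The genuine gap is exactly where you put it, and it is not a finishing touch but the bulk of the theorem: non-minimality for $0<\alpha<\frac{\pi}{5}$ rests entirely on your unproved assertion that the triangles all of whose images exhibit the coincidence are ``precisely the golden-gnomon-type isosceles triangles.'' No argument is given for this classification, and it is where all the work lies: for an arbitrary triangle one must inspect every admissible placement (up to six, modulo mirrors, for a scalene triangle whose three angles all exceed $\alpha$) and show either that some image misses the coincidence or that the images have no common point. This is precisely what the second half of the paper's proof does: for non-isosceles triangles it manufactures a second, genuinely different image by reflecting part of a given configuration in a suitable bisector and translating it along a ray (non-isoscelesness guaranteeing the new image avoids the given point, an argument valid for all $\alpha<\frac{\pi}{3}$), and for isosceles triangles it runs a three-case analysis according to the position of the third vertex, which is where $\alpha<\frac{\pi}{5}$ is used. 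Until you supply an analysis of this kind --- for instance, by proving that simultaneous coincidences in two combinatorially distinct placements force two sides of the triangle to be equal, and then settling the isosceles family separately --- the ``only if'' half of the minimality statement remains unproven. Note also that your coincidence criterion is only sufficient for non-forcing: a triangle already fails to be forced when its images have empty common intersection, so the full classification you assert is stronger than what is needed; but some complete case analysis is indispensable, and none is given.
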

\begin{proof}
It is easy to see that in  the case when $\frac{\pi}{3}\leqslant \alpha \leqslant \pi$ the inscribing of any equilateral triangle in $X_{\alpha}$ is impossible.

Let us prove the $\mathfrak{F}_3$-universality of $X_\alpha$ in the case when $0 < \alpha <\frac{\pi}{3}$. For the triangles $\{e,f,g\}\in \mathfrak{MB}$ it is trivial. Now let $\{e,f,g\}\notin \mathfrak{MB}$. Without loss of generality suppose $\angle feg\geqslant \frac{\pi}{3}$. Write $\beta=\angle feg$ for short. Let us locate $\{e,f,g\}$ in the way depicted on Figure~\ref{f11}.
\begin{figure}[h]
\begin{center}
\begin{minipage}[h]{0.45\linewidth}
\includegraphics[width=1\linewidth]{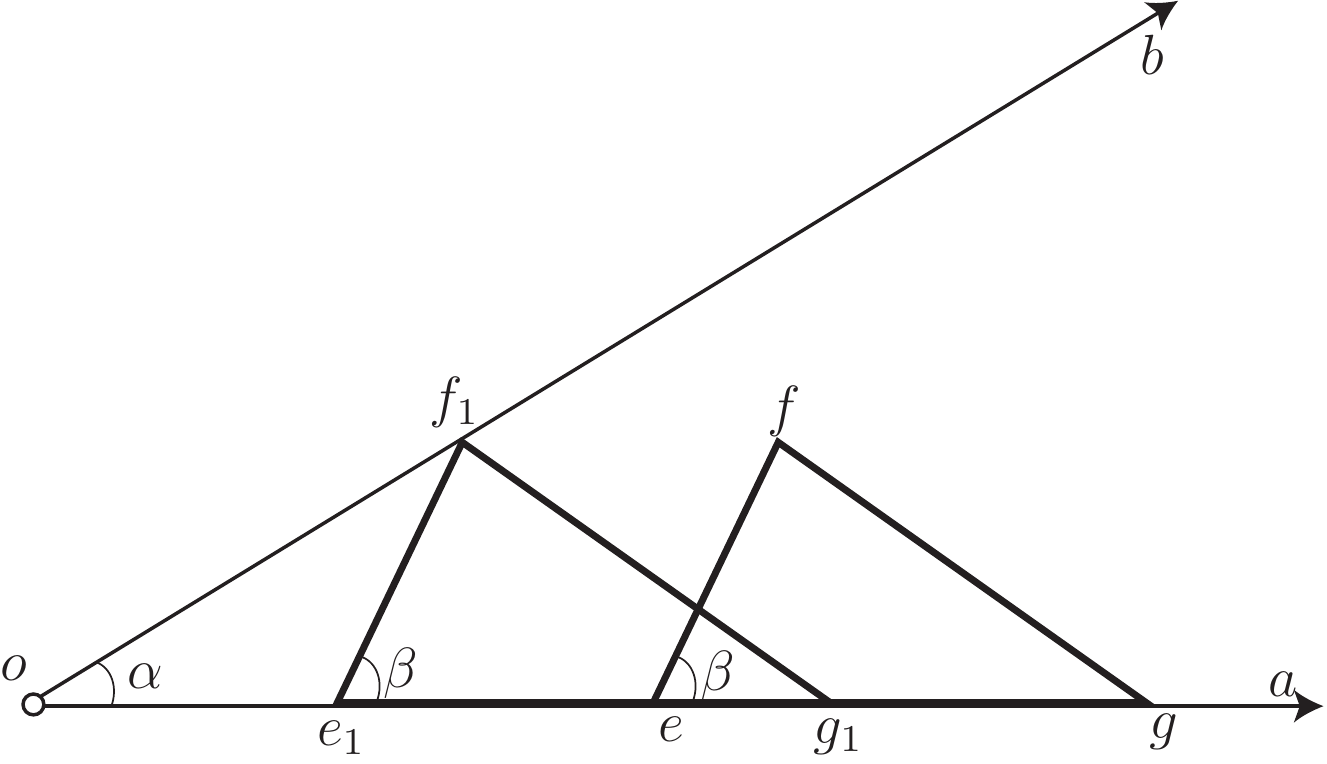}
\caption{If $0<\alpha<\frac{\pi}{3}$, then $X_{\alpha}$ is $\mathfrak{F}_3$-universal.}
\label{f11}
\end{minipage}
\hfill
\begin{minipage}[h]{0.45\linewidth}
\includegraphics[width=1\linewidth]{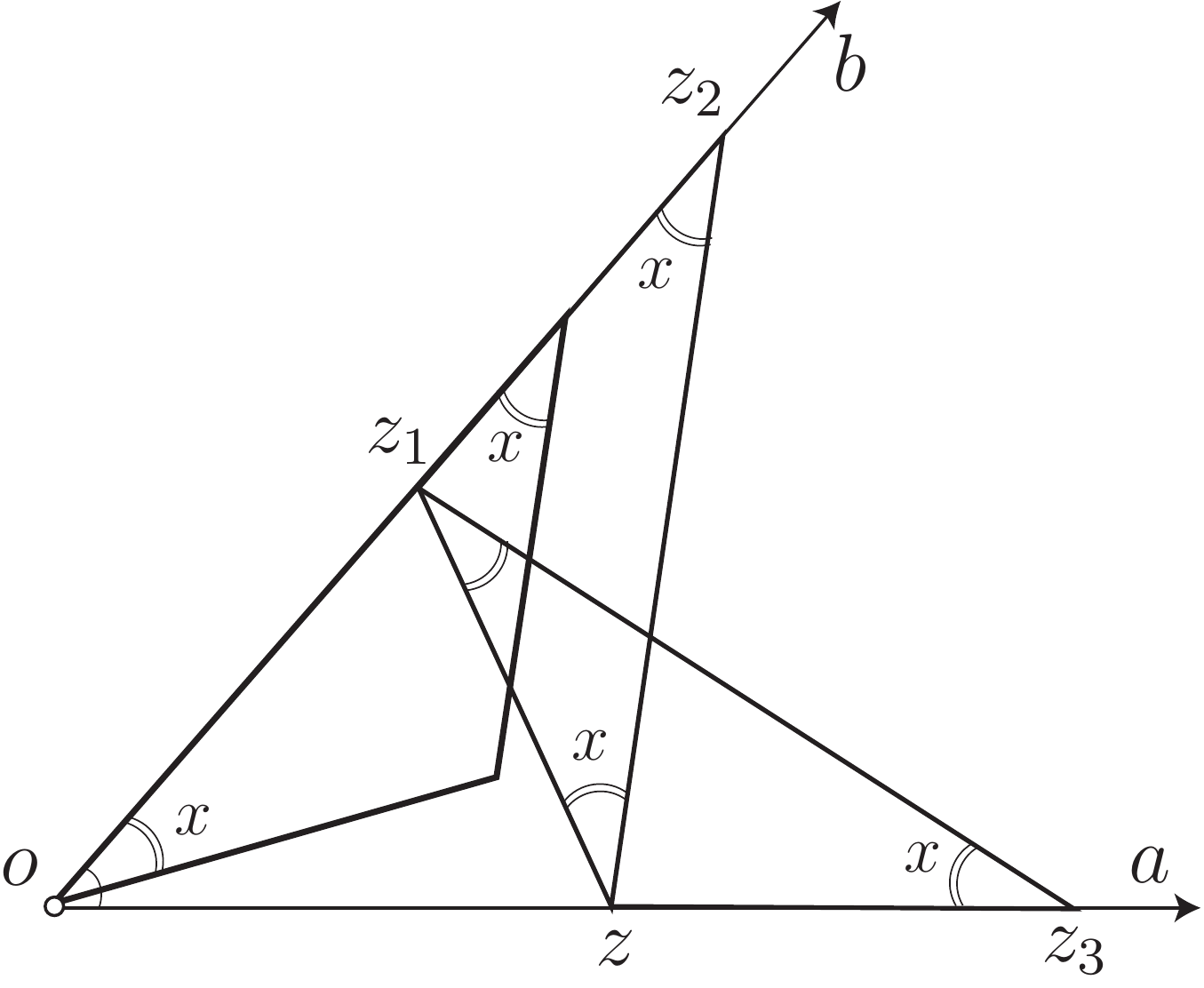}
\caption{If $\frac{\pi}{5}\leqslant\alpha<\frac{\pi}{3}$, then $X_{\alpha}$ is minimal.}
\label{f22}
\end{minipage}
\end{center}
\end{figure}
Doing a parallel shift of the $\{e,f,g\}$ along the ray $\overrightarrow{oa\vphantom{b}}$ we can find a position of its image $\{e_1, f_1, g_1\}$ with $e_1\in \overrightarrow{ob}$. Moreover, we have $e_1\neq o$ because $\beta>\alpha$. The desirable embedding is obtained.

Let us prove that  the space $X_{\alpha}$ is minimal $\mathfrak{F}_3$-universal in the case $\frac{\pi}{5}\leqslant \alpha <\frac{\pi}{3}$. It suffices to show that after deleting any point $z$ from the set $X_{\alpha}$ there exists a triangle which is not embeddable into the set
$$
X_{\alpha}\setminus \{z\}.
$$
We can suppose, without loss of generality, that $z\in \overrightarrow{oa\vphantom{b}}$ (see Figure~\ref{f22}). Let us choose $z_1\in \overrightarrow{ob}$ and $z_2\in \overrightarrow{ob}$ such that
$$
d_{\mathbb{R}^2}(o,z)=d_{\mathbb{R}^2}(o,z_1) \text{ and } d_{\mathbb{R}^2}(z,z_1)=d_{\mathbb{R}^2}(z,z_2).
$$
We claim that $\{z,z_1,z_2\}\not\hookrightarrow X_{\alpha}\setminus \{z\}$. Let $z_3\in \overrightarrow{oa\vphantom{b}}$ with $d_{\mathbb{R}^2}(z,z_1) = d_{\mathbb{R}^2}(z,z_3)$. It is easy to see that $\{z,z_1,z_2\}\simeq\{z,z_1,z_3\}$. Let us show that there is no any another triangles in $X_{\alpha}$ which are isometric to $\{z,z_1,z_2\}$. For the angle $x$, we have the equality $x=\frac{\pi}{4}-\frac{\alpha}{4}$ (see Figure~\ref{f22}). The double inequality  $\frac{\pi}{5}\leqslant \alpha <\frac{\pi}{3}$ implies $\frac{\pi}{6}<x\leqslant \frac{\pi}{5}$, i.e. $x\leqslant\alpha$. A simple geometric reasoning shows that in virtue of the inequality $x\leqslant\alpha$ the  inscribing of $\{z,z_1,z_2\}$ into the set $X_{\alpha}$ provided that the angles $x$ and $\alpha$ have the common rays $\overrightarrow{oa\vphantom{b}}$ or $\overrightarrow{ob}$ which is impossible (see Figure~\ref{f22}).

\begin{figure}[h]
\begin{center}
\includegraphics[width=0.6\linewidth]{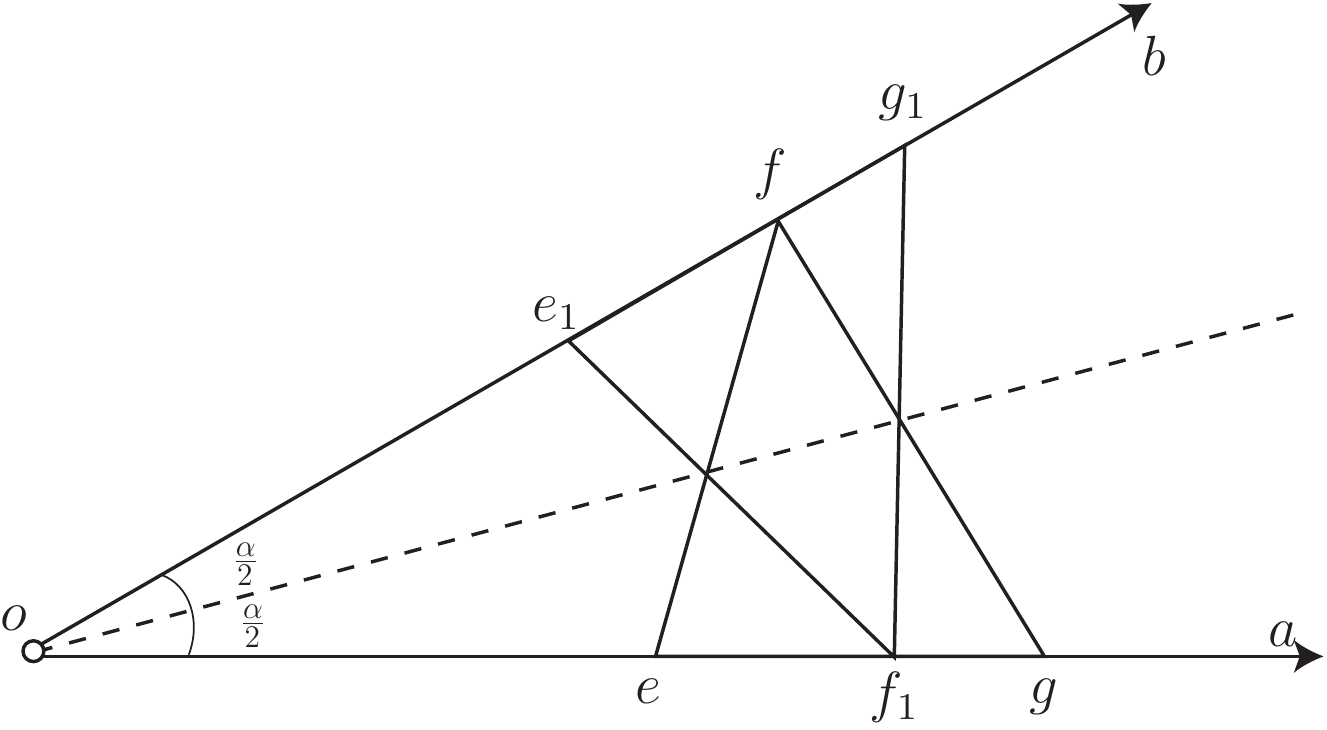}
\caption{} 
\label{f33} 
\end{center}
\end{figure}

Let us prove that $X_{\alpha}$ is not minimal if $0<\alpha<\frac{\pi}{5}$.  It suffices to show that for every point $z\in X_{\alpha}$ and every  triangle $\{e,f,g\}$ there are some embeddings $\{e,f,g\}\hookrightarrow X_{\alpha}\setminus \{z\}$.

Consider first the case when $\{e,f,g\}$ is not isosceles. Without loss of generality assume $z\in \overrightarrow{oa\vphantom{b}}$. Let $\Psi\colon \{e,f,g\}\hookrightarrow X_{\alpha}$ be an isometric embedding with $\Psi (e)=z$ (see Figure~\ref{f33}).
\begin{figure}[h]
\begin{center}
\includegraphics[width=0.6\linewidth]{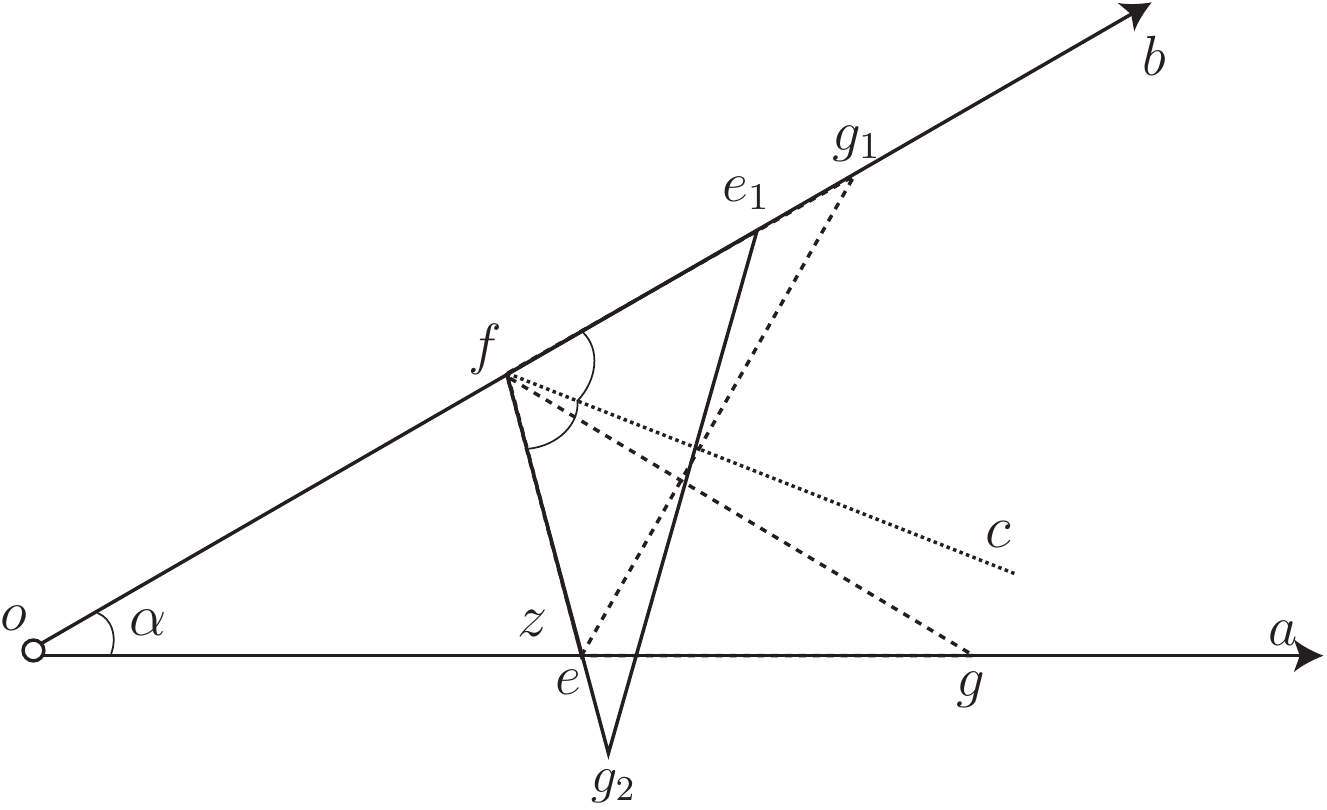}
\caption{}
\label{f4}
\end{center}
\end{figure}
Let us construct a new triangle $\{e_1,f_1,g_1\}$ which is symmetric to $\{\Psi(e),\Psi(f),\Psi(g)\}$ with respect to the bisector of the angle $\alpha$ (see Figure~\ref{f33}). If $d_{\mathbb R^2}(o,\Psi(e))\neq d_{\mathbb R^2}(o,\Psi(f))$, then $f_1$ does not coincide with $z$ so that we have found the desirable embedding. Otherwise, let us consider the triangle $\{\Psi(f),g_1,g_2\}$ which is the reflection of $\{\Psi(e), \Psi(f), g_1\}$ w.r.t. the bisector of the angle $\angle \Psi(e)\Psi(f)g_1$ (see Figure~\ref{f4}). Here we have $d_{\mathbb R^2}(\Psi(e),\Psi(f))\neq d_{\mathbb R^2}(\Psi(f),g_2)$ because $\{e,f,g\}$ is not isosceles. Doing a parallel shift of the $\{\Psi(f),g_1,g_2\}$ along the ray $\overrightarrow{ob}$ one can find a position of its image such that the point $g_2$ belongs to the ray $\overrightarrow{oa\vphantom{b}}$. The  desirable embedding is obtained.

Consider the case when $\{e,f,g\}$ is isosceles. Let $z\in X_{\alpha}$ and let
$$
\Psi \colon \{e,f,g\}\hookrightarrow X_{\alpha}  \ \text{ with }  \ z\in \{\Psi(e),\Psi(f),\Psi(g)\}.
$$
We can clearly assume $z\in \overrightarrow{oa\vphantom{b}}$. We are interested only in the cases when two of the vertices of $\{\Psi(e),\Psi(f),\Psi(g)\}$ are symmetric with respect to the bisector of the angle $\alpha$ and one of them coincides with $z$. Otherwise one can find a desirable embedding as above.
\begin{figure}[h]
\begin{center}
\begin{minipage}[h]{0.45\linewidth}
\includegraphics[width=1\linewidth]{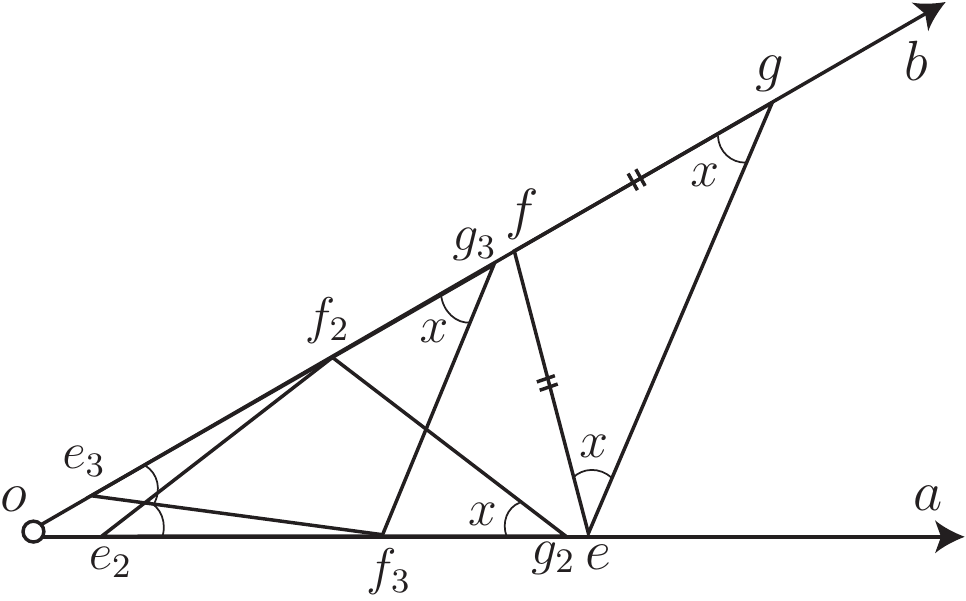}
\caption{Case (i): $x=\frac{\pi}{4} -\frac{\alpha}{4}$.} 
\label{f77} 
\end{minipage}
\hfill
\begin{minipage}[h]{0.45\linewidth}
\includegraphics[width=1\linewidth]{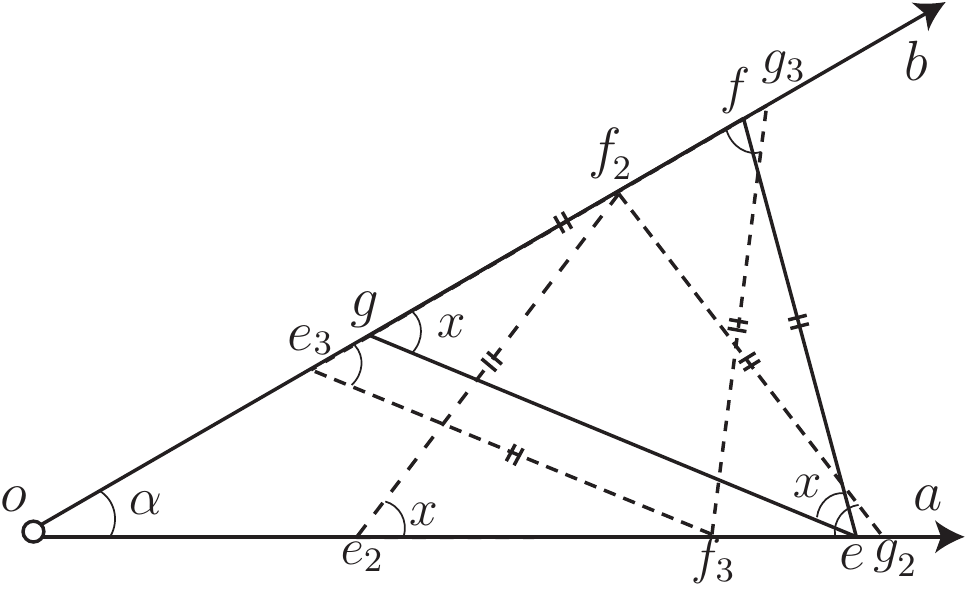}
\caption{Case (ii): $x=\frac{\pi}{4} +\frac{\alpha}{4}$.}
\label{f8}
\end{minipage}
\end{center}
\end{figure}
Let us show that in the case under consideration there is $\Psi\colon \{e,f,g\}\hookrightarrow X_{\alpha}$ with $z\notin \{\Psi(e),\Psi(f),\Psi(g)\}$. Further, without loss of generality, we assume that $\Psi(e)$ and $\Psi(f)$ are symmetric w.r.t. the bisector of $\alpha$. There exists three cases:
\begin{itemize}
\item [(i)] $\Psi(g) \in \overrightarrow{ob}$ and $d_{\mathbb R^2}(o,\Psi(g))> d_{\mathbb R^2}(o,\Psi(f))$ (see Figure~\ref{f77});
\item [(ii)] $\Psi(g) \in \overrightarrow{ob}$ and $d_{\mathbb R^2}(o,\Psi(g))< d_{\mathbb R^2}(o,\Psi(f))$ (see Figure~\ref{f8});
\item [(iii)] $\Psi(g) \in \overrightarrow{oa\vphantom{b}}$ and $d_{\mathbb R^2}(o,\Psi(g))< d_{\mathbb R^2}(o,\Psi(e))$ (see Figure~\ref{f9}).
\end{itemize}

Consider case (i). Write $x=\angle \Psi(f)\Psi(e)\Psi(g)=\angle \Psi(f)\Psi(g)\Psi(e)$. A simple geometric reasoning  gives us $x=\frac{\pi}{4}-\frac{\alpha}{4}$. If $0<\alpha<\frac{\pi}{6}$, then $\frac{\pi}{6}<x<\frac{\pi}{4}$. Hence $x>\alpha$. In virtue of the last inequality there exist $\{e_2,f_2,g_2\}\subseteq X_{\alpha}$ and $\{e_3,f_3,g_3\}\subseteq X_{\alpha}$ such that
$$
\{e_2,f_2,g_2\}\cap \{e_3,f_3,g_3\}=\varnothing \ \text{ and } \ \{e_2,f_2,g_2\}\simeq \{e,f,g\}\simeq \{e_3,f_3,g_3\}
$$
(see Figure~\ref{f77}). Consequently we have $z\notin \{e_2,f_2,g_2\}$ or $z\notin \{e_2,f_3,g_3\}$. The existence of $\Psi \colon \{e,f,g\}\hookrightarrow X_{\alpha}\setminus \{z\}$ follows.

\begin{figure}[h]
\begin{center}
\includegraphics[width=0.5\linewidth]{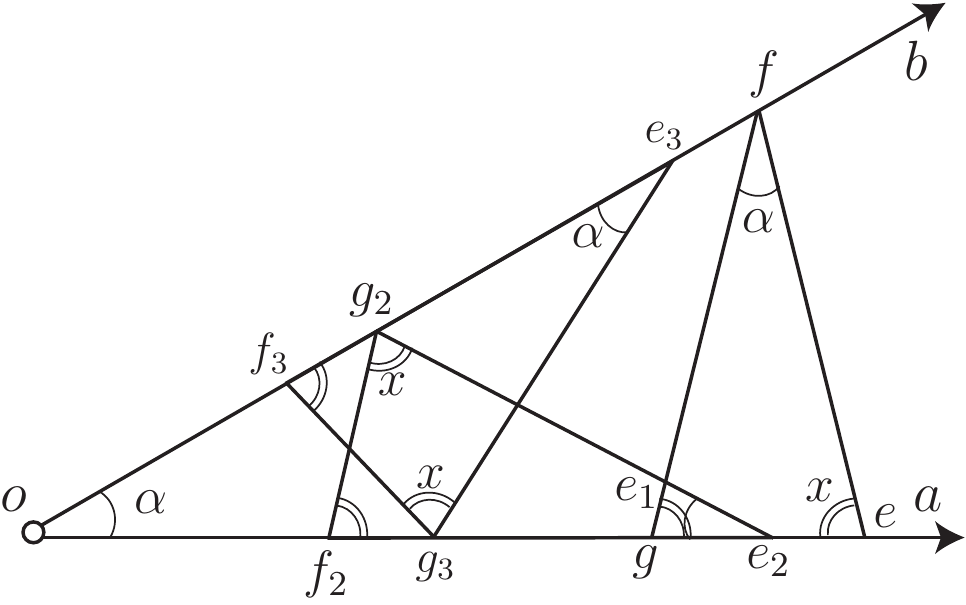}
\end{center}
\caption{Case (iii): $x=\frac{\pi}{2} -\frac{\alpha}{2}$.} 
\label{f9} 
\end{figure}

Cases (ii) and (iii) are similar.

\end{proof}


\end{document}